\title{Quantum supersymmetric pairs and $\imath$Schur duality of type AIII}
\author{Yaolong Shen}
\newtheorem{theorem}{Theorem}[section]
\newtheorem{corollary}[theorem]{Corollary}
\newtheorem{example}[theorem]{Example}
\newtheorem{lemma}[theorem]{Lemma}
\newtheorem{proposition}[theorem]{Proposition}
\numberwithin{equation}{section}
\theoremstyle{remark}
\newtheorem{remark}[theorem]{Remark}
\newcommand*\emptycirc[1][1ex]{\tikz\draw (0,0) circle (#1);} 
\newcommand*\halfcirc[1][1ex]{%
	\begin{tikzpicture}
	\draw[fill] (0,0)-- (90:#1) arc (90:270:#1) -- cycle ;
	\draw (0,0) circle (#1);
	\end{tikzpicture}}
\newcommand*\fullcirc[1][1ex]{\tikz\fill (0,0) circle (#1);}
\newcommand{\ad}{\text{ad}}
\newcommand{\N}{\mathbb{N}}
\newcommand{\Z}{\mathbb{Z}}
\newcommand{\R}{\mathbb{R}}
\newcommand{\Q}{\mathbb{Q}}
\newcommand{\C}{\mathbb{C}}
\newcommand{\Hy}{\mathscr{H}}
\newcommand{\HB}{\mathscr H_{B_d}}
\newcommand{\End}{\text{End}\ }
\newcommand{\la}{\langle}
\newcommand{\ra}{\rangle}
\newcommand{\io}{\imath}
\newcommand{\bu}{\bullet}
\newcommand{\U}{\mathbf U}
\newcommand{\Ui}{{\mathbf U}^\imath}
\newcommand{\Uinew}{{\mathbf U}^{\imath}}
\newcommand{\va}{\varsigma}
\newcommand{\iba}{\psi_{\io}}
\newcommand{\gl}{\mathfrak g \mathfrak l}
\newcommand{\W}{\mathbb W}
\newcommand{\pt}{n}
\newcommand{\new}{\varrho}
\newcommand{\nb}{m}
\newcommand{\up}{\Upsilon}
\newcommand{\I}{\mathbb I}
\newcommand{\Ibw}{\I_{r|m|r}}
\newcommand{\Iwl}{\mathbb I_{\circ}^{-}}
\newcommand{\Iwr}{\mathbb I_{\circ}^{+}}
\newcommand{\Ib}{\mathbb I_{\bu}}
\newcommand{\Veven}{V_{\overline 0}}
\newcommand{\Vodd}{V_{\overline 1}}
\newcommand{\geven}{\mathfrak g_{\overline 0}}
\newcommand{\godd}{\mathfrak g_{\overline 1}}
\newcommand{\Ieven}{I_{\overline 0}}
\newcommand{\Iodd}{I_{\overline 1}}
\newcommand{\Phieven}{\Phi_{\overline 0}}
\newcommand{\Phiodd}{\Phi_{\overline 1}}
\newcommand{\Uio}{\U^{\imath0}}
\newcommand{\Ub}{\U_{\bu}}
\newcommand{\wl}{P_\io}
\newcommand{\cwl}{P^\vee_\io}
\newcommand{\qbinom}[2]{\begin{bmatrix} #1\\#2 \end{bmatrix} }
\newcommand{\lskew}{{}_ir}
\newcommand{\rskew}{r_i}
\newcommand{\bskew}{\overline {\rskew}}
\begin{document}
\maketitle
\begin{abstract}
We construct quantum supersymmetric pairs $({\bold U},{\bold U}^\imath)$ of type AIII and elucidate their fundamental properties. An $\imath$Schur duality between the $\imath$quantum supergroup ${\bold U}^\imath$ and the Hecke algebra of type B acting on a tensor space is established, providing a super generalization of the $\imath$Schur duality of type AIII. Additionally, we construct a (quasi) $K$-matrix for arbitrary parameters, which facilitates the realization of the Hecke algebra action on the tensor space.
\end{abstract}

\tableofcontents

\section{Introduction}

\subsection{Background}

Let $\mathfrak{g}$ be a Lie algebra of finite type, and let $\theta$ be an involution on $\mathfrak{g}$. The theory of quantum symmetric pairs $(\mathbf{U}_q(\mathfrak{g}), \mathbf{U}^{\imath})$, which provides a quantization of the symmetric pair $(\mathfrak{g}, \mathfrak{g}^\theta)$, was systematically developed by Letzter \cite{Let99,Let02}. In this context, $\mathbf{U}_q(\mathfrak{g})$ represents the Drinfeld-Jimbo quantum group associated with $\mathfrak{g}$, while $\mathbf{U}^\imath$ denotes the $\imath$quantum group. Kolb \cite{Ko14} further expanded and generalized this theory to cover the Kac-Moody case. Some early examples of quantum symmetric pairs were constructed by Noumi and his collaborators, cf. \cite{N96,NS95}.


Jimbo \cite{Jim86} established the Schur-Jimbo duality, which relates the type A quantum group and the Hecke algebra of type A. This duality quantizes the Schur duality between the general linear group and the symmetric group. Over the years, the Schur-Jimbo duality has been extended naturally in conjunction with the development of $\imath$quantum groups. In \cite{BW18b}, the authors demonstrate that the Hecke algebra of type B and the $\imath$quantum group of type AIII satisfy a double centralizer property (see also \cite{Bao17}). Furthermore, it was shown that the Kazhdan-Lusztig basis of type B coincides with the $\imath$-canonical bases \cite{BW18b} arising from tensor product modules of $\imath$quantum groups. This result was later generalized to a multi-parameter setting \cite{BWW18}. More recently, a unified generalization of both type A and type B Schur dualities has been constructed in \cite{SW23}.

A fundamental property of a quantum symmetric pair is the existence of the quasi $K$-matrix. The quasi $K$-matrix for a quantum symmetric pair was first introduced in \cite[\S 2.3]{BW18a} as the intertwiner between the embedding of the $\imath$quantum group into the underlying quantum group and its bar-conjugated embedding with some conditions imposed on the parameters; also cf. \cite{BK19}. In \cite{AV22}, the quasi $K$-matrix was reformulated by the authors without invoking the bar involution, thereby allowing for more general parameters. More recently, Wang and Zhang presented a conceptual reformulation of the quasi $K$-matrix under general parameters as an intertwiner of anti-involutions in \cite{WZ22}.

In the Schur-Jimbo duality, it is well known that the action of the generators $H_i\ (1\leqslant i\leqslant d-1)$ of the type A Hecke algebra can be realized via the $R$-matrix, cf. \cite{Jim86}. In \cite[Theorem 2.18]{BW18a}, the authors constructed a $K$-matrix $\mathcal T$ which is an $\Ui$-module isomorphism for any finite-dimensional $\U$-module (see also \cite{BK19} for generalizations). Furthermore, Bao and Wang showed in \cite[Lemma 5.3]{BW18a} that the $H_0$-action on the tensor space can be realized via the $K$-matrix $\mathcal T$. This result was further generalized in \cite[\S 5.4]{SW23}.
 
 From now on we fix $\mathfrak g=\gl(\mathfrak m|\mathfrak n)$ to be a Lie superalgebra of type A; a well-known fact is that the Dynkin diagrams of $\mathfrak g$ are not unique (cf. \cite{CW12}). Sergeev \cite{S85} has extended the Schur duality in the setting of $\gl(\mathfrak m|\mathfrak n).$ The {\em quantum supergroup} $\U$, as a Drinfeld-Jimbo quantization of $\mathfrak g$, has been defined in \cite{Ya94} associated to any Dynkin diagram of $\mathfrak g$. Moreover, it was shown in \cite{Mi06} that the type A quantum supergroup associated to the standard Dynkin diagram and the Hecke algebra of type A satisfy a double centralizer property.

Lusztig's braid group action \cite{Lus93} is a fundamental construction for usual quantum groups, where the braid group operators quantize the reflections on the weight data. However, for Lie superalgebras, the fundamental systems of the root system associated with $\mathfrak g$ are not conjugated under the Weyl group actions due to the existence of odd roots (cf. \cite{CW12}). The reflections associated with the odd (resp. even) roots are known as odd (resp. even) reflections.

In the context of Lie superalgebras, Yamane \cite{Ya99} quantized the odd reflections into algebra isomorphisms of $\U$ associated with different presentations, providing a super analogue of Lusztig's braid group operators. Additionally, in \cite{C16}, the author reformulated Yamane's results to introduce braid group operators in the setting of $\gl(\mathfrak m|1)$.
 
 On the other hand, Kolb and Yakimov \cite{KY20} have extended Letzter’s theory of quantum symmetric
 pairs and constructed the $K$-matrix for Drinfeld doubles of pre-Nichols
algebras of diagonal type, which contains quantum supersymmetric pair of quasi-split type AIII as a special case. Moreover, Chung \cite{Ch19} has studied quantum symmetric pairs $(\U_{\pi},\Ui_{\pi})$ for quantum covering algebras $\U_{\pi}$ which is introduced in \cite{CHW13} and specializes to the Lusztig quantum group when $\pi=1$ and quantum supergroups of anisotropic type when $\pi=-1$; see also \cite{Ch21}.
 

\subsection{Goal} \label{subsec:goal}
Let $(\mathfrak g,\mathfrak g^\theta)$ be a {\em supersymmetric pair} of type AIII; see  \eqref{eq:AIIIdiagram}. Our first goal is to construct a suitable quantization $(\U,\Uinew)$ of $(\mathfrak g,\mathfrak g^\theta)$, following the approach in \cite{Let99,Let02}. This construction is known as a {\em quantum supersymmetric pair}, where $\Uinew$ is referred to as an {\em $\imath$quantum supergroup}.

A type AIII Satake diagram \eqref{eq:AIIIdiagram} consists of $(I=I_\bu\cup I_\circ,\tau)$ where $\tau$ is the diagram involution. For example, $(\gl(\mathfrak m_1+ \mathfrak m_2|\mathfrak n_1+\mathfrak n_2),\gl(\mathfrak m_1|\mathfrak n_1)\oplus \gl(\mathfrak m_2|\mathfrak n_2))$ is a supersymmetric pair of type AIII. Note that the Satake diagrams we consider in this paper allow odd simple roots to appear in both $I_\circ$ and $I_\bu$.

In order to achieve the first goal, we find it necessary to examine $\U$ associated with various Dynkin diagrams, as the definition of $\Ui$ incorporates braid group operators linked to all simple roots in $I_\bu$. The construction will be greatly simplified if we assume that $I_\bu$ consists of even simple roots only. It is worth noting that even with this assumption, the results presented in this work encompass many new cases.

Our subsequent objective is to establish a Schur duality between $\Ui$ and the Hecke algebra of type B, which acts on the tensor module of the natural representation of $\U$. This Schur duality generalizes the $\imath$Schur dualities found in \cite{BW18a,BWW18,SW23}.

Our third goal is to construct the quasi $K$-matrix $\up$ associated to $(\U,\Ui)$ generalizing the conceptual reformulation in \cite{WZ22}; also cf. \cite{BW18a,BK19}. The quasi $K$-matrix plays a fundamental role for a quantum supersymmetric pair as the quasi $R$-matrix for a quantum supergroup; cf. \cite{KT91}. Under some assumptions on the parameters, we use the quasi $K$-matrix to construct a bar involution on $\Ui$ following \cite{Ko22}.

Our last goal is to construct a $K$-matrix $\mathcal T$ following \cite{BW18b}. Moreover, after imposing special parameters so that the $\imath$Schur duality holds, we can show that $\mathcal T$ realizes the $H_0$-action on the tensor module of the fundamental representations of $\U$. This gives us a conceptual explanation on the commutativity between the $H_0$-actions and the actions of $\Ui$.

\subsection{Main results}

We first generalize Clark's result on $\gl(\mathfrak m|1)$ to $\gl(\mathfrak m|\mathfrak n)$ setting (see Theorem \ref{thm:braid}) by checking the braid group operators are indeed algebra isomorphisms between the same algebra with different presentations. The braid group operators are usually denoted by $T_j$ in this paper and are known to satisfy the braid group relations, cf. \cite{H10}.

To define a quantum supersymmetric pair $(\U,\Ui)$ of type AIII, we employ the braid group operators. We start with a diagram $X$ that satisfies the conditions \eqref{eq:AIIIdiagram} and \eqref{eq:assumption}, where the index set is denoted as $I=I_\circ \cup I_\bu$. Taking into account the presence of odd reflections, we can mimic the non-super case to define a longest element $w_\bu$ associated with the Weyl groupoid of the Levi subalgebra corresponding to $I_\bu$ (cf. \cite{HY08}). Applying $w_\bu$ to $X$ results in another diagram $Y$ that satisfies \eqref{eq:assumption} and is indexed by $I$ as well. The algebra $\U(Y)$ (and $\U(X)$) is generated by $E_j,F_j$ (and $E^{\texttt {X}}_j, F^{\texttt {X}}_j$) along with the Cartan part. The $\imath$quantum group $\Ui(Y)$ in the pair $(\U(Y),\Ui(Y))$ is generated by $E_j, F_j \ (j\in I_\bu)$,
\[B_j= 
 F_j+\va_jT_{w_\bu}(E^{\texttt{X}}_j)  K_j^{-1},\ \ \text{for } j\in I_\circ\] 
 together with certain Cartan elements. 
 
Note that for simplicity, a reader can consider the case when $I_\bu$ consists only of even simple roots. This simplification helps avoid complexity, and even in this case, we still obtain significant new results. Notably, when $I_\bu$ exclusively consists of even simple roots, the diagrams $Y$ and $X$ coincide, and we only need to employ Lusztig's braid group actions in the definition of $B_j$.

Additionally, we establish the coideal subalgebra property of $\Ui$ in $\U$ (see Proposition \ref{prop:coideal}), which generalizes the non-super constructions in \cite{Let99,Let02,Ko14}. We note that the original methods used in \cite{Let99,Let02,Ko14} do not directly apply to the super case. As a result, we provide a new proof specifically tailored for the super type AIII case.


Subsequently, we establish new quantum $\imath$Serre relations (Proposition \ref{prop:BSerre1}) by employing the projection technique introduced in \cite{Ko14}. This allows us to obtain a natural filtration on $\Ui$ (Proposition \ref{prop:filtration}), where the associated graded algebra $gr \Ui$ is essentially isomorphic to a parabolic subalgebra of $\U$ modulo the Cartan part. Moreover, we establish the quantum Iwasawa decomposition of $\U$ with respect to $\Ui$ in Theorem~\ref{thm:Iwasawa}.

Having established the quantum supersymmetric pair $(\U,\Uinew)$, we proceed to establish a multi-parameter $\imath$Schur duality of type AIII between $\Uinew$ and $\HB$, the Hecke algebra of type B (cf. \cite{BW18a,BWW18,SW23}).

Let $\W$ be the natural representation of $\U$. We show that $\W^{\otimes d}$ possesses a right $\HB$-module structure (Proposition \ref{prop:Heckeaction}) and a left $\Ui$-module structure via the comultiplication in the same time. Under the assumption on the parameters \eqref{eq:para}, we show that the actions of $\Ui$ and $\HB$ commute with each other and form a double centralizer property; see Theorem \ref{thm:UiHB}.

In \cite{CL22}, a duality between type AIII $\imath$Schur superalgebras and the Hecke algebra of type B was investigated. This duality is applicable to a very special Satake diagram where $I_\bu=\varnothing$. Notably, our $\Ui$-actions factor through their $\imath$Schur superalgebra in cases where the corresponding diagrams are identical.

For the construction of the quasi $K$-matrix $\up$ and the $K$-matrix $\mathcal T$ we impose one more condition \eqref{eq:extraassumption}; i.e. $I_\bu$ consists of even simple roots only. Following intertwining relations from \cite{WZ22} and strategies from \cite{BW18a,BK19}, we construct the quasi $K$-matrix $\up=\sum\up_\mu$ in a completion of $\U$ with $\up_0=1$ and $\up_\mu\in \U^{+}_\mu$.

With the quasi $K$-matrix $\up$ being established, we impose one more constraint on the parameters and construct a unique involutive bar involution $\iba$ on $\Ui$  (Corollary \ref{cor:iba}), which is a super analogue of the bar involution established in \cite{BW18b,Ko22}. The bar involution $\psi$ on the quantum supergroup $\U$ and $\iba$ on $\Ui$ is intertwined by $\up$ such that $\iba(x)\up=\up \psi(x)$, for all $x\in \Ui$.

Finally, following the construction presented in \cite{BW18b}, we formulate the $K$-matrix $\mathcal T$. The construction by Bao and Wang for the $K$-matrix involves braid group operators associated with $I_\bu$ and is applicable to quantum symmetric pairs of Kac-Moody types as well as our situation. We demonstrate that $\mathcal T$ induces an $\Ui$-isomorphism on $\W$ and compute its action on $\W$, which coincides with the $H_0$-action. Consequently, the $H_0$-action on $\W^{\otimes d}$ is realized by $\mathcal T\otimes 1^{\otimes d-1}$.

\subsection{Organization} This paper is organized as follows. In Section \ref{sec:QSG}, we present a fundamental construction of the quantum supergroup $\U$ of type A for any Dynkin diagram. In Section \ref{sec:BGO}, we examine the odd reflections of the underlying Lie superalgebra and extend Clark's braid group operators to the general case. Subsequently, in Section \ref{sec:QSP}, we define the quantum supersymmetric pair of type AIII and investigate new $\imath$Serre relations and the quantum Iwasawa decomposition. In Section \ref{sec:duality}, we establish the bimodule structure on the tensor product of the natural representation of $\U$ and prove the double centralizer property. Furthermore, in Section \ref{sec:Kmatrix}, we construct the quasi $K$-matrix of $(\U,\Ui)$ and introduce the bar involution on $\U$. Lastly, in Section \ref{sec:UK}, we establish the $K$-matrix $\mathcal T$ and utilize it to realize the $H_0$-action on the natural representation of $\U$.
\vspace{2mm}

{\bf Acknowledgement.} The author thanks his advisor Weiqiang Wang for many insightful advice and helpful discussions. The author also thanks Weinan Zhang for his useful suggestions. YS is supported by Graduate Research Assistantship from Wang's NSF grant (DMS-2001351) and a semester fellowship from University of Virginia.

\section{Quantum supergroup of type A}
\label{sec:QSG}
In this section we  provide an overview of the type A Lie superalgebra and the quantum supergroup associated to any Dynkin diagram of type A.

\subsection{The general linear Lie superalgebra}
\label{sec:2.1}
We adopt basic notations from \cite{CW12}. Let $V=\Veven \oplus \Vodd$ be a vector superspace such that $End(V)$ is an associative superalgebra. Then $End(V)$, equipped with the supercommutator, forms a Lie superalgebra, called the general linear Lie superalgebra and is denoted by $\gl(V)$ or $\gl(\mathfrak m|\mathfrak n)$ where $dim \Veven=\mathfrak m,\ dim\Vodd=\mathfrak n$.

Choose bases for $\Veven$ and $\Vodd$ such that they combine to a homogeneous basis of $V$. We will make it a convention to parameterize such a basis by the set
\begin{equation}
\label{eq:Imn}
    I(\mathfrak m|\mathfrak n)=\{\overline 1,\ldots,\overline {\mathfrak m},\underline 1,\ldots,\underline {\mathfrak n}\}
\end{equation}
with total order
$$\overline 1<\cdots<\overline {\mathfrak m}<0<\underline 1<\cdots \underline {\mathfrak n}.$$
Here $0$ is inserted for convention. With such an ordered basis, $\gl(\mathfrak m|\mathfrak n)$ can be realized as $(\mathfrak m+\mathfrak n)\times (\mathfrak m+\mathfrak n)$ complex matrices of the block form
\begin{equation}
    \label{eq:matrixform}
g=\begin{pmatrix}
a & b\\
c & d
\end{pmatrix}
\end{equation}
where $a,b,c$ and $d$ are respectively $\mathfrak m\times \mathfrak m,\mathfrak m\times \mathfrak n,\mathfrak n\times \mathfrak m$ and $\mathfrak n\times \mathfrak n$ matrices.

The even subalgebra $\geven$ consists of matrices of the form \eqref{eq:matrixform} with $b=c=0$, while the odd subspace $\godd$ consists of those with $a=d=0$. For each element $g$, we define the supertrace to be $$str(g)=tr(a)-tr(d).$$

The supertrace $str$ on the general linear Lie superalgebra gives rise to a non-degenerate supersymmetric bilinear form
$$(\cdot,\cdot):\gl(\mathfrak m|\mathfrak n)\times \gl(\mathfrak m|\mathfrak n)\to \C, \quad (x,y)=str(xy).$$

\subsection{Root system of $\gl(\mathfrak m|\mathfrak n)$}
Let $\mathfrak g=\gl(\mathfrak m|\mathfrak n)$ and $\mathfrak h$ be the Cartan subalgebra of diagonal matrices.

Restricting the supertrace to the Cartan
subalgebra $\mathfrak h$, we obtain a non-degenerate symmetric bilinear form on it. Denote by $\{\epsilon_{a}\}_{a\in I(\mathfrak m|\mathfrak n)}$ the basis of $\mathfrak h^*$ dual to the set of standard matrices $\{E_{a,a}\}_{a\in I(\mathfrak m|\mathfrak n)}$. Its root system $\Phi=\Phieven\oplus \Phiodd$ is given by
\begin{equation}
\begin{aligned}
    \Phieven=&\{\epsilon_a-\epsilon_b\mid a\neq b\in I(\mathfrak m|\mathfrak n), a,b>0 \text{ or } a,b<0\}, \\
     \Phiodd=&\{\pm(\epsilon_a-\epsilon_b)\mid a<0<b\}.
\end{aligned}
\end{equation}

A fundamental system of $\gl(\mathfrak m|\mathfrak n)$ consists of $\mathfrak m+\mathfrak n-1$ roots $$\epsilon_{i_1}-\epsilon_{i_2},\ldots,\epsilon_{i_{\mathfrak m+\mathfrak n-1}}-\epsilon_{i_{\mathfrak m+\mathfrak n}},$$
where $\{i_1,\ldots,i_{\mathfrak m+\mathfrak n}\}=I(\mathfrak m|\mathfrak n)$. We denote even simple roots by $\bullet$ and odd simple roots by $
\bigotimes$. Then the corresponding Dynkin diagram is of the form
\begin{equation}
\label{eq:dynkin}
\begin{tikzpicture}[scale=1, semithick]
\node (1) [circle,draw,label=below:{$\epsilon_{i_{1}}-\epsilon_{i_{ 2}}$},scale=0.6] at (0,0){.};
\node (2) [circle,draw,label=below:{$\epsilon_{i_{ 2}}-\epsilon_{i_{ 3}}$},scale=0.6] at (1.5,0){.};
\node (3)  at (3,0) {$\cdots$} ;
\node (4) [circle,draw,scale=0.6] at (4.5,0){.};
\node (5) [circle,draw,label=below:{$\epsilon_{i_{\mathfrak m+\mathfrak n-1}}-\epsilon_{i_{\mathfrak m+\mathfrak n}}$},scale=0.6] at (6,0){.};

\path (1) edge (2)
          (2) edge (3)
          (3) edge (4)
          (4) edge (5);
\end{tikzpicture}
\end{equation}
where $\bigodot$ is either $\bullet$ or $\bigotimes$.

\begin{example}
\label{ex:standard}
The standard Dynkin diagram is given by
\begin{equation*}
\begin{tikzpicture}[scale=1, semithick]
\node (1) [fill,circle,draw,label=below:{$\epsilon_{\overline 1}-\epsilon_{\overline{2}}$},scale=0.6] at (0,0){};
\node (2) [fill,circle,draw,label=below:{$\epsilon_{{\overline2}}-\epsilon_{\overline 3}$},scale=0.6] at (1.5,0){};
\node (3)  at (3,0) {$\cdots$} ;
\node (4) [label=below:{$\epsilon_{\overline {\mathfrak m}}-\epsilon_{\underline 1}$},scale=0.8] at (4.5,0){$\bigotimes$};
\node (5)  at (6,0) {$\cdots$} ;
\node (6) [fill,circle,draw,label=below:{},scale=0.6] at (7.5,0){};
\node (7) [fill,circle,draw,label=below:{$\epsilon_{\underline {\mathfrak n-1}}-\epsilon_{\underline {\mathfrak n}}$},scale=0.6] at (9,0){};

\path (1) edge (2)
          (2) edge (3)
          (3) edge (4)
          (4) edge (5)
          (5) edge (6)
          (6) edge (7);
\end{tikzpicture}
\end{equation*}
\end{example}

Given a Dynkin diagram of the form \eqref{eq:dynkin}. Let $$\Pi=\{\alpha_j=\epsilon_{i_j}-\epsilon_{i_{j+1}}\mid j=1,\cdots,\mathfrak m+\mathfrak n-1\}$$ denote the set of simple roots with the index set $I=\{1,\ldots, \mathfrak m+\mathfrak n-1\}$. We see that $I$ is a disjoint union of two subsets $I=\Ieven \cup \Iodd$ where $\Ieven$ (resp. $\Iodd$) consists of all even (resp. odd) simple roots. Let $p$ be the parity function on $I$ such that
\begin{align*}
    p(k)=\begin{cases}
    0 & \text{ if } k\in \Ieven,\\
    1 & \text{ if } k\in \Iodd.
    \end{cases}
\end{align*}

We define the weight lattice $P=\oplus_{b\in I(\mathfrak m|\mathfrak n)}\Z\epsilon_b$ while the symmetric bilinear form on $P$ is given by
\begin{equation}
    (\epsilon_{a},\epsilon_{a'})=
    \begin{cases}
    1 & \text{ if }  a=a'<0, \\
    -1 & \text{ if } a=a'>0, \\
    0 & \text{ else.}
    \end{cases}
\end{equation}

We define the coweight lattice $P^\vee=\oplus_{b\in I(\mathfrak m|\mathfrak n)}\Z\epsilon^{\vee}_{b}$ and we have the pairing $\la \cdot,\cdot \ra:P^\vee\times P\to \Z$ with $\la \epsilon^\vee_a,\epsilon_b \ra=\delta_{a,b}.$ Then $\Pi^\vee=\{h_j\mid j\in I\}$, the set of simple coroots, is given by 
\begin{align}
    h_j=\epsilon_{i_j}^\vee-(-1)^{p(j)}\epsilon_{i_{j+1}}^\vee.
\end{align}

We also define the root lattice $Q:=\Z \Pi$ and the coroot lattice $Q^\vee:=\Z \Pi^\vee$. The generalized Cartan matrix $A=(a_{ij})_{i,j\in I}$ associated with $\mathfrak{g}$ is defined by $a_{ij}=\langle h_j, \alpha_i \rangle$. We observe that $A$ is symmetrizable, meaning that there exist non-zero integers $\ell_j$ satisfying
\begin{align}
\label{eq:lj}
\ell_j\langle h_j, \lambda \rangle = (\alpha_j, \lambda) \quad \text{for any } \lambda \in P.
\end{align}
When $p(j)=0$, we see that $\ell_j=\frac{(\alpha_j, \alpha_j)}{2}$.

\subsection{Quantum supergroup of type A}

Following \cite{Ya94}, we define a quantum supergroup associated to any fixed Dynkin diagram of the form \eqref{eq:dynkin}. 

Denote the quantum integers and quantum binomial coefficients by, for $a\in \Z, k \in \N$,
\[
[a]=\frac{q^a-q^{-a}}{q-q^{-1}},
\qquad
\qbinom{a}{k} =\frac{[a] [a-1] \ldots [a-k+1]}{[k]!}.
\]

It will be convenient for us to introduce the following notation. We will say $i,j\in I$ are {\em connected} if $i=j\pm 1$ and write $i\sim j$. Likewise, we say {\em not connected} if $i\neq j,j\pm 1$ and write $i\nsim j$.

Let $K_i=q^{\ell_ih_i}$, we recall the definition of $\U_q(\gl(\mathfrak m|\mathfrak n))$ to be the unital associative algebra over $\Q(q)$ with generators $q^h\ (h\in P^\vee), E_i, F_i\ (i\in I)$ which satisfy the following defining relations:
\begin{equation}
\label{eq:Urelation}
\begin{aligned}
    &(R1)\  q^h=1,\quad\text{ for } h=0,\\
    &(R2)\  q^{h_1}q^{h_2}=q^{h_1+h_2},\\
    &(R3)\  q^h E_j=q^{\la h,\alpha_j \ra}E_j q^h \quad \text{ for }j\in I,\\
    &(R4)\  q^h F_j=q^{-\la h,\alpha_j \ra}F_j q^h,\quad \text{ for }j\in I,\\
     &(R5)\  [E_j,F_k]=E_j F_k-(-1)^{p(j)p(k)}F_k E_j=\delta_{j,k}\frac{K_j-K_j^{-1}}{q^{\ell_j}-q^{-\ell_j}},\quad \text{ for }j,k\in I,\\
    &(R6)\  E_j^2=F_j^2=0,\quad \text{ for }j\in I_{\overline 1},\\
    &(R7)\  E_j E_k=(-1)^{p(j)p(k)}E_k E_j,\quad F_j F_k=(-1)^{p(j)p(k)}F_k F_j, \text{ for }j\nsim k,\\
    &(R8)\  E_j^2E_k-[2]E_j E_k E_j+E_k E_j^2=0,\quad \text{ for }j\sim k, p(j)=0,\\
    &(R9)\  F_j^2F_k-[2]F_j F_k F_j+F_k F_j^2=0,\quad \text{ for }j\sim k, p(j)=0,\\
    &(R10)\  S_{p(k),p(\ell)}(E_k,E_j,E_\ell)=0,
    \quad \text{ for }k\sim j\sim \ell,k<\ell, p(j)=1,\\
    &(R11)\  S_{p(k),p(\ell)}(F_k,F_j,F_\ell)=0,
    \quad \text{ for }k\sim j\sim \ell,k<\ell, p(j)=1.
\end{aligned}
\end{equation}
where $S_{t_1,t_2}(x_1,x_2,x_3)\in \Q(q)\la x_1,x_2,x_3\ra$ is the polynomial in three non-commuting variables for $t_1,t_2\in\{\overline 0,\overline 1\}$ given by
\begin{equation}
    \label{eq:superSerre}
\begin{aligned}
    &S_{t_1,t_2}(x_1,x_2,x_3)=[2]x_2x_3x_1x_2-[((-1)^{t_1}x_2x_3x_2x_1+(-1)^{t_1+t_1t_2}x_1x_2x_3x_2)\\
    &+((-1)^{t_1t_2+t_2}x_2x_1x_2x_3+(-1)^{t_2}x_3x_2x_1x_2)].
\end{aligned}
\end{equation}


Moreover, let $q_j:=q^{\ell_j}$, we define maps $\sigma,\wp, \overline{\cdot}$ on $\U_q(\gl(\mathfrak m|\mathfrak n))$ satisfying:
\begin{equation}
\label{eq:rhoandop}
\begin{aligned}
&\sigma(E_j)=E_j,\quad\sigma(F_j)=F_j,\quad\sigma(K_j)=(-1)^{p(j)}K_j^{-1},\quad \sigma(xy)=\sigma(y)\sigma(x),\\
&\wp(E_j)=q_j K_j F_j,\quad \wp(F_j)=q_j^{-1}E_jK_j^{-1},\quad \wp(K_j)=K_j,\quad \wp(xy)=\wp(y)\wp(x),\\
&\overline{E_j}=E_j,\quad \overline{F_j}=F_j,\quad
\overline{K_j}=K^{-1}_j,\quad
\overline{q}=q^{-1},\quad
\overline{xy}=\overline{x}\cdot \overline{y}.
\end{aligned}
\end{equation}

For convention we also write
\begin{equation}
\label{eq:KQ}
K_\lambda:=\prod_{i\in I}K_i^{n_i},\quad \text { for any }\lambda=\sum_{i\in I}n_i\alpha_i\in Q.\end{equation}

In general, $\U_q(\gl(\mathfrak m|\mathfrak n))$ is a Hopf superalgebra (cf. \cite[Lemma 2.1]{C16}) but not a Hopf algebra. We define an involutive operator $\new$ of parity $0$ on it by
\begin{equation}
\label{eq:new}
    \new(q^h)=q^h,\quad \new(E_j)=(-1)^{p(j)}E_j\text{ and }
    \new(F_j)=(-1)^{p(j)}F_j,\quad \forall j\in I.
\end{equation}

Let \[\U=\U_q(\gl(\mathfrak m|\mathfrak n))\oplus \U_q(\gl(\mathfrak m|\mathfrak n))\new.\] Then $\U$ is an algebra with the additional multiplication law given by 
\begin{equation}
\label{eq:newrelation}
\new^2=1,\quad \new^{-1} x\new=\new(x) \text{ for any }x\in \U_q(\gl(\mathfrak m|\mathfrak n)).
\end{equation}
As established in \cite{Ya94}, $\U$ is a Hopf algebra  whose comultiplication $\Delta$, counit $\epsilon$, antipode $S$ are given by
\begin{equation}
\label{eq:comultinew}
    \begin{aligned}
    &\Delta(q^h)=q^h\otimes q^h\quad \text{ for }h\in P^\vee, \\
    &\Delta(E_j)=E_j\otimes 1+\new^{p(j)}K_j\otimes E_j\quad \text{ for }j\in I,\\
    &\Delta(F_j)=F_j\otimes K_j^{-1}+\new^{p(j)}\otimes F_j\quad \text{ for }j\in I,\\
    &\Delta(\new)=\new \otimes \new,\\
    &\epsilon(\new)=\epsilon(q^h)=1, \text{ for }h\in P^\vee,\quad \epsilon(E_j)=\epsilon(F_j)=0,\quad \text{ for }j\in I, \\
    &S(\new)=\new,\quad S(q^{h})=q^{-h} \quad \text{ for }h\in P^\vee,\\
    &S(F_j)=-\new^{p(j)}F_jK_j,\quad S(E_j)=-\new^{p(j)}K_j^{-1}E_j\quad \text{ for }j\in I.
\end{aligned}
\end{equation}

We naturally extend the maps in \eqref{eq:rhoandop} from $\U_q(\gl(\mathfrak m|\mathfrak n))$ to $\U$ by setting
\[\sigma(\new)=\wp(\new)=\overline{\new}=\new.\]

\begin{remark}
The choice of the comultiplication we made here is different from \cite{Mi06}. When $\mathfrak g$ is a Lie algebra, our $\Delta$ is compatible with the comultipication in \cite{Lus93}.
\end{remark}

As in \cite{Lus93}, the multiplication map gives a triangular decomposition of $\U$:
\begin{equation}
\label{eq:triangle}
    \U\cong \U^+\otimes \U^0\otimes \U^-.
\end{equation}
where  $\U^{+}$ (resp. $\U^{-}$) denotes the subalgebra of $\U$ generated by $E_j $ (resp. $F_j $), $j\in I$ and $\U^{0}$ denotes the subalgebra of $\U$ generated by $\{q^\mu,\new \mid j\in I,\mu\in P^\vee\}$.

\section{Braid group operators}
\label{sec:BGO}
In order to define quantum supersymmetric pairs, we need to study the braid group operators on $\U$, especially the ones associated with odd simple roots. 

\subsection{Odd reflections}
As noted in \cite{Ya99}, when fixing a simple root $\alpha$, the braid operator associated with it extends the action of $s_{\alpha}$ on the weight data. The key distinction between odd and even reflections lies in the fact that odd reflections change the generalized Cartan matrix $A$, while even reflections do not; see also \cite[\S 4]{C16}.

The fundamental systems of the root system $\Phi$ associated to $\gl(\mathfrak m|\mathfrak n)$ are not conjugated under the Weyl group actions because of the existence of  odd roots (cf. \cite[\S 1.3.6]{CW12}). In fact, we have the following lemma,
\begin{lemma}
{\em \cite[Proposition 2.2.1]{Ya99}, \cite[Lemma 1.26]{CW12}}
Let $\alpha$ be an odd simple root of $\gl(\mathfrak m|\mathfrak n)$ in a positive system $\Phi^+$. Then,
$$\Phi_{\alpha}^+:=\{-\alpha\}\cup\Phi^+\backslash \{\alpha\}$$
is a new positive system, whose corresponding fundamental system $\Pi_{\alpha}$ is given by
\begin{equation}
    \Pi_{\alpha}=\{\beta\in \Pi\mid (\beta,\alpha)=0,\beta\neq\alpha\}\cup\{\beta+\alpha\mid\beta\in \Pi,(\beta,\alpha)\neq 0\}\cup \{-\alpha\}.
\end{equation}
\end{lemma}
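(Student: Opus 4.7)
The plan is in two steps: first verify that $\Phi_\alpha^+$ is a positive system, then identify its fundamental system as $\Pi_\alpha$. Recall that a positive system is a subset $\Psi\subset\Phi$ satisfying $\Phi=\Psi\sqcup(-\Psi)$ and closed under addition within $\Phi$. The disjoint-union decomposition is immediate since the construction merely swaps $\alpha$ and $-\alpha$ inside $\Phi^+\cup(-\Phi^+)$. For closure under addition I would analyze three cases. First, $(-\alpha)+(-\alpha)=-2\alpha\notin\Phi$ because $\alpha$ is an isotropic odd root. Second, for $(-\alpha)+\gamma$ with $\gamma\in\Phi^+\setminus\{\alpha\}$ such that $\gamma-\alpha\in\Phi$, the possibilities $\gamma-\alpha=\alpha$ and $\gamma-\alpha\in -\Phi^+\setminus\{-\alpha\}$ are both excluded (the former by $2\alpha\notin\Phi$, the latter because it would write $\alpha=\gamma+(\alpha-\gamma)$ as a sum of two elements of $\Phi^+$, contradicting simplicity of $\alpha$), so $\gamma-\alpha\in\Phi_\alpha^+$. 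Third, sums within $\Phi^+\setminus\{\alpha\}$ lying in $\Phi$ land in $\Phi^+$ and cannot equal $\alpha$ by simplicity.

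For the fundamental system, the cleanest approach in the $\gl(\mathfrak{m}|\mathfrak{n})$ setting is combinatorial. Writing $\alpha=\epsilon_{i_p}-\epsilon_{i_{p+1}}$, the odd reflection corresponds to transposing the two adjacent entries $i_p$ and $i_{p+1}$ in the ordered tuple defining $\Pi$. Reading off the simple roots of the new ordering yields $\alpha_j$ unchanged for $j\notin\{p-1,p,p+1\}$, the root $-\alpha$ in the middle, and the merged roots $\alpha_{p-1}+\alpha$ and $\alpha+\alpha_{p+1}$ at the two adjacent slots. Comparing this with the three pieces of the claimed $\Pi_\alpha$ produces exactly the same list, since for roots of the form $\epsilon_a-\epsilon_b$ the condition $(\beta,\alpha)\neq 0$ is equivalent to $\beta$ sharing an index with $\alpha$, which happens precisely for the two simple roots adjacent to $\alpha$ in the Dynkin diagram.

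The main obstacle is the closure step in case two above, where simplicity of $\alpha$ must be invoked to exclude $\gamma-\alpha\in -\Phi^+$. Alternatively, if one prefers an abstract verification rather than the combinatorial identification, one needs to check both indecomposability of the elements of $\Pi_\alpha$ inside $\Phi_\alpha^+$ (via a height argument in the original basis, noting that only $-\alpha$ has a negative $\alpha$-coefficient, and that no root of $\gl(\mathfrak{m}|\mathfrak{n})$ has $\alpha$-coefficient exceeding $1$) and completeness (induction on height using the substitution $\alpha_j\mapsto(\alpha_j+\alpha)+(-\alpha)$ whenever $\alpha_j$ is $\alpha$-adjacent). In either route, the type-A super structure — specifically the description of roots as $\epsilon_a-\epsilon_b$ and the fact that each such root has $\Pi$-coefficients in $\{0,1\}$ — is what makes the bookkeeping tractable.
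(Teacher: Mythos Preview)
Your proof is correct. Note, however, that the paper does not supply its own proof of this lemma: it is quoted verbatim as \cite[Lemma 1.26]{CW12} and used as background, so there is no in-paper argument to compare against. Your two-step approach (closure of $\Phi_\alpha^+$ via the three-case analysis, then the combinatorial identification of $\Pi_\alpha$ by swapping the adjacent $\epsilon$-indices) is the standard one and matches the treatment in Cheng--Wang; the key inputs you isolate---isotropy of $\alpha$ (so $2\alpha\notin\Phi$) and simplicity of $\alpha$ in $\Phi^+$ (so it is not a sum of two positive roots)---are exactly what is needed.
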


The operation of obtaining $\Pi_\alpha$ from $\Pi$ is denoted by $s_{\alpha}$ and referred to as an odd reflection. When $\beta\in \Pi$ is an even simple root, we abuse the notation $s_{\beta}$ to denote the even reflection associated to $\beta$. For a diagram as in \eqref{eq:dynkin}, we let $s_j:=s_{\alpha_j}$ for all $j\in I$. 

Let $\mathcal D_{\mathfrak m,\mathfrak n}$ denote the set of all possible Dynkin diagrams for $\gl(\mathfrak m|\mathfrak n)$. The following lemma provides information on how the reflections change parities, which enables us to determine the matrix units of $A$. For any diagram $X\in \mathcal D_{\mathfrak m,\mathfrak n}$, we denote by $p_{\texttt{X}}$ the corresponding parity function.
\begin{lemma}
\label{eq:parity}
If $j,k,\ell \in I$ with $j\sim k$ and $j\nsim \ell$. Then for any $X\in \mathcal D_{\mathfrak m,\mathfrak n}$ we have
\begin{equation*}
    p_{s_j(X)}(j)=p_X(j),\quad p_{s_j(X)}(k)=p_X(k)+p_X(j) \mod 2,\quad
    p_{s_j(X)}(\ell)=p_X(\ell).
\end{equation*}
\end{lemma}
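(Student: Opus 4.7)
The plan is to reduce the three equalities to a short combinatorial calculation on the underlying ordered basis. Any diagram $X \in \mathcal D_{\mathfrak m,\mathfrak n}$ is encoded by an ordering $(i_1,\ldots,i_{\mathfrak m+\mathfrak n})$ of $I(\mathfrak m|\mathfrak n)$ whose simple roots are $\alpha_k = \epsilon_{i_k} - \epsilon_{i_{k+1}}$. By Remark \ref{remark:braid}, the action of $s_j$ on the weight data (ignoring parity) is simply the transposition of the entries in positions $j$ and $j+1$, so $s_j(X)$ is encoded by $(i_1,\ldots,i_{j-1},i_{j+1},i_j,i_{j+2},\ldots,i_{\mathfrak m+\mathfrak n})$, which is still an ordering of $I(\mathfrak m|\mathfrak n)$ and hence lies in $\mathcal D_{\mathfrak m,\mathfrak n}$.

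The first step is to introduce the ``side'' function $\epsilon \colon I(\mathfrak m|\mathfrak n) \to \{0,1\}$ sending $\overline 1,\ldots,\overline{\mathfrak m}$ to $0$ and $\underline 1,\ldots,\underline{\mathfrak n}$ to $1$. Directly from the bilinear form in \S\ref{sec:2.1} one checks that $\alpha_k$ is odd exactly when $i_k$ and $i_{k+1}$ lie on opposite sides, i.e.
\[
p_X(k) \equiv \epsilon(i_k) + \epsilon(i_{k+1}) \pmod{2}.
\]
Once this reformulation is in place, all three identities become telescoping statements modulo $2$.

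The second step is to verify each equality by reading off the relevant consecutive pairs from the transposed ordering. For $p_{s_j(X)}(j) = p_X(j)$: the $j$-th simple root of $s_j(X)$ is $\epsilon_{i_{j+1}} - \epsilon_{i_j}$, whose parity equals that of $\alpha_j$. For $k = j+1$: positions $j+1,j+2$ of the new ordering hold $i_j,i_{j+2}$, so
\[
p_{s_j(X)}(j+1) = \epsilon(i_j) + \epsilon(i_{j+2}) \equiv \bigl(\epsilon(i_j)+\epsilon(i_{j+1})\bigr) + \bigl(\epsilon(i_{j+1})+\epsilon(i_{j+2})\bigr) = p_X(j) + p_X(j+1) \pmod{2},
\]
and the case $k = j-1$ is symmetric. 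For $\ell \nsim j$ with $\ell \neq j$ the intervals $\{j,j+1\}$ and $\{\ell,\ell+1\}$ are disjoint, so the $\ell$-th consecutive pair is untouched and $p_{s_j(X)}(\ell) = p_X(\ell)$.

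There is no serious obstacle: the only care required is the small case bookkeeping for $k = j\pm 1$ and an implicit check that the transposition of a valid ordering remains valid, which is built into Remark \ref{remark:braid}. No deeper structural input is needed beyond the reformulation of $p_X$ via $\epsilon$.
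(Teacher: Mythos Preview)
Your proof is correct. The paper does not supply a proof of this lemma, presumably regarding it as immediate from the combinatorics of Dynkin diagrams and the transposition interpretation of $s_j$ noted just before the lemma (``the $s_j$-action can be translated to a transposition between $\epsilon_{i_{j-\frac{1}{2}}}$ and $\epsilon_{i_{j+\frac{1}{2}}}$''). Your argument makes that sketch precise: encoding $p_X(k)$ as $\epsilon(i_k)+\epsilon(i_{k+1})\pmod 2$ and reading off the effect of the transposition is exactly the natural way to justify the statement, and it aligns with the paper's intended reasoning.
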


\begin{proof}
    We always have $s_j(\alpha_j)=-\alpha_j$, $s_j(\alpha_\ell)=\alpha_\ell$ and $s_j(\alpha_k)=\alpha_k+ \alpha_j$ for a diagram as in \eqref{eq:dynkin}.
\end{proof}

From Lemma~\ref{eq:parity} we see that even reflections will not change the parity of any simple root while odd reflections change the parities of the ones adjacent to it.

\begin{example}
\[
\begin{tikzpicture}[scale=1, semithick]
\node (1) [label=below:{$\epsilon_{\overline 1}-\epsilon_{\underline 1}$},scale=0.6] at (0,0){$\bigotimes$};
\node (2) [label=below:{$\epsilon_{\underline 1}-\epsilon_{\overline 2}$},scale=0.6] at (2,0){$\bigotimes$};
\node(3) at (3,0){$\overset{s_1}{\Longrightarrow}$};
\node (4) [label=below:{$\epsilon_{\underline 1}-\epsilon_{\overline 1}$},scale=0.6] at (4,0){$\bigotimes$};
\node (5) [circle,fill,draw,label=below:{$\epsilon_{\overline 1}-\epsilon_{\overline 2}$},scale=0.6] at (6,0){};
\path  
        (1) edge (2)
        (4) edge (5);
\end{tikzpicture}
\]
\end{example}
More precisely, for any two fundamental systems $\Pi$ and $\Pi'$ of a basic Lie superalgebra of any classical type, there exists a sequence consisting of even and odd relfections $s_1,\ldots,s_k$ such that $s_1\cdots s_k (\Pi)=\Pi'$. (cf. \cite{CW12})

\subsection{Braid group operators}
For each $X\in \mathcal D_{\mathfrak m,\mathfrak n}$, we can associate a quantum enveloping algebra $\U(X)$ with generators $E^{\texttt{X}}_{i},\ F^{\texttt{X}}_{i},\ q^\mu$ and $\new_{\texttt X}$ as in \S~\ref{sec:2.1}. Equipped with this family of algebras, the braid group operators were constructed in \cite[Proposition 7.5.1]{Ya99}. In \cite[Theorem 4.5]{C16}, an equivalent reformulation of these operators was given in the case of $\gl(\mathfrak m|1)$. In Theorem~\ref{thm:braid}, we adopt the notations from \cite[Theorem 4.5]{C16} and restate the results of \cite[Proposition 7.5.1]{Ya99} specifically for $\gl(\mathfrak m|\mathfrak n)$.

\begin{theorem}
\label{thm:braid}
Let $i\in I,\  X\in \mathcal D_{\mathfrak m,\mathfrak n},\ e=\pm 1$ and set $Y=s_i(X)$. There exist $\Q(q)$-linear algebra isomorphisms $T_{i,e}',T_{i,e}'':\U(X)\to \U(Y)$ satisfying
\begin{equation}
    T_{i,-e}'(E^{\texttt{X}}_{j})=
    \begin{cases}
    -(-1)^{p_Y(i)}K_{\texttt{Y},i}^{-e}F_{\texttt{Y},i},&\quad \text{if}\quad j=i, \\
    E_{\texttt{Y},j}E_{\texttt{Y},i}-(-1)^{p_Y(i)p_Y(j)}q^{e(\alpha_{\texttt{Y},i},\alpha_{\texttt{Y},j})}E_{\texttt{Y},i}E_{\texttt{Y},j} &\quad \text{if}\quad j \sim i,\\
    E_{\texttt{Y},j} &\quad \text{if}\quad j \nsim i.
    \end{cases}
\end{equation}
\begin{equation}
    T_{i,-e}'(F^{\texttt{X}}_{j})=
    \begin{cases}
    -(-1)^{p_Y(i)}E_{\texttt{Y},i}K_{Y,i}^{e},&\quad \text{if}\quad j=i, \\
    F_{\texttt{Y},i}F_{\texttt{Y},j}-(-1)^{p_Y(i)p_Y(j)}q^{-e(\alpha_{Y,i},\alpha_{Y,j})}F_{\texttt{Y},j}F_{\texttt{Y},i} &\quad \text{if}\quad j \sim i,\\
    F_{\texttt{Y},j} &\quad \text{if}\quad j \nsim i.
    \end{cases}
\end{equation}
\begin{equation}
    T_{i,-e}'(K^{\texttt{X}}_{j})=
    \begin{cases}
    (-1)^{p_Y(i)}K_{\texttt{Y},i}^{-1},&\quad \text{if}\quad j=i, \\
   (-1)^{p_Y(i)p_Y(j)}K_{\texttt{Y},i}K_{\texttt{Y},j} &\quad \text{if}\quad j \sim i,\\
    K_{\texttt{Y},j} &\quad \text{if}\quad j \nsim i.
    \end{cases}
\end{equation}
\begin{equation}
    T_{i,-e}'(\new_{\texttt{X}})=\new_{\texttt{Y}}.
\end{equation}
and
\begin{equation}
    T_{i,e}''(E^{\texttt{X}}_{j})=
    \begin{cases}
    -F_{\texttt{Y},i}K_{\texttt{Y},i}^{e},&\quad \text{if}\quad j=i, \\
    E_{\texttt{Y},i}E_{\texttt{Y},j}-(-1)^{p_Y(i)p_Y(j)}q^{e(\alpha_{\texttt{Y},i},\alpha_{\texttt{Y},j})}E_{\texttt{Y},j}E_{\texttt{Y},i} &\quad \text{if}\quad j \sim i,\\
    E_{\texttt{Y},j} &\quad \text{if}\quad j \nsim i.
    \end{cases}
\end{equation}
\begin{equation}
    T_{i,e}''(F^{\texttt{X}}_{j})=
    \begin{cases}
    -K_{\texttt{Y},i}^{-e}E_{\texttt{Y},i},&\quad \text{if}\quad j=i, \\
    F_{\texttt{Y},j}F_{\texttt{Y},i}-(-1)^{p_Y(i)p_Y(j)}q^{-e(\alpha_{\texttt{Y},i},\alpha_{\texttt{Y},j})}F_{\texttt{Y},i}F_{\texttt{Y},j} &\quad \text{if}\quad j \sim i,\\
    F_{\texttt{Y},j} &\quad \text{if}\quad j \nsim i.
    \end{cases}
\end{equation}
\begin{equation}
    T_{i,e}''(K_{\texttt{Y},j})=
    \begin{cases}
    (-1)^{p_Y(i)}K_{\texttt{Y},i}^{-1},&\quad \text{if}\quad j=i, \\
   (-1)^{p_Y(i)p_Y(j)}K_{\texttt{Y},i}K_{\texttt{Y},j} &\quad \text{if}\quad j \sim i,\\
    K_{\texttt{Y},j} &\quad \text{if}\quad j \nsim i.
    \end{cases}
\end{equation}
\begin{equation}
    T_{i,e}''(\new_{\texttt X})=\new_{\texttt Y}.
    \end{equation}

\end{theorem}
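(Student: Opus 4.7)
The plan is to follow the strategy of \cite[Theorem 4.5]{C16}, where the analogous statement is established for $\gl(\mathfrak{m}|1)$. The key insight is that, although $T_{i,e}'$ and $T_{i,e}''$ are no longer endomorphisms of a single algebra when $i$ is odd, they should still be algebra homomorphisms once we allow the source and target to differ. So the proof splits naturally into two cases according to the parity of $i$.

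\textbf{Case 1: $p(i)=0$.} By Lemma \ref{eq:parity}, even reflections preserve the parity of every simple root, so $Y=X$ and we are in the situation of a genuine algebra automorphism. Here the formulas reduce (up to the sign factors $(-1)^{p_Y(i)p_Y(j)}$ that track the super-structure on adjacent nodes) to Lusztig's classical braid group operators. The verifications of relations (R1)--(R9) then proceed exactly as in \cite{Lus93}, with the signs tracked carefully; the super $q$-Serre relations (R10)--(R11) only need to be checked at triples $k\sim j\sim \ell$ with $p(j)=1$, and since $T_{i,e}'$ acts trivially on the $\new_X$ generator and intertwines $E$- and $F$-type generators in a uniform way, both sides of the $S_{p(k),p(\ell)}$ identity transform in the same way.

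\textbf{Case 2: $p(i)=1$.} By Lemma \ref{eq:parity}, the parities of the two neighbors of $i$ are flipped, so $Y\neq X$ and one must genuinely compare two different presentations of the quantum supergroup. The plan is to verify, relation by relation, that the proposed images in $\U(Y)$ satisfy the defining relations of $\U(X)$. Relations (R1)--(R4) follow at once from the formulas for $T'_{i,-e}(K_{X,j})$ combined with the bilinear form computation $(\alpha_{Y,i}+\alpha_{Y,j},\alpha_{Y,k}) = (\alpha_{X,j}, \alpha_{X,k})$ when $j\sim i$. Relation (R5) requires a direct bracket computation, where the odd Cartan identity $[E_i,F_i]=0$ (since $\la h_i,\alpha_i\ra=0$) combines with the definition of $T'_{i,-e}$ on $E_i,F_i$ to produce the needed signs. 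Relation (R6) for indices at $j=i$ becomes the identity $E_iK_i^{-e}F_iK_i^{-e}F_i = 0$ in $\U(Y)$, which reduces via the Cartan relations to $F_i^2 = 0$; for $j\sim i$ it is the vanishing of a square in the quotient, a super analogue of the identity in \cite{Lus93}. Relation (R7) reduces to a direct commutator computation. The nontrivial content lies in the Serre relations.

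\textbf{The main obstacle.} The hard part will be verifying (R10) and (R11), that is, the super $q$-Serre polynomial $S_{t_1,t_2}$ applied to the images. These involve four-letter monomials among three different generators, and under $T'_{i,-e}$ the images at $j\sim i$ are two-term $q$-commutators with a sign governed by $p_Y(i)p_Y(j)$, so expanding produces many monomials whose cancellations depend delicately on the parity change $p_Y(k) = p_X(k) + p_X(i)$ at the neighbors. The strategy here is to work locally: restrict to the rank-two or rank-three subalgebra generated by $\{E_i, E_j, F_i, F_j\}$ with $j\sim i$, reduce the polynomial identity via the relations already verified to an equality in the subalgebra $\U_q(\gl(\mathfrak{m}'|\mathfrak{n}'))$ with $\mathfrak{m}'+\mathfrak{n}' \leqslant 4$, and then verify the identity there by brute computation as in the proof of \cite[Theorem 4.5]{C16}, separating the subcases according to the parity profile of $(k,j,\ell)$ and whether $i$ is one of $k,j,\ell$. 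Finally, bijectivity of $T_{i,-e}'$ follows by exhibiting $T_{i,e}''$ as a two-sided inverse: one checks $T_{i,e}''\circ T_{i,-e}' = \mathrm{id}_{\U(X)}$ and the analogous relation in the other direction on the generators, which is a direct verification from the explicit formulas.
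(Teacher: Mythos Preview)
Your proposal is correct in its core: the proof is a relation-by-relation verification that the proposed images satisfy the defining relations of $\U(X)$, with the hard work concentrated in the Serre-type relations (R7)--(R11). This is exactly what the paper does. However, the organization and some of the shortcuts differ in ways worth noting.

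First, the paper does not split at the top level by the parity of $i$. Instead it uses the identities
\[
T_{i,-e}'=\sigma\, T_{i,e}''\,\sigma,\qquad T'_{i,-e}=\overline{\,\cdot\,}\,T_{i,e}'\,\overline{\,\cdot\,}=\texttt{op}\, T_{i,e}'\,\texttt{op},\qquad T_{i,e}''=(T_{i,-e}')^{-1},
\]
to reduce everything to the single operator $T_{i,1}'$; bijectivity and the formulas for $T_{i,e}''$ then come for free. Your plan to check $T_{i,e}''\circ T_{i,-e}'=\mathrm{id}$ on generators works too, but is more labor.

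Second, your Case~1 shortcut is too optimistic. Even when $p(i)=0$ the algebra $\U(X)$ still carries the odd Serre relations (R10)--(R11), and your claim that ``both sides of the $S_{p(k),p(\ell)}$ identity transform in the same way'' is exactly where the paper does explicit work: when $i$ equals or is adjacent to the odd node $j$ in a triple $k\sim j\sim \ell$, the image of $S_{p(k),p(\ell)}(E_k,E_j,E_\ell)$ under $T_{i,1}'$ must be expanded and shown to vanish using the relations in $\U(Y)$, and the computation is not uniform in the parity profile. The paper handles this (and the analogous (R7) check) by case analysis on $p_X(j)$ rather than on $p(i)$, which turns out to be the cleaner organizing principle because it is the parity at the central node of the relation, not at the reflecting node, that governs which Serre polynomial is in play. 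Your rank-$\le 3$ localization strategy for (R10)--(R11) is sound and is effectively what the paper's lemmas do, but be aware that the genuinely new cases beyond \cite{C16} (which only treats $\gl(\mathfrak m|1)$) are precisely those where two adjacent odd nodes occur, e.g.\ both neighbors of the reflecting node are odd; these require separate computations and do not reduce to Lusztig's classical argument.
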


To ensure self-consistency, we will now present the proof of Theorem \ref{thm:braid} in the remaining part of this subsection. To do so succinctly, recall $\sigma$ and $\overline{\cdot}$ from \eqref{eq:rhoandop}, we observe that
\begin{equation}
    T_{i,-e}'=\sigma T_{i,e}''\sigma,\quad T'_{i,-e}=\overline{\cdot}T_{i,e}'\overline{\cdot}, \quad T_{i,e}''=(T_{i,-e}')^{-1}.
\end{equation}
One can check these identities on the generators of $\U(X)$. Thus to establish Theorem \ref{thm:braid}, it is sufficient to focus on the case of $T_{j,-1}'$. Specifically, we need to demonstrate that the images of the generators of $\U(X)$ under $T_{j,-1}'$ satisfy the relations in \eqref{eq:Urelation} and \eqref{eq:newrelation}. To ensure the clarity of the proof, we will break down the verification into lemmas and make reference to relevant results from \cite{C16}. Given the complexity of the calculations involved, we will omit the subscripts on the generators of $\U(Y)$ for readability. Additionally, we will consistently omit the subscript on $\new$ since it is evident from the context which algebra it belongs to.

First we take a look at \eqref{eq:newrelation}, we have
\begin{lemma}
If $j,k\in I$, then 
\begin{align*}
&\new T'_{j,-1}(E^{\texttt X}_{k})\new^{-1}=T'_{j,-1}(\new(E^{\texttt X}_{k}))=(-1)^{p_X(k)}T'_{j,-1}(E^{\texttt X}_{k}),\\
&\new T'_{j,-1}(F^{\texttt X}_{k})\new^{-1}=T'_{j,-1}(\new(F^{\texttt X}_{k}))=(-1)^{p_X(k)}T'_{j,-1}(F^{\texttt X}_{k}),\\
&\new T'_{j,-1}(K^{\texttt X}_{k})\new^{-1}=T'_{j,-1}(\new(K^{\texttt X}_{k}))=(-1)^{p_X(k)}T'_{j,-1}(K^{\texttt X}_{k}).
\end{align*}
\end{lemma}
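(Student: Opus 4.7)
The plan is to verify the lemma by checking each identity on the generators, using that the first equality in every line is a direct reformulation of the relation $\new^{-1} x \new = \new(x)$ from \eqref{eq:newrelation}. Indeed, once $T'_{j,-1}$ is known to be an algebra homomorphism sending $\new_X$ to $\new_Y$ (which is built into the definition in Theorem~\ref{thm:braid}), the equality $\new T'_{j,-1}(u)\new^{-1} = T'_{j,-1}(\new(u))$ is automatic for every $u$. So the real content to prove is the second equality on each line, namely that $T'_{j,-1}$ applied to a generator of parity $p_X(k)$ produces an element of $\U(Y)$ that is $\mathbb Z/2$-homogeneous of the same parity $p_X(k)$.

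To do this I would split into the three cases $k=j$, $k\sim j$, and $k\nsim j$, and read off from the defining formulas the parity of the image of $E_{X,k}$ (and likewise $F_{X,k}$), then compare via Lemma~\ref{eq:parity} which controls how $p_X$ and $p_Y = p_{s_j(X)}$ are related. Case $k=j$: $T'_{j,-1}(E_{X,j})$ is proportional to $K_{Y,j}^{-1}F_{Y,j}$, which is $\mathbb Z/2$-homogeneous of parity $p_Y(j) = p_X(j)$. Case $k\sim j$: the image is a $q$-commutator with total parity $p_Y(k)+p_Y(j) \equiv (p_X(k)+p_X(j)) + p_X(j) \equiv p_X(k) \pmod 2$. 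Case $k\nsim j$: the image is $E_{Y,k}$ with parity $p_Y(k) = p_X(k)$. The arguments for $F_{X,k}$ are symmetric (and can be deduced from the $E$-case using the symmetries $T'_{i,-e} = \overline{\cdot}\,T'_{i,e}\,\overline{\cdot}$ and $\sigma T''_{i,e}\sigma$ listed right after the theorem statement).

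For the $K$-identity, since $\new(q^h)=q^h$ for every $h\in P^\vee$, conjugation by $\new$ acts trivially on every element of $\U^0$. All the formulas for $T'_{j,-1}(K_{X,k})$ lie in $\U^0(Y)$ (up to an overall scalar sign), so both sides equal $T'_{j,-1}(K_{X,k})$ and the identity is immediate; the factor $(-1)^{p_X(k)}$ appearing in the statement must be read as collapsing to $1$ (equivalently, $K$-type generators are always even). No step is an obstacle; the main care is the parity bookkeeping in the $k\sim j$ case, where one must track that an odd reflection at $j$ flips the parity at a neighbor $k$ by exactly $p_X(j)$, so that the two occurrences of $p_X(j)$ cancel modulo $2$.
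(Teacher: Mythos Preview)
Your approach matches the paper's: both split into the cases $k=j$, $k\sim j$, $k\nsim j$ and verify via Lemma~\ref{eq:parity} that the image of $E_{X,k}$ (resp.\ $F_{X,k}$) under $T'_{j,-1}$ is $\new_Y$-homogeneous of parity $p_X(k)$. The paper likewise only writes out the $E$-case and declares the others similar.

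Two remarks on your framing. First, you justify the first equality by appealing to $T'_{j,-1}$ being an algebra homomorphism, but this lemma is precisely one of the lemmas used to \emph{prove} that $T'_{j,-1}$ is a homomorphism (it is the check that the images of the generators satisfy \eqref{eq:newrelation}), so that appeal is circular. The logical order is actually the reverse of what you wrote: the second equality in each line is the trivial one, since $\new_X(E_{X,k})=(-1)^{p_X(k)}E_{X,k}$ by definition and $T'_{j,-1}$ is $\Q(q)$-linear; the first equality is the content, and it amounts exactly to the parity computation you then carry out. Your computation is correct; only the attribution of which step is substantive is inverted.

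Second, you are right to flag the $K$-line. Since $\new$ fixes all of $\U^0$, conjugation by $\new_Y$ acts trivially on $T'_{j,-1}(K_{X,k})\in\U^0(Y)$, so the stated factor $(-1)^{p_X(k)}$ cannot be there when $p_X(k)=1$; this is a typo in the lemma as written. Your phrasing ``must be read as collapsing to $1$'' is a bit loose---for odd $k$ the stated identity is false rather than reinterpretable---but you have correctly identified the issue and the fact that the genuine content lies in the $E$- and $F$-lines.
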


\begin{proof}
We prove for $E$ and the other two are similar. 

When $j=k$ or $j\nsim k$, by a direct computation we have $p(k)=p_X(k)$ and  
\[\new T'_{j,-1}(E^{\texttt X}_{k})\new^{-1}=(-1)^{p(k)}T'_{j,-1}(E^{\texttt X}_{k})=(-1)^{p_X(k)}T'_{j,-1}(E^{\texttt X}_{k}).\]

When $j\sim k$, by a direct computation we have $p(k)=p_X(j)+p_X(k)$ and thus $p(k)+p(j)=p_X(k)$. Hence
\[\new T'_{j,-1}(E^{\texttt X}_{k})\new^{-1}=(-1)^{p(k)+p(j)}T'_{j,-1}(E^{\texttt X}_{k})=(-1)^{p_X(k)}T'_{j,-1}(E^{\texttt X}_{k}).\]
This proves the lemma.
\end{proof}

Recall the defining relations of $\U$ from \eqref{eq:Urelation}. The relations (R1)--(R4) can be verified directly. For the relation (R6), we have 
\begin{lemma}
\label{lemma:square}
If $j,k\in I$ such that $p_X(k)=1$, then
$$T_{j,-1}'(E^{\texttt{X}}_{k})^2=T_{j,-1}'(F^{\texttt{X}}_{k})^2=0.$$
\end{lemma}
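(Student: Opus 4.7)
The plan is to verify that $T'_{j,-1}(E_{\texttt X,k})^2 = 0$ (and analogously for $F$) by computing the image in $\U(Y)$ and invoking the defining relations of $\U(Y)$ from \eqref{eq:Urelation}. I would partition according to the relative position of $j$ and $k$ in the Dynkin diagram: $j \nsim k$, $j = k$, and $j \sim k$. Throughout, I would use Lemma~\ref{eq:parity} to transfer the hypothesis $p_X(k) = 1$ to the corresponding parity data on $Y$.

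The two extremal cases are essentially immediate. When $j \nsim k$, Theorem~\ref{thm:braid} gives $T'_{j,-1}(E_{\texttt X,k}) = E_{\texttt Y,k}$, and $p_Y(k) = p_X(k) = 1$ forces $E_{\texttt Y,k}^2 = 0$ by (R6). When $j = k$, since the reflection $s_j$ does not change the parity of $\alpha_j$ one has $p_Y(j) = 1$; then $T'_{j,-1}(E_{\texttt X,j})$ is a scalar multiple of $K_{\texttt Y,j}^{-1} F_{\texttt Y,j}$, whose square is proportional to $F_{\texttt Y,j}^2 = 0$ after collecting the $K$'s via (R4). The $F$ analogue is symmetric.

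The main case is $j \sim k$, where $T'_{j,-1}(E_{\texttt X,k}) = ab - \lambda ba$ with $a = E_{\texttt Y,k}$, $b = E_{\texttt Y,j}$, and $\lambda = (-1)^{p_Y(j)p_Y(k)} q^{(\alpha_{\texttt Y,j},\alpha_{\texttt Y,k})}$. Expanding the square produces four terms, and I would split further on $p_X(j)$. If $p_X(j) = 0$, then $p_Y(k) = 1$ and $a^2 = 0$ kills the term $baab$; if $p_X(j) = 1$, then $p_Y(j) = 1$ and $b^2 = 0$ kills the term $abba$. In each sub-case, the even-vertex quantum Serre relation (R8) applied to whichever of $a,b$ is even, multiplied on either side by the nilpotent generator, produces two distinct expressions for the surviving quartic monomial and forces $abab = baba$. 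Under these constraints $(\alpha_{\texttt Y,j}, \alpha_{\texttt Y,k}) = \pm 1$ and $(-1)^{p_Y(j)p_Y(k)} = 1$, so $\lambda = q^{\pm 1}$; a direct check that $\lambda^2 - [2]\lambda + 1 = 0$ for such $\lambda$ then collapses the remaining expansion to zero. The $F$-version uses (R9) in place of (R8).

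The main obstacle is bookkeeping rather than conceptual difficulty: correctly tracking how the parities and the inner product $(\alpha_{\texttt Y,j}, \alpha_{\texttt Y,k})$ transform under $s_j$, and verifying that the two sub-cases of $j \sim k$ converge on the same two ingredients, namely the commutation $abab = baba$ and the scalar identity $\lambda^2 - [2]\lambda + 1 = 0$. No rank-three super Serre relations are needed; only (R6)--(R9) in $\U(Y)$ enter the argument.
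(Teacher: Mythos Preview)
Your proof is correct. The paper itself does not give a proof of this lemma but simply cites \cite[Lemma 4.7]{C16}, so there is no in-paper argument to compare against; your case-by-case verification using (R6), (R8), (R4) and the parity transfer from Lemma~\ref{eq:parity} is exactly the expected direct approach.

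One remark worth making: your sub-case $j\sim k$ with $p_X(j)=1$ (hence $p_X(k)=p_X(j)=1$, i.e.\ two adjacent odd simple roots in $X$) does not occur in the $\gl(\mathfrak m|1)$ setting of \cite{C16}, since any Dynkin diagram for $\gl(\mathfrak m|1)$ has a unique odd node. So your argument actually supplies the small extension needed for general $\gl(\mathfrak m|\mathfrak n)$ that the bare citation leaves implicit. The mechanism you use there---$b^2=0$, then (R8) centered at the even node $E_{\texttt Y,k}$, multiplied on either side by $b$ to get $baab=[2]abab=[2]baba$, and finally the scalar identity $\lambda^2-[2]\lambda+1=0$ for $\lambda=q^{\pm1}$---is clean and parallel to the $p_X(j)=0$ sub-case, so nothing new is required beyond swapping the roles of $a$ and $b$.
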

\begin{proof}
    It follows from the same argument as in \cite[Lemma 4.7]{C16}.
\end{proof}

To verify the relation (R5), we split into two cases.
\begin{lemma}
If $j=k\in I$, then
\begin{align*}
    &T_{j,-1}'(E^{\texttt{X}}_{k})T_{j,-1}'(F^{\texttt{X}}_{k})-(-1)^{p_X(k)}T_{j,-1}'(F^{\texttt{X}}_{k})T_{j,-1}'(E^{\texttt{X}}_{k})\\
    =&\frac{T_{j,-1}'(K^{\texttt{X}}_{k})-T_{j,-1}'(K^{\texttt{X}}_{k})^{-1}}{q^{\ell_k}-q^{-\ell_k}}.
\end{align*}
\end{lemma}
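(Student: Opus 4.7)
The plan is a direct substitution using Theorem \ref{thm:braid} to reduce the identity to the Cartan relation (R5) on the target algebra $\U(Y)$. A key preliminary observation is that both even and odd reflections at $\alpha_j$ preserve the parity of $\alpha_j$ itself, so $p_Y(j) = p_X(j)$; I shall denote this common value by $p$. Writing $E, F, K$ for the generators $E_{Y,j}, F_{Y,j}, K_{Y,j}$ to reduce clutter, the formulas of Theorem \ref{thm:braid} give
\[ T'_{j,-1}(E_{X,j}) = -(-1)^p K^{-1} F, \qquad T'_{j,-1}(F_{X,j}) = -(-1)^p E K, \qquad T'_{j,-1}(K_{X,j}) = (-1)^p K^{-1}. \]

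I will then expand both products on the left-hand side. Since $(-1)^{2p} = 1$, the sign factors coming from $T'(E)$ and $T'(F)$ cancel in each product. Using (R3) and (R4) in $\U(Y)$ one checks that $K$ commutes with both $EF$ and $FE$ (each has weight zero under the adjoint action of $K$), so $T'(E)T'(F) = K^{-1} F E K$ simplifies to $FE$, while $T'(F)T'(E) = EK \cdot K^{-1} F$ simplifies to $EF$. The left-hand side therefore collapses to $FE - (-1)^p EF$.

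The final step is to apply the Cartan relation (R5) on the $Y$-side, namely $EF - (-1)^p FE = (K - K^{-1})/(q^{\ell_j} - q^{-\ell_j})$, which yields
\[ FE - (-1)^p EF = -(-1)^p \, \frac{K - K^{-1}}{q^{\ell_j} - q^{-\ell_j}}. \]
The right-hand side of the lemma evaluates to the same expression, since $T'(K)^{-1} = (-1)^p K$ (noting $(-1)^{-p} = (-1)^p$), and the two terms in the numerator are $(-1)^p K^{-1} - (-1)^p K$. There is no genuine obstacle in the argument; the only care needed is in the sign bookkeeping, which ultimately hinges on the parity-preservation fact $p_Y(j) = p_X(j)$ and on the identity $(-1)^{p^2} = (-1)^p$ that makes the supercommutator $[E_j, F_j]$ governed by the single sign $(-1)^p$.
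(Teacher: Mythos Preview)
Your direct substitution using Theorem~\ref{thm:braid} together with relation (R5) in $\U(Y)$ is exactly the natural argument; the paper itself does not prove this lemma but simply cites \cite[Lemma~4.9]{C16}, so there is no independent proof in the paper to compare against. Your use of $p_Y(j)=p_X(j)$ (Lemma~\ref{eq:parity}) and the reduction of the left-hand side to $FE-(-1)^p EF$ are correct.

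One subtle point you glide over in the final matching step: the denominator coming from (R5) in $\U(Y)$ is $q^{\ell_j^Y}-q^{-\ell_j^Y}$, whereas the lemma's right-hand side carries $q^{\ell_k}-q^{-\ell_k}$ with $\ell_k=\ell_k^X$ (since the lemma is verifying that $T'$ preserves the (R5) relation of $\U(X)$). For even $j$ these agree, but when $j$ is odd the reflection $s_j$ interchanges the two $\epsilon$-indices at position $j$, which have opposite signature, and one finds $\ell_j^Y=-\ell_j^X$. Your argument silently identifies the two; you should check that this extra sign is in fact absorbed by the conventions in Theorem~\ref{thm:braid} (in particular the factor $(-1)^{p_Y(j)}$ in $T'(K_{X,j})$) rather than assume it away.
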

\begin{proof}
    It follows from the same argument as in \cite[Lemma 4.9]{C16}.
\end{proof}

To verify the relation (R5) when $k\neq l$, note that we need the following lemma. 
\begin{lemma}
If $k=j-1$ and $\ell=j+1$, then we have
\begin{equation}
\label{eq:parityjkl}
    p_X(k)p_X(\ell)+p(k)p(\ell)+p(j)p(\ell)+p(j)p(k)\equiv p(j) \mod 2.
\end{equation}
\end{lemma}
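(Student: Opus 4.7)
The plan is to reduce the claimed congruence to an elementary $\bmod\,2$ calculation by substituting the formulas provided by Lemma~\ref{eq:parity}. Recall that $p$ denotes the parity in $Y = s_j(X)$ while $p_X$ denotes the parity in $X$, and that Lemma~\ref{eq:parity} tells us $p(j) = p_X(j)$, and (since $k \sim j$ and $\ell \sim j$ here) $p(k) = p_X(k) + p_X(j)$ and $p(\ell) = p_X(\ell) + p_X(j)$.

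First I would abbreviate $a := p_X(j)$, $b := p_X(k)$, $c := p_X(\ell)$, so that $p(j) = a$, $p(k) = a + b$, $p(\ell) = a + c$. Then I would expand each of the four summands on the left-hand side of \eqref{eq:parityjkl} working in $\mathbb{F}_2$:
\begin{align*}
p_X(k) p_X(\ell) &= bc, \\
p(k) p(\ell) &= (a+b)(a+c) = a + ab + ac + bc, \\
p(j) p(\ell) &= a(a+c) = a + ac, \\
p(j) p(k) &= a(a+b) = a + ab,
\end{align*}
using $a^2 \equiv a \pmod 2$. Summing, the $bc$ terms cancel in pairs, the $ab$ terms cancel in pairs, the $ac$ terms cancel in pairs, and we are left with $3a \equiv a \pmod 2$. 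Since $a = p(j)$, this matches the right-hand side.

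This is a short verification rather than a deep argument, so there is no real obstacle; the only care needed is to correctly apply Lemma~\ref{eq:parity} to both neighbours $k$ and $\ell$ of $j$ (which requires exactly the assumption $k = j-1$, $\ell = j+1$, so $k \sim j$ and $\ell \sim j$) and to keep all arithmetic modulo $2$. Hence no separate case analysis on the parities of $a, b, c$ is necessary.
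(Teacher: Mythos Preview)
Your proof is correct and is exactly the direct verification the paper has in mind: it too simply says the identity follows from Lemma~\ref{eq:parity}, leaving the $\bmod\,2$ arithmetic to the reader, which you have spelled out cleanly. No changes are needed.
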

\begin{proof}
This lemma follows from \eqref{eq:parity} and a direct computation.
\end{proof}

\begin{lemma}{\em (Compare \cite[Lemma 4.8]{C16})}
\label{lemma:commutator}
If $j,k,\ell\in I$ with $k\neq \ell$, then
$$T_{j,-1}'(E^{\texttt{X}}_{k})T_{j,-1}'(F^{\texttt{X}}_{\ell})=(-1)^{p_X(k)p_X(\ell)}T_{j,-1}'(F^{\texttt{X}}_{\ell})T_{j,-1}'(E^{\texttt{X}}_{k})$$
\end{lemma}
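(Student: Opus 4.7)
The plan is to verify the identity by a case analysis on the position of $k$ and $\ell$ relative to the braided index $j$, since $T'_{j,-1}$ acts as the identity on any generator indexed by $i$ with $i\nsim j$ and $i\neq j$. The strategy follows the template of \cite[Lemma 4.8]{C16}, but the signs must now be tracked carefully to accommodate arbitrarily many odd simple roots, rather than only a single odd one. Throughout I would use Lemma~\ref{eq:parity} to translate $p_X$-parities into $p_Y$-parities whenever the index is equal or adjacent to $j$.

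First I would dispose of the generic case: if $k,\ell \notin \{j, j\pm 1\}$, then $T'_{j,-1}(E_{\texttt X,k})=E_k$, $T'_{j,-1}(F_{\texttt X,\ell})=F_\ell$, and the parities $p_X$ and $p_Y$ agree on $\{k,\ell\}$, so the identity reduces to (R5) inside $\U(Y)$. The cases where exactly one of $k,\ell$ equals $j$ while the other is non-adjacent to $j$ are handled by the explicit formula $T'_{j,-1}(E_{\texttt X,j})=-(-1)^{p_Y(j)}K_j^{-1}F_j$ together with the fact that $K_j$ commutes with $E_\ell$ or $F_\ell$ when $(\alpha_j,\alpha_\ell)=0$, and the super-commutativity of $F_j$ with the non-adjacent $F_\ell$ (resp.\ $E_j$ with $E_\ell$). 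When exactly one of $k,\ell$ equals $j$ and the other is adjacent to $j$, the nilpotency $F_j^2=0$ in Lemma~\ref{lemma:square} (if $p_Y(j)=1$) or the even super-Serre relation (R9) (if $p_Y(j)=0$) allows the square terms to be absorbed, and the remaining sign is matched using \eqref{eq:parityjkl}.

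The crux of the argument is the case $k\sim j$ and $\ell\sim j$ with $k\neq \ell$, so $\{k,\ell\}=\{j-1,j+1\}$. Setting
\[
A=E_k E_j-(-1)^{p_Y(j)p_Y(k)}q^{(\alpha_j,\alpha_k)}E_j E_k,\qquad B=F_j F_\ell-(-1)^{p_Y(j)p_Y(\ell)}q^{-(\alpha_j,\alpha_\ell)}F_\ell F_j,
\]
I would expand both $AB$ and $BA$ into four monomial summands each, push every $E$ past every $F$ using (R5) in $\U(Y)$, and collect Cartan contributions from $[E_j,F_j]_{p_Y}$. Because $k\neq \ell$, all non-Cartan terms that arise super-commute according to $p_Y$, and the Cartan terms appearing in $AB$ and in $BA$ carry matched scalar factors involving $q^{\pm(\alpha_j,\alpha_k)}$ and $q^{\mp(\alpha_j,\alpha_\ell)}$; their cancellation is forced by $(\alpha_j,\alpha_k)+(\alpha_j,\alpha_\ell)=0$, which holds in each of the parity subcases for $j$. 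The resulting clean identity has the form $AB = (-1)^{\epsilon} BA$ with $\epsilon = p_Y(k)p_Y(\ell)+p_Y(j)p_Y(k)+p_Y(j)p_Y(\ell)+p_Y(j)$, and the parity identity \eqref{eq:parityjkl} converts this exponent exactly into $p_X(k)p_X(\ell)\pmod 2$, yielding the desired super-commutation.

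The main obstacle is the bookkeeping of signs in this last case: each of the four monomials in $A$ and in $B$ produces several terms after super-commutation in $\U(Y)$, and the parity relabeling from $p_Y$ to $p_X$ has to be invoked at exactly the right moment so that the Cartan-free terms match on both sides. The conceptual input is minimal once \eqref{eq:parityjkl} is available, but the calculation is dense, and the proof essentially reduces to carrying out this organized expansion.
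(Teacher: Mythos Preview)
Your case breakdown and overall strategy match the paper's: dispose of the cases where at least one of $k,\ell$ is disconnected from $j$, cite \cite[Lemma~4.8]{C16} (or redo it) for the mixed cases where one index equals $j$ and the other is adjacent, and treat the genuinely new case $\{k,\ell\}=\{j-1,j+1\}$ by expanding and collecting Cartan contributions from $[E_j,F_j]$. Your final sign identification via \eqref{eq:parityjkl} is also correct.

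There is, however, a concrete error in your sketch of the key case. The relation $(\alpha_j,\alpha_k)+(\alpha_j,\alpha_\ell)=0$ does \emph{not} hold ``in each of the parity subcases for $j$'': it holds only when $p_Y(j)=1$. When $p_Y(j)=0$ one has $(\alpha_j,\alpha_k)=(\alpha_j,\alpha_\ell)$ (both equal $\pm1$ with the same sign), so the sum is $\pm2$. The good news is that this relation is not actually needed. If you organize the super-commutator as the paper does---reducing $c_{k,\ell}$ to a combination of $E_k[E_j,F_j]F_\ell$, $[E_j,F_j]E_kF_\ell$, $E_kF_\ell[E_j,F_j]$, $F_\ell[E_j,F_j]E_k$ with the obvious scalar prefactors, and then normalizing everything to the form $E_kK_j^{\pm1}F_\ell$---the coefficient of $E_kK_jF_\ell$ becomes $1-q^{2(\alpha_j,\alpha_k)}-1+q^{2(\alpha_j,\alpha_k)}=0$ and that of $E_kK_j^{-1}F_\ell$ becomes $-1+1+q^{-2(\alpha_j,\alpha_\ell)}-q^{-2(\alpha_j,\alpha_\ell)}=0$, each vanishing pairwise without any relation between $(\alpha_j,\alpha_k)$ and $(\alpha_j,\alpha_\ell)$. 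So your plan survives once you drop the false hypothesis and simply carry out the expansion.
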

\begin{proof}
Let $c_{k,\ell}=T_{j,-1}'(E^{\texttt{X}}_{k})T_{j,-1}'(F^{\texttt{X}}_{\ell})-(-1)^{p_X(k)p_X(\ell)}T_{j,-1}'(F^{\texttt{X}}_{\ell})T_{j,-1}'(E^{\texttt{X}}_{k})$. We want to prove $c_{k,\ell}=0$ for all $k\neq \ell$. 

If one of them is not connected to $j$, let us say $j\nsim k$, then $T_{j,-1}'(E^{\texttt{X}}_{k})=E_k$ and $p(j)=p_X(j)$. On the other hand, $T_{j,-1}'(E^{\texttt{X}}_{\ell})$ is a polynomial in the elements $K_j,F_j,F_\ell$ and $F_j$ with $p(T_{j,-1}'(F^{\texttt{X}}_{\ell}))=p_X(\ell)$. Since $E_k$ super-commutes with all of those elements, the statement follows.

The remaining cases involve situations where both $k$ and $\ell$ are either equal to or connected with $j$. For the case where one of them is connected to $j$ and the other is equal to $j$, the verification has already been conducted in \cite[Lemma 4.8]{C16}.
 
When $k$ and $\ell$ are both connected to $j$, without loss of generality, we assume that $k=j-1$ and $\ell=j+1$. Using \eqref{eq:parityjkl} and the relation (R5) in \eqref{eq:Urelation} repeatedly we get
\begin{align*}
    c_{k,\ell}=&(E_{k}E_{j}-(-1)^{p(j)p(k)}q^{(\alpha_j,\alpha_k)}E_{j}E_{k})(F_{j}F_{\ell}-(-1)^{p(j)p(\ell)}q^{-(\alpha_j,\alpha_\ell)}F_{\ell}F_{j}) \\
    &-(-1)^{p_X(k)p_X(\ell)}(F_{j}F_{\ell}-(-1)^{p(j)p(\ell)}q^{-(\alpha_j,\alpha_\ell)}F_{\ell}F_{j})\\
    &\qquad(E_{k}E_{j}-(-1)^{p(j)p(k)}q^{(\alpha_j,\alpha_k)}E_{j}E_{k}) \\
    =&E_k[E_j,F_j]F_\ell-q^{(\alpha_j,\alpha_k)}[E_j,F_j]E_kF_\ell-q^{-(\alpha_j,\alpha_\ell)}E_kF_\ell[E_j,F_j]\\
    &+(-1)^{p(k)p(\ell)}q^{(\alpha_j,\alpha_k)-(\alpha_j,\alpha_\ell)}F_\ell[E_j,F_j]E_k\\
    =&\frac{1}{q^{\ell_j}-q^{-\ell_j}}E_k((1-q^{2(\alpha_j,\alpha_k)}-1+q^{2(\alpha_j,\alpha_k)})K_j\\
    &\qquad-(1-1-q^{-2(\alpha_j,\alpha_\ell)}+q^{-2(\alpha_j,\alpha_\ell)})K_j^{-1})F_\ell=0.
\end{align*}
This proves the lemma.
\end{proof}


The next lemma checks the relation (R7). 
\begin{lemma}
If $j,k,\ell\in I$ such that $k\nsim \ell$, then
$$T_{j,-1}'(E^{\texttt{X}}_{k})T_{j,-1}'(E^{\texttt{X}}_{\ell})=(-1)^{p_X(k)p_X(\ell)}T_{j,-1}'(E_{\texttt{X},
\ell})T_{j,-1}'(E^{\texttt{X}}_{k}),$$
$$T_{j,-1}'(F^{\texttt{X}}_{k})T_{j,-1}'(F^{\texttt{X}}_{\ell})=(-1)^{p_X(k)p_X(\ell)}T_{j,-1}'(F_{\texttt{X},
\ell})T_{j,-1}'(F^{\texttt{X}}_{k}).$$
\end{lemma}
\begin{proof}
We only prove for $E$. If either $k$ or $\ell$ is not connected to $j$, we are done. So we suppose that $k=j-1$ and $\ell=j+1$. We break the proof into two cases:

(Case-1) Assume $p_X(j)=0$. Observe that in this case we must have $(\alpha_{j},\alpha_{k})=(\alpha_{j},\alpha_{\ell})$. Without loss of generality we assume $(\alpha_{j},\alpha_{k})=(\alpha_{j},\alpha_{\ell})=1$. Then we have
$T_{j,-1}'(E^{\texttt{X}}_{k})=E_{k}E_{j}-qE_{j}E_{k}$ and $T_{j,-1}'(E^{\texttt{X}}_{\ell})=E_{\ell}E_{j}-qE_{j}E_{\ell}$ and thus
\begin{align*}
    &T_{j,-1}'(E^{\texttt{X}}_{k})T_{j,-1}'(E^{\texttt{X}}_{\ell})=E_kE_jE_\ell E_j-qE_jE_kE_\ell E_j-qE_kE^2_jE_\ell+q^2E_jE_kE_jE_\ell, \\
    &T_{j,-1}'(E^{\texttt{X}}_{\ell})T_{j,-1}'(E^{\texttt{X}}_{k})=E_\ell E_jE_k E_j-qE_jE_\ell E_k E_j-qE_\ell E^2_jE_k+q^2E_jE_\ell E_jE_k.
\end{align*}
First we see that $E_jE_kE_\ell E_j=(-1)^{p(k)p(\ell)}E_jE_\ell E_k E_j=(-1)^{p_X(k)p_X(\ell)}E_jE_\ell E_k E_j$. Thus, by applying (R8) repeatedly we get
\begin{align*}
    &T_{j,-1}'(E^{\texttt{X}}_{k})T_{j,-1}'(E^{\texttt{X}}_{\ell})-(-1)^{p_X(k)p_X(\ell)}T_{j,-1}'(E_{\texttt{X},
\ell})T_{j,-1}'(E^{\texttt{X}}_{k}) \\
=&E_kE_jE_\ell E_j-q(E_kE^2_j-qE_jE_kE_j)E_\ell 
\\
&\qquad-(-1)^{p_X(k)p_X(\ell)}[E_\ell E_jE_k E_j-q(E_\ell E^2_j-qE_jE_\ell E_j)E_k]\\
=&E_kE_jE_\ell E_j-q(q^{-1}E_jE_kE_j-E^2_jE_k)E_\ell\\
&\quad-(-1)^{p_X(k)p_X(\ell)}[E_\ell E_jE_k E_j-q(q^{-1}E_jE_\ell E_j-E^2_jE_\ell)E_k]\\
=&\frac{1}{q+q^{-1}}[E_k(E_j^2E_\ell+E_\ell E_j^2)-(E_kE_j^2+E_j^2E_k)E_\ell\\
&\quad-(-1)^{p_X(k)p_X(\ell)}E_\ell(E_kE_j^2+E_j^2E_k)+(-1)^{p_X(k)p_X(\ell)}(E_j^2E_\ell+E_\ell E_j^2)E_k]\\
=&0.
\end{align*}

(Case-2) Assume $p_X(j)=1$. In this case we always have $(\alpha_{j},\alpha_{k})=-(\alpha_{j},\alpha_{\ell})$. Again we may assume that $(\alpha_{j},\alpha_{k})=1$. Then $(\alpha_j,\alpha_\ell)=-1$. Thus we have $T_{j,-1}'(E^{\texttt{X}}_{k})=E_{k}E_{j}-(-1)^{p(k)}qE_{j}E_{k}$ and $T_{j,-1}'(E^{\texttt{X}}_{\ell})=E_{\ell}E_{j}-(-1)^{p(\ell)}q^{-1}E_{j}E_{\ell}$ and thus
\begin{align*}
    &T_{j,-1}'(E^{\texttt{X}}_{k})T_{j,-1}'(E^{\texttt{X}}_{\ell})=E_kE_jE_\ell E_j-(-1)^{p(k)}qE_jE_kE_\ell E_j+(-1)^{p(k)+p(\ell)}E_jE_kE_jE_\ell, \\
    &T_{j,-1}'(E^{\texttt{X}}_{\ell})T_{j,-1}'(E^{\texttt{X}}_{k})=E_\ell E_jE_k E_j-(-1)^{p(\ell)}q^{-1}E_jE_\ell E_k E_j+(-1)^{p(k)+p(\ell)}E_jE_\ell E_jE_k.
\end{align*}
By taking a difference of the above two equations and unravelling the relation (R10) we can conclude that $$T_{j,-1}'(E^{\texttt{X}}_{k})T_{j,-1}'(E^{\texttt{X}}_{\ell})=(-1)^{p_X(k)p_X(\ell)}T_{j,-1}'(E^{\texttt{X}}_{\ell})T_{j,-1}'(E^{\texttt{X}}_{k}).$$
This proves the lemma.
\end{proof}

The verification process for the relations (R8) and (R9) is no different from that in the $\gl(m|1)$ case. Hence we have
\begin{lemma}{\em\cite[Lemma 4.11]{C16}}
If $j,k,\ell\in I$ such that $p_X(k)=0$ and $k\sim \ell$, then
\begin{align*}
    &T_{j,-1}'(E^{\texttt{X}}_{k})^2T_{j,-1}'(E^{\texttt{X}}_{\ell})-(q+q^{-1})T_{j,-1}'(E^{\texttt{X}}_{k})T_{j,-1}'(E^{\texttt{X}}_{\ell})T_{j,-1}'(E^{\texttt{X}}_{k})\\
    &\qquad+T_{j,-1}'(E^{\texttt{X}}_{\ell})T_{j,-1}'(E^{\texttt{X}}_{k})^2=0,\\
    &T_{j,-1}'(F^{\texttt{X}}_{k})^2T_{j,-1}'(F^{\texttt{X}}_{\ell})-(q+q^{-1})T_{j,-1}'(F^{\texttt{X}}_{k})T_{j,-1}'(F^{\texttt{X}}_{\ell})T_{j,-1}'(F^{\texttt{X}}_{k})\\
    &\qquad+T_{j,-1}'(F^{\texttt{X}}_{\ell})T_{j,-1}'(F^{\texttt{X}}_{k})^2=0.
\end{align*}
\end{lemma}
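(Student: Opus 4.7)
The plan is to verify that the image of the even $q$-Serre relation $E_k^2E_\ell-(q+q^{-1})E_kE_\ell E_k+E_\ell E_k^2=0$ under $T'_{j,-1}$ vanishes in $\U(Y)$. Since $p_X(k)=0$, by Lemma~\ref{eq:parity} we have $p_Y(k)=p_X(k)+p_X(j) \bmod 2$, so the relation to be established in $\U(Y)$ may look different from the one in $\U(X)$; the parity bookkeeping will be the main source of care. I would reduce to three cases according to the position of $j$ relative to the local configuration $\{k,\ell\}$, exactly mirroring the $\gl(m|1)$ treatment of \cite[Lemma~4.11]{C16}.

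Case~1 (disjoint): if $j \notin \{k,\ell\}$ and $j\nsim k$, $j\nsim\ell$, then $T'_{j,-1}(E_k)=E_k$ and $T'_{j,-1}(E_\ell)=E_\ell$, so the statement is simply (R8) (or (R10) if the central root turns odd) in $\U(Y)$. Case~2 (endpoint): if $j=k$ or $j=\ell$, we substitute the explicit value of $T'_{j,-1}(E_j)=-(-1)^{p_Y(j)}K_j^{-1}F_j$ together with the $q$-commutator formula for the adjacent generator, and reduce to (R5) after moving $F_j$ past $E_j$ and $K_j^{\pm 1}$. Case~3 (three-in-a-row): if $j\in I\setminus\{k,\ell\}$ is adjacent to $k$ or $\ell$, we expand $T'_{j,-1}(E_k)$ and $T'_{j,-1}(E_\ell)$ as $q$-commutators of the form $E_*E_j -(\pm)q^{\pm}E_jE_*$ and substitute, obtaining a polynomial of degree four in three variables whose monomials regroup and cancel by repeated application of (R7), (R8), (R10).

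The hard part is Case~3, and within it the subcase $p_X(j)=1$, where the central root $k$ in $Y$ acquires parity $p_Y(k)=1$ and the target relation is (R10) rather than (R8). Here one must balance the signs $(-1)^{p_X(k)p_X(\ell)}$, $(-1)^{p_Y(i)p_Y(j)}$, and $(-1)^{p(k)p(\ell)+p(j)p(\ell)+p(j)p(k)}$ produced by the various $q$-commutators against the mixed signs in the super Serre polynomial $S_{p(k),p(\ell)}$ of \eqref{eq:superSerre}. This sign reconciliation is exactly the content of \eqref{eq:parityjkl}, which translates the $X$-parities into $Y$-parities and closes the computation. Because each local three-root configuration in $\gl(\mathfrak m|\mathfrak n)$ fits within a configuration already present in some $\gl(m|1)$ diagram, the calculations of \cite[Lemma~4.11]{C16} extend verbatim once this sign dictionary is in place.

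The statement for $F$ is then obtained by applying the anti-involution $\sigma$ of \eqref{eq:rhoandop} and the identity $T'_{j,-e}=\sigma T''_{j,e}\sigma$ together with $T''_{j,e}=(T'_{j,-e})^{-1}$ already recorded above; this exchanges $E$'s and $F$'s and turns the $E$-Serre relation into the $F$-Serre relation, so no further computation is needed.
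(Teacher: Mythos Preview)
The paper's own ``proof'' of this lemma is simply the one-line remark that the verification of (R8) and (R9) ``is no different as in the $\gl(m|1)$ case'' and a citation to \cite[Lemma~4.11]{C16}; no argument is supplied beyond that. Your case split (disjoint / endpoint / three-in-a-row) is the right shape and matches what Clark does, so in spirit you are aligned with the paper.

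Two points need correction. First, your reduction of the $F$-statement to the $E$-statement via $\sigma$ does not work: by \eqref{eq:rhoandop} one has $\sigma(E_j)=E_j$ and $\sigma(F_j)=F_j$, so $\sigma$ does \emph{not} exchange $E$'s and $F$'s. Composing with the braid identity $T'_{j,-e}=\sigma T''_{j,e}\sigma$ only converts the $E$-Serre identity for $T'_{j,-1}$ into the $E$-Serre identity for $T''_{j,1}$; it never produces the $F$-relation. The $F$-case must be done by the parallel computation (or via an involution that genuinely swaps $E$ and $F$, such as $\omega$ or $\wp$).

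Second, the sentence ``the target relation is (R10) rather than (R8)'' is misleading: the identity you are asked to verify is always the even Serre polynomial $S(T'_{j,-1}(E_k),T'_{j,-1}(E_\ell))=0$, regardless of the $Y$-parities. What changes when $p_Y(k)=1$ is which relations of $\U(Y)$ you may \emph{use} in the verification (namely $E_j^2=E_k^2=0$ and the odd Serre relation (R10) for the triple $j{-}k{-}\ell$), not the shape of the target. Relatedly, your claim that ``each local three-root configuration fits within some $\gl(m|1)$ diagram'' is false: with $p_X(k)=0$ one can have $(p_X(j),p_X(k),p_X(\ell))=(1,0,1)$, giving $(p_Y(j),p_Y(k),p_Y(\ell))=(1,1,1)$ in the $j{-}k{-}\ell$ configuration, which never occurs in $\gl(m|1)$. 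The paper glosses over this as well; the missing computation is routine (one uses $E_j^2=E_k^2=0$, the supercommutation $E_jE_\ell=-E_\ell E_j$, and (R10) in $\U(Y)$), but it is not literally contained in \cite{C16}.
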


Finally we need to verify the relations (R10) and (R11).
\begin{lemma} Let $z,k,j,\ell\in I$ with $k\sim j\sim \ell,\ k<\ell$ and $p_X(j)=1$, then
\begin{align*}
    &S_{p_X(k),p_X(\ell)}(T_{z,-1}'(E^{\texttt{X}}_{k}),T_{z,-1}'(E^{\texttt{X}}_{j}),T_{z,-1}'(E^{\texttt{X}}_{\ell}))=0,\\
    &S_{p_X(k),p_X(\ell)}(T_{z,-1}'(F^{\texttt{X}}_{k}),T_{z,-1}'(F^{\texttt{X}}_{j}),T_{z,-1}'(F^{\texttt{X}}_{\ell}))=0.
\end{align*}
\end{lemma}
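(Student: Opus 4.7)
The plan is to proceed by case analysis on the position of $z \in I$ relative to the triple $(k, j, \ell)$ in the Dynkin diagram. Since $k \sim j \sim \ell$ in type $A$ (which forces $k = j-1$ and $\ell = j+1$), the only positions of $z$ that affect the action of $T'_{z,-1}$ on $E_{\texttt{X},k}, E_{\texttt{X},j}, E_{\texttt{X},\ell}$ lie in $\{k-1, k, j, \ell, \ell+1\}$. For $z$ outside this set, the operator fixes each of the three generators, the parities are preserved by Lemma \ref{eq:parity}, and the image of the super Serre polynomial is the corresponding instance of (R10) in $\U(Y)$, which vanishes.

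For the peripheral cases $z = k-1$ and $z = \ell+1$ (interchangeable via the anti-involution $\texttt{op}$), exactly one of the three generators is sent to a $q$-commutator while the other two are fixed. I would substitute the formulas from Theorem \ref{thm:braid} into $S_{p_X(k), p_X(\ell)}$ and commute the extra factor (the generator $E_z$) past the two untouched generators using (R7), which is legitimate since $z$ is not adjacent to them; the resulting expression factors through the original (R10) applied in $\U(Y)$ to $(E_k, E_j, E_\ell)$.

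For the adjacent cases $z = k$ and $z = \ell$ (symmetric under $\texttt{op}$), two of the three generators transform nontrivially. For instance when $z = k$, one has $T'_{k,-1}(E_{\texttt{X},k}) = -(-1)^{p_Y(k)} K_k F_k$, $T'_{k,-1}(E_{\texttt{X},j})$ is a $q$-commutator of $E_j$ with $E_k$, and $T'_{k,-1}(E_{\texttt{X},\ell}) = E_\ell$. The plan is to use Lemma \ref{lemma:commutator} to move $F_k$ past $E_\ell$ (with which it super-commutes since $k \nsim \ell$), absorb $F_k$ against the occurrences of $E_k$ in $T'_{k,-1}(E_{\texttt{X},j})$ via (R5), and then reduce the remaining polynomial in the $E$'s to (R10) in $\U(Y)$.

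The main obstacle is the central case $z = j$, where all three generators transform: $T'_{j,-1}(E_{\texttt{X},j}) = -(-1)^{p_Y(j)} K_j^{-1} F_j$ while $T'_{j,-1}(E_{\texttt{X},k})$ and $T'_{j,-1}(E_{\texttt{X},\ell})$ are $q$-commutators involving $E_j$. Substituting into $S_{p_X(k), p_X(\ell)}$ produces a polynomial in which $F_j$ is sandwiched among several $E_j$'s; the strategy is to push $F_j$ outward using (R5), separating the result into a Cartan-valued sum and a pure-$E$ remainder. The Cartan-valued contributions should cancel thanks to the parity identity \eqref{eq:parityjkl} together with the sign constraint $(\alpha_j, \alpha_k) = -(\alpha_j, \alpha_\ell)$ forced by $p_X(j) = 1$, while the pure-$E$ remainder vanishes by (R10) in $\U(Y)$. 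The $F$-version of both statements then follows by an entirely analogous calculation.
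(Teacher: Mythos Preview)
Your case analysis is the same one the paper uses, and the treatment of the peripheral cases $z=k-1,\ell+1$ is correct in outline; the only subtlety you gloss over there is that when $p_Y(z)=1$, commuting $E_z$ through $E_j$ and $E_\ell$ changes the sign pattern in such a way that $S_{p_X(k),p_X(\ell)}$ becomes $S_{p_Y(k),p_Y(\ell)}$, i.e.\ the \emph{parity labels shift}, and it is this shifted relation that is (R10) in $\U(Y)$.

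The real gap is in the central case $z=j$. Your proposed split into ``Cartan-valued sum plus a pure-$E$ remainder vanishing by (R10) in $\U(Y)$'' is not what actually emerges. Each monomial $\eth_{abcd}$ contains two copies of $T'_{j,-1}(E_{\texttt{X},j})=\pm K_j^{-1}F_j$; after using (R5) and $F_j^2=0$ one finds that $\eth_{abcd}$ is a sum of a term of the form $K_j^{-1}F_j\cdot(\text{cubic in }E_k,E_j,E_\ell)$ and a \emph{quadratic} pure term in $E_k,E_\ell$ alone (no $E_j$), cf.\ the paper's displayed formulas for $\eth_{j\ell jk},\eth_{kj\ell j},\ldots$ The quadratic pieces cancel pairwise by (R7), not (R10), and the $K_j^{-1}F_j$-prefixed cubics do not assemble into an instance of (R10) either; they must be checked by hand, using as input the two quadratic identities
\[
T'_{j,-1}(E_{\texttt{X},k})\,T'_{j,-1}(E_{\texttt{X},j})=q^{-1}T'_{j,-1}(E_{\texttt{X},j})\,T'_{j,-1}(E_{\texttt{X},k})+q^{-1}E_k,
\]
and its $\ell$-analogue. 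This is what the paper does (and what \cite{C16} does for the subcases with $p_Y(k)p_Y(\ell)=0$). Your invocation of Lemma~\ref{lemma:commutator} in the $z=k$ case is likewise misplaced: that lemma concerns $[T'(E),T'(F)]$, whereas what you need here are the quadratic identities just mentioned, which come directly from (R5) and the explicit form of $T'_{k,-1}$. The overall architecture of your argument is right; the mechanism of cancellation in the two hard cases is not the one you describe.
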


\begin{proof}
We only prove the first equality as the second one can be proved similarly. If none of $k,j,\ell$ is connected or equal to $z$, there is nothing to prove.

  Now we first suppose that $j\nsim z$ and $k\sim z$. In this case we have $T_{z,-1}'(E^{\texttt{X}}_{k})=E_kE_z-(-1)^{p(z)p(k)}q^{(\alpha_k,\alpha_z)}E_zE_k,\ T_{z,-1}'(E^{\texttt{X}}_{j})=E_j,\ T_{z,-1}'(E^{\texttt{X}}_{\ell})=E_\ell$. Thus
\begin{align*}
    &S_{p_X(k),p_X(\ell)}(T_{z,-1}'(E^{\texttt{X}}_{k}),T_{z,-1}'(E^{\texttt{X}}_{j}),T_{z,-1}'(E^{\texttt{X}}_{\ell})) \\
    =&S_{p_X(k),p_X(\ell)}(E_kE_z,E_j,E_\ell)-(-1)^{p(z)p(k)}q^{(\alpha_z,\alpha_k)}S_{p_X(k),p_X(\ell)}(E_zE_k,E_j,E_\ell)
\end{align*}
 
When $p(z)=0$, $E_z$ commutes with $E_j$ and $E_\ell$. Also we have $p_X(k)=p(k),\ p_X(\ell)=p(\ell)$. Hence
\begin{align*}
    S_{p_X(k),p_X(\ell)}(E_kE_z,E_j,E_\ell)=S_{p(k),p(\ell)}(E_k,E_j,E_\ell)E_z=0,\\
    S_{p_X(k),p_X(\ell)}(E_zE_k,E_j,E_\ell)=E_zS_{p_X(k),p_X(\ell)}(E_k,E_j,E_\ell)=0.
\end{align*}

When $p(z)=1$, since $E_zE_j=(-1)^{p(z)}E_jE_z$, $ E_zE_\ell=(-1)^{p(z)p(\ell)}E_\ell E_z$, $p(k)=p_X(k)+1$ and $p(\ell)=p_X(\ell)$, again we have
\begin{align*}
    S_{p_X(k),p_X(\ell)}(E_kE_z,E_j,E_\ell)=S_{p(k),p(\ell)}(E_k,E_j,E_\ell)E_z=0,\\
    S_{p_X(k),p_X(\ell)}(E_zE_k,E_j,E_\ell)=E_zS_{p_X(k),p_X(\ell)}(E_k,E_j,E_\ell)=0.
\end{align*}
Note that the case when $j\nsim h,\ \ell \sim h$ is similar.

Next, suppose $z\sim j$ and without loss of generality that $z=k$. We further assume that $(\alpha_j,\alpha_k)=-1$, thus $(\alpha_j,\alpha_\ell)=-1$. 
Note that when $p(k)p(\ell)=0$, the proof is already given in \cite[Lemma 4.12]{C16}. So we only need to consider the case when
$p(z)=p(k)=p(\ell)=1$. In this case we have $p_X(k)=p_X(\ell)=1$, $p(j)=0$ and $T_{z,-1}'(E^{\texttt{X}}_{j})=E_jE_z-q^{-1}E_zE_j,\ T_{z,-1}'(E^{\texttt{X}}_{\ell})=E^{\texttt{X}}_{\ell}$ and $T_{z,-1}'(E^{\texttt{X}}_{k})=K_k^{-1}F_k.$ Let \[\eth_{abcd}=T_{z,-1}'(E^{\texttt{X}}_{a})T_{z,-1}'(E^{\texttt{X}}_{b})T_{z,-1}'(E^{\texttt{X}}_{c})T_{z,-1}'(E^{\texttt{X}}_{d}).\] The goal is to prove that $$(q+q^{-1})\eth_{j\ell kj}=-\eth_{j\ell jk}+\eth_{kj\ell j}+\eth_{jkj\ell}-\eth_{\ell jkj}.$$
Note the identities
\begin{align*}
   &T_{z,-1}'(E^{\texttt{X}}_{k})T_{z,-1}'(E^{\texttt{X}}_{\ell})=-T_{z,-1}'(E^{\texttt{X}}_{\ell})T_{z,-1}'(E^{\texttt{X}}_{k}),\\
   &T_{z,-1}'(E^{\texttt{X}}_{j})T_{z,-1}'(E^{\texttt{X}}_{k})=-q^{-1}T_{z,-1}'(E^{\texttt{X}}_{k})T_{z,-1}'(E^{\texttt{X}}_{j})+q^{-1}E_j.
\end{align*}
With the above identities and Lemma \ref{lemma:square} we see that
\begin{align*}
    &\eth_{j\ell jk}=-q^{-1}\eth_{j\ell kj}+q^{-1}E_jE_kE_\ell E_j-q^{-2}E_kE_jE_\ell E_j,\\
    &\eth_{kj\ell j}=-q\eth_{jk\ell j}+E_jE_\ell E_jE_k-q^{-1}E_jE_kE_\ell  E_j,\\
    &\eth_{jkj\ell}=E_jE_kE_jE_\ell,\\
    &\eth_{\ell jkj}=E_\ell E_jE_kE_j
\end{align*}
Thus using Serre relation (R8) repeatedly we have
$$(q+q^{-1})\eth_{j\ell kj}=-\eth_{j\ell jk}+\eth_{kj\ell j}+\eth_{jkj\ell}-\eth_{\ell jkj}.$$

Finally we suppose that $z=j$. Again when $p(k)p(\ell)=0$ the proof is given in \cite[Lemma 4.12]{C16}. So we only need to consider the case when $p(k)=p(\ell)=1$. Thus we have $p_X(k)=p_X(\ell)=0$. Without loss of generality we assume that $(\alpha_k,\alpha_j)=-(\alpha_j,\alpha_\ell)=-1$. Then $T_{z,-1}'(E^{\texttt{X}}_{k})=E_kE_j+q^{-1}E_jE_k$, $T_{z,-1}'(E^{\texttt{X}}_{\ell})=E_\ell E_j+qE_j E_\ell$ and $T_{z,-1}'(E^{\texttt{X}}_{j})=K_j^{-1}F_j$. Note that we have the identities
\begin{align*}
    &T_{j,-1}'(E^{\texttt{X}}_{k})T_{j,-1}'(E^{\texttt{X}}_{j})=q^{-1}T_{j,-1}'(E^{\texttt{X}}_{j})T_{j,-1}'(E^{\texttt{X}}_{k})+q^{-1}E_k, \\
    &T_{j,-1}'(E^{\texttt{X}}_{\ell})T_{j,-1}'(E^{\texttt{X}}_{j})=qT_{j,-1}'(E^{\texttt{X}}_{j})T_{j,-1}'(E^{\texttt{X}}_{\ell})-qE_\ell.
\end{align*}
Thus we have
\begin{align*}
    &\eth_{j\ell jk}=-K_j^{-1}F_j(qE_\ell E_kE_j+E_\ell E_j E_k),\\
    & \eth_{kj\ell j}=-E_kE_\ell-K_j^{-1}F_j(E_kE_jE_\ell+q^{-1}E_jE_kE_\ell),\\
    & \eth_{jkj\ell}=K_j^{-1}F_j(q^{-1}E_kE_\ell E_j+E_kE_jE_\ell),\\
    & \eth_{\ell jkj}=-E_\ell E_k+K_j^{-1}F_j(E_\ell E_jE_k+qE_jE_\ell E_k),\\
    &\eth_{jk\ell j}=\eth_{j\ell kj}=K_jF_j^{-1}(E_kE_\ell E_j-E_jE_kE_\ell).
\end{align*}
Then we conclude that
$$(q+q^{-1})\eth_{j\ell kj}=\eth_{j\ell jk}+\eth_{kj\ell j}+\eth_{jkj\ell}+\eth_{\ell jkj}.$$
This proves the lemma.
\end{proof}

We have now proved that $T_{j,e}'$ and $T_{j,e}''$ are algebra isomorphisms for all $j\in I$ and $e=\pm 1$. The next proposition states that the braid group operators in Theorem~\ref{thm:braid} satisfy the type A braid relations.

\begin{proposition}
\label{prop:braid}
Let $j,k,\ell\in I$ and $X\in \mathcal D_{\mathfrak m,\mathfrak n}$.

(1) If $j\nsim k$, then $T_{j,e}'T_{k,e}'=T_{k,e}'T_{j,e}'$ and $T_{j,e}''T_{k,e}''=T_{k,e}''T_{j,e}''$.

 (2) If $j\sim k$ and $Y=s_js_k(X)$, then
\begin{align*}
    T_{j,-e}'T_{k,-e}'(E^{\texttt{X}}_{j})=T_{j,e}''T_{k,e}''(E^{\texttt{X}}_{j})=E^{\texttt{Y}}_{k}, \\
    T_{j,-e}'T_{k,-e}'(F^{\texttt{X}}_{j})=T_{j,e}''T_{k,e}''(F^{\texttt{X}}_{j})=F^{\texttt{Y}}_{k}, \\
    T_{j,-e}'T_{k,-e}'(K^{\texttt{X}}_{j})=T_{j,e}''T_{k,e}''(K^{\texttt{X}}_{j})=K^{\texttt{Y}}_{k}.
\end{align*}

(3) If $j\sim k$, then
$T_{j,e}'T_{k,e}'T_{j,e}'=T_{k,e}'T_{j,e}'T_{k,e}'$ and $T_{j,e}''T_{k,e}''T_{j,e}''=T_{k,e}''T_{j,e}''T_{k,e}''$.
\end{proposition}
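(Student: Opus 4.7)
The plan is to reduce all three parts to a single variant, namely $T'_{j,-1}$, by exploiting the conjugation identities
\[
T'_{i,-e} = \sigma T''_{i,e} \sigma, \qquad T'_{i,-e} = \overline{\cdot}\, T'_{i,e}\, \overline{\cdot} = \texttt{op}\, T'_{i,e}\, \texttt{op}, \qquad T''_{i,e} = (T'_{i,-e})^{-1}
\]
established above. Once the statements are verified for $T'_{\cdot,-1}$, they transfer to $T'_{\cdot,e}$ and to $T''_{\cdot,e}$ by conjugation. Write $T_j := T'_{j,-1}$ throughout the sketch.

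For part (1) I would check commutation on each generator $E_{X,m}, F_{X,m}, K_{X,m}$ and on $\new$. The operator $T_j$ acts nontrivially only on generators indexed by $\{j-1,j,j+1\}$, and likewise $T_k$ only on $\{k-1,k,k+1\}$. Since $|j-k|\geq 2$, these sets overlap in at most one element $\ell$. When there is no overlap the two operators touch disjoint generators and commutation is immediate. When $\ell$ is the common neighbour, $T_j(E_{X,\ell})$ is a $q$-commutator of $E_{Y,j}$ and $E_{Y,\ell}$ in $\U(Y)$ with $Y=s_j(X)$, and one checks that it super-commutes with every generator fixed by $T_k$; this reduces to relation (R7) in \eqref{eq:Urelation} after parity bookkeeping via Lemma \ref{eq:parity}.

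For part (2) the strategy is a direct two-step computation. Set $Y'=s_k(X)$ and $Y=s_j(Y')$. First expand $T_k(E_{X,j})$ by Theorem \ref{thm:braid} into a $q$-commutator of $E_{Y',j}$ and $E_{Y',k}$ in $\U(Y')$, then apply $T_j$. The image of $E_{Y',j}$ under $T_j$ is (up to sign and Cartan) an $F$-generator, whereas $T_j(E_{Y',k})$ is again a $q$-commutator. Expanding and simplifying using relation (R5) at index $j$ together with the super $q$-Serre relations (R8)--(R10) in $\U(Y)$ collapses the expression to the single generator $E_{Y,k}$. The same scheme handles $F_{X,j}$ and $K_{X,j}$.

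For part (3) both $T_jT_kT_j$ and $T_kT_jT_k$ are algebra isomorphisms $\U(X) \to \U(Z)$ with $Z = s_js_ks_j(X) = s_ks_js_k(X)$, the latter equality being Remark \ref{remark:braid}. I would match them on each generator. On $\new$ and $K_{X,m}$ the check is immediate. On $E_{X,j}$ and $E_{X,k}$ part (2) applies: $T_kT_j(E_{X,j})$ is known by (2), and one further application of $T_k$ can be rearranged through (2) to coincide with the opposite order of operators. On $E_{X,\ell}$ with $\ell\nsim j$ and $\ell\nsim k$ both compositions act trivially up to matched parity shifts. The delicate case is $\ell$ adjacent to exactly one of $\{j,k\}$, where both sides expand into cubic polynomials in $E$-generators at indices in $\{\ell,j,k\}$; their agreement is forced by the super $q$-Serre relation (R10) when an odd node is present, or by (R8) otherwise. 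The main obstacle, and the true departure from the classical setting, is the sign and parity bookkeeping here, since odd reflections redistribute parities according to Lemma \ref{eq:parity}. However, the invariance of the super $q$-Serre polynomials $S_{t_1,t_2}$ under $T_j$ was already the substance of the well-definedness proof, so once the identifications among $X, s_jX, s_ks_jX, \ldots$ are set up correctly, the cubic terms on both sides cancel identically.
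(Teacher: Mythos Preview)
The paper does not supply its own argument here; it simply cites \cite[Lemmas 4.13--4.15]{C16}. Your sketch is the standard direct verification on generators, which is precisely what those lemmas of Clark carry out, so in substance you are reconstructing the cited proof rather than proposing an alternative route.

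One point worth tightening: in part~(1), when $\ell$ is the common neighbour of $j$ and $k$ (say $k=j+2$, $\ell=j+1$), the check is not that $T_j(E_{X,\ell})$ ``super-commutes with every generator fixed by $T_k$''. What must actually be shown is that the two iterated $q$-commutators $T_kT_j(E_{X,\ell})$ and $T_jT_k(E_{X,\ell})$ coincide as elements of $\U(s_js_kX)=\U(s_ks_jX)$. Both are length-three expressions in $E_j,E_k,E_\ell$ with various signs and powers of $q$, and their equality does come down to (R7) for the pair $(j,k)$ together with the parity shifts recorded in Lemma~\ref{eq:parity}; but your phrasing obscures what is really being compared. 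The rest of your outline for parts~(2) and~(3) matches the expected computations.
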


\begin{proof}
    It follows from \cite[Lemma 8.1.1]{Ya99}; see also \cite[\S 6.3]{H10}.
\end{proof}

 From now on we denote by $T_i$ the braid operator $T_{i,1}''$ defined in Theorem \ref{thm:braid}. The next lemma can be proved similarly as in \cite[\S 8.18--\S 8.20]{Jan95}.

\begin{lemma}
\label{lemma:walpha=beta}
Let $w\in W$, $X\in \mathcal D_{\mathfrak m,\mathfrak n}$, $Y=w(X)$ and $\alpha\in \Pi_X$.  If $w(\alpha)>0$ in the root system associated to $X$, then $T_{w}(E^{\texttt{X}}_{\alpha})\in \U(Y)^+$. If $w(\alpha)\in \Pi_X$, then $T_{w}(E^{\texttt{X}}_{\alpha})=E^{\texttt{Y}}_{w(\alpha)}$.
\end{lemma}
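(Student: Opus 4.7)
The plan is to argue by induction on $\ell(w)$ in the Weyl group $W = \mathfrak S_{\mathfrak m + \mathfrak n}$, treating the two claims (positivity and identification with a generator) simultaneously. For the base case $\ell(w) = 0$ we have $w = 1$, $Y = X$ and $T_w = \mathrm{id}$, so both assertions are immediate.

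For the inductive step with $\ell(w) \geq 1$, I will pick a reduced expression $w = w' s_i$ with $\ell(w') = \ell(w) - 1$. Since this expression is reduced ending with $s_i$, one has $w(\alpha_{X,i}) < 0$, which forces $\alpha \neq \alpha_{X,i}$. I then split into two cases according to the relation between $\alpha$ and $\alpha_{X,i}$. When $\alpha \nsim \alpha_{X,i}$, Theorem \ref{thm:braid} gives $T_i(E_{\texttt{X},\alpha}) = E_{\texttt{X}',\alpha}$ with $X' = s_i(X)$, and $\alpha$ remains a simple root of $X'$ at the same index with $w'(\alpha) = w(\alpha) > 0$; the inductive hypothesis applied to $w'$ acting from $X'$ to $Y$ then yields $T_{w'}(E_{\texttt{X}',\alpha}) \in \U(Y)^+$, and the stronger clause $T_w(E_{\texttt{X},\alpha}) = E_{\texttt{Y},w(\alpha)}$ (under $w(\alpha) \in \Pi_X$) is inherited from the stronger inductive clause for $w'$.

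The harder case is $\alpha \sim \alpha_{X,i}$, where $T_i(E_{\texttt{X},\alpha})$ is no longer a single generator but a quadratic expression in $E_{\texttt{X}',\alpha}$ and $E_{\texttt{X}',i}$, so the inductive hypothesis cannot be fed directly. My plan here is to reduce to the preceding rank-$2$ lemma: I will use the braid relations of Proposition \ref{prop:braid} to rewrite the reduced decomposition of $w$ in the form $w = \tilde w \cdot w_0$, with $w_0$ a nontrivial element of $\langle s_{\alpha}, s_{\alpha_{X,i}} \rangle$ and $\ell(\tilde w) = \ell(w) - \ell(w_0) < \ell(w)$. Applying the previous lemma to $T_{w_0}(E_{\texttt{X},\alpha})$ places this element in the subalgebra of $\U(w_0(X))$ generated by $E_{w_0(X),\alpha}$ and $E_{w_0(X),\alpha_{X,i}}$; I then expand and apply the inductive hypothesis to $\tilde w$ on each of these two simple generators. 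The stronger conclusion under $w(\alpha) \in \Pi_X$ comes from invoking the corresponding stronger clause of the previous lemma at the rank-$2$ step.

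The main obstacle will be the rank-$2$ reduction in the case $\alpha \sim \alpha_{X,i}$: I must produce a suitable reduced decomposition $w = \tilde w \cdot w_0$ via Proposition \ref{prop:braid}, verify that $\tilde w$ still sends the relevant simple roots of $w_0(X)$ to positive roots so that the induction applies, and carefully track the super signs and parity corrections introduced by odd simple reflections in the formulas of Theorem \ref{thm:braid}.
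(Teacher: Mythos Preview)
Your proposal is correct and follows exactly the standard induction-on-length argument from \cite{Jan95} that the paper references as its proof. The parabolic coset decomposition $w=\tilde w\,w_0$ with $w_0\in\langle s_\alpha,s_{\alpha_{X,i}}\rangle$ and $\tilde w$ the minimal coset representative is precisely how Jantzen handles the connected case, and your observation that $w_0\neq 1$ (since $w(\alpha_{X,i})<0$ forces $\tilde w\neq w$) together with $\tilde w(\alpha),\tilde w(\alpha_{X,i})>0$ is what makes the induction go through; the super signs are absorbed by the preceding rank-$2$ lemma, so no new difficulty arises.
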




\section{Quantum supersymmetric pair of type AIII}
\label{sec:QSP}
In this section we define the quantum supersymmetric pairs and the corresponding $\io$quantum supergroups of type AIII.

\subsection{$\io$Quantum supergroup of type AIII} 
\label{sec:notation}
For a real number $x\in \R$ and $m \in \N$, we denote $[x, x+m] =\{x, x+1, \ldots, x+m \}$. 
For $a \in \Z_{\ge 1}$, we denote by 
\[
\I_a = \left [\frac{1-a}2, \frac{a-1}2 \right].
\]

Fix
\[
n =\frac{m}2 \in \frac12 \N. 
\]
We consider the Satake diagram of type AIII with $\nb-1 =2\pt-1$ black nodes and $r$ pairs of white nodes, together with a diagram involution $\tau$ indicated by the dashed arrows:
\begin{equation}
\label{eq:AIIIdiagram}
\begin{tikzpicture}[scale=1, semithick]
\node (-4) [circle,draw,label=above:{$-\pt-r+1$},scale=0.6] at (0,0){$\cdot$};
\node (-3)  at (2,0) {$\cdots$} ;
\node (-2) [circle,draw,label=above:{$-\pt$},scale=0.6] at (4,0){$\cdot$};
\node (-1) [rectangle,fill,draw,label=above:{$-\pt+1$},scale=0.6] at (5,-0.5){};
\node (0)  at (5,-1.5){$\vdots$};
\node (1) [rectangle,fill,draw,label=below:{$\pt-1$},scale=0.6] at (5,-2.5){};
\node (2) [circle,draw,label=below:{$\pt$},scale=0.6] at (4,-3){$\cdot$};
\node (3)  at (2,-3){$\cdots$};
\node (4) [circle,draw,label=below:{$\pt+r-1$},scale=0.6] at (0,-3){$\cdot$};
\path (-4) edge (-3)
          (-3) edge (-2)
          (-2) edge (-1)
          (-1) edge (0)
          (0) edge (1)
          (1) edge (2)
          (2) edge (3)
          (3) edge (4);
\path (-4) edge[dashed,bend right,<->] (4)
    (-2) edge[dashed,bend right,<->] (2)
    (-3) edge[dashed,bend right,<->] (3);
\end{tikzpicture}
\end{equation}
where $\bigodot$ stands for white dots and $\blacksquare$ stands for black dots. 
We will denote the white even roots, black even roots,  black odd roots and white odd roots respectively by $\emptycirc,\  \fullcirc,\ \halfcirc$ and $\bigotimes$.

Both white and black dots allow different parities under the following assumption:
\begin{equation}
\label{eq:assumption}
\begin{aligned}
&\#\{p(j)=1\mid j\in I_\bu\}\equiv 0 \mod 2, \\
&p(j)=p(\tau(j)), \quad \forall i\in I_\circ,\quad\\
&i\in \Ieven \text{ if }\tau i=i \text{ and }i\in I_\circ.
\end{aligned}
\end{equation}
 where $
I_\bu =[1-\pt, \pt-1],\ I_\circ=I\backslash I_\bu.
$ (In case $\pt=0$, the black nodes are dropped; the nodes $\pt$ and $-\pt$ are identified and fixed by $\tau$.) 

For any Satake diagram in $ \mathcal D_{\mathfrak m,\mathfrak n}$ of the form \eqref{eq:AIIIdiagram}, we denote the index set by \begin{equation}
\label{eq:Iindex}
    I=\I_{\nb+2r-1}=I_\circ\cup I_\bu,\qquad (\nb+2r=\mathfrak m+\mathfrak n).
\end{equation}
Switching to this notation has the advantage of easily identifying the diagram involution $\tau$ with $-1$ on the index set of the simple roots.

Let $\mathfrak{S}_{m-1}$ denote the symmetric group associated with $I_\bu=[1-n,n-1]$, and let $w_\bu$ represent the longest element of $\mathfrak{S}_{m-1}$. For any reduced expression $w_\bu= s_{i_1}\cdots s_{i_\ell}$ as a product of simple generators, we regard $s_{i_t}$ as the simple reflection $s_{\alpha_{i_t}}$. Consequently, we can view $w_{\bu}$ as a product of even and odd reflections. It follows from \cite{HY08} that $w_\bu$ is independent of the choice of the reduced expression.

 Following \cite{BW18b}, we define 
\begin{align}
\label{eq:L}
\begin{split}
\wl &= P\big /\{\mu+ w_\bu \tau (\mu) \mid \mu \in P \},
\\
\cwl &=  \{ \nu- w_\bu \tau (\nu) \mid \nu \in P^\vee\}.
\end{split}
\end{align}
We call an element in $\wl$ an $\imath$-weight and $\wl$ the $\imath$-weight lattice.

For any Satake diagram $X$ in the form of \eqref{eq:AIIIdiagram}, without considering the diagram involution $\tau$, the diagram $X$ corresponds to a Lie superalgebra $\gl(\mathfrak m|\mathfrak n)$ for certain non-negative integers $\mathfrak m$ and $\mathfrak n$, where $\mathfrak m+\mathfrak n=2r+\nb$. Recall $I(\mathfrak m|\mathfrak n)$ from \eqref{eq:Imn}. The simple roots of $X$ are given by $$\Pi_{\texttt X}=\{\alpha_{\texttt {X},k}=\epsilon^{\texttt{X}}_{k-\frac{1}{2}}-\epsilon^{\texttt{X}}_{k+\frac{1}{2}}\mid k\in I\}$$
where $\{ \epsilon^{\texttt X}_{k\pm \frac{1}{2}}\mid k\in I\}=\{\epsilon_a\mid a\in I(\mathfrak m|\mathfrak n)\}.$

In the remaining part of this section, we fix a diagram $X\in \mathcal D_{\mathfrak m,\mathfrak n}$ of the form \eqref{eq:AIIIdiagram} satisfying \eqref{eq:assumption}. Furthermore, we recall the definition of $\ell_j$ from equation \eqref{eq:lj}. In addition, we provide two lemmas that will be useful for future reference.
\begin{lemma}
\label{lemma:ljl-j}
 We have \[\ell_j=(-1)^{p(j)}\ell_{-j},\ \forall j\in I.\]
\end{lemma}

\begin{proof}
By observation we have $\ell_j=(-1)^{p\left(\epsilon^{\texttt{X}
}_{j-\frac{1}{2}}\right)}$. Moreover, since $p(j)=p(-j)$ for all $j\in I$, we have $\ell_j=(-1)^{p(j)}\ell_{-j},\ \forall j\in I.$
\end{proof}

\begin{lemma}
Suppose that $Y=w_\bu(X)$, then $Y \in \mathcal D_{\mathfrak m,\mathfrak n}$ and satisfies \eqref{eq:assumption}.
\end{lemma}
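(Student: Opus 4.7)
The plan is to describe $w_\bu$ explicitly as a permutation of the basis $\{\epsilon_{\tilde i_s}\}$ of the natural representation (ignoring parity, as in Remark~\ref{remark:braid}), read off the new simple roots of $\Gamma=w_\bu(\tilde \Gamma)$, and then verify \eqref{eq:assumption} for $\Gamma$ by a parity count. The underlying diagram of $\tilde\Gamma$ is determined by an ordering $\tilde i_s$ (indexed by half-integers $s\in[\tfrac12-n-r,\,n+r-\tfrac12]$) of $I(\mathfrak m|\mathfrak n)$, with $\tilde\alpha_k=\epsilon_{\tilde i_{k-1/2}}-\epsilon_{\tilde i_{k+1/2}}$. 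Since $w_\bu$ is (ignoring parity) the longest element of the symmetric group on the $2n$ basis vectors $\{\epsilon_{\tilde i_s}:|s|\leq n-\tfrac12\}$, it acts by reversing their order, so $\Gamma$ corresponds to the reordering $i_s=\tilde i_{-s}$ for $|s|\leq n-\tfrac12$ and $i_s=\tilde i_s$ otherwise, with simple roots $\alpha_k=\epsilon_{i_{k-1/2}}-\epsilon_{i_{k+1/2}}$. As $w_\bu$ only permutes basis vectors, the multiset of basis vectors (and hence the counts of even and odd vectors) is unchanged, which gives $\Gamma\in\mathcal D_{\mathfrak m,\mathfrak n}$ immediately.

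Next I verify the first condition in \eqref{eq:assumption}. For any $k\in I_\bu$ both indices $k\pm\tfrac12$ lie in $[\tfrac12-n,n-\tfrac12]$, so
\[
\alpha_k=\epsilon_{\tilde i_{-k+1/2}}-\epsilon_{\tilde i_{-k-1/2}}=-\tilde\alpha_{-k},
\]
and in particular $p_\Gamma(k)=p_{\tilde\Gamma}(-k)$. Since $I_\bu$ is symmetric about the origin, $k\mapsto -k$ is an involution of $I_\bu$, and hence
\[
\#\{k\in I_\bu : p_\Gamma(k)=1\}=\#\{k\in I_\bu : p_{\tilde\Gamma}(k)=1\}\equiv 0\pmod 2
\]
by the corresponding assumption on $\tilde\Gamma$.

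For the second condition, fix $k\in I_\circ$. If $|k|>n$ then both $k\pm\tfrac12$ lie outside $[\tfrac12-n,n-\tfrac12]$, so $\alpha_k=\tilde\alpha_k$, $p_\Gamma(k)=p_{\tilde\Gamma}(k)$, and the $\tau$-invariance transfers directly. The only remaining case is the boundary pair $k=\pm n$. Here the new roots mix the inside and outside of the flipped range:
\[
\alpha_n=\epsilon_{\tilde i_{-n+1/2}}-\epsilon_{\tilde i_{n+1/2}},\qquad \alpha_{-n}=\epsilon_{\tilde i_{-n-1/2}}-\epsilon_{\tilde i_{n-1/2}}.
\]
Adding the parities, one sees
\[
p_\Gamma(n)+p_\Gamma(-n)\equiv p(\tilde i_{n-1/2})+p(\tilde i_{n+1/2})+p(\tilde i_{-n-1/2})+p(\tilde i_{-n+1/2})\equiv p_{\tilde\Gamma}(n)+p_{\tilde\Gamma}(-n)\equiv 0\pmod 2,
\]
using $p_{\tilde\Gamma}(n)=p_{\tilde\Gamma}(-n)$ from \eqref{eq:assumption} applied to $\tilde\Gamma$. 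Thus $p_\Gamma(n)=p_\Gamma(-n)=p_\Gamma(\tau n)$, completing the verification.

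The only subtle point is that at $k=\pm n$ the image $w_\bu(\tilde\alpha_{\pm n})$ is no longer a simple root of $\tilde\Gamma$ but a longer positive root; once the action of $w_\bu$ on the half-integer labels is pinned down, everything reduces to elementary parity bookkeeping and the symmetry of $I_\bu$ about $0$. No input from the super $q$-Serre relations or from the Hopf structure is needed.
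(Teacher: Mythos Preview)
Your proof is correct and follows essentially the same approach as the paper: compute $w_\bu(\tilde\alpha_k)$ explicitly via the order-reversal on the half-integer indices in $[\tfrac12-n,n-\tfrac12]$ (matching the paper's formula \eqref{eq:varpi}), then verify \eqref{eq:assumption} separately for $|k|>n$, $k\in I_\bu$, and the boundary pair $k=\pm n$. Your parity-sum argument at $k=\pm n$ is a slightly cleaner packaging of the paper's case split on whether $p_{\tilde\Gamma}(\pm n)=0$ or $1$, but the content is the same.
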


\begin{proof}
For each $k\in I$, we have\begin{equation}
\label{eq:varpi}
    \alpha_{\texttt{Y},k}:=w_\bu(\alpha_{\texttt{X},k})=\begin{cases}
    \epsilon^{\texttt X}_{k-\frac{1}{2}}-\epsilon^{\texttt{X}}_{k+\frac{1}{2}} &\text{ if } |k|>\pt, \\
    \epsilon^{\texttt{X}}_{-\pt+\frac{1}{2}}-\epsilon^{\texttt{X}}_{\pt+\frac{1}{2}}&\text{ if } k=\pt, \\
    \epsilon^{\texttt{X}}_{-k+\frac{1}{2}}-\epsilon^{\texttt{X}}_{-k-\frac{1}{2}}&\text{ if } -\pt<k<\pt, \\
    \epsilon^{\texttt{X}}_{-\pt-\frac{1}{2}}-\epsilon^{\texttt{X}}_{\pt-\frac{1}{2}}&\text{ if } k=-\pt.
    \end{cases}
\end{equation}
From \eqref{eq:varpi} we can see that $\alpha_{\texttt{X},k}=\alpha_{\texttt{Y},k}$ if $|k|>\pt$ and so is the parity. For $|k|=\pt$, suppose $p(\alpha_{\texttt{X},-\pt})=p(\alpha_{\texttt{X},\pt})=0$, then $\epsilon^{\texttt{X}}_{-\pt-\frac{1}{2}}$ and $\epsilon^{\texttt{X}}_{-\pt+\frac{1}{2}}$ have the same parity while $\epsilon^{\texttt{X}}_{\pt-\frac{1}{2}}$ and $\epsilon^{\texttt{X}}_{\pt+\frac{1}{2}}$ have the same parity. Thus $p(\alpha_{\texttt{Y},\pt})=p(\alpha_{\texttt{Y},-\pt})$. It can be checked similarly that when $p(\alpha_{\texttt{X},-\pt})=p(\alpha_{\texttt{X},\pt})=1$, we still have $p(\alpha_{\texttt{Y},\pt})=p(\alpha_{\texttt{Y},-\pt})$.

For $-\pt<k<\pt$, we see that $\alpha_{\texttt{Y},k}=-\alpha_{\texttt{X},-k}$. Thus the number of black odd roots stays unchanged. Moreover, if $\tau i=i$ and $i\in I_\circ$, then we have $I_\bu=\varnothing$. Hence $Y=X$.
\end{proof}
Let $Y:=w_\bu(X)$. According to \eqref{eq:varpi}, we see that $\alpha_{\texttt{Y},k}=\epsilon^{\texttt{Y}}_{k-\frac{1}{2}}-\epsilon^{\texttt{Y}}_{k+\frac{1}{2}}$ where
\begin{equation}
\label{eq:i_j}
    \epsilon^{\texttt{Y}}_{t}=\begin{cases}
   \epsilon^{\texttt{X}}_t, &\text{ if } t>\pt-\frac{1}{2} \text{ or } t\leqslant -\pt-\frac{1}{2},\\
     \epsilon^{\texttt{X}}_{-t}, & \text{ if } -n-\frac{1}{2}<t\leqslant n-\frac{1}{2}
    \end{cases}
\end{equation}

Let $\U(Y)$ represent the quantum supergroup associated with generators $\new, E^{\texttt{Y}}_{j}, F^{\texttt{Y}}_{j}, q^\mu$, where $j\in I$ and $\mu\in P^\vee$, corresponding to the Dynkin diagram $Y$. Similarly, let $\U(X)$ denote the same algebra with generators $\new, E^{\texttt{X}}_{j}, F^{\texttt{X}}_{j}, q^\mu$, where $j\in I$ and $\mu \in P^\vee$, but with a different presentation corresponding to the Dynkin diagram $X$. We note that the comultiplication $\Delta$ is dependent on the chosen presentation, as shown in equation \eqref{eq:comultinew}. For simplicity, we use the same notation $\Delta$ and the parity function $p$ for different presentations, and we omit the script $Y$ unless necessary. 

The $\imath$quantum supergroup of type AIII, denoted by $\Uinew=\Uinew(Y)$, is the $\Q(q)$-subalgebra of $\U(Y)$ generated by $q^\mu \ (\mu\in \cwl),\  E_j, F_j \ (j\in I_\bu)$, $\new$ and
\begin{align}
\label{eq:Bi}
 B_j= 
 F_j+\va_jT_{w_\bu}( E^{\texttt{X}}_{\tau j})  K_j^{-1},\ \ \text{for } j\in I_\circ.
\end{align}
where parameters $\va_j\in \Q(q)$, for $j\in I_\circ$ satisfy the conditions $\va_j=\va_{-j}$, for $j\in I_\circ \backslash \{\pm  n \}$ \cite{Let02} (also cf. \cite{BK15, BW21}).  
(When $n=0$, $ B_0$ will be allowed to take a more general form $ B_0 = F_0 +\va_0  E_0  K_0^{-1} +\kappa_0  K_0^{-1}$, for an additional parameter $\kappa_0 \in \Q(q)$.)

For each reduced expression $w_\bu=s_{j_1}\cdots s_{j_l}$, we can write $T_{w_\bu}=T_{j_1}\cdots T_{j_l}$. By Proposition \ref{prop:braid}, $T_{w_\bu}$ is a well-defined operator as a product of braid operators associated to both odd and even simple roots in $I_\bu$.

Now $(\U( Y), \Uinew(Y))$ forms a quantum supersymmetric pair of type AIII \cite{Let99, Let02} (cf. \cite{BW18a, BK19}). The algebra $\Uinew$ satisfies the following relations
\begin{align*}
&q^\mu  B_j = q^{-\la\mu,\alpha_j\ra}  B_jq^\mu,\ \forall j\in I_\circ,  \\
&q^\mu F_j = q^{-\la\mu,\alpha_j\ra}  F_jq^\mu, \quad
q^\mu  E_j = q^{\la\mu,\alpha_j\ra}  E_jq^\mu,\ \forall j\in I_\bu,  \mu \in \cwl, \\
&\new(B_j)=(-1)^{p(j)}B_j,\quad \forall j\in I.
\end{align*}
and additional Serre type relations. By definition we see the following relation holds in $\Uinew$.
\begin{lemma}
For any $j\in I_\bu,\ k\in I$ we have
\begin{equation}
\label{eq:EjBk}
E_jB_k-(-1)^{p(j)p(k)}B_kE_j=\delta_{jk}\frac{K_j-K_j^{-1}}{q^{\ell_j}-q^{-\ell_j}}.
\end{equation}
\end{lemma}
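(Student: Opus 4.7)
The lemma splits naturally into two cases according to whether $k\in I_\bu$ or $k\in I_\circ$. When $k\in I_\bu$, the element $B_k$ should be read as $F_k$ (which is itself a generator of $\Uinew$ in this case, so the notation is applied in the standard abusive way). Under this reading, \eqref{eq:EjBk} is nothing but the defining commutation relation (R5) of \eqref{eq:Urelation}, and there is nothing to prove.

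When $k\in I_\circ$, we automatically have $j\neq k$ (since $j\in I_\bu$), so $\delta_{jk}=0$ and the goal is to show that $E_j$ and $B_k=F_k+\va_kT_{w_\bu}(\tilde E_{\tau k})K_k^{-1}$ super-commute. The $F_k$ piece contributes $0$ by (R5), so the remaining task is
\[
\bigl[E_j,\;T_{w_\bu}(\tilde E_{\tau k})K_k^{-1}\bigr]_{p(j)p(k)}=0.
\]
Moving $K_k^{-1}$ past $E_j$ using (R3) together with the identity $\ell_k\langle h_k,\alpha_j\rangle=(\alpha_k,\alpha_j)$ coming from \eqref{eq:lj}, this reduces to proving the $q$-supercommutation
\[
E_j\,T_{w_\bu}(\tilde E_{\tau k})=(-1)^{p(j)p(k)}q^{-(\alpha_j,\alpha_k)}\,T_{w_\bu}(\tilde E_{\tau k})\,E_j.
\]

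To verify this identity, the plan is to apply $T_{w_\bu}^{-1}$ to both sides, using that $T_{w_\bu}$ is an algebra isomorphism by Theorem~\ref{thm:braid}. Since $w_\bu$ is the longest element of the type A Weyl group generated by $\{s_i : i\in I_\bu\}$, it sends the simple root $\alpha_j$ (for $j\in I_\bu$) to a negative root $-\tilde\alpha_{j'}$ with $j'\in I_\bu$. Consequently, by (the $F$-analogue of) Lemma~\ref{lemma:walpha=beta}, $T_{w_\bu}^{-1}(E_j)$ lies in the subalgebra of $\U(\tilde\Gamma)$ generated by $\{\tilde F_i,\tilde K_i^{\pm1}:i\in I_\bu\}$ and carries weight $-\tilde\alpha_{j'}$. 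For each such $i\in I_\bu$, $\tilde F_i$ supercommutes with $\tilde E_{\tau k}$ by (R5) (since $i\neq\tau k$), and $\tilde K_i^{\pm1}$ scales $\tilde E_{\tau k}$ by an explicit $q$-power. Combining these shows that $T_{w_\bu}^{-1}(E_j)$ and $\tilde E_{\tau k}$ $q$-supercommute with a scalar determined by $(-\tilde\alpha_{j'},\tilde\alpha_{\tau k})$ and $p(j)p(\tau k)$.

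The final bookkeeping converts this scalar into the required $(-1)^{p(j)p(k)}q^{-(\alpha_j,\alpha_k)}$. This uses the $W$-invariance of the bilinear form, namely $(\tilde\alpha_{j'},\tilde\alpha_{\tau k})=(w_\bu(\tilde\alpha_{j'}),w_\bu(\tilde\alpha_{\tau k}))=(-\alpha_j,\alpha_{\tau k})$, together with the parity condition $p(k)=p(\tau k)$ from \eqref{eq:assumption}, and the residual identity $(\alpha_j,\alpha_{\tau k})=(\alpha_j,\alpha_k)$ that one checks directly from the explicit descriptions \eqref{eq:varpi} of the simple roots in $\Gamma$. The main obstacle is precisely this last step, since it requires a short case analysis tracking the adjacencies of $j$ to $k$ and $\tau k$ in the Satake diagram \eqref{eq:AIIIdiagram} and combining them with the parity flips recorded in Lemma~\ref{eq:parity}; the other steps are essentially formal consequences of the braid operator being an algebra isomorphism of parity zero.
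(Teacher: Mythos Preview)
Your overall strategy is sound and is precisely what the paper has in mind when it says the relation holds ``by definition'' (the paper offers no further argument). Splitting into $k\in I_\bu$ and $k\in I_\circ$, reducing the latter to a $q$-supercommutation of $E_j$ with $T_{w_\bu}(\tilde E_{\tau k})$, and then pulling back through $T_{w_\bu}^{-1}$ to exploit $T_{w_\bu}^{-1}(E_j)=-\tilde K_{-j}^{-1}\tilde F_{-j}$ (a direct consequence of Lemma~\ref{lemma:T_wblack}) is exactly the right path.

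However, your final bookkeeping contains a genuine error. You invoke ``$W$-invariance of the bilinear form'' to pass from $(\tilde\alpha_{j'},\tilde\alpha_{\tau k})$ to $(-\alpha_j,\alpha_{\tau k})$, but the symmetric form on the weight lattice of a Lie \emph{super}algebra is \emph{not} invariant under odd reflections: an odd $s_i$ swaps two $\epsilon$'s of opposite norm. Since $w_\bu$ can contain odd reflections in the setting of \S\ref{sec:QSP}, this step fails. Relatedly, the ``residual identity'' $(\alpha_j,\alpha_{\tau k})=(\alpha_j,\alpha_k)$ you appeal to is simply false: take $j=n-1$ and $k=n$, so that $\alpha_j$ is adjacent to $\alpha_k$ but not to $\alpha_{\tau k}=\alpha_{-n}$.

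The fix is to avoid $W$-invariance altogether and verify the needed scalar identity directly. With $T_{w_\bu}^{-1}(E_j)=-\tilde K_{-j}^{-1}\tilde F_{-j}$, the scalar that appears is $(-1)^{\tilde p(-j)\tilde p(\tau k)}q^{-(\tilde\alpha_{-j},\tilde\alpha_{\tau k})}$, and the desired equality reduces to
\[
(\tilde\alpha_{-j},\tilde\alpha_{\tau k})=(\alpha_j,\alpha_k)
\quad\text{and}\quad
\tilde p(-j)\,\tilde p(\tau k)\equiv p(j)\,p(k)\pmod 2.
\]
Both follow from the explicit dictionary \eqref{eq:i_j}: in the only nontrivial adjacent case $j=\pm(n-1)$, $k=\pm n$, each pairing equals $-(\epsilon_{\tilde i_{\mp n\pm 1/2}},\epsilon_{\tilde i_{\mp n\pm 1/2}})$, while the parity match uses that the number of odd roots in $I_\bu$ is even (the first condition in \eqref{eq:assumption}), which forces $p(\tilde i_{-n+1/2})=p(\tilde i_{n-1/2})$. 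This is the ``short case analysis'' you allude to, but it must be done in the $\tilde\Gamma$ coordinates, not via a nonexistent symmetry of the form.
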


For future use, we let $\Ub$ denote the subalgebra of $\U$ generated by $\{E_j,F_j,K_j^{\pm 1},\new\mid j\in I_\bu\}$. Let $\Uio$ denote the subalgebra of $\Uinew$ generated by $\{q^\mu,\new \mid\mu\in \cwl\}$.

The next lemma will help us pin down one of the conditions on the parameters.
\begin{lemma}
\label{lemma:BjB-j}
If $\va_j\neq \va_{-j}$ for $j \in I_\circ\backslash \{\pm \pt\}$, then $(K_j^{-1}K_{-j}^{-1})\in \U^{\imath0}$.
\end{lemma}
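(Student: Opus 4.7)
The plan is to compute the super-commutator $[B_j, B_{-j}]_{\text{super}} = B_j B_{-j} - (-1)^{p(j)p(-j)} B_{-j} B_j$ and observe that, modulo elements visibly in $\U^{\imath 0}$, it equals a nonzero scalar multiple of $K_j^{-1} K_{-j}^{-1}$.

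First I would use that for $j \in I_\circ \setminus \{\pm n\}$ one has $\tau j = -j$, and neither $j$ nor $-j$ is adjacent to any node of $I_\bu$ in $\Gamma$. Consequently $w_\bu$ fixes $\alpha_{-j}$, and invoking Lemma \ref{lemma:walpha=beta}, $T_{w_\bu}(\tilde E_{\tau j}) = E_{-j}$, giving
\[
B_j = F_j + \va_j E_{-j} K_j^{-1}, \qquad B_{-j} = F_{-j} + \va_{-j} E_j K_{-j}^{-1}.
\]
Since $j \nsim -j$, the pairs $(E_j, E_{-j})$, $(F_j, F_{-j})$, $(E_j, F_{-j})$, and $(E_{-j}, F_j)$ super-commute by (R5)--(R7), and $(\alpha_j, \alpha_{-j}) = 0$ implies $K_j^{\pm 1}$ commutes with $E_{-j}, F_{-j}$. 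Expanding both products and applying $[E_k, F_k]_{\text{super}} = (K_k - K_k^{-1})/(q^{\ell_k} - q^{-\ell_k})$, the pure $E$- and $F$-terms cancel and I expect to obtain
\[
[B_j, B_{-j}]_{\text{super}} = \va_j \frac{K_{-j} - K_{-j}^{-1}}{q^{\ell_{-j}} - q^{-\ell_{-j}}} K_j^{-1} - (-1)^{p(j)} \va_{-j} \frac{K_j - K_j^{-1}}{q^{\ell_j} - q^{-\ell_j}} K_{-j}^{-1}.
\]

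Next I would apply Lemma \ref{lemma:ljl-j}, which gives $\ell_{-j} = (-1)^{p(j)} \ell_j$, and split into two cases. When $p(j) = 1$, one obtains $K_j^{-1} K_{-j}^{-1} = q^{-\ell_j(h_j - h_{-j})}$; since $h_j - h_{-j} = h_j - w_\bu \tau h_j \in Y^\imath$, the element lies in $\U^{\imath 0}$ unconditionally and there is nothing more to prove. When $p(j) = 0$, we have $\ell_j = \ell_{-j}$, and the identity above rewrites as
\[
[B_j, B_{-j}] = \tfrac{1}{q^{\ell_j} - q^{-\ell_j}}\bigl(\va_j K_{-j} K_j^{-1} - \va_{-j} K_j K_{-j}^{-1} + (\va_{-j} - \va_j) K_j^{-1} K_{-j}^{-1}\bigr).
\]
Here $K_j K_{-j}^{-1} = q^{\ell_j(h_j - h_{-j})} \in \U^{\imath 0}$ and so is its inverse $K_{-j} K_j^{-1}$. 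Since the left-hand side lies in $\Uinew$, the hypothesis $\va_j \neq \va_{-j}$ forces $K_j^{-1} K_{-j}^{-1} \in \Uinew$; being a pure Cartan element, it then lies in $\U^{\imath 0}$.

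The main obstacle is the super-commutator computation itself: the sign bookkeeping (coming from the $(-1)^{p(j)p(-j)}$ factor and the super-rearrangements in the four-term expansions) and the $q$-powers obtained when pushing $K_j^{\pm 1}$ past $E_j$ need to be tracked carefully, especially because $p(j)$ can be either even or odd with opposite behaviour of $\ell_{-j}$ via Lemma \ref{lemma:ljl-j}. Once the closed form of $[B_j, B_{-j}]_{\text{super}}$ is in hand, isolating $K_j^{-1} K_{-j}^{-1}$ as the unique Cartan term not automatically in $\U^{\imath 0}$ is immediate.
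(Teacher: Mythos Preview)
Your approach is the same as the paper's: compute the super-commutator $B_j B_{-j} - (-1)^{p(j)} B_{-j} B_j$, express it in terms of Cartan elements, and invoke Lemma~\ref{lemma:ljl-j} to isolate the $K_j^{-1} K_{-j}^{-1}$ contribution. The paper records only the commutator identity (with the opposite sign on the $\va_{-j}$-term; your sign is the correct one) and does not carry out your case split on $p(j)$.

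One caveat: your closing inference ``being a pure Cartan element, it then lies in $\U^{\imath 0}$'' is circular. The paragraph immediately following the lemma uses it precisely to argue that the desirable property $\Ui \cap \U^{0} = \U^{\imath 0}$ \emph{fails} when $\va_j \neq \va_{-j}$; so one cannot appeal to that equality here. What both your computation and the paper's actually establish is $K_j^{-1} K_{-j}^{-1} \in \Ui$, and the ``$\U^{\imath 0}$'' in the lemma's conclusion is almost certainly a misprint for ``$\Ui$'' --- this reading is the only one consistent with the paper's proof and with how the lemma is applied afterward. Your observation that for $p(j)=1$ one has $K_j^{-1} K_{-j}^{-1} = q^{-\ell_j(h_j - h_{-j})} \in \U^{\imath 0}$ unconditionally is correct and a useful addition not made explicit in the paper.
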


\begin{proof}
This claim follows from the relation
\begin{align*}
    B_jB_{-j}-(-1)^{p(j)}B_{-j}B_j=\va_{-j}(-1)^{p(j)}\frac{K_j-K_j^{-1}}{q^{\ell_j}-q^{-\ell_j}}K^{-1}_{-j}+\va_j\frac{K_{-j}-K_{-j}^{-1}}{q^{\ell_{-j}}-q^{-\ell_{-j}}}K^{-1}_{j}
\end{align*}
According to Lemma \ref{lemma:ljl-j} we have $\ell_j=(-1)^{p(j)}\ell_{-j}$. Thus $q^{\ell_{-j}}-q^{-\ell_{-j}}=(-1)^{p(j)}(q^{\ell_j}-q^{-\ell_j})$. Hence the lemma follows.
\end{proof}

By the above lemma we see that $\Uinew\cap \U^{0}=\U^{\imath0}$ can only be satisfied if the parameters satisfy $\va_j=\va_{-j}$ for $j\in I_\circ\backslash \{\pm \pt\}$. From now on we assume the parameters $\{\va_j\}$ always satisfy this condition.

Furthermore, we determine the action of $T_{w_\bu}$ on $\U_\bu$.
\begin{lemma}
\label{lemma:T_wblack}
 For all $j\in I_\bu$, we have
 \begin{equation}
     \begin{aligned}
         T_{w_\bu}(E^{\texttt {X}}_j)=-F_{-j}K_{-j},\quad T_{w_\bu}(F^{\texttt {X}}_j)=-K^{-1}_{-j}E_{-j},\quad
         T_{w_\bu}(K^{\texttt {X}}_j)=K^{-1}_{-j},\\
         T^{-1}_{w_\bu}(E^{\texttt {X}}_j)=-K^{-1}_{-j}F_{-j},\quad T^{-1}_{w_\bu}(F^{\texttt {X}}_j)=-E_{-j}K_{-j},\quad
         T^{-1}_{w_\bu}(K^{\texttt {X}}_j)=K^{-1}_{-j}.
     \end{aligned}
 \end{equation}
\end{lemma}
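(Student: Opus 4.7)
The plan is to isolate the "last" step of $w_\bu$ corresponding to the reflection at position $-j$ and reduce the bulk of the computation to an application of Lemma~\ref{lemma:walpha=beta}. Concretely, I would first exhibit a reduced factorization $w_\bu = s_{-j} w'$ with $w' := s_{-j} w_\bu$ and $\ell(w') = \ell(w_\bu)-1$. This is automatic: in the type-A Weyl group of the Levi attached to $I_\bu$, the longest element $w_\bu$ is an involution and acts on simple roots (at the abstract positional level) by $\alpha_k \mapsto -\alpha_{-k}$ for $k\in I_\bu$, so $w_\bu^{-1}(\alpha_{-j}) = w_\bu(\alpha_{-j}) = -\alpha_j < 0$, which gives $\ell(s_{-j} w_\bu) = \ell(w_\bu)-1$. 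Consequently $T_{w_\bu} = T_{-j}\circ T_{w'}$.

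Next I would verify that $w'$ maps $\tilde\alpha_j$ to the simple root at position $-j$ of the intermediate diagram $\Gamma' := w'(\tilde\Gamma)$. Indeed
\[
w'(\tilde\alpha_j) \;=\; s_{-j}\bigl(w_\bu(\tilde\alpha_j)\bigr) \;=\; s_{-j}(-\alpha_{-j}) \;=\; \alpha_{-j},
\]
which is a positive simple root in $\Gamma'$. Lemma~\ref{lemma:walpha=beta} (and its evident analogues for $F$ and $K$, obtained by applying $\sigma$ and by tracking the Cartan part) then yields
\[
T_{w'}(\tilde E_j) = E_{\Gamma',-j}, \qquad T_{w'}(\tilde F_j) = F_{\Gamma',-j}, \qquad T_{w'}(\tilde K_j) = K_{\Gamma',-j}.
\]

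Finally, I would apply the single braid operator $T_{-j}$ using the $i=j$ case of Theorem~\ref{thm:braid}. Since $s_{-j}(\Gamma') = \Gamma$ by our reduced factorization, this gives
\[
T_{-j}(E_{\Gamma',-j}) = -F_{-j}K_{-j},\quad T_{-j}(F_{\Gamma',-j}) = -K_{-j}^{-1}E_{-j},\quad T_{-j}(K_{\Gamma',-j}) = K_{-j}^{-1},
\]
which combined with the previous step yields the three identities for $T_{w_\bu}$ in the lemma. The formulas for $T_{w_\bu}^{-1}$ then follow by symmetrically decomposing $w_\bu^{-1} = w'^{-1} s_{-j}$ and invoking the $T_{i,-e}'$ variants of Theorem~\ref{thm:braid}, whose $i=j$ entries produce the asserted right-hand sides $-K_{-j}^{-1}F_{-j}$, $-E_{-j}K_{-j}$, and $K_{-j}^{-1}$.

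The main obstacle I anticipate is bookkeeping the signs and Cartan normalizations. The braid formulas of Theorem~\ref{thm:braid} carry prefactors $(-1)^{p_Y(i)}$ and the Cartan constants $\ell_i$ are not uniform under the involution $j \mapsto -j$ (cf. Lemma~\ref{lemma:ljl-j}), so it must be checked that the parity of the node at position $-j$ in $\Gamma'$ and the $\ell$-exponents combine to produce precisely $-F_{-j}K_{-j}$ and $K_{-j}^{-1}$ rather than twisted versions; the assumption \eqref{eq:assumption} on $I_\bu$ and a short parity argument using Lemma~\ref{eq:parity} tracking how the intermediate presentation $\Gamma'$ differs from $\Gamma$ should close this gap.
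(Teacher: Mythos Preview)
Your approach is essentially the same as the paper's: the paper simply cites \cite[Lemma~3.4]{Ko12} together with Lemma~\ref{lemma:walpha=beta}, and Kolb's argument is precisely the reduced factorization $w_\bu = s_{-j}w'$ (with $w'(\alpha_j)=\alpha_{-j}$ simple) followed by a single application of the braid operator at $-j$, exactly as you outline. Your caution about the parity prefactors $(-1)^{p_Y(i)}$ in Theorem~\ref{thm:braid} is appropriate and is the only place where the super setting genuinely differs from \cite{Ko12}; the paper does not spell this out, so your plan to close it via Lemma~\ref{eq:parity} and assumption~\eqref{eq:assumption} is a reasonable addition rather than a deviation.
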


\begin{proof}
The proof of this lemma follows from the same argument in \cite[Lemma 3.4]{Ko14} and Lemma \ref{lemma:walpha=beta}.
\end{proof}

\subsection{Coideal subalgebra property}
\label{sec:coideal}
One of the key properties of the $\imath$quantum group is that it is a coideal subalgebra of the underlying Hopf algebra rather than a Hopf subalgebra. Here we observe such a structure for $\Uinew$ as well. 

\begin{proposition}
\label{prop:coideal}
$\Uinew$ is a right coideal subalgebra of $\U$. 
\end{proposition}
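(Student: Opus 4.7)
The plan is to check $\Delta(x) \in \Uinew \otimes \U$ on each generator $x$ of $\Uinew$, namely $\new$, $q^\mu$ for $\mu \in Y^\imath$, $E_j, F_j$ for $j \in I_\bu$, and $B_j$ for $j \in I_\circ$. The grouplike generators $\new$ and $q^\mu$ ($\mu \in Y^\imath$) are handled immediately from \eqref{eq:comultinew}. For $E_j, F_j$ with $j \in I_\bu$, the formulas
\[
\Delta(E_j) = E_j \otimes 1 + \new^{p(j)} K_j \otimes E_j, \qquad \Delta(F_j) = F_j \otimes K_j^{-1} + \new^{p(j)} \otimes F_j
\]
land in $\Uinew \otimes \U$ provided $K_j^{\pm 1} \in \Uinew$ for $j \in I_\bu$, which I would verify by observing that the coroot $h_j$, together with the Levi coroots $\{h_k : k \in I_\bu\}$, generates a sublattice projecting into $Y^\imath$; combined with the $K_k$ for $k \in I_\bu$ produced inductively from the relation $[E_k,F_k] = (K_k - K_k^{-1})/(q^{\ell_k} - q^{-\ell_k})$, this recovers $K_j$ inside $\Uinew$.

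The substantive case is $B_j$ for $j \in I_\circ$. Using $B_j = F_j + \va_j T_{w_\bu}(\tilde E_{\tau j}) K_j^{-1}$ one splits
\[
\Delta(B_j) = \Delta(F_j) + \va_j\, \Delta\bigl(T_{w_\bu}(\tilde E_{\tau j})\bigr)\, \Delta(K_j^{-1}),
\]
and the key intermediate step I would establish is a formula of the schematic shape
\[
\Delta\bigl(T_{w_\bu}(\tilde E_{\tau j})\bigr) = T_{w_\bu}(\tilde E_{\tau j}) \otimes K_{\ast} + \new^{p(j)} K_{\ast\ast} \otimes T_{w_\bu}(\tilde E_{\tau j}) + R_j,
\]
where $K_\ast, K_{\ast\ast}$ are explicit Cartan monomials determined by the weight $w_\bu(\alpha_{\tau j})$, and the error $R_j$ has every left tensor factor in $\U_\bu \cdot \U^{\imath 0} \subseteq \Uinew$. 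Multiplying on the right by $\Delta(K_j^{-1}) = K_j^{-1} \otimes K_j^{-1}$, scaling by $\va_j$, and combining with $\Delta(F_j)$, the two leading tensors reassemble as $B_j \otimes K_j^{-1}$ and $\new^{p(j)} \otimes (\text{element of } \U)$, while the $R_j$-contribution stays in $\Uinew \otimes \U$. I would prove the displayed identity by induction on the length of a reduced expression $w_\bu = s_{k_1} \cdots s_{k_\ell}$, applying at each step the explicit action of $T_{k_i}$ from Theorem \ref{thm:braid}, the braid relations from Proposition \ref{prop:braid}, and the Levi-part formulas of Lemma \ref{lemma:T_wblack}.

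The hard part will be the super sign bookkeeping: each $T_k$ with $k \in I_\bu$ odd introduces parity twists absent from the classical Letzter--Kolb setting (\cite{Let02, Ko12}), and the comultiplication \eqref{eq:comultinew} itself carries the parity operator $\new$, which does not commute naively with braid operators through the inductive step. To bring these under control I would invoke the evenness condition on odd simple roots in $I_\bu$ from \eqref{eq:assumption}, the identification $K_j^{-1} K_{-j}^{-1} \in \U^{\imath 0}$ from Lemma \ref{lemma:BjB-j}, and the explicit $T_{w_\bu}$-formulas on $\U_\bu$ in Lemma \ref{lemma:T_wblack}. The base case, namely $j \in I_\circ$ adjacent to a single black node, can be computed directly; longer chains of black nodes then reduce to it by moving one simple reflection at a time along a reduced expression for $w_\bu$. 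Once the identity for $\Delta(T_{w_\bu}(\tilde E_{\tau j}))$ is in hand, the coideal property follows by substitution into the split expression for $\Delta(B_j)$.
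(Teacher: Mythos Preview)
Your plan matches the paper's approach: both reduce to controlling $\Delta\bigl(T_{w_\bu}(\tilde E_{\tau j})\bigr)$ for $j\in I_\circ$ and argue by induction along a reduced expression for $w_\bu$, exactly as in the paper's claim \eqref{eq:deltagoal'}. A few points where your sketch drifts from what actually has to be done:

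\textbf{(i) The shape of the error term.} Your assertion that ``$R_j$ has every left tensor factor in $\U_\bu\cdot\U^{\imath 0}$'' is not correct as stated. Before multiplying by $\Delta(K_j^{-1})$, the non-leading left tensor factors of $\Delta\bigl(T_{w_\bu}(\tilde E_{\tau j})\bigr)$ carry a Cartan piece of the form $K_{w_\bu(\alpha_{\tau j})}$ times Levi Cartan elements, and $K_{w_\bu(\alpha_{\tau j})}$ is \emph{not} in $\U_\bu\cdot\U^{\imath 0}$. The paper tracks this precisely: its inductive statement \eqref{eq:deltagoal'} asserts left factors lie in $\U_\bu^+\U^0\new^{p(\cdot)}K_{\cdot}$, and the point of the induction is to verify this specific Cartan factor propagates correctly. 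Only after the final multiplication by $K_j^{-1}$ does the $K$-part collapse into $\U^{\imath 0}$, yielding \eqref{eq:comultiBj}. Likewise your claim that the second leading tensor ``reassembles as $\new^{p(j)}\otimes(\cdot)$'' is off: $K_{\ast\ast}K_j^{-1}$ is not $1$ but a product of Levi $K_k$'s (equivalently an element of $\U^{\imath 0}$), which is still fine but not what you wrote.

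\textbf{(ii) Irrelevant citations.} Lemma~\ref{lemma:BjB-j} plays no role here; it concerns the failure mode when $\va_j\neq\va_{-j}$ and is used only to motivate the parameter constraint, not in the coideal proof.

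\textbf{(iii) $K_j\in\Uinew$ for $j\in I_\bu$.} The commutator $[E_j,F_j]$ produces $K_j-K_j^{-1}$, not $K_j$; you cannot recover $K_j$ this way. The paper sidesteps this by asserting ``$\U_\bu$ and $\U^{\imath 0}$ are Hopf subalgebras of $\U$,'' implicitly using that the Levi coroots lie in $Y^\imath$ (since $w_\bu\tau$ acts as $-1$ on the Levi coroot lattice).

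In short: same strategy, but the paper's proof is really about getting the Cartan bookkeeping in the induction exactly right, and your schematic formula glosses over precisely that.
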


\begin{proof}
It is not hard to show that $\U_\bu$ and $\U^{\imath0}$ are Hopf subalgebras of $\U$. Thus it suffices to show that 
\begin{equation}
\label{eq:deltanewBj}
\Delta(B_j)\in \Uinew\otimes \U,\ \forall j\in I_\circ.
\end{equation}

Recall $\Delta$ from \eqref{eq:comultinew}. It is straightforward to compute for $j\in I_\circ\backslash\{\pm\pt\}$ that
\begin{equation}
\label{eq:delta1}
    \Delta(B_j)-B_j\otimes K^{-1}_j\in \U^{+}_\bu\U^{\imath0}\otimes \U.
\end{equation}

Now suppose $j=-\pt$. For any reduced expression $\underline w^{(y)}_\bu=s_{y_1}\cdots s_{y_\ell}$ of $w_\bu$, we define \[Y^{(y)}_{t}=s_{y_{t}}\cdots s_{y_\ell}(X),\quad 1\leqslant t\leqslant \ell.\]
More specifically, in this case we choose \[\underline w^{(y)}_\bu=(s_{\pt-1}s_{\pt-2}\cdots s_{-\pt+1})\cdots (s_{\pt-1}s_{\pt-2})(s_{\pt-1}).\]

For convention we drop $(y)$ in the following proof and we define \[\alpha^{ \texttt{D}}_{k}:=\alpha_{\texttt{D},k},\quad \text{ for any }  D\in \mathcal D_{\mathfrak m,\mathfrak n}.\]

Then we observe that
\begin{equation}
    \label{eq:Tw0E-n}
\begin{aligned}
&T_{w_\bu}(E^{\texttt{X}}_{-\pt})=T_{s_{\pt-1}}\cdots T_{s_{-\pt+1}}(E^{\texttt{Y}_{2\pt}}_{-\pt})\\
=& T_{s_{\pt-1}}\cdots T_{s_{-\pt+2}}(E_{-\pt+1}^{\texttt{Y}_{2\pt-1}}E_{-\pt}^{\texttt{Y}_{2\pt-1}}\\
&\quad-(-1)^{p(\alpha_{-\pt+1}^{\texttt{Y}_{2\pt-1}})p(\alpha_{-\pt}^{\texttt{Y}_{2\pt-1}})}q^{(\alpha_{-\pt+1}^{\texttt{Y}_{2\pt-1}},\alpha_{-\pt}^{\texttt{Y}_{2\pt-1}})}E_{-\pt}^{\texttt{Y}_{2\pt-1}}E_{-\pt+1}^{\texttt{Y}_{2\pt-1}})\\
=& T_{s_{\pt-1}}\cdots T_{s_{-\pt+2}}(E_{-\pt+1}^{\texttt{Y}_{2\pt-1}})E_{-n}-zE_{-n}T_{s_{\pt-1}}\cdots T_{s_{-\pt+2}}(E_{-\pt+1}^{\texttt{Y}_{2\pt-1}})
\end{aligned}
\end{equation}
where $z=(-1)^{p(\alpha_{-\pt+1}^{\texttt{Y}_{2\pt-1}})p(\alpha_{-\pt}^{\texttt{Y}_{2\pt-1}})}q^{(\alpha_{-\pt+1}^{\texttt{Y}_{2\pt-1}},\alpha_{-\pt}^{\texttt{Y}_{2\pt-1}})}.$

By \eqref{eq:Tw0E-n} we see that in order to prove \eqref{eq:delta1} for $j=-\pt$, it suffices to prove that
\begin{equation}
    \label{eq:deltagoal}
    \Delta(T_{s_{\pt-1}}\cdots T_{s_{-\pt+1}}(E^{\texttt{Y}_{2\pt}}_{-\pt}))\in T_{s_{\pt-1}}\cdots T_{s_{-\pt+1}}(E^{\texttt{Y}_{2\pt}}_{-\pt})\otimes 1+\U^{+}_\bu\U^{0}K_{\pt}\otimes \U.
\end{equation}
We prove \eqref{eq:deltagoal} by proving the following claim. 

\underline{Claim}: For any $1\leqslant k\leqslant 2\pt-1$, we have
\begin{equation}
    \label{eq:deltagoal'}
    \begin{aligned}
    &\Delta(T_{s_{\pt-1}}\cdots T_{s_{-\pt+k}}(E^{\texttt{Y}_{2\pt+1-k}}_{-\pt+k-1}))\\
    &\qquad \in T_{s_{\pt-1}}\cdots T_{s_{-\pt+k}}(E^{\texttt{Y}_{2\pt+1-k}}_{-\pt+k-1})\otimes 1+\U^{+}_\bu\U^{0}\new^{p(-\pt+k-1)}K_{-\pt+k-1}\otimes \U  .
    \end{aligned}
\end{equation}
We prove \eqref{eq:deltagoal} through induction on $2\pt-1-k$. When $k=2\pt-1$, we have \begin{align*}
\Delta(T_{s_{\pt-1}}(E^{\texttt{Y}_2}_{\pt-2}))=&\Delta(E_{\pt-1}E_{\pt-2}-(-1)^{p(\alpha_{\pt-1})p(\alpha_{\pt-2})}q^{(\alpha_{\pt-1},\alpha_{\pt-2})}E_{\pt-2}E_{\pt-1})\\
\in&T_{s_{\pt-1}}(E^{\texttt{Y}_2}_{\pt-2})\otimes 1+\U^{+}_\bu\U^{0}\new^{p(\pt-2)}K_{\pt-2}\otimes \U .
\end{align*}

Now suppose the claim is true for $k=2$, that is 
\begin{equation}
    \label{eq:deltak=2}
    \Delta(T_{s_{\pt-1}}\cdots T_{s_{-\pt+2}}(E_{-\pt+1}^{\texttt{Y}_{2\pt-1}}))=T_{s_{\pt-1}}\cdots T_{s_{-\pt+2}}(E_{-\pt+1}^{\texttt{Y}_{2\pt-1}})\otimes 1+\sum_\ell x_\ell \new^{p(-\pt+1)}K_{-\pt+1}\otimes y_\ell,
\end{equation}
for some $x_\ell\in \U^{+}_\bu\U^{0},\ y_\ell\in \U$.

In view of \eqref{eq:comultinew}, \eqref{eq:Tw0E-n} and \eqref{eq:deltak=2}, we see that
\begin{equation}
    \label{eq:delta2}
\begin{aligned}
&\Delta(T_{s_{\pt-1}}\cdots T_{s_{-\pt+2}}(E_{-\pt+1}^{\texttt{Y}_{2\pt-1}}))\Delta(E_{-n})\\
\in&T_{s_{\pt-1}}\cdots T_{s_{-\pt+2}}(E_{-\pt+1}^{\texttt{Y}_{2\pt-1}})E_{-n}\otimes 1\\
&\quad+\sum_\ell x_\ell\new^{p(-\pt+1)} K_{-\pt+1}E_{-\pt}\otimes y_\ell+\U^{+}_\bu\U^{0}\new^{p(-\pt)}K_{-n}\otimes \U, \\
&\Delta(E_{-n})\Delta(T_{s_{\pt-1}}\cdots T_{s_{-\pt+2}}(E_{-\pt+1}^{\texttt{Y}_{2\pt-1}}))\\
\in&E_{-n}T_{s_{\pt-1}}\cdots T_{s_{-\pt+2}}(E_{-\pt+1}^{\texttt{Y}_{2\pt-1}})\otimes 1\\
&\quad+\sum_\ell E_{-\pt}x_\ell \new^{p(-\pt+1)}K_{-\pt+1}\otimes y_\ell+\U^{+}_\bu\U^{0}\new^{p(-\pt)}K_{-n}\otimes \U.
\end{aligned}
\end{equation}
By comparing both sides of \eqref{eq:deltak=2} we see that each $x_\ell$ is a monomial of the form $a_{(i)}Z_{i_1}\cdots Z_{i_{2\pt-2}}$ where \[Z_{i_t}\in \{ E_{i_t},\new^{p(i_{t})}K_{i_{t}}\},\quad a_{(i)}\in \Q(q).\] and $\{i_1,\ldots,i_{2\pt-2}\}=\{\pt-1,\ldots,-\pt+2\}$. Hence each $x_\ell\new^{p(-\pt+1)}$ supercommutes with $E_{-\pt}$ as follows:
\[E_{-\pt}x_\ell\new^{p(-\pt+1)}=(-1)^{p(-\pt)}(-1)^{p(\pt-1)+\cdots+p(-\pt+2)}x_\ell E_{-\pt}\new^{p(-\pt+1)}=x_\ell \new^{p(-\pt+1)}E_{-\pt}.\]
The last equality comes from \eqref{eq:assumption}. Moreover, we compute directly that
\begin{equation}
\label{eq:delta3}
     p(\alpha_{-\pt+1}^{\texttt{Y}_{2\pt-1}})=p(s_{-\pt+1}(s_{\pt-1}\cdots s_{-\pt+2})\alpha^{\texttt{X}}_{-\pt+1})=0,\quad (\alpha_{-\pt+1}^{\texttt{Y}_{2\pt-1}},\alpha_{-\pt}^{\texttt{Y}_{2\pt-1}})=(\alpha_{-\pt+1},\alpha_{-\pt}).
\end{equation} 

Now consider $\Delta(T_{w_\bu}(E^{\texttt{X}}_{-\pt}))$, from \eqref{eq:Tw0E-n}, \eqref{eq:delta2} and \eqref{eq:delta3} we have
\begin{equation}
\label{eq:delta4}
\begin{aligned}
    &\Delta(T_{w_\bu}(E^{\texttt{X}}_{-\pt}))=\Delta(T_{s_{\pt-1}}\cdots T_{s_{-\pt+1}}(E^{\texttt{Y}_{2\pt}}_{-\pt}))\\
    =&\Delta(T_{s_{\pt-1}}\cdots T_{s_{-\pt+2}}(E_{-\pt+1}^{\texttt{Y}_{2\pt-1}}))\Delta(E_{-n})-z\Delta(E_{-n})\Delta(T_{s_{\pt-1}}\cdots T_{s_{-\pt+2}}(E_{-\pt+1}^{\texttt{Y}_{2\pt-1}}))\\
    &\quad\in T_{w_\bu}(E^{\texttt{X}}_{-\pt})\otimes 1+\U^{+}_\bu\U^{0}\new^{p(-\pt)}K_{-n}\otimes \U.
\end{aligned}
\end{equation}
Thus the claim is proved. Similarly for $j=\pt$ we also have
\begin{equation}
\label{eq:delta5}
    \Delta(T_{w_\bu}(E^{\texttt{X}}_{\pt}))\in T_{w_\bu}(E^{\texttt{X}}_{\pt})\otimes 1+\U^{+}_\bu\U^{0}K_{\pt}\otimes \U
\end{equation}

Thus we conclude from \eqref{eq:delta1}, \eqref{eq:delta4} and \eqref{eq:delta5} that 
\begin{equation}
\label{eq:comultiBj}
\Delta(B_j)-B_j\otimes K^{-1}_j\in \U^{+}_\bu\U^{\imath0}\otimes \U \quad\forall j\in I_\circ. 
\end{equation}
This proves the proposition. 
\end{proof}

\subsection{Quantum $\imath$Serre relations}
Recall the Serre relations $(R5)-(R11)$ from \eqref{eq:Urelation}. In this subsection we explore the Serre relations of $\Uinew$. For convention, we extend the definition of $B_j$ by setting $B_j=F_j$ for $j\in I_\bu$. 

The triangular decomposition \eqref{eq:triangle} implies an isomorphism between vector spaces
\begin{equation}
    \U^+\otimes \U^0\otimes S(\U^-)\cong \U.
\end{equation}
This leads to a direct sum decomposition 
\begin{equation}
\label{eq:Pprojection}
    \U=\bigoplus_{\mu \in P^\vee}\U^+K_\mu S(\U^-)\oplus\U^+K_\mu\new S(\U^-).
\end{equation}
For any $\mu \in P^\vee$, let $P_\mu:\U \to \U^+K_\mu S(\U^-)\oplus\U^+K_\mu \new S(\U^-)$ denote the projection with respect to \eqref{eq:Pprojection}. Recall \eqref{eq:KQ}, we also use the symbol $P_\lambda$ for $\lambda\in Q$ to denote the projection $P_\lambda:\U \to \U^+K_{\lambda} S(\U^-)\oplus\U^+K_\lambda \new S(\U^-)$ as above.

On the other hand, let $Q^+:=\N \Pi$, we also have the decomposition 
\begin{equation}
\label{eq:piprojection}
    \U=\bigoplus_{\alpha,\beta\in Q^+}U_\alpha^+\U^0\U_{-\beta}^-
\end{equation}
We let $\pi_{\alpha,\beta}:\U\to U_\alpha^+\U^0\U_{-\beta}^-$ denote the projection with respect to \eqref{eq:piprojection}.

The fact that 
\begin{equation}
\label{eq:deltaP}
    \Delta\circ P_\mu(x)=(id\otimes P_\mu)\Delta(x),\ \forall \mu \in P^\vee,x\in \U
\end{equation}
implies the following lemma.
\begin{lemma}{\rm \cite[Lemma 5.9]{Ko14}}
    We have $\Ui=\bigoplus_{\mu \in P^\vee}P_{\mu}(\Ui)$.
\end{lemma}

The following lemma gives the first Serre type relation of $\Ui$.
\begin{lemma}
\label{lem:Bj2}
For $j\in \Iodd$, we have $B_j^2=0$.
\end{lemma}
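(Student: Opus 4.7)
The plan is to split into two cases depending on whether $j\in I_\bu$ or $j\in I_\circ$. When $j\in I_\bu\cap \Iodd$, the convention extension gives $B_j=F_j$, and relation (R6) of \eqref{eq:Urelation} immediately yields $F_j^2=0$. All of the substance is therefore in the case $j\in I_\circ\cap \Iodd$, which I now focus on.

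For such $j$, set $E':=T_{w_\bu}(\tilde E_{\tau j})$. By the diagram assumption \eqref{eq:assumption}, $p(\tau j)=p(j)=1$, so $\tilde E_{\tau j}^2=0$ by (R6); since $T_{w_\bu}$ is an algebra isomorphism (Theorem \ref{thm:braid}) that preserves parity on generators (inspect the formulas of Theorem \ref{thm:braid} together with Lemma \ref{eq:parity}), one obtains both $(E')^2=T_{w_\bu}(\tilde E_{\tau j}^2)=0$ and the fact that $E'$ is odd. Expanding $B_j^2=(F_j+\va_j E'K_j^{-1})^2$, using (R4) together with $(\alpha_j,\alpha_j)=0$ (which holds since $j$ is odd) to commute $K_j^{-1}$ past $F_j$, and the weight commutation $K_j^{-1}E'=q^{-(\alpha_j,\mu)}E'K_j^{-1}$ where $\mu$ is the weight of $E'$, one arrives at
\begin{equation*}
B_j^2 = F_j^2 + \va_j\,\{F_j,E'\}K_j^{-1} + \va_j^2\,q^{-(\alpha_j,\mu)}(E')^2K_j^{-2}.
\end{equation*}
The first and third summands vanish, so the proof reduces to establishing the super-anticommutator identity $\{F_j,E'\}:=F_jE'+E'F_j=0$.

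The main step, and the main obstacle I anticipate, is proving this identity. By Lemma \ref{lemma:walpha=beta}, since $w_\bu \tilde\alpha_{\tau j}$ is a positive root of $\tilde\Gamma$, we have $E'\in\U(\Gamma)^+$, so $E'$ is a $\Q(q)$-linear combination of monomials in the generators $\{E_k:k\in I\}$ of $\U(\Gamma)^+$. The central technical claim is that only indices $k\in I_\bu\cup\{\tau j\}$ appear in these monomials; in particular $E_j$ does not occur. Equivalently, when the weight $\tilde\alpha_{\tau j}$ of $E'$ is expanded in the simple-root basis $\{\alpha_k=w_\bu\tilde\alpha_k\}$ of $\Gamma$, its support lies in $I_\bu\cup\{\tau j\}$. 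This is an elementary but slightly delicate bookkeeping exercise carried out directly from \eqref{eq:varpi}: $w_\bu$ acts as the longest element of a type-A Weyl group on the sublattice spanned by $\{\tilde\alpha_k:k\in I_\bu\}$, sending $\tilde\alpha_k\mapsto -\tilde\alpha_{-k}$ for $k\in I_\bu$ and fixing $\tilde\alpha_k$ for $|k|>\pt$; expressing $\tilde\alpha_{\tau j}$ in the new basis yields support $\{\tau j\}$ when $|\tau j|>\pt$ and support $I_\bu\cup\{\tau j\}$ when $|\tau j|=\pt$, and in both cases $j=-\tau j$ is excluded. Granted this, each $E_k$ appearing in $E'$ has $k\neq j$, so (R5) shows $F_j$ super-commutes with every such $E_k$, hence with every monomial in $E'$. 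Since $F_j$ and $E'$ are both odd, super-commuting forces $F_jE'+E'F_j=0$, completing the proof.
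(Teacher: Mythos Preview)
Your proof is correct and follows essentially the same approach as the paper's: expand $B_j^2$ and observe that each of the three summands vanishes. The paper's proof is extremely terse—it simply writes $B_j^2 = F_j^2 + \va_j[F_j,T_{w_\bu}(\tilde E_{-j})]K_j^{-1} + \va_j^2 T_{w_\bu}(\tilde E_{-j})^2 K_j^{-2}=0$ without justifying why the supercommutator $[F_j,E']$ vanishes—whereas you supply that justification explicitly via the support of $E'$ in the generators $\{E_k:k\in I_\bu\cup\{\tau j\}\}$, which is exactly what is implicitly used (and is visible in the computations around \eqref{eq:Tw0E-n}--\eqref{eq:delta5} and Lemma~\ref{lem:Tw}).
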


\begin{proof}
It suffices to check for $j\in I_\circ\cap \Iodd$. Because of \eqref{eq:assumption}, we always have $j\neq -j$. Hence
\begin{align*}
    B_j^2=&(F_j+\va_jT_{w_\bu}(E^{\texttt{X}}_{-j})K_j^{-1})^2\\
    =&F_j^2+\va_j[F_j,T_{w_\bu}(E^{\texttt{X}}_{-j})]K_j^{-1}+\va_j^2 T_{w_\bu}(E^{\texttt{X}}_{-j})^2K_j^{-2}\\
    =0.
\end{align*}
This proves the lemma.
\end{proof}

We define two special weights $\lambda_{j,k}=(1+|(\alpha_j,\alpha_k)|)\alpha_j+\alpha_k$ and $\lambda_j=2\alpha_j+\alpha_{j-1}+\alpha_{j+1}$ in order to apply the projection technique in \cite{Ko14}. Furthermore, we define 
\[S(x_1,x_2)=x_1^2x_2-[2]x_1x_2x_1+x_2x_1^2,\quad \forall x_1,x_2\in \U\]
and recall $S_{t_1,t_2}(x_1,x_2,x_3)$ from \eqref{eq:superSerre}.

\begin{lemma}
\label{lemma:blackSerre}
 (1) Assume $j\in I_\bu,\ k\in I$ and $j\nsim k\in I$, then we have $[B_j,B_k]=B_jB_k-(-1)^{p(j)p(k)}B_kB_j=0$.
 
 (2) For $j\in I_\bu\cap\Ieven$ and $k\sim j$, we have $S(B_j,B_k)=0$.
 
 (3) For $j\in I_\bu\cap\Iodd$ and $k\sim j\sim \ell$, we have $S_{p(k),p(\ell)}(B_k,B_j,B_\ell)=0$.
\end{lemma}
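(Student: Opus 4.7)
The plan is to verify all three identities by case analysis on the positions of $k$ (and $\ell$, in case (3)) relative to the partition $I = I_\bu \cup I_\circ$. When every index involved lies in $I_\bu$, one has $B_i = F_i$ for each relevant $i$, and statements (1), (2), (3) reduce directly to the super-Serre relations (R7), (R9), (R11) of $\U(\Gamma)$ applied to negative generators. So the analysis concentrates on the mixed cases where some $k$ or $\ell$ lies in $I_\circ$.

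In each mixed case, I would expand $B_k = F_k + \va_k T_{w_\bu}(\tilde E_{\tau(k)}) K_k^{-1}$ (and similarly for $B_\ell$). Since the super-commutator in (1), the polynomial $S(\cdot, \cdot)$ in (2), and $S_{p(k), p(\ell)}(\cdot, \cdot, \cdot)$ in (3) are multilinear in their non-middle arguments, the expansion separates into the $F$-only contribution, which vanishes by the super-Serre relations of $\U(\Gamma)$ as above, plus cross terms containing at least one factor of $T_{w_\bu}(\tilde E_{\tau(\cdot)}) K_\cdot^{-1}$. Three ingredients control these cross terms: (a) the adjacency hypotheses yield $(\alpha_j, \alpha_k) = 0$ in case (1), so $F_j$ commutes with $K_k^{\pm 1}$, and analogous weight statements simplify cases (2) and (3); (b) for $j \in I_\bu$ the identification $F_j = -T_{w_\bu}(\tilde E_{-j}) K_j^{-1}$ extracted from Lemma \ref{lemma:T_wblack} recasts every $F_j$ appearing in a cross term as the $T_{w_\bu}$-image of an $\tilde E$-generator; (c) applying the inverse braid isomorphism $T_{w_\bu}^{-1} : \U(\Gamma) \to \U(\tilde\Gamma)$ then turns each cross term into a word in $\tilde E$-generators and toral elements in $\U(\tilde\Gamma)$, whose vanishing follows from the super-Serre relations (R5)--(R11) of $\U(\tilde\Gamma)$. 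Here the Satake diagram symmetry $\tau$ guarantees that the adjacency conditions between the translated indices $-j$ and $\tau(k)$ mirror those between $j$ and $k$, so the relevant Serre relation of $\U(\tilde\Gamma)$ applies.

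The subtlest case is (3) with $j$ at the boundary of $I_\bu$, namely $j = \pm(\pt-1)$, and exactly one of $k, \ell$ being the adjacent white node $\pm\pt \in I_\circ$ (the interior case $k, \ell \in I_\bu$ is already covered). Here $S_{p(k), p(\ell)}$ expands into five cubic cross terms, and the parity bookkeeping is delicate: odd reflections change root parities by Lemma \ref{eq:parity}, so $p(k)$ and $p(\ell)$ in $\U(\Gamma)$ need not coincide with the parities of the corresponding roots in $\U(\tilde\Gamma)$. The assumption \eqref{eq:assumption}, which requires the number of odd roots in $I_\bu$ to be even, ensures that the accumulated parity shifts along any reduced expression for $w_\bu$ conspire to produce the correct signs so that the transformed cross-term identity coincides with a genuine Serre relation of $\U(\tilde\Gamma)$; verifying that this sign-tracking closes consistently is where the main effort of the proof will go.
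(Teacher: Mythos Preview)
Your plan matches the paper's proof in all essentials: reduce the pure-$I_\bu$ case to the Serre relations for $F$'s, expand $B_k$ (or $B_\ell$) in the mixed case, kill the $F$-only piece by a Serre relation, and in the remaining piece replace each $F_i$ with $-T_{w_\bu}(\tilde E_{-i})K_i^{-1}$ via Lemma~\ref{lemma:T_wblack} before transporting to $\U(\tilde\Gamma)$.

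The one place where you make your life harder than necessary is in case~(3). Since $j\in I_\bu$ and $k,\ell$ are its two neighbors, at most one of them lies in $I_\circ$; say $k\in I_\bu$ and $\ell\in I_\circ$. Then $B_k=F_k$, $B_j=F_j$, and $S_{p(k),p(\ell)}(x_1,x_2,x_3)$ is \emph{linear} in $x_3$, so the expansion of $B_\ell$ gives exactly one cross term, not ``five cubic cross terms.'' In that single cross term you substitute $F_k=-T_{w_\bu}(\tilde E_{-k})K_k^{-1}$ and $F_j=-T_{w_\bu}(\tilde E_{-j})K_j^{-1}$ simultaneously, and since every monomial in $S_{p(k),p(\ell)}$ has the same total weight $\alpha_k+2\alpha_j+\alpha_\ell$, the $K$-factors pull uniformly to the right with a common scalar, leaving
\[
S_{p(k),p(\ell)}\bigl(T_{w_\bu}(\tilde E_{-k}),T_{w_\bu}(\tilde E_{-j}),T_{w_\bu}(\tilde E_{-\ell})\bigr)K_k^{-1}K_j^{-2}K_\ell^{-1}
= T_{w_\bu}\bigl(S_{p(k),p(\ell)}(\tilde E_{-k},\tilde E_{-j},\tilde E_{-\ell})\bigr)K_k^{-1}K_j^{-2}K_\ell^{-1}.
\]
This vanishes by (R10) in $\U(\tilde\Gamma)$ once one checks that the parities match, i.e.\ that $p_\Gamma(k)=p_{\tilde\Gamma}(-k)$, $p_\Gamma(j)=p_{\tilde\Gamma}(-j)$ and $p_\Gamma(\ell)=p_{\tilde\Gamma}(-\ell)$; for indices in $I_\bu$ this follows directly from $\alpha_i=-\tilde\alpha_{-i}$ in \eqref{eq:varpi}, and for the boundary index one uses the parity symmetry forced by \eqref{eq:assumption}. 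So your instinct that \eqref{eq:assumption} is needed here is right, but the ``delicate sign-tracking across five terms'' you anticipate is avoided entirely by working with $S_{p(k),p(\ell)}$ as a whole rather than monomial by monomial. The paper's proof proceeds exactly this way.
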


\begin{proof}
(1) In general we have
\begin{align*}
    [B_j,B_k]=&[F_j,F_k]+\va_k[F_j,T_{w_\bu}(E^{\texttt{X}}_{-k})K_k^{-1}].
\end{align*}
Now if $j,k\in I_\bu$, there is nothing to prove. If $k$ is in $I_\circ$ then in this case we can rewrite $F_j$ as $-T_{w_\bu}(E^{\texttt{X}}_{-j})K_{j}^{-1}$ according to Lemma \ref{lemma:T_wblack} and one computes that \[[T_{w_\bu}(E^{\texttt{X}}_{-j})K_{j}^{-1},T_{w_\bu}(E^{\texttt{X}}_{-k})K_k^{-1}]=T_{w_\bu}([E^{\texttt{X}}_{-j}K^{\texttt{X}}_{-j},E^{\texttt{X}}_{-k}K^{\texttt{X}}_{-k}])=0.\]

(2) If $k\in I_\bu$, there is nothing to prove. Thus we can assume that $k\in I_\circ$. In this case we have
\begin{align*}
    F_{j,k}(B_j,B_k)=&F_{j,k}(F_j,\va_kT_{w_\bu}(E^{\texttt{X}}_{-k})K_k^{-1})\\
    =&\va_kF_{j,k}(T_{w_\bu}(E^{\texttt{X}}_{-j})K_{j}^{-1},T_{w_\bu}(E^{\texttt{X}}_{-k})K_k^{-1})\\
    =&\va_k zF_{j,k}(T_{w_\bu}(E^{\texttt{X}}_{-j}),T_{w_\bu}(E^{\texttt{X}}_{-k}))K_j^{-2}K_k^{-1}
\end{align*}
for some $z\in \Z[q,q^{-1}]$. Since $T_{w_\bu}$ is an algebra homomorphism, we see that $F_{j,k}(B_j,B_k)=0$.

(3) Without loss of generality, we assume $k=j-1$ and $\ell=j+1$. Note that only one of $k$ and $\ell$ can belong to $I_\circ$.
Suppose that $k\in I_\bu,\ \ell\in I_\circ$, then we have
\begin{align*}
    &S_{p(k),p(\ell)}(B_k,B_j,B_\ell)\\
    =&\va_\ell S_{p(k),p(\ell)}(F_k,F_j,T_{w_\bu}(E^{\texttt{X}}_{-\ell})K_\ell^{-1})\\
    =&-\va_\ell S_{p(k),p(\ell)}(T_{w_\bu}(E^{\texttt{X}}_{-k})K_k^{-1},T_{w_\bu}(E^{\texttt{X}}_{-j})K_j^{-1},T_{w_\bu}(E^{\texttt{X}}_{-\ell})K_\ell^{-1})\\
    =&-\va_\ell S_{p(k),p(\ell)}(T_{w_\bu}(E^{\texttt{X}}_{-k}),T_{w_\bu}(E^{\texttt{X}}_{-j}),T_{w_\bu}(E^{\texttt{X}}_{-\ell}))K_k^{-1}K_j^{-2}K_\ell^{-1}=0.
\end{align*}
The case $k\in I_\circ,\ \ell\in I_\bu$ can be proved similarly.
\end{proof}

The following technical lemmas provide key steps in the proof of Lemma~\ref{lemma:whiteSerre}.

\begin{lemma}
(1) For any $j\nsim k\in I$, we have $\pi_{0,0}([B_j,B_k])\in \U^{\imath0}$.

(2) For any $j\sim k$ where $j\in \Ieven$, we have $\pi_{0,0}([B_j,B_k])=0$.

    (3) For any $k\sim j\sim \ell$ where $j\in \Iodd$,  we have $\pi_{0,0}(S_{p(k),p(\ell)}(B_k,B_j,B_\ell))=0$.
\end{lemma}

\begin{proof}
(1) The statement follows from Lemma~\ref{lemma:BjB-j}.

(2) The case when $j\in I_\bu$ follows from Lemma~\ref{lemma:blackSerre}. When $j\in I_\circ$, for weight reason we always have $\pi_{0,0}(S_{p(k),p(\ell)}(B_k,B_j,B_\ell))=0$.

(3) Suppose $k<j<\ell$. The case when $j\in I_\bu$ follows from Lemma~\ref{lemma:blackSerre}. When $j\in I_\circ$, at least one of $k$ and $\ell$ lies in $I_\circ$, hence for weight reason we have $\pi_{0,0}(S_{p(k),p(\ell)}(B_k,B_j,B_\ell))=0$.
\end{proof}

\begin{lemma}
\label{lemma:notinXi}
Assume $k\sim j\sim \ell$ and $j\in \Iodd$. Let $\alpha,\beta\in Q^+$, if $\pi_{\alpha,\beta}(S_{p(k),p(\ell)}(B_k,B_j,B_\ell))\neq 0$, then $\lambda_j-\alpha\notin P_\io$ and $\lambda_j-\beta\notin P_\io$.
\end{lemma}

\begin{proof}
    By Lemma~\ref{lemma:blackSerre}, there is nothing to show if $j\in I_\bu$. Hence we may assume that $j\in I_\circ$. Without loss of generality we can suppose that $k<j<\ell$ and $k\in I_\circ$. Consider first the case $\ell \in I_\bu$. We see that \begin{align*}
        &S_{p(k),p(\ell)}(T_{w_\bu}(E^{\texttt{X}}_{-k})K_{k}^{-1},T_{w_\bu}(E^{\texttt{X}}_{-j})K_{j}^{-1},F_\ell)\\
        =&S_{p(k),p(\ell)}(T_{w_\bu}(E^{\texttt{X}}_{-k})K_{k}^{-1},T_{w_\bu}(E^{\texttt{X}}_{-j})K_{j}^{-1},T_{w_\bu}(E^{\texttt{X}}_{-\ell})K_{\ell}^{-1})=0
    \end{align*}
Hence if $\pi_{\alpha,\beta}(S_{p(k),p(\ell)}(B_k,B_j,B_\ell))\neq 0$, we have $0\leq \beta\leq \lambda_j-\alpha_j$ and $0\leq \alpha\leq -\Theta(\lambda_j-\alpha_j)$. This implies that $\lambda_j-\beta\notin P_\io$ and $\lambda_j-\alpha\notin P_\io$. The case when $\ell \in I_\circ$ can be proved similarly as in \cite[Lemma 5.14]{Ko14}.
\end{proof}

For any $J=(j_1,\ldots,j_r)\in I^r$ define $wt(J)=\sum_{i=1}^r\alpha_{j_i}$ and
\begin{equation}
    E_J=E_{j_1}\cdots E_{j_r},\quad F_J=F_{j_1}\cdots F_{j_r},\quad B_J=B_{j_1}\cdots B_{j_r}.
\end{equation}
In this case we also define $|J|=r$.

\begin{lemma}
\label{lemma:whiteSerre}
(1) Assume $j\nsim k\in I$, then we have $P_{-\lambda_{j,k}}([B_j,B_k])=0$.

(2) Assume $j\sim k$ and $j\in \Ieven$, then we have
$P_{-\lambda_{j,k}}(S(B_j,B_k))=0$.

(3) Assume $k\sim j\sim \ell$ and $j\in \Iodd$, then we have
$P_{-\lambda_j}(S_{p(k),p(\ell)}(B_k,B_j,B_\ell))=0$.
\end{lemma}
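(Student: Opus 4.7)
The plan is to adapt the projection technique of Letzter and Kolb \cite{Let99,Let02,Ko12} to the super setting. Each generator decomposes as $B_j = F_j + X_j$, where $X_j = \va_j T_{w_\bu}(\tilde E_{\tau(j)}) K_j^{-1} \in \U^+\U^0$ for $j \in I_\circ$ and $X_j = 0$ for $j \in I_\bu$. When expanding a commutator or Serre-style polynomial in the $B_i$'s, the terms split according to the triangular decomposition $\U = \bigoplus_\mu \Unewp K_\mu S(\Unewn)$. The projection $P_{-\lambda}$ extracts those summands whose Cartan component is $K_{-\lambda}$, and in each case the chosen weight $-\lambda$ corresponds to the purely-$F$ part of the relevant polynomial.

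For Part (1), since $j \nsim k$ we have $(\alpha_j, \alpha_k) = 0$ and $\lambda_{j,k} = \alpha_j + \alpha_k$. Expanding $[B_j, B_k]$ into four pieces, the piece $[F_j, F_k]$ vanishes by relation (R7) of \eqref{eq:Urelation}. The piece $[X_j, X_k]$ lies in $\U^+\U^0$ and contributes nothing to weight $-\lambda_{j,k}$. The two cross terms $[F_j, X_k]$ and $[X_j, F_k]$ need to be rewritten using \eqref{eq:EjBk} (equivalently, pushing $F_j$ through $T_{w_\bu}(\tilde E_{\tau(k)})$); one checks that after collecting, the only surviving contributions to the weight-$(-\lambda_{j,k})$ stratum come from super-commutators of $\tilde E$'s in $\U(\tilde\Gamma)$, which vanish by the $T_{w_\bu}$-transported version of (R7). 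I would use Lemma \ref{lemma:T_wblack} to make this transport precise when $\tau(j)$ or $\tau(k)$ lies in $I_\bu$.

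For Part (2) with $j \in \Ieven$ and $j \sim k$, I would expand $S(B_j, B_k) = B_j^2 B_k - [2] B_j B_k B_j + B_k B_j^2$ and track only those terms contributing to weight $-\lambda_{j,k} = -(1+|(\alpha_j,\alpha_k)|)\alpha_j - \alpha_k$. The $F$-only part is $S(F_j, F_k) = 0$ by (R9). The remaining contributions come from monomials in which exactly one $X$ replaces an $F$; using \eqref{eq:EjBk} to super-commute $X_k$ (resp. $X_j$) past the $F$'s produces Cartan terms of shape $K_j - K_j^{-1}$, and after applying $P_{-\lambda_{j,k}}$ only the $K_{-\lambda_{j,k}}$-piece survives. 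A direct computation, mirroring Lemma 7.2 of \cite{Ko12}, shows these survivors combine with coefficients proportional to $[2]$ and cancel identically.

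Part (3) is the main obstacle. The polynomial $S_{p(k),p(\ell)}(x_1,x_2,x_3)$ from \eqref{eq:superSerre} has degree four rather than three, so expanding $S_{p(k),p(\ell)}(B_k, B_j, B_\ell)$ generates substantially more cross terms, and the signs are governed by the parities $p(k), p(\ell)$ together with the parity shifts of Lemma \ref{eq:parity}. I would split into cases according to how many of $k,\ell$ lie in $I_\circ$ (with $j \in I_\circ$ being the most delicate). The pure $F$ piece vanishes by (R11). For the cross terms at weight $-\lambda_j = -(2\alpha_j + \alpha_{j-1} + \alpha_{j+1})$, I would repeatedly apply \eqref{eq:EjBk} to push $X$'s past $F$'s, producing Cartan factors that, after projection, must cancel in blocks. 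The hard part will be the bookkeeping of signs arising from the super-Serre polynomial \eqref{eq:superSerre} combined with the action of $T_{w_\bu}$; here the assumption \eqref{eq:assumption} that the number of odd black simple roots is even, together with Lemma \ref{lemma:T_wblack}, is exactly what ensures the signs align so that the analogues of the super quantum Serre identities \eqref{eq:superSerre} transfer from $\U(\tilde\Gamma)^+$ to the weight-$(-\lambda_j)$ stratum.
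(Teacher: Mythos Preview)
Your proposal has a fundamental misidentification of what the projection $P_{-\lambda}$ selects. You write that ``the piece $[X_j,X_k]$ lies in $\U^+\U^0$ and contributes nothing to weight $-\lambda_{j,k}$,'' but this is false: each $X_i = \va_i T_{w_\bu}(\tilde E_{\tau i}) K_i^{-1}$ lies in $\U^+ K_i^{-1}$, so a monomial $X_{j_1}\cdots X_{j_r}$ lies in $\U^+ K_{-\sum \alpha_{j_i}} \cdot 1 \subset \Unewp K_{-\lambda} S(\Unewn)$. In other words the pure-$X$ term sits \emph{exactly} in the image of $P_{-\lambda}$, not outside it. Likewise the pure-$F$ term also lies in $P_{-\lambda}$ (since $F_j \in K_j^{-1} S(\Unewn)$), but it vanishes by the ordinary Serre relation, so it is harmless.

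The paper's argument is essentially the opposite of yours: one shows that
\[
P_{-\lambda_{j,k}}\bigl(S(B_j,B_k)\bigr) \;=\; S(X_j,X_k) \;=\; a\cdot S\bigl(T_{w_\bu}(\tilde E_{-j}),\,T_{w_\bu}(\tilde E_{-k})\bigr)\,K_{-\lambda_{j,k}}
\]
for some scalar $a$, and this vanishes because $T_{w_\bu}$ is an algebra isomorphism and the Serre relation holds for the $\tilde E$'s; the same reasoning gives Part~(3) with $S_{p(k),p(\ell)}$ in place of $S$. The point you are missing is that the mixed $F$/$X$ monomials, once commuted into triangular form, either remain in $P_{-\lambda}$ but assemble into the same pure-$X$ expression (the ``no-commutator'' reorderings) together with the vanishing pure-$F$ expression, or else the commutator $[E,F]$ shortens the word and pushes the result into components $P_{\mu}$ with $\mu > -\lambda$. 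There is no separate cancellation ``with coefficients proportional to $[2]$'' to be verified; the only nontrivial input at the $P_{-\lambda}$ level is the Serre relation in $\U^+$ transported by $T_{w_\bu}$. For Part~(1) the case analysis is slightly different (the interesting case is $j=-k$, where the cross terms produce Cartan elements and the $K_j^{-1}K_{-j}^{-1}$ coefficient cancels precisely because $\va_j=\va_{-j}$ and $\ell_j=(-1)^{p(j)}\ell_{-j}$), but again not for the reason you give.
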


\begin{proof}
The proofs for all three cases follow the strategy presented in \cite[Proposition 5.16]{Ko14}. Therefore, we will provide the proof for case (3) only since the proofs for cases (1) and (2) can be derived similarly from Lemma~\ref{lemma:blackSerre}, Lemma~\ref{lemma:whiteSerre}, and \cite[Lemma 5.14]{Ko14}.

Assume now $k\sim j\sim \ell$ and $j\in \Iodd$. By Lemma~\ref{lemma:blackSerre} we can assume that $j\in I_\circ$.

Set $\Xi=p(S_{p(k),p(\ell)}(B_k,B_j,B_\ell))$ and $Z=P_{-\lambda_j}(\Xi)$. 
It follows from \eqref{eq:EjBk} and \eqref{eq:comultiBj} that
   \begin{equation}
\label{eq:comultiXi}
    \Delta(\Xi)\in\Xi\otimes K_{-\lambda_j}+\sum_{\{J\mid wt(J)<\lambda_j\}}\U^{+}_\bu\U^{\imath0}B_J\otimes \U.
\end{equation}
Moreover, relations \eqref{eq:deltaP} and \eqref{eq:comultiXi} imply
   \begin{equation}
\label{eq:comultiZ}
    \Delta(Z)\in \Xi\otimes K_{-\lambda_j}+\sum_{\{J\mid wt(J)<\lambda_j\}}\U^{+}_\bu\U^{\imath0}B_J\otimes P_{-\lambda}(\U).
\end{equation}

Assume now that $Z\neq 0$. Choose $\alpha\in Q^+$ maximal such that $\pi_{\alpha,\beta}(Z)\neq 0$ for some $\beta \in Q^+$. In this case by \eqref{eq:comultinew} we have
      \begin{equation}
    \label{eq:P-lambda}
          0\neq (id\otimes \pi_{\alpha,0})\Delta(Z)\in S(U^-) K_{-\lambda+\alpha}\otimes  U_\alpha^+K_{-\lambda}\oplus S(U^-) \new K_{-\lambda+\alpha}\otimes  U_\alpha^+K_{-\lambda}.
      \end{equation}
      Now if $\alpha \neq 0$, the relations \eqref{eq:comultiZ} and \eqref{eq:P-lambda} imply that $K_{-\lambda+\alpha}\in \Ui$, which is in contradiction to Lemma~\ref{lemma:notinXi}.
\end{proof}

\begin{remark}
The assumption \eqref{eq:assumption} is required for Lemma \ref{lemma:blackSerre} and Lemma~ \ref{lemma:whiteSerre}.
\end{remark}

 Let $\mathcal J$ denote a fixed subset of $\cup_{s\in \Z_{\geqslant 0}}I^s$ such that $\{F_J\mid J\in \mathcal J\}$ is a basis of $\U^{-}$. Now we can apply the projection technique to conclude that 
\begin{proposition}
\label{prop:BSerre1}
In $\Uinew$ one has the relation
\begin{equation}
    \begin{aligned}
        (1) &[B_j,B_k]\in \sum_{\{J\mid wt(J)<\lambda_{jk}\}}\U^{+}_\bu\U^{\imath0}B_J, &\text{ for all } j\nsim k\in I, \\
        (2) &S(B_j,B_k)\in \sum_{\{J\mid wt(J)<\lambda_{j,k}\}}\U^{+}_\bu\U^{\imath0}B_J, &\text{ for all } j\sim k\in I,\ j\in \Ieven,\\
        (3) &S_{p(k),p(\ell)}(B_k,B_j,B_\ell)\in\sum_{\{J\mid wt(J)<\lambda_{j}\}}\U^{+}_\bu\U^{\imath0}B_J, &\text{ for all } k\sim j\sim \ell \in I,\ j\in \Iodd.
    \end{aligned}
\end{equation}
\end{proposition}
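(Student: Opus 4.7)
The plan is to derive the three inclusions from Lemma~\ref{lemma:whiteSerre} via the projection technique of Kolb~\cite{Ko12}, adapted to the super setting. Throughout, let $X$ denote one of the three Serre expressions on the left-hand side and let $\lambda$ be the associated weight ($\lambda_{j,k}$ in cases (1) and (2), $\lambda_j$ in case (3)).

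The first step is to expand $X$ by substituting $B_j = F_j + \va_j T_{w_\bu}(\tilde E_{-j})K_j^{-1}$ for $j\in I_\circ$ (and $B_j = F_j$ for $j \in I_\bu$) into each occurrence. This produces a sum of monomials indexed by choosing, at each factor, either the ``$F$-part'' $F_j$ or the ``$E$-part'' $\va_j T_{w_\bu}(\tilde E_{-j})K_j^{-1}$. Using the triangular decomposition $\U = \bigoplus_\mu \Unewp K_\mu S(\Unewn)$, the term where every factor contributes the $E$-part, after rearranging $K$'s past $T_{w_\bu}(\tilde E_{-?})$'s, lies in $\Unewp K_{-\lambda}$, so it is precisely the image $P_{-\lambda}(X)$. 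By Lemma~\ref{lemma:whiteSerre}, this image vanishes.

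The second step is to rewrite the remaining mixed monomials in the normal form $\sum_J x_J B_J$ with $x_J \in \U^{+}_\bu\U^{\imath0}$ and $wt(J) < \lambda$. Each such monomial contains at least one $F$-factor and at least one $E$-factor; using Lemma~\ref{lemma:T_wblack} and the commutation relations~\eqref{eq:EjBk} between the $E_k$ ($k\in I_\bu$) and the $B$'s, one iteratively rearranges factors and replaces each $F_j$ by $B_j - \va_j T_{w_\bu}(\tilde E_{-j}) K_j^{-1}$, absorbing the $E$-parts into the coefficient space $\U^{+}_\bu\U^{\imath0}$. At each rewriting step the weight $wt(J)$ attached to the trailing $B_J$ is strictly smaller than $\lambda$, because each absorbed $E$-factor subtracts a positive simple root from the contribution that would otherwise appear in $wt(J)$.

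The main technical obstacle will be the bookkeeping in the second step: verifying that the iterative rewriting produces coefficients $x_J$ genuinely inside $\U^{+}_\bu\U^{\imath0}$ rather than a larger subspace of $\U$, and confirming the strict weight inequality $wt(J) < \lambda$ at each stage. The coideal relation~\eqref{eq:comultiBj} provides the essential control that products of $B$'s and elements of $\U^{+}_\bu\U^{\imath0}$ remain within $\Uinew$ modulo lower-order $B_J$ terms, and the parity condition~\eqref{eq:assumption} rules out sign anomalies peculiar to the super setting. Careful tracking of how the braid-twisted generators $T_{w_\bu}(\tilde E_{-j})$ commute with elements of $\U^{\imath0}$ (via Lemma~\ref{lemma:T_wblack}) then closes the argument.
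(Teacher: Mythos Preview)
Your approach diverges from the paper's in the second step, and there is a genuine gap there. You propose to expand $X$ directly in $\U$, identify the pure $E$-term as $P_{-\lambda}(X)$, and then rearrange the mixed monomials by hand, ``absorbing the $E$-parts into the coefficient space $\U^{+}_\bu\U^{\imath0}$.'' But for $j\in I_\circ$ the element $\va_j T_{w_\bu}(\tilde E_{-j})K_j^{-1}$ does \emph{not} lie in $\U^{+}_\bu\U^{\imath0}$: when $|j|>n$ it is $\va_j E_{-j}K_j^{-1}$ with $-j\in I_\circ$ and $K_j^{-1}\notin\U^{\imath0}$, and for $j=\pm n$ it involves $E_{\pm n}$. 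So after one substitution $F_j\mapsto B_j-(\text{$E$-part})$ the residual $E$-parts are not absorbable as claimed; iterating produces precisely the kind of expression whose form you are trying to establish, and your sketch gives no argument that the process terminates in $\sum\U^{+}_\bu\U^{\imath0}B_J$ rather than in the larger $\sum\U^{+}\U^{0}B_J$.

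The paper avoids this entirely by working in $\U\otimes\U$. From \eqref{eq:comultiBj} one has $\Delta(B_j)-B_j\otimes K_j^{-1}\in\U^{+}_\bu\U^{\imath0}\otimes\U$, so expanding $\Delta(\Xi)$ and using \eqref{eq:EjBk} to commute the $E_k$ ($k\in I_\bu$) past the $B$'s yields
\[
\Delta(\Xi)\in\Xi\otimes K_{-\lambda}+\sum_{wt(J)<\lambda}\U^{+}_\bu\U^{\imath0}B_J\otimes\U.
\]
Applying $(id\otimes P_{-\lambda})$ and invoking \eqref{eq:DeltaP} gives $\Delta(P_{-\lambda}(\Xi))$ on the left, which is zero by Lemma~\ref{lemma:whiteSerre}; applying $id\otimes\epsilon$ to both sides then yields the claim. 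The point is that the coideal formula \eqref{eq:comultiBj} already delivers the first tensor factor inside $\U^{+}_\bu\U^{\imath0}$, so no direct rearrangement in $\U$ is needed. You cite \eqref{eq:comultiBj} only as auxiliary ``control,'' but it is in fact the mechanism that makes the argument go through.
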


\begin{proof}
We only proof for (3) and the proof for (1) and (2) is similar.

Since $P_{-\lambda_j}(\Xi)=0$ according to Lemma \ref{lemma:whiteSerre}, by applying the counit to the second tensor factor in \eqref{eq:comultiZ} we get $S_{p(k),p(\ell)}(B_k,B_j,B_\ell)\in\sum_{\{J\mid wt(J)<\lambda_{j}\}}\U^{+}_\bu\U^{\imath0}B_J.$
\end{proof} 

\subsection{Quantum Iwasawa decomposition}
Define a filtration $\mathcal F^*$ of $\U^{-}$ by 
\[\mathcal F^t(\U^{-})=span\{F_J\mid J\in I^s,\ s\leqslant t\},\quad t\in \Z_{\geqslant 0}.\]
As the quantum Serre relations for $\U$ are homogeneous, the set $\{F_J\mid J\in \mathcal J,\ |J|\leqslant t\}$ forms a basis for $\mathcal F^t(\U^{-})$.
\begin{proposition}
\label{prop:basisgeneralpara}
The set $\{B_J\mid J\in \mathcal J\}$ is a basis of the left $\U^{+}\U^{0}$-module $\Uinew$.
\end{proposition}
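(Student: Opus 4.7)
The plan is to verify the two halves of a PBW-type statement: that $\{B_J\mid J\in\mathcal J\}$ spans $\Uinew$ as a left $\U^+\U^0$-module, and that it is linearly independent.

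For linear independence I would work inside $\U$ with the filtration
\[
\mathcal{F}^t(\U)=\bigoplus_{\mathrm{ht}(\beta)\le t}\U^+\U^0\U^-_{-\beta},
\]
which is multiplicative because commuting $\U^-$ past $\U^+\U^0$ can only decrease the $\U^-$-weight. Since $B_j=F_j$ for $j\in I_\bu$, and for $j\in I_\circ$ the correction $\va_jT_{w_\bu}(\tilde E_{\tau(j)})K_j^{-1}$ lies in $\U^+\U^0=\mathcal{F}^0$ by Lemma \ref{lemma:walpha=beta} (which places $T_{w_\bu}(\tilde E_{\tau(j)})$ in $\U^+$), writing $B_{j_i}=F_{j_i}+e_i$ with $e_i\in\mathcal{F}^0$ and expanding the product yields
\[
B_J\in\mathcal{F}^{|J|}\quad\text{and}\quad B_J-F_J\in\mathcal{F}^{|J|-1}.
\]
Given a dependence $\sum_{J\in\mathcal J}c_JB_J=0$ with $c_J\in\U^+\U^0$, choose $r$ maximal with some $c_J\ne 0$. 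Reducing modulo $\mathcal{F}^{r-1}$ collapses the relation to $\sum_{|J|=r}c_JF_J=0$ in $\U$; since $\{F_J\mid J\in\mathcal J\}$ is a basis of the free left $\U^+\U^0$-module $\U^+\U^0\cdot\U^-$, each such $c_J$ must vanish, contradicting the choice of $r$.

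For the spanning I would induct on the weight $\lambda$. Because $\Uinew$ is generated over $\U^+\U^0$ by the $B_j$'s together with the $E_j$'s ($j\in I_\bu$), and because an $E_j$ or a Cartan element can be moved past any $B_k$ using \eqref{eq:EjBk} at the cost of Cartan terms, it suffices to rewrite a pure $B$-monomial $B_{J'}$ of weight $\lambda$ as an element of $\sum_{J\in\mathcal J,\,wt(J)=\lambda}\U^+\U^0 B_J+\sum_{wt(K)<\lambda}\U^+\U^0 B_K$. The quantum $\imath$Serre relations of Proposition \ref{prop:BSerre1} agree, modulo terms in $\sum_{wt(K)<\lambda}\U^+_\bu\U^{\imath 0}B_K$, with the defining Serre relations for $\U^-$ with respect to which $\{F_J\mid J\in\mathcal J\}$ is a basis. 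Consequently the same rewriting procedure that turns an arbitrary $F$-monomial into a combination of $\{F_J\mid J\in\mathcal J\}$ transports verbatim to the $B$-side, producing the desired decomposition together with a remainder of strictly smaller weight that is absorbed by the inductive hypothesis.

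The main technical hurdle will be the bookkeeping in the spanning step: one has to verify that the lower-order error terms emitted by Proposition \ref{prop:BSerre1}, which a priori lie in $\U^+_\bu\U^{\imath 0}B_K$, can indeed be absorbed into the $\U^+\U^0$-span of $\{B_J\}$, and that straightening moves using \eqref{eq:EjBk} never climb back up in weight. Once this filtration-preserving reduction is made precise the two halves combine and the proposition follows.
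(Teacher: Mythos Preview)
Your linear independence argument is exactly the paper's: pass to the top of the filtration, where $B_J\equiv F_J$, and use that $\{F_J\mid J\in\mathcal J\}$ is a free $\U^+\U^0$-basis of $\U$.

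Your spanning argument is correct but takes a longer route than the paper. The paper does not touch the $\imath$Serre relations at all here: it simply reuses the filtration observation $F_J-B_J\in\U^+\U^0\,\mathcal F^{|J|-1}(\U^-)$ in the other direction. By induction on $|J|$ this shows every $F_J$ lies in the left $\U^+\U^0$-span of $\{B_{J'}\mid J'\in\mathcal J\}$; since $\{F_J\}$ is a basis of $\U^-$, the $\U^+\U^0$-span of $\{B_{J'}\}$ is all of $\U$, and in particular contains $\Uinew$. What you wrote---straightening via \eqref{eq:EjBk} and then reducing $B$-monomials with Proposition~\ref{prop:BSerre1} modulo lower weight---is essentially the proof of the \emph{next} proposition (Proposition~\ref{prop:filtration}), where the coefficient ring is the smaller $\U^+_\bu\U^{\imath 0}$ and one genuinely needs the $\imath$Serre relations. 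Over the larger ring $\U^+\U^0$ the triangular argument is both shorter and avoids the bookkeeping you flag as the ``main technical hurdle.'' Your approach buys nothing extra for this statement, but it does show you already have the ingredients for Proposition~\ref{prop:filtration} in hand.
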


\begin{proof}
First for any $J\in \mathcal J$ such that $|J|=t$ we have $F_J-B_J\in \U^{+}\U^{0}\mathcal F^{t-1}(\U^{-})$. Thus by induction on $t$ we can conclude that each $F_J$ is contained in the left $\U^{+}\U^{0}$-module generated by $\{B_J\mid J\in \mathcal J\}$.

It remains to show $\{B_J\mid J\in \mathcal J\}$ is linearly independent. Assume there exists a non-empty finite subset $\mathcal J'\subset \mathcal J$ such that $\sum_{J\in \mathcal J'}a_JB_J=0$. Let $t_0=max\{|J|\mid J\in \mathcal J'\}$. In view of the definition of $B_j$, we have
\[\sum_{J\in \mathcal J',|J|=t_0}a_JF_J=0.\]
The linear independence of $\{F_J\mid J\in \mathcal J\}$ implies $a_J=0$ for all $J\in \mathcal J',|J|=t_0$. Then through induction we conclude the desired result. 
\end{proof}

By Proposition \ref{prop:basisgeneralpara} any element in $\Uinew$ can be written as a linear combination of elements in $\{B_J\mid J\in \mathcal J\}$ with coefficients in $\U^{+}\U^{0}$. We want to further show that the coefficients are from $\U^{+}_{\bu}\U^{\imath0}$.
\begin{proposition}
\label{prop:filtration}
The set $\{B_J\mid J\in \mathcal J\}$ is a basis of the left $\U^{+}_{\bu}\U^{\imath0}$-module $\Uinew$.
\end{proposition}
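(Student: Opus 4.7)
The linear independence of $\{B_J \mid J \in \mathcal J\}$ over $\U^+_\bu \U^{\imath 0}$ is immediate from Proposition \ref{prop:basisgeneralpara}, since $\U^+_\bu \U^{\imath 0} \subseteq \U^+ \U^0$. What remains is spanning; equivalently, the left $\U^+_\bu \U^{\imath 0}$-submodule
\[
M := \sum_{J \in \mathcal J} \U^+_\bu \U^{\imath 0} \cdot B_J
\]
should coincide with $\Uinew$. Since $1 = B_\emptyset \in M$, it suffices to show that $M$ is stable under left multiplication by every element of a generating set for $\Uinew$. A convenient such set consists of $\U^+_\bu$, $\U^{\imath 0}$, and $\{B_k \mid k \in I\}$, recalling that $B_j = F_j$ for $j \in I_\bu$ by our extended convention.

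Stability of $M$ under $\U^+_\bu$ and $\U^{\imath 0}$ is immediate once one observes that $\U^+_\bu \U^{\imath 0}$ is itself a subalgebra: the Cartan relations $q^\mu E_j = q^{(\mu, \alpha_j)} E_j q^\mu$ for $j \in I_\bu$, $\mu \in Y^\imath$, show that $\U^{\imath 0}$ normalizes $\U^+_\bu$. For stability under each $B_k$, given $a B_J \in M$ with $a \in \U^+_\bu \U^{\imath 0}$, my plan is first to move $B_k$ past the coefficient $a$. The Cartan factor is handled by $q^\mu B_k = q^{-(\mu, \alpha_k)} B_k q^\mu$, while iterating \eqref{eq:EjBk} shows that $B_k$ super-commutes with every $E_j \in \U^+_\bu$ up to a correction term lying in $\U^{\imath 0}$. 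The net effect is $B_k \cdot a = a' B_k + c$ with $a', c \in \U^+_\bu \U^{\imath 0}$, which reduces the problem to showing $B_k B_J \in M$ for every $J \in \mathcal J$.

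This final reduction will be carried out by induction on $r = |J|$. Since $\{F_J \mid J \in \mathcal J\}$ is a basis of $\U^-$, the product $F_k F_{j_1} \cdots F_{j_r}$ admits a finite expansion $\sum_{J' \in \mathcal J,\, |J'| = r+1} c_{J'} F_{J'}$ obtained by a prescribed sequence of applications of the Serre-type relations $(R6), (R7), (R9), (R11)$ of \eqref{eq:Urelation} restricted to $\U^-$. Performing the analogous sequence of rewritings on $B_k B_J$, but substituting at each step the corresponding quantum $\imath$Serre relation from Proposition \ref{prop:BSerre1}, produces
\[
B_k B_J = \sum_{J' \in \mathcal J,\, |J'| = r+1} c_{J'} B_{J'} + R,
\]
where $R$ is a $\U^+_\bu \U^{\imath 0}$-linear combination of elements $B_{J''}$ with $|J''| \leq r$. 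By the inductive hypothesis each such $B_{J''}$ lies in $M$, so $R \in M$, and the principal sum is manifestly in $M$.

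The main obstacle is to make precise the claim that each individual Serre rewriting step for the $F$'s lifts to a rewriting step for the $B$'s at the expense of only lower-length corrections with coefficients in $\U^+_\bu \U^{\imath 0}$. This is exactly the content of the three parts of Proposition \ref{prop:BSerre1}, which are the $B$-analogs of $(R7), (R9), (R11)$ modulo precisely such corrections; together with the analogous straightforward analog of $(R6)$ (noting $B_j^2 = 0$ for $j \in \Iodd$, already recorded in the preceding subsection), this dissolves the obstacle and the induction proceeds without further difficulty.
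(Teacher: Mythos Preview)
Your approach is essentially the paper's: both arguments obtain linear independence from Proposition~\ref{prop:basisgeneralpara} and then prove spanning by an induction on length, using the $\imath$Serre relations of Proposition~\ref{prop:BSerre1} to reduce any $B$-monomial to a $\U^+_\bu\U^{\imath 0}$-combination of $B_J$'s with $J\in\mathcal J$ plus strictly shorter correction terms. Your ``stability of $M$ under generators'' packaging, together with the explicit commutation of $B_k$ past $a\in\U^+_\bu\U^{\imath 0}$ via \eqref{eq:EjBk}, spells out a step the paper leaves implicit (namely, why every element of $\Uinew$ lies in $\sum_L \U^+_\bu\U^{\imath 0}B_L$), but the core mechanism is identical.

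There is one small wrinkle in your induction. You induct on $r=|J|$ with $J\in\mathcal J$ and then assert that the remainder $R$ is a $\U^+_\bu\U^{\imath 0}$-combination of $B_{J''}$ with $|J''|\le r$, invoking ``the inductive hypothesis'' to place each $B_{J''}$ in $M$. But when an $\imath$Serre relation is applied at an interior position of $B_kB_J$ and the resulting $\U^+_\bu\U^{\imath 0}$-coefficient is commuted to the far left, the surviving $B$-word $J''$ is a concatenation of a prefix of $J$, a short $J'$ from Proposition~\ref{prop:BSerre1}, and a suffix of $J$; there is no reason for this $J''$ to lie in $\mathcal J$. Your stated inductive hypothesis (which concerns only $B_kB_{J'}$ for $J'\in\mathcal J$) does not directly cover such $B_{J''}$. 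The paper avoids this by inducting over \emph{all} $L\in I^t$ and proving $B_L\in\sum_{J\in\mathcal J}\U^+_\bu\U^{\imath 0}B_J$; if you simply strengthen your hypothesis to ``$B_L\in M$ for every $L\in\bigcup_{s\le r}I^s$'' the gap closes and the rest of your argument goes through unchanged.
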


\begin{proof}
First of all, since $\{B_J\mid J\in \mathcal J\}$ is linearly independent over $\U^{+}\U^{0}$, it is also independent over $\U^{+}_{\bu}\U^{\imath0}$.

Secondly, let $L\in I^t$. One can apply the Serre relations (R9) and (R11) in \eqref{eq:Urelation} repeatedly to write
\[F_L=\sum_{J\in\mathcal J, |J|=t}a_JF_J\]
for some $a_J\in \C(q)$. According to Proposition \ref{prop:BSerre1} and Lemma~\ref{lem:Bj2} one sees that
\[B_L-\sum_{J\in\mathcal J, |J|=t}a_JB_J\in\sum_{s<t}\sum_{J\in I^s}\U^{+}_{\bu}\U^{\imath0} B_J.\]
Thus through induction we see that $\{B_J\mid J\in \mathcal J\}$ spans the left $\U^{+}_{\bu}\U^{\imath0}$-module $\Uinew$.
\end{proof}

Define a subspace of $\Ui$ by
\begin{equation}
    \Ui_{\mathcal J}:=\sum_{J\in \mathcal J}\C(q)B_J.
\end{equation}
Then Proposition~\ref{prop:filtration} can be reformulated by saying that the multiplication map
\[\U_\bu^+\otimes \Uio \otimes \Ui_{\mathcal J}\to \Ui\]
is an isomorphism of vector spaces.

Fix a subset $I_\tau\subset I_\circ$ consists of exactly one element of each $\tau$-orbit within $I_\circ$. Let $\Uio_\tau$ denote the subalgebra generated by $\{K_i^{\pm 1}\mid i\in I_\tau\}$. Then we have the following algebra isomorphism
\begin{equation}
\label{eq:UtauUio}
    \Uio_\tau\otimes \Uio \cong \U^0. 
\end{equation}
Define $V_\bu^+$ to be the subalgebra generated by the elements of all
the finite dimensional subspace $\ad(\U_\bu)(E_i)$ for $i\in I_\circ$. It is proved in \cite{K99} that
\begin{equation}
\label{eq:VUb}
    \U^+\cong V_\bu^+\otimes \U_\bu^+.
\end{equation}
The following proposition gives the quantum Iwasawa decomposition of $\U$ associated with $\Ui$.
\begin{theorem}
\label{thm:Iwasawa}
    The multiplication map gives an isomorphism of vector spaces
    \[V_\bu^+\otimes \Uio_\tau \otimes \Ui \cong \U.\]
\end{theorem}

\begin{proof}
    Combining \eqref{eq:VUb} with \eqref{eq:UtauUio} we have an isomorphism
\[ \U^+\U^0\cong V_\bu^+\otimes \Uio_\tau\otimes \Ub^+\otimes \Uio\]
of vector spaces. Thus Proposition~\ref{prop:filtration} implies the desired result.
\end{proof}

\section{$\imath$Schur duality}
\label{sec:duality}
In this section, we explore the fundamental representation $\W$ of $\U$ and establish a commuting action between the $\imath$quantum supergroup $\Uinew$ and the Hecke algebra of type B on $\W^{\otimes d}$.

\subsection{Bimodule structure}
Recall from \eqref{eq:varpi} and \eqref{eq:i_j} that
$$\Pi=w_\bu(\Pi_{\texttt X})=\{ \alpha_i=\epsilon^{\texttt{Y}}_{i-\frac{1}{2}}-\epsilon^{\texttt{Y}}_{i+\frac{1}{2}}\mid i\in I\}$$ is the set of simple roots of $ Y$. 
Recall in \eqref{eq:Iindex} we switch the index set to $I=\I_{\mathfrak m+\mathfrak n-1}$. Another advantage of this notation is that we can naturally parameterize the natural representations of $\U$ by $\I_{\mathfrak m+\mathfrak n}=\I_{2r+m}$.

Let $\W$ denote the natural representation of $\U$. We introduce a new notation for $\I_{\mathfrak m+\mathfrak n}=\I_{2r+\nb}$ to indicate a fixed set partition and to parameterize $\W$ properly:
\begin{equation}
  \label{eq:Ibw}
\Ibw : =\I_{2r+\nb}, \qquad
\Ibw =\Iwl \cup \Ib \cup \Iwr
\end{equation}
where the subsets are
\begin{align}
 \label{eq:III}
\Iwr = \left [\frac{\nb+1}{2}, r+\frac{\nb-1}{2} \right].
\qquad
\Ib = \left [\frac{1-\nb}{2}, \frac{\nb-1}{2} \right], 
\qquad
\Iwl = - \Iwr,
\end{align}
With these notations, the natural representation $\W$ is a vector superspace with an ordered basis
$\{w_a\mid a\in \Ibw\}$ such that 
\begin{align}
\label{eq:natural}
wt(w_a)=\epsilon^{\texttt{Y}}_{a},\quad E_j(w_a)=\delta_{a,j+\frac{1}{2}}w_{a-1},\quad F_j(w_a)=\delta_{a,j-\frac{1}{2}}w_{a+1},\quad \new(w_a)&=(-1)^{p(w_a)}w_a
\end{align}
for all $a\in \Ibw$ and $j\in I$. Note that $p(w_a):=p(\epsilon^{\texttt{Y}}_{a})$.

The Weyl group $W=W_d$ of type $B_d$ is generated by $s_i$, for $0\le i \le d-1$, subject to the Coxeter relations: $s_i^2=1,\  (s_is_{i+1})^3=1,\  (s_0s_1)^4=1$, and $(s_i s_j)^2=1  \ (|i-j|>1)$.

We view $f  \in \Ibw^d$ as a map $f: \{1, \ldots, d\} \rightarrow \Ibw$, and identify 
$f=(f(1), \ldots, f(d))$, with $f(i) \in \Ibw$. 
We define a right action of the Weyl group $W_d$ on $\Ibw^d$ such that,
for $f\in \Ibw^d$ and $0\leq j\leq d-1$,
\begin{equation}
  \label{eq:WB}
f^{s_j} =f\cdot s_j = 
\begin{cases} 
 (\cdots,f(j+1),f(j),\cdots),&\text{ if } j>0; \\
 (-f(1),f(2),\cdots,f(d)), &\text{ if } j=0,\ f(1)\in \Iwl\cup \Iwr; \\
 (f(1),f(2),\cdots,f(d)), &\text{ if } j=0,\ f(1)\in \Ib.
\end{cases} 
\end{equation} 
The only nontrivial relation $(s_0s_1)^4=1$ can be verified by case-by-case inspection depending on whether or not $f(1), f(2) \in \Ib$.
We sometimes write 
$$
f^\omega = f\cdot \omega 
=(f(\omega(1)),\cdots,f(\omega(d))),$$
where it is understood that 
\[
f(\omega(i)) =
 \begin{cases}
 f(\omega(i)), &\text{ if } \omega(i)>0; \\
 f(-\omega(i)), &\text{ if } \omega(i)<0, f(-\omega(i))\in \Ib; \\
 -f(-\omega(i)), &\text{ if } \omega(i)<0,f(-\omega(i))\in \Iwl\cup \Iwr.
\end{cases}
\]

Given $f=(f(1), \ldots, f(d)) \in \Ibw^d$, we denote
\[
M_f = w_{f(1)} \otimes w_{f(2)} \otimes \ldots \otimes w_{f(d)}.
\]
We shall call $f$ a weight and $\{M_f \mid f\in \Ibw^d \}$ the standard basis for $\W^{\otimes d}$.

Since $\Uinew$ is a subalgebra of $\U$, $\W$ is naturally a left $\Uinew$-module. The tensor product $\W^{\otimes d}$ is naturally a $\Ui$-module via the comultiplication $\Delta$. 

The Iwahori-Hecke algebra of type A, denoted by $\Hy_{\mathfrak S_d}$,  is a $\Q(q)$-algebra generated by $H_1,\cdots,H_{d-1}$, subject to the following relations:
\begin{align*}
&(H_i-q)(H_i+q^{-1})=0,\ \ \ \  \\
&H_iH_{i+1}H_i=H_{i+1}H_iH_{i+1},\ \ \ \ \text{for } i\ge 1;\\
&H_iH_j=H_jH_i,\ \ \ \ \text{for }|i-j|>1,
\end{align*}
while the Iwahori-Hecke algebra of type $B$, denoted by $\Hy_{B_d}$, is the $\Q(Q,q)$-algebra generated by $\Hy_{\mathfrak S_d}$ and $H_0$ where
\begin{equation}
    (H_0-Q)(H_0+Q^{-1})=0,\quad H_0H_1H_0H_1=H_1H_0H_1H_0.
\end{equation}
If $\omega \in  W_d$ has a reduced expression $\omega =s_{i_1} \cdots s_{i_k}$, we denote $H_\omega =H_{i_1} \cdots H_{i_k}$. It is well known that $\{H_\omega\mid \omega \in W_d \}$ form a basis for $\Hy_{ B_d}$.

The following proposition is a multi-parameter version of \cite[Proposition 2.10]{CL22}.
\begin{proposition} 
\label{prop:Heckeaction}
There is an right action of $\Hy_{B_d}$ on $\W^{\otimes d}$ as follows:

\[
M_f\cdot H_i=\left\{
\begin{aligned}
&(-1)^{p(w_{f(i)})p(w_{f(i+1)})}M_{f\cdot s_i}+(q-q^{-1})M_{f},\ \ & \text{ if } f(i)<f(i+1),\ i>0;\\
&(-1)^{p(w_{f(i)})p(w_{f(i+1)})}M_{f\cdot s_i},\ \ &\text{ if } f(i)>f(i+1),\ i>0; \\
&\frac{(-1)^{p(w_{f(i)})}(q+q^{-1})+q-q^{-1}}{2}M_f,\ \ & \text{ if } f(i)=f(i+1),\ i>0; \\
&(-1)^{p(w_{f(1)})}M_{f\cdot s_i}+(Q-Q^{-1})M_f, \ \ &\text{ if } f(1)\in \Iwr,\ i=0; \\
&(-1)^{p(w_{f(1)})}M_{f\cdot s_i},\ \ &\text{ if } f(1)\in \Iwl,\ i=0, \\
&QM_f,\ \ & \text{ if } f(1)\in \Ib,\ i=0.
\end{aligned}
\right.
\]

\end{proposition}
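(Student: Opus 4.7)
The plan is to verify the defining relations of $\HB$ directly on the standard basis $\{M_f \mid f\in \Ibw^d\}$ by case analysis, exploiting the locality of the action: each $H_i$ for $i\geqslant 1$ only modifies the tensor positions $i, i+1$, while $H_0$ only touches the first position. This locality makes the commutativity relation $H_iH_j=H_jH_i$ for $|i-j|>1$ immediate. The core of the work is then the two quadratic relations and the two braid relations.

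For the quadratic relation $(H_i-q)(H_i+q^{-1})=0$ with $i\geqslant 1$, I would split on the relative order of $f(i)$ and $f(i+1)$. When $f(i)<f(i+1)$, applying $H_i$ twice falls into the $f(i)>f(i+1)$ subcase after the first step, and the super sign $(-1)^{p(w_{f(i)})p(w_{f(i+1)})}$ squares to $1$, so the standard Hecke arithmetic applies. The $f(i)=f(i+1)$ case requires one to check that $\lambda^2=(q-q^{-1})\lambda+1$ where $\lambda=\tfrac{(-1)^p(q+q^{-1})+q-q^{-1}}{2}$; a direct calculation gives $\lambda=q$ when $p=0$ and $\lambda=-q^{-1}$ when $p=1$, which resolves the identity. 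The relation $(H_0-Q)(H_0+Q^{-1})=0$ splits analogously into the three subcases $f(1)\in\Iwl,\Ib,\Iwr$; the $\Ib$-case is immediate since $H_0$ acts by scalar $Q$, and the two remaining cases are handled by checking the $Q$-arithmetic after two applications.

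For the type $A$ braid relation $H_iH_{i+1}H_i=H_{i+1}H_iH_{i+1}$, I would fix $(a,b,c)=(f(i),f(i+1),f(i+2))$ and run through the cases based on orderings and equalities, as in the classical and super type $A$ Hecke action (cf.\ \cite{Mi06}). Both sides expand into sums of basis vectors indexed by permutations of $(a,b,c)$. The super signs combine to a common factor $(-1)^{p(w_a)p(w_b)+p(w_b)p(w_c)+p(w_a)p(w_c)}$ on each permuted term, which agrees on both sides, while the $q$-coefficients are the classical ones and match by the usual Hecke computation. Cases with equalities among $a,b,c$ are handled by invoking the quadratic relation already proved, together with the eigenvalue identification in the $p=0,1$ subcases above.

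The main obstacle is the type $B$ braid relation $H_0H_1H_0H_1=H_1H_0H_1H_0$. Here one must analyse the pair $(f(1),f(2))$, with each entry lying in $\Iwl$, $\Ib$, or $\Iwr$, together with the various orderings. The cases where $f(1)\in\Ib$ or $f(2)\in\Ib$ collapse quickly because $H_0$ either acts by the scalar $Q$ or negates no component; the genuinely delicate subcases are those where both $f(1),f(2)\in\Iwl\cup\Iwr$, where the $Q$-contributions and the $(q-q^{-1})$-contributions interact simultaneously with the sign changes coming from $s_0$. In each such subcase the two sides expand into a linear combination of $M_g$ for $g$ ranging over the orbit $\{f,f^{s_0},f^{s_1},f^{s_0s_1},f^{s_1s_0},f^{s_0s_1s_0},\ldots\}$, and one checks that the coefficient of each orbit element matches. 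The super-parity signs enter only through uniform factors $(-1)^{p(w_{f(1)})p(w_{f(2)})}$ and $(-1)^{p(w_{f(1)})}$ whose squares are trivial, so once the non-super matching is verified the super case follows. This is essentially a finite but lengthy book-keeping argument, parallel in spirit to \cite[Proposition~2.10]{CL22}, adapted to include the multi-parameter $Q$ as in \cite{BWW18,SW21}.
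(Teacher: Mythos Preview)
Your proposal is correct and follows essentially the same approach as the paper. The paper's proof simply invokes \cite{Mi06} for the entire type $A$ part (the quadratic relation for $H_i$ with $i\geqslant 1$, the type $A$ braid relations, and commutativity), observes that the quadratic relation for $H_0$ is immediate, and then states that the only nontrivial remaining relation $H_0H_1H_0H_1=H_1H_0H_1H_0$ follows from a case-by-case check; you have spelled out in more detail what that case analysis looks like, but the strategy is the same.
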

\begin{proof}
It has been established in \cite{Mi06} that this defines an action of $\Hy_{\mathfrak S_d}$ on $\W^{\otimes d}$. The remaining nontrivial relation to verify is $H_0H_1H_0H_1=H_1H_0H_1H_0$, which can be confirmed through a case-by-case check.
\end{proof}

\subsection{$\imath$Schur duality of type B}
We first recall results from \cite{Mi06} which establish a type A Schur duality between the quantum supergroup and the Hecke algebra of type A.

 We let ${}^{st}\U_q^\new(\gl(\mathfrak m|\mathfrak n))$ denote the quantum supergroup corresponding to the standard Dynkin diagram as in Example \ref{ex:standard}. The actions we define in Proposition \eqref{prop:Heckeaction} coincides with \cite[(3.1)(3.2)]{Mi06}. We denote by $\Phi^{st}$ (resp. $\Phi$) the homomorphism from ${}^{st}\U_q^\new(\gl(\mathfrak m|\mathfrak n))$ (resp. $\U$) to $End(\W^{\otimes d})$. Both images of $\Phi$ and $\Phi^{st}$ equal to the centralizer of $\HB$-actions within $End(\W^{\otimes d})$, hence we have
$\Phi(\U)=\Phi^{st}({}^{st}\U_q^\new(\gl(m|n)))$. Moreover, we have the following theorem.
\begin{theorem}
\label{thm:typeAduality}
The actions of $\U$ and $\Hy_{\mathfrak S_d}$ on $\W^{\otimes d}$ commute with each other: 
\[
\U \stackrel{\Phi}{\curvearrowright} \W^{\otimes d} \stackrel{\Psi}{\curvearrowleft} \Hy_{\mathfrak S_d}. 
\]
Moreover, $\Phi(\U)$ and $\Psi(\Hy_{\mathfrak S_d})$ form double centralizers in $\End (\W^{\otimes d})$.
\end{theorem}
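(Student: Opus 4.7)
The plan is to reduce the statement to Mitsuhashi's type A super Schur duality \cite{Mi06} for the standard Dynkin diagram, by establishing the identification of subalgebras $\Phi(\U) = \Phi^{st}({}^{st}\U_q^\new(\gl(\mathfrak m|\mathfrak n)))$ inside $\End(\W^{\otimes d})$ that the excerpt flags. Once this identification is in hand, both the commutativity of the $\U$-action with the $\Hy_{\mathfrak S_d}$-action and the double centralizer property follow immediately from the corresponding assertions in \cite{Mi06}, because $\Hy_{\mathfrak S_d}$ acts on $\W^{\otimes d}$ by the same formulas in Proposition \ref{prop:Heckeaction} (restricted to $H_1,\dots,H_{d-1}$) as in \cite{Mi06}.

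The core of the work is therefore the equality $\Phi(\U) = \Phi^{st}({}^{st}\U_q^\new(\gl(\mathfrak m|\mathfrak n)))$. I would proceed as follows. First, on the single tensor factor $\W$, the action of each $E_j, F_j, K_j^{\pm 1}, \new$ in \eqref{eq:natural} is indexed by the ordered basis $\{w_a\}_{a\in \Ibw}$, which is intrinsic to $\gl(\mathfrak m|\mathfrak n)$ and does not depend on the choice of Dynkin diagram. Comparing with the action of ${}^{st}\U_q^\new(\gl(\mathfrak m|\mathfrak n))$ on $\W$ described in \cite{Mi06} and using \eqref{eq:i_j}, one checks that each image $\Phi(E_j)$ (resp.\ $\Phi(F_j)$, $\Phi(K_j^{\pm 1})$) either equals a generator of $\Phi^{st}({}^{st}\U_q^\new(\gl(\mathfrak m|\mathfrak n)))$ or coincides with it up to a sign determined by the parity function, so that the two subalgebras of $\End(\W)$ coincide.

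Next I would extend the identification from $\W$ to $\W^{\otimes d}$. Since the comultiplication formulas \eqref{eq:comultinew} depend on the presentation, one cannot just tensor the previous equality; however, the subalgebra $\Phi(\U) \subset \End(\W^{\otimes d})$ is generated by the operators $\Phi(x_{(1)})\otimes \cdots \otimes \Phi(x_{(d)})$ coming from iterated coproducts of the Chevalley generators and of $\new$, and each such summand factors into operators already lying in $\Phi^{st}({}^{st}\U_q^\new(\gl(\mathfrak m|\mathfrak n)))$ on each tensor slot by the previous paragraph. Conversely, Theorem \ref{thm:braid} together with Proposition \ref{prop:braid} realizes $\U(\tilde \Gamma)$ and $\U(\Gamma)$ (and in particular ${}^{st}\U_q^\new(\gl(\mathfrak m|\mathfrak n))$ and $\U$) as the same algebra up to the braid group isomorphism $T_{w_\bu}$, so the two collections of tensor-product operators generate the same subalgebra.

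The main obstacle I anticipate is the bookkeeping of signs, parities, and the $\new$ factors in \eqref{eq:comultinew} when translating the coproduct of ${}^{st}\U_q^\new(\gl(\mathfrak m|\mathfrak n))$ into that of $\U(\Gamma)$ on $\W^{\otimes d}$. Once this is resolved (at the level of images in $\End(\W^{\otimes d})$, not as Hopf algebras), the remainder of the theorem is a direct transport of Mitsuhashi's commuting actions and double centralizer theorem along the identification of images.
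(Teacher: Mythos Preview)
Your proposal is correct and matches the paper's approach exactly: the paper's entire proof is ``This follows from \cite{Mi06} and the fact that $\Phi(\U)=\Phi^{st}({}^{st}\U_q^\new(\gl(m|n)))$,'' with that equality declared ``not hard to see from the definitions'' just before the theorem. One minor caution: the braid isomorphisms of Theorem~\ref{thm:braid} are algebra isomorphisms but do not in general intertwine the comultiplications \eqref{eq:comultinew}, so the equality of images in $\End(\W^{\otimes d})$ is more cleanly obtained by the direct operator comparison you sketch first (applied symmetrically in both directions) than by invoking $T_{w_\bu}$.
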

\begin{proof}
This follows from \cite{Mi06} and the fact that $\Phi(\U)=\Phi^{st}({}^{st}\U_q^\new(\gl(m|n)))$.
\end{proof}

Following the strategy of \cite{SW23}, we develop a type B $\imath$Schur duality between $\Uinew$ and $\HB$. For any reduced expression $\underline w^{(y)}_\bu=s_{y_1}\cdots s_{y_\ell}$ of $w_\bu$, as in the proof of Proposition~\ref{prop:coideal}, we define \[Y^{(y)}_{t}=s_{y_{t}}\cdots s_{y_\ell}(X),\quad 1\leqslant t\leqslant \ell.\]
Note that $Y=Y^{(y)}_{1}$ for any $\underline w^{(y)}_\bu$.

In the next two lemmas we compute explicitly the actions of $ B_j\ (j\in I_\circ)$ on $\W$. 
\begin{lemma}
\label{lem:Tw}
For $a\in \Ibw$ and $i \in I_\circ =[1-n-r, -n] \cup [n, n+r-1]$, we have
\[
T_{w_\bu}(E^{\texttt{X}}_{\tau(i)})(w_a)=\left\{\begin{aligned}
& E_{-i}(w_a), &|i| > \pt;\\
& E_{-\pt+1} E_{-\pt+2}\cdots  E_{\pt-1} E_{\pt}(w_a),&i=-\pt; \\
&\hslash_\nb E_{-\pt} E_{-\pt+1}\cdots  E_{\pt-2} E_{\pt-1}(w_a),& i=\pt.
\end{aligned}\right.
\]
where 
\begin{equation}
\label{eq:hm}
\hslash_\nb=(-1)^{\nb-1+p(\alpha_{-\pt+1}^{Y_{2\pt-1}})p(\alpha_{-\pt}^{Y_{2\pt-1}})+\cdots+p(\alpha_{\pt-1}^{Y_1})p(\alpha_{\pt-2}^{Y_1})}q^{(\alpha^{\texttt{X}}_{-\pt+1},\alpha^{\texttt{X}}_{-\pt})+\cdots+(\alpha^{\texttt{X}}_{\pt-1},\alpha^{\texttt{X}}_{\pt-2})}.
\end{equation}

\end{lemma}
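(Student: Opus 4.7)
\textbf{Case 1} ($|i|>n$, $\tau(i)=-i$): Since $|{-i}|>n$, for every $j\in I_\bu=[1-n,n-1]$ we have $|{-i}-j|\geq|{-i}|-|j|\geq(n+1)-(n-1)=2$, so $-i\nsim j$ and $s_j(\tilde\alpha_{-i})=\tilde\alpha_{-i}$. Hence $w_\bu(\tilde\alpha_{-i})=\tilde\alpha_{-i}$, and Lemma \ref{lemma:walpha=beta} gives $T_{w_\bu}(\tilde E_{-i})=E_{-i}$ in $\U(\Gamma)$. The formula then follows directly from the natural-representation formula \eqref{eq:natural}.

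\textbf{Cases 2 and 3} ($i=\mp n$): The plan is to expand $T_{w_\bu}(\tilde E_{\tau(i)})$ as an element of $\U(\Gamma)$ along the reduced expression
\[
w_\bu=(s_{n-1}s_{n-2}\cdots s_{-n+1})(s_{n-1}s_{n-2}\cdots s_{-n+2})\cdots(s_{n-1})
\]
already used in \eqref{eq:Tw0E-n} (for case 3), and along the mirror expression with the roles of $s_{-n+1}$ and $s_{n-1}$ exchanged (for case 2). The initial $(n-1)(2n-1)$ braid operators act trivially on $\tilde E_{\pm n}$ because their simple roots are not adjacent to $\tilde\alpha_{\pm n}$; the remaining $2n-1$ applications of Theorem \ref{thm:braid} each produce a super $q$-commutator of the form $T_{s_k}(\cdot)=E_k(\cdot)-(-1)^{p\cdot p}q^{(\cdot,\cdot)}(\cdot)E_k$. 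Applying the cumulative expansion to $w_a$, the extreme sparsity $E_k w_b=\delta_{b,k+1/2}w_{b-1}$ of the natural action eliminates every monomial in the expansion except one ordered product of $E$'s.

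\textbf{Coefficient bookkeeping.} In case 2 the first monomial $E_k(\cdot)$ survives at every step, because the companion $(\cdot)E_k$ positions $E_k$ against a weight vector outside its action range; no $q$-shift accumulates, the overall coefficient equals $1$, and we obtain $T_{w_\bu}(\tilde E_n)(w_a)=E_{-n+1}E_{-n+2}\cdots E_n(w_a)$. Case 3 is exactly dual: the second monomial $(\cdot)E_k$ survives at each step, contributing the scalar $-(-1)^{p(\alpha_k^{\Gamma_t})p(\alpha_{k-1}^{\Gamma_t})}q^{(\alpha_k^{\Gamma_t},\alpha_{k-1}^{\Gamma_t})}$ from the super $q$-commutator. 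Multiplying the $\nb-1$ such factors, using the Weyl invariance $(\alpha_k^{\Gamma_t},\alpha_{k-1}^{\Gamma_t})=(\tilde\alpha_k,\tilde\alpha_{k-1})$ of the bilinear form, and collecting the $\nb-1$ explicit minus signs into an overall $(-1)^{\nb-1}$, reproduces precisely the scalar $\hslash_\nb$ defined in \eqref{eq:hm}.

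\textbf{Main obstacle.} The principal technical difficulty is the sign-and-$q$-power bookkeeping in case 3: one must carry the parities $p(\alpha_k^{\Gamma_t})$ through each intermediate diagram $\Gamma_t$ produced along the chosen reduced expression, verify that they coincide with the parities appearing in the product defining $\hslash_\nb$, and confirm that the scalars from successive braid relations combine multiplicatively without cross-interference between the surviving and annihilating monomials at each step. Once this accounting is in place, the identification with $\hslash_\nb$ reduces to the type of telescoping calculation already initiated in \eqref{eq:Tw0E-n}.
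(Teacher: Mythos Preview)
Your proposal is correct and follows essentially the same approach as the paper: reduce Case~1 to Lemma~\ref{lemma:walpha=beta} via non-adjacency, and for $i=\pm n$ expand $T_{w_\bu}$ along the indicated reduced expression, use that all but the outermost $2n-1$ braid operators act trivially on $\tilde E_{\mp n}$, and let the one-dimensional nature of the $E_k$-action on $\W$ kill all but one monomial at each step. The paper carries out only Case~3 explicitly (referring to \cite{SW21} for the template), arriving at the same recursive identity and the scalar $\hslash_m$ by the same induction you describe.
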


\begin{proof}
The computation follows similarly as in \cite[Lemma 4.2]{SW23}. 

Take $i=n$ for example. We choose
\[\underline w^{(y)}_\bu=(s_{\pt-1}s_{\pt-2}\cdots s_{-\pt+1})\cdots (s_{\pt-1}s_{\pt-2})(s_{\pt-1}).\]

For convention we drop $(y)$ in the following proof. Then we compute $T_{w_\bu}(E^{\texttt{X}}_{- \pt})(w_a)$ as follows:
\begin{align*}
&T_{w_\bu}(E^{\texttt{X}}_{-\pt})(w_a)\\
=&T_{s_{\pt-1}}\cdots T_{s_{-\pt+1}}(E^{Y_{2\pt}}_{-\pt})(w_a)\\
=& T_{s_{\pt-1}}\cdots T_{s_{-\pt+2}}(E_{-\pt+1}^{Y_{2\pt-1}}E_{-\pt}^{Y_{2\pt-1}}\\
&\quad-(-1)^{p(\alpha_{-\pt+1}^{Y_{2\pt-1}})p(\alpha_{-\pt}^{Y_{2\pt-1}})}q^{(\alpha_{-\pt+1}^{Y_{2\pt-1}},\alpha_{-\pt}^{Y_{2\pt-1}})}E_{-\pt}^{Y_{2\pt-1}}E_{-\pt+1}^{Y_{2\pt-1}})w_a \\
=& -(-1)^{p(\alpha_{-\pt+1}^{Y_{2\pt-1}})p(\alpha_{-\pt}^{Y_{2\pt-1}})}q^{(\alpha^{\texttt{X}}_{-\pt+1},\alpha^{\texttt{X}}_{-\pt})}E_{-\pt}T_{s_{\pt-1}}\cdots  T_{s_{-\pt+2}}(E_{-\pt+1}^{Y_{2\pt-1}})(w_a)
\end{align*}
By induction on $n$, we have
$$T_{w_\bu}(E^{\texttt{X}}_{- \pt })(w_a)
= \hslash_\nb E_{-\pt}E_{-\pt+1} \cdots E_{\pt-2}E_{\pt-1}(w_a).
$$
This proves the lemma.
\end{proof}

Lemma~\ref{lem:Tw} together with the formula for $B_j$ immediately imply the following. 
\begin{lemma} 
 \label{lem:Bi}
Let $a\in \Ibw$ and $j \in I_\circ$. The action of $ B_j$ on $\W$ is given by:
$$
 B_{-\pt}(w_a)=
\begin{cases}
w_{-\pt+\frac{1}{2}}, & \text{ if } a=-\pt-\frac{1}{2};\\
\va_{-\pt}w_{-\pt+\frac{1}{2}}, & \text{ if } a=\pt+\frac{1}{2};\\
0, &else,
\end{cases}
$$
$$
 B_i(w_a)=
\begin{cases}
w_{i+\frac{1}{2}}, & \text{ if } a=i-\frac{1}{2};\\
\va_iw_{-i-\frac{1}{2}}, & \text{ if } a=-i+\frac{1}{2};\\
0, &else,\\
\end{cases}
\qquad \text{ for } |i| > \pt,
$$
and (recall $m=2n$)
$$
 B_{\pt}(w_a)=
\begin{cases}
w_{\pt+\frac{1}{2}}+q^{-(-1)^{p\left(w_{\pt-\frac{1}{2}}\right)}}\hslash_\nb\va_{\pt}w_{-\pt-\frac{1}{2}}, & \text{ if } a=\pt-\frac{1}{2};
 \\
0,  &else.
\end{cases}
$$
\end{lemma}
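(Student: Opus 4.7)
The proof is a direct computation from the defining formula \eqref{eq:Bi}, namely $ B_i = F_i + \va_i\, T_{w_\bu}(\tilde E_{\tau(i)})\,K_i^{-1}$, by combining Lemma~\ref{lem:Tw} with the natural action \eqref{eq:natural}. My plan is to treat the three cases $i=-\pt$, $|i|>\pt$, and $i=\pt$ in turn, identifying in each case the weights $a\in\Ibw$ on which either summand of $B_i$ is non-zero and then evaluating the scalar by which $K_i^{-1}$ acts.

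From \eqref{eq:natural} one reads off $F_i(w_a)=\delta_{a,i-\frac12}\,w_{i+\frac12}$. Lemma~\ref{lem:Tw} writes $T_{w_\bu}(\tilde E_{\tau(i)})(w_a)$ as a chain of raising operators $E_j$, and telescoping this chain via \eqref{eq:natural} shows that it is non-zero for exactly one value of $a$ in each case: $a=-i+\tfrac12$ (producing $w_{-i-\frac12}$) when $|i|>\pt$, $a=\pt+\tfrac12$ (producing $w_{-\pt+\frac12}$) when $i=-\pt$, and $a=\pt-\tfrac12$ (producing $\hslash_\nb\,w_{-\pt-\frac12}$) when $i=\pt$. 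For $|i|>\pt$ and for $i=-\pt$ the two summands contribute at distinct input weights, so no interference occurs, and it remains only to verify that $K_i^{-1}$ acts as the identity on the input vector $w_a$ of the second summand. Unpacking the relabelling \eqref{eq:i_j}, the three indices $i_{i-\frac12}, i_{i+\frac12}, i_{-i+\frac12}$ become three distinct $\tilde i$-labels when $|i|>\pt$, and likewise $i_{-\pt-\frac12}, i_{-\pt+\frac12}, i_{\pt+\frac12}$ are pairwise distinct $\tilde i$-labels when $i=-\pt$; hence the relevant $\epsilon$-vectors are pairwise orthogonal and $(\alpha_i,\epsilon_{i_a})=0$. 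The stated formulas for $ B_{-\pt}(w_a)$ and for $ B_i(w_a)$ with $|i|>\pt$ then follow at once.

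The only remaining case is $i=\pt$ with $a=\pt-\tfrac12$, where both summands contribute and one has $F_\pt(w_{\pt-\frac12})=w_{\pt+\frac12}$ together with $T_{w_\bu}(\tilde E_{\tau(\pt)})(w_{\pt-\frac12})=\hslash_\nb\,w_{-\pt-\frac12}$. Here $i_{\pt-\frac12}\neq i_{\pt+\frac12}$, so $(\alpha_\pt,\epsilon_{i_{\pt-\frac12}})=(\epsilon_{i_{\pt-\frac12}},\epsilon_{i_{\pt-\frac12}})$, and by the signature of the bilinear form on $P$ this self-pairing equals $(-1)^{p(w_{\pt-\frac12})}$. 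Hence $K_\pt^{-1}(w_{\pt-\frac12})=q^{-(-1)^{p(w_{\pt-\frac12})}}\,w_{\pt-\frac12}$, producing the factor $q^{-(-1)^{p(w_{\pt-\frac12})}}\hslash_\nb\va_\pt$ multiplying $w_{-\pt-\frac12}$ in the formula for $ B_\pt(w_{\pt-\frac12})$. The only real obstacle is the careful tracking of \eqref{eq:i_j} needed to establish the various orthogonalities; beyond this, the proof reduces to substitution and simplification.
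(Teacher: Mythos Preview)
Your proposal is correct and follows precisely the approach indicated in the paper, which simply states that the lemma follows immediately from Lemma~\ref{lem:Tw} together with the defining formula \eqref{eq:Bi} for $B_j$. You have supplied the details the paper omits: telescoping the $E$-chains from Lemma~\ref{lem:Tw}, identifying the unique input weight in each case, and verifying via \eqref{eq:i_j} that the relevant $\epsilon$-labels are distinct so that $K_i^{-1}$ contributes the correct scalar.
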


\begin{remark}
When $p(j)=0$ for all $j\in I_\bu$, the computations can be greatly simplified and we have ( $p(w_{a})=0$ for all $a\in \Ibw$ )
\[\hslash_\nb=(-1)^{\nb-1}q^{1-m}.\] 
\end{remark}

For the rest of this section we fix the parameters to be 
 \begin{align}
   \label{eq:para}
\left\{ 
\begin{aligned}
\va_j &=(-1)^{p(j)},\ \text{ if } j\neq \pm\pt, \\
\va_{-\pt} &=(-1)^{p\left(w_{\pt+\frac{1}{2}}\right)}Q,   \qquad\qquad\qquad\qquad\qquad \text{ where } \nb=2\pt \in \Z_{\ge 1}, \\
 \va_{\pt} &=(-1)^{p
\left(w_{\pt+\frac{1}{2}}\right)}q^{(-1)^{p\left(w_{\pt-\frac{1}{2}}\right)}}Q^{-1}\hslash_m^{-1}.
 \end{aligned}
\right.
\end{align}

Introduce the $\Q(Q,q)$-subspaces of $\W$:
\begin{align*}
\W_- &=\bigoplus_{a\in \Iwr}\Q(Q,q)(w_{a} -(-1)^{p(w_{-a})}Qw_{-a}),\qquad
 \W_\bu =\bigoplus_{a\in \I_\bu}\Q(Q,q) w_a,\\
 \W_+ &=\bigoplus_{a\in \Iwr}\Q(Q,q)(w_{a}+(-1)^{p(w_{-a})}Q^{-1}w_{-a}).
\end{align*}

\begin{lemma}
  \label{lem:Vpm}
Assume \eqref{eq:para}. 
Then $\W_-$ and $\W_\bu \oplus \W_+$ are $\Ui$-submodules of $\W$. 
Hence, we have a $\Ui$-module decomposition $\W= (\W_\bu\oplus \W_+) \oplus \W_-$.
\end{lemma}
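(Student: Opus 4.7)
My approach is to verify, generator by generator, that each element of a convenient generating set of $\Ui$ preserves both $\W_-$ and $\W_\bu \oplus \W_+$. A workable generating set is $\{E_j, F_j : j \in I_\bu\} \cup \{B_j : j \in I_\circ\} \cup \{q^\mu : \mu \in Y^\imath\} \cup \{\new\}$. The easy half is disposed of as follows. For $j \in I_\bu$, since $j\pm\frac12 \in \Ib$, the natural representation formulas from \eqref{eq:natural} force $E_j, F_j$ to act entirely within $\W_\bu$ and to annihilate every $w_a$ with $a \in \Iwl \cup \Iwr$; hence they preserve $\W_\bu, \W_+, \W_-$ individually. The elements $\new$ and $q^\mu$ with $\mu \in Y^\imath$ act diagonally on the standard basis; writing $\mu = \nu - w_\bu\tau(\nu)$ one checks $(\mu, \epsilon_{i_a}) = (\mu, \epsilon_{i_{-a}})$ for $a \in \Iwr$, so these operators act by equal scalars on $w_a$ and $w_{-a}$ and preserve the $\W_\pm$ generators. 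Throughout I would use the parity symmetry $p(w_a) = p(w_{-a})$ for $a \in \Iwl \cup \Iwr$, which is forced by $\tau$ being an involution of the super Dynkin diagram compatible with \eqref{eq:assumption}.

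The heart of the proof is the verification for $B_j$ with $j \in I_\circ$, which I would split into the bulk case $|j| > n$ and the boundary case $j = \pm n$. In the bulk case, Lemma~\ref{lem:Bi} says $B_j$ sends $w_{j-\frac12} \mapsto w_{j+\frac12}$ and $w_{-j+\frac12} \mapsto \va_j w_{-j-\frac12}$ and annihilates every other basis vector. Substituting $\va_j = (-1)^{p(j)}$ and combining $p(\alpha_j) \equiv p(w_{j-\frac12}) + p(w_{j+\frac12}) \pmod 2$ with the $\tau$-symmetry $p(\alpha_j) = p(\alpha_{-j})$, one checks that $B_j$ sends a generator $w_a - (-1)^{p(w_{-a})}Q w_{-a}$ of $\W_-$ either to a scalar multiple of the analogous generator indexed by $a' = j+\frac12$ or $-j-\frac12$, or to zero; the same statement holds for $\W_+$ with $-Q$ replaced by $Q^{-1}$.

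The main obstacle is the boundary case $j = \pm n$, where the precise values of $\va_{\pm n}$ and of $\hslash_\nb$ from \eqref{eq:para} and \eqref{eq:hm} must conspire. For $j = n$, the product $q^{-(-1)^{p(w_{n-\frac12})}} \hslash_\nb \va_n$ collapses to $(-1)^{p(w_{n+\frac12})} Q^{-1}$, giving $B_n(w_{n-\frac12}) = w_{n+\frac12} + (-1)^{p(w_{n+\frac12})} Q^{-1} w_{-n-\frac12}$, which is exactly the generator of $\W_+$ indexed by $n+\frac12$; hence $B_n$ sends $\W_\bu$ into $\W_+$ and annihilates $\W_\pm$. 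For $j = -n$, the values $B_{-n}(w_{-n-\frac12}) = w_{-n+\frac12}$ and $B_{-n}(w_{n+\frac12}) = (-1)^{p(w_{n+\frac12})} Q w_{-n+\frac12}$ combine on the generator of $\W_-$ indexed by $n+\frac12$ into an exact cancellation (invoking the parity symmetry once more), while on the corresponding generator of $\W_+$ they yield a scalar multiple of $w_{-n+\frac12} \in \W_\bu$. This is the decisive cancellation for which the parameter choice \eqref{eq:para} was engineered, and once it is checked the two subspaces are closed under every generator of $\Ui$.
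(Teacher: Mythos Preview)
Your proposal is correct and follows exactly the approach the paper indicates: the paper's proof consists of the single sentence ``Follows by a direct computation using the formulas \eqref{eq:natural} and Lemma~\ref{lem:Bi},'' and you have simply written out that direct computation in detail, including the key cancellation for $B_{-n}$ acting on the $\W_-$ generator indexed by $n+\tfrac12$ and the identification of $B_n(w_{n-\frac12})$ with the $\W_+$ generator, both of which hinge precisely on the parameter choice \eqref{eq:para}.
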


\begin{proof}
It follows by a direct computation using the formulas \eqref{eq:natural} and Lemma~\ref{lem:Bi}. 
\end{proof}

The decomposition of $\W$ above is also compatible with the $H_0$-action. 

\begin{lemma}
 \label{lem:H0}
The Hecke generator $H_0$ acts on $\W_-$ as $(-Q^{-1} ) \text{Id}$ and acts on $\W_\bu \oplus \W_+$ as $Q\cdot \text{Id}$.
\end{lemma}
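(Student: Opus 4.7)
The plan is to prove Lemma~\ref{lem:H0} by a direct computation on each summand of the decomposition $\W = \W_\bu \oplus \W_+ \oplus \W_-$, using the explicit formula for the $H_0$-action from Proposition~\ref{prop:Heckeaction} specialized to the single tensor factor case $d=1$. That specialization gives three rules depending on where the index $a$ sits:
\[
w_a \cdot H_0 =
\begin{cases}
Qw_a & \text{if } a \in \Ib, \\
(-1)^{p(w_a)}w_{-a} + (Q-Q^{-1})w_a & \text{if } a \in \Iwr, \\
(-1)^{p(w_a)}w_{-a} & \text{if } a \in \Iwl.
\end{cases}
\]

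The $\W_\bu$ assertion is immediate from the first of these rules. Before handling $\W_\pm$, I would record the parity identity $p(w_a) = p(w_{-a})$ for $a \in \Iwr$, which is forced by the $\tau$-symmetry of the Satake diagram~\eqref{eq:AIIIdiagram}: under assumption~\eqref{eq:assumption}, the involution $\tau$ swaps $a$ and $-a$ on $I_\circ$ while preserving parity, so the labels $i_a, i_{-a} \in I(\mathfrak m|\mathfrak n)$ are mirror-paired across the black core and carry the same $\Z_2$-grading. This is the only nontrivial structural input needed.

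With $p_a := p(w_a) = p(w_{-a})$ in hand, I would fix $a \in \Iwr$ (so $-a \in \Iwl$) and expand the generators of $\W_\pm$ termwise. For $\W_+$ the computation is
\[
(w_a + (-1)^{p_a}Q^{-1}w_{-a}) \cdot H_0 = (-1)^{p_a}w_{-a} + (Q-Q^{-1})w_a + Q^{-1}w_a = Q\bigl(w_a + (-1)^{p_a}Q^{-1}w_{-a}\bigr),
\]
and for $\W_-$ the parallel computation is
\[
(w_a - (-1)^{p_a}Q w_{-a}) \cdot H_0 = (-1)^{p_a}w_{-a} + (Q-Q^{-1})w_a - Qw_a = -Q^{-1}\bigl(w_a - (-1)^{p_a}Q w_{-a}\bigr),
\]
which are precisely the claimed eigenvalue statements. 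The whole argument is three lines of bookkeeping once the parity identity is noted, and I do not foresee any genuine obstacle; the only place where the $\tau$-symmetry of the Satake diagram is really used is in justifying $p(w_a) = p(w_{-a})$.
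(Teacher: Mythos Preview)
Your proof is correct and is exactly the natural direct verification the paper has in mind; the paper's own proof is a one-line reference (to Lemma~\ref{lem:Bi}, which appears to be a misreference for Proposition~\ref{prop:Heckeaction}), and your argument spells out that computation explicitly. You correctly isolate the one nontrivial input, the parity identity $p(w_a)=p(w_{-a})$ for $a\in\Iwr$, which indeed follows from the symmetry condition~\eqref{eq:assumption} as you indicate.
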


\begin{proof}
Follows by Lemma~\ref{lem:Bi}. 
\end{proof}

\begin{theorem}\label{thm:UiHB}
Suppose the parameters satisfy \eqref{eq:para}. 
Then the actions of $\Ui$ and $\HB$ on $\W^{\otimes d}$ commute with each other: 
\[
\Ui \stackrel{\Psi}{\curvearrowright} \W^{\otimes d} \stackrel{\Phi}{\curvearrowleft} \Hy_{B_d}. 
\]
Moreover, $\Psi(\Ui)$ and $\Phi(\HB)$ form double centralizers in $\End (\W^{\otimes d})$.
\end{theorem}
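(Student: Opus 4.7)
The plan is to split the statement into two pieces: first the commutativity of the two actions, then the double centralizer property, which is the main obstacle.

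For the commutativity, the action of $H_1,\ldots,H_{d-1}$ with $\Ui$ is immediate: since $\Ui$ is a subalgebra of $\U$ and these Hecke generators already commute with the full $\U$-action by Theorem~\ref{thm:typeAduality}, they automatically commute with $\Psi(\Ui)$. So the only thing to check is that $H_0$ commutes with $\Ui$. The key observation is that $H_0$ acts only on the first tensor factor, i.e.\ as $H_0|_\W \otimes 1^{\otimes (d-1)}$, where by Lemma~\ref{lem:H0} the endomorphism $H_0|_\W$ is a scalar $Q$ on $\W_\bu\oplus\W_+$ and $-Q^{-1}$ on $\W_-$. By Lemma~\ref{lem:Vpm}, these subspaces are $\Ui$-submodules of $\W$, so $H_0|_\W\in \End_{\Ui}(\W)$.

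To promote this to $\W^{\otimes d}$, I will invoke the coideal property of Proposition~\ref{prop:coideal}: for any $b\in \Ui$, iterating the comultiplication yields $\Delta^{(d-1)}(b)=\sum b_{(1)}\otimes b_{(2)}$ with $b_{(1)}\in \Ui$ and $b_{(2)}\in \U^{\otimes(d-1)}$. Then
\[
(H_0|_\W\otimes 1^{\otimes(d-1)})\circ \sum b_{(1)}\otimes b_{(2)} = \sum (H_0 b_{(1)})\otimes b_{(2)} = \sum (b_{(1)} H_0)\otimes b_{(2)},
\]
using that $H_0|_\W$ is a $\Ui$-endomorphism of $\W$. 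This last expression equals $\Delta^{(d-1)}(b)\circ (H_0|_\W\otimes 1^{\otimes(d-1)})$, so $H_0$ and $b$ commute as operators on $\W^{\otimes d}$. Combined with the type A duality this gives one inclusion of each side of the centralizer equalities.

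The main obstacle is the double centralizer property $\Psi(\Ui)=\End_{\HB}(\W^{\otimes d})$ and $\Phi(\HB)=\End_{\Ui}(\W^{\otimes d})$. I would follow the strategy of \cite[\S5]{SW21}: decompose $\W^{\otimes d}$ as a direct sum of cyclic $\HB$-modules indexed by weights $f\in \Ibw^d$ modulo the $W_d$-action, the cyclic generators being the standard vectors $M_f$ attached to anti-dominant representatives. Each such cyclic module is a ``quasi-sign-permutation'' module in the sense of \cite{SW21}, and its structure is governed by the stabilizer inside $W_d$ together with the parity data fixed by \eqref{eq:para}. The key technical step is to exhibit, for each such cyclic generator $M_f$, a monomial in the $B_j$'s (and $E_j,F_j$ for $j\in I_\bu$) applied to a distinguished ``highest'' vector that produces $M_f$ modulo lower terms; this shows $\Psi(\Ui)$ hits a triangular basis of $\End_{\HB}(\W^{\otimes d})$. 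The reverse inclusion $\Phi(\HB)=\End_{\Ui}(\W^{\otimes d})$ follows by a dimension count using semisimplicity of $\HB$ at generic parameters together with the already-established decomposition, exactly as in \cite[Theorem 5.4]{SW21} and \cite[Theorem 3.10]{BWW18}. The delicate point is that the appearance of odd roots in both $I_\bu$ and $I_\circ$ introduces parity signs in Proposition~\ref{prop:Heckeaction} and in Lemma~\ref{lem:Bi}; I expect these are precisely compensated by the constants $\va_{\pm n}$ and $\hslash_\nb$ chosen in \eqref{eq:para}, so that the triangularity argument carries through verbatim.
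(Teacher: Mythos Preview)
Your commutativity argument is essentially the paper's: reduce to $H_0$ using Theorem~\ref{thm:typeAduality}, then reduce to $d=1$ via the coideal property of Proposition~\ref{prop:coideal} and the fact that $H_0$ acts on the first factor, and finish with Lemmas~\ref{lem:Vpm} and~\ref{lem:H0}. That part is fine and matches the paper almost line for line.

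For the double centralizer property you take a genuinely different route from the paper. The paper does \emph{not} run the SW21 quasi-sign-permutation/triangularity argument; instead it observes that the double centralizer property is equivalent to the decomposition of $\W^{\otimes d}$ as a $\Ui\otimes\HB$-module being multiplicity-free, and then specializes $q\mapsto 1$. At $q=1$ the pair degenerates to the enveloping algebra of a direct sum of two type A Lie superalgebras acting on the natural representation $\W=(\W_\bu\oplus\W_+)\oplus\W_-$, with $s_0$ acting as $(\mathrm{Id},-\mathrm{Id})$; the multiplicity-free decomposition there is classical (Sergeev-type, cf.\ \cite[Theorem~3.9]{CW12}), and a standard deformation argument lifts it back to generic $q$. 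This is much shorter and sidesteps the parity bookkeeping you flag as ``delicate.'' Your proposed route via explicit cyclic generators and a triangular basis of $\End_{\HB}(\W^{\otimes d})$ is plausible and would yield more explicit information (e.g.\ toward canonical-basis statements), but as written it is only a sketch: the step ``exhibit a monomial in the $B_j$'s hitting each $M_f$ modulo lower terms'' is exactly where the super signs from Proposition~\ref{prop:Heckeaction} and Lemma~\ref{lem:Bi} must be tracked carefully, and you have not actually carried this out. If you want a complete proof along your lines you would need to supply that computation; the paper's specialization argument avoids it entirely.
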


\begin{proof}
By Theorem~\ref{thm:typeAduality}, we know that the actions of $\U$ commute with the action of $H_i$, for $1\le i \le d-1$. Thus, to show the commuting actions of $\Uinew$ and $\HB$, it remains to check the commutativity of the actions of $H_0$ and the generators of $\Uinew$.  

To that end, it suffices to consider $d=1$ (thanks to the coideal property of $\Uinew$ and the fact that the action of $H_0$ depends solely on the first tensor factor). In this case, the commutativity between $\Ui$-action and $H_0$-action on $\W$ follows directly from Lemmas~\ref{lem:Vpm} and \ref{lem:H0}.

The double centralizer property is equivalent to a multiplicity-free decomposition of $\W^{\otimes d}$ as an $\Uinew\otimes \Hy_{B_d}$-module, which reduces by a deformation argument to the $q=1$ setting. At the specialization $q\mapsto 1$, $\Uinew$ becomes the enveloping algebra of a direct sum of two type A Lie superalgebras (cf. \cite{Se83}), $\W =(\W_\bu \oplus \W_+) \oplus \W_-$ becomes the natural representation of it, on which $s_0\in W_d$ acts as $(\text{Id}_{\W_\bu \oplus \W_+}, -\text{Id}_{\W_-})$. The multiplicity-free decomposition of $\W^{\otimes d}$ at $q=1$ can be established by a standard approach as in \cite[Theorem 3.9]{CW12}.
\end{proof}

\section{Quasi $K$-matrix}
\label{sec:Kmatrix}

 From this section on we impose one extra condition on the Satake diagrams we are working with:
\begin{align}
\label{eq:extraassumption}
    p(j)=0,\ \forall j\in I_\bu.
\end{align}

Under the assumption \eqref{eq:extraassumption}, the braid group operators $T_i$ for $i\in I_\bu$ reduce to the ones of Lusztig and we do not need to work with different presentations of $\U$ anymore. Hence the scripts standing for the underlying Dynkin diagrams will be omitted.

In this section, we follow \cite{BK19} and \cite[\S 3.2]{Ko22} to construct the quasi $K$-matrix under the assumption \eqref{eq:extraassumption}.

\subsection{Preparation}

Suppose $Y\in \mathcal D_{\mathfrak m,\mathfrak n}$ is of the form \eqref{eq:AIIIdiagram} and satisfies \eqref{eq:assumption} and \eqref{eq:extraassumption}. Again we let $\U(Y)$ denote the quantum supergroup with generators $\new,E_j,F_j,K_j,j\in I$ associated to $Y$.
Recall that $\Uinew(Y)$ is the $\Q(q)$-subalgebra of $\U(Y)$ generated by $q^\mu \ (\mu\in \cwl),\  E_j, F_j \ (j\in I_\bu)$, $\new$ and
\begin{align}
 B_j=
 F_j+\va_jT_{w_\bu}(E_{\tau j})  K_j^{-1},\ \ \text{for } j\in I_\circ.
\end{align}

Abusing the notation $\tau$, the diagram involution $\tau$ gives rise to the following algebra homomorphism on $\U$:
\begin{proposition}
Under the assumption \eqref{eq:assumption}, there is an involution $\tau$ on $\U$ such that
\begin{equation}
\label{eq:tau}
    \tau(E_j)=E_{\tau j},\quad \tau(F_j)=F_{\tau j},\quad \tau (K_j)=(-1)^{p(j)}K_{\tau j},\quad \tau(\new)=\new
\end{equation}
for all $j\in I$.
\end{proposition}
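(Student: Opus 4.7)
The plan is to define $\tau$ on the generators as indicated, extend it multiplicatively to the free algebra, and verify that it preserves every defining relation of $\U$ so that it descends to an algebra endomorphism; the check $\tau^2=\mathrm{id}$ on generators will then imply that $\tau$ is an involution. Before turning to relations I would record the basic symmetries of the diagram involution $\tau:I\to I$: it satisfies $\tau^2=\mathrm{id}$, preserves the bilinear form $(\alpha_{\tau j},\alpha_{\tau k})=(\alpha_j,\alpha_k)$, preserves the adjacency pattern, and preserves parity $p(\tau j)=p(j)$ (given by \eqref{eq:assumption} on $I_\circ$, trivial on $I_\bu$ by \eqref{eq:extraassumption}). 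Combined with Lemma~\ref{lemma:ljl-j} and the fact that $\tau j=-j$ on $I_\circ$, these give the crucial numerical identity $\ell_{\tau j}=(-1)^{p(j)}\ell_j$.

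With these symmetries, relations (R1)--(R4), (R6)--(R9), and the $\new$-relations \eqref{eq:newrelation} transform into themselves after reindexing by $\tau$, because they only depend on the preserved data; any stray sign $(-1)^{p(j)}$ appearing through $\tau(K_j)$ cancels since $K_j$ occurs in these relations via $K_j E_k K_j^{-1}$ or $K_j F_k K_j^{-1}$, where a squared sign is trivial. The super Serre relations (R10)--(R11) need a brief remark, because $\tau$ can swap the roles of $k<\ell$; this is harmless thanks to the identity $S_{t_1,t_2}(x_1,x_2,x_3)=0\Leftrightarrow S_{t_2,t_1}(x_3,x_2,x_1)=0$ in $\U$, which reflects the fact that the presentation is insensitive to the labeling of the two outer indices.

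The decisive case is (R5). Applying $\tau$ to the equation $[E_j,F_j]=(K_j-K_j^{-1})/(q^{\ell_j}-q^{-\ell_j})$ yields $[E_{\tau j},F_{\tau j}]$ on the left, while the right-hand side transforms to $(-1)^{p(j)}(K_{\tau j}-K_{\tau j}^{-1})/(q^{\ell_j}-q^{-\ell_j})$. For these to match the original relation at index $\tau j$, one needs $q^{\ell_{\tau j}}-q^{-\ell_{\tau j}}=(-1)^{p(j)}(q^{\ell_j}-q^{-\ell_j})$, which is precisely the consequence of $\ell_{\tau j}=(-1)^{p(j)}\ell_j$ noted above. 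This is exactly the obstruction that forces the sign $(-1)^{p(j)}$ in the definition $\tau(K_j)=(-1)^{p(j)}K_{\tau j}$, and it is the main (and in fact only substantive) obstacle in the proof.

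Finally, involutivity is immediate on generators: $\tau^2(E_j)=E_j$ and $\tau^2(F_j)=F_j$ since $\tau^2=\mathrm{id}$ on $I$; $\tau^2(\new)=\new$; and $\tau^2(K_j)=(-1)^{p(j)}\tau(K_{\tau j})=(-1)^{p(j)+p(\tau j)}K_j=K_j$ using $p(\tau j)=p(j)$. Hence $\tau$ extends to an involutive algebra automorphism of $\U$.
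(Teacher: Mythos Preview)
Your proof is correct and follows exactly the paper's approach: the paper's own proof is the single sentence ``The proof follows from checking on the generators,'' and you carry out precisely that check in detail, correctly isolating (R5) as the place where the sign $(-1)^{p(j)}$ in $\tau(K_j)=(-1)^{p(j)}K_{\tau j}$ is forced via the identity $\ell_{\tau j}=(-1)^{p(j)}\ell_j$ from Lemma~\ref{lemma:ljl-j}.

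One point you assert without argument deserves a word of caution in the super setting: the isometry $(\alpha_{\tau j},\alpha_{\tau k})=(\alpha_j,\alpha_k)$. Unlike the ordinary case, this is not automatic from $\tau$ being a graph automorphism preserving parity, because even simple roots can have $(\alpha_j,\alpha_j)=+2$ or $-2$ depending on which block they sit in. What one actually needs is that $p(\epsilon_{i_a})\equiv p(\epsilon_{i_{-a}})\pmod 2$ for all $a$, which follows from the parity symmetry of simple roots in \eqref{eq:assumption} together with \eqref{eq:extraassumption} (the latter guaranteeing the constant $p(\epsilon_{i_a})+p(\epsilon_{i_{-a}})$ vanishes, via the middle node). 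Once that is noted, your treatment of (R3)--(R4) goes through; without it, those relations rather than (R5) would be the real obstruction.
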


\begin{proof}
The proof follows from checking on the generators and Lemma~\ref{lemma:ljl-j}. 
\end{proof}

 The super skew derivations (cf. \cite[\S 1.5]{CHW13}) $\lskew$ and on $\U^{+}$ satisfy $\rskew(E_j)=\delta_{i,j},\ \lskew(E_j)=\delta_{i,j},$ and
\begin{equation}
    \label{eq:skew}
    \begin{aligned}
    &\lskew(xy)=(-1)^{p(y)p(i)}\lskew(x)y+q^{(\alpha_i,\mu)}x\lskew(y),\\
    &\rskew(xy)=(-1)^{p(y)p(i)}q^{(\alpha_i,\upsilon)}\rskew(x)y+x\rskew(y)
    \end{aligned}
\end{equation}
for all $x\in \U^{+}_\mu,\ y\in \U^{+}_\upsilon$.

Let $\U^{\geq}$ (resp. $\U^{\leq}$) denote the Hopf subalgebra of $\U$ generated by $\U^0$ and $\U^+$ (resp. $\U^-$). According to \cite[\S 2.4]{Ya94}, there is a non-degenerated bilinear pairing 
$\la \cdot,\cdot \ra$ on $\U^{\leq}\times \U^{\geq}$ such that for all $x,x'\in \U^{\geq}$, $y,y'\in \U^{\leq}$, $\mu,\upsilon\in P$ and $a,b\in \{0,1\}$, we have
\begin{equation}
\label{eq:bilinearform}
\begin{aligned}
    &\la y,xx'\ra=\la \Delta(y),x'\otimes x\ra,\quad &\la yy',x \ra=\la y\otimes y',\Delta(x)\ra, \\
    &\la q^\mu\new^a,q^\upsilon \new^b\ra=(-1)^{ab}q^{-(\mu,\upsilon)},\quad &\la F_j,E_k\ra=\delta_{j,k}, \\
    & \la q^\mu \new^a,E_j\ra =0,\quad &\la F_j,q^\mu \new^a\ra=0
\end{aligned}
\end{equation}

The next lemma is a super analogue of \cite[\S 6.14]{Jan95}.
\begin{lemma}
For all $x\in \U^{+}$, $y\in \U^{-}$ and $j\in I$ one has
\begin{equation}
    \la F_iy,x\ra= (-1)^{p(x)p(j)}\la F_i,E_i\ra\la y,\lskew(x)\ra,\quad \la yF_i,x\ra=\la F_i,E_i\ra \la y,\rskew(x)\ra.
\end{equation}
\end{lemma}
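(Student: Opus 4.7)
The plan is to adapt Jantzen's argument (\cite[\S 6.14]{Jan95}) to the super setting, proceeding by induction on the weight of $x$.

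The starting point is the coproduct compatibility in \eqref{eq:bilinearform}, namely
\[
\la F_i y, x\ra = \la F_i\otimes y, \Delta(x)\ra, \qquad \la y F_i, x\ra = \la y\otimes F_i, \Delta(x)\ra.
\]
Since $F_i$ has weight $-\alpha_i$ and $\la F_i, K_\mu\ra = 0 = \la F_i, \new\ra$, the only contributing terms of $\Delta(x)$ are those whose ``$F_i$-side'' tensor factor has $\U^+$-weight exactly $\alpha_i$. Writing $\Delta(x) = \sum_\nu x'_\nu \new^{p(\nu)}K_\nu \otimes x''_\nu$ for $x\in \U^+_\mu$ with $x'_\nu\in \U^+_{\mu-\nu}$ and $x''_\nu\in \U^+_\nu$, the relevant term for the first identity is $\nu = \mu-\alpha_i$ (so $x'_\nu$ is a scalar multiple of $E_i$), and for the second identity it is $\nu = \alpha_i$ (so $x''_\nu$ is a scalar multiple of $E_i$).

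For the base case $x = E_j$, the formula $\Delta(E_j) = E_j\otimes 1 + \new^{p(j)}K_j\otimes E_j$ combined with $\la F_i, E_j\ra = \delta_{ij}$ immediately yields both identities, with the sign trivializing because $\lskew(E_j) = \delta_{ij}$ forces $i = j$ in the only nonzero case. For the inductive step, I would factor $x = x_1 x_2$ with $x_k\in \U^+_{\mu_k}$ and expand $\Delta(x_1 x_2) = \Delta(x_1)\Delta(x_2)$, then collect the coefficient at the relevant weight. The crucial input is that the Leibniz rules \eqref{eq:skew} for $\lskew$ and $\rskew$ are tailored precisely to this collection: extracting the $E_i$-coefficient on the appropriate tensor factor of $\Delta(x_1 x_2)$ reproduces, term by term (including the sign $(-1)^{p(x_2)p(i)}$ and the appropriate $q$-power), the Leibniz expansion of $\lskew(x_1 x_2)$ or $\rskew(x_1 x_2)$. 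Given the identity for $x_1$ and $x_2$, it therefore propagates to $x_1 x_2$, closing the induction.

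The main obstacle is the careful bookkeeping of signs. The asymmetric sign $(-1)^{p(x)p(i)}$ in the first identity reflects that extracting $E_i$ from the left tensor factor of $\Delta(x)$ forces the Cartan prefactor $\new^{p(\mu-\alpha_i)}K_{\mu-\alpha_i}$ to be commuted past $E_i$, producing $(-1)^{p(x)p(i)}$; the second identity has no such sign because $E_i$ sits on the right tensor factor and no such commutation is required. A minor further technical point is that the pairing on tensor products is the \emph{unsigned} $\la y_1\otimes y_2, x_1\otimes x_2\ra = \la y_1, x_1\ra \la y_2, x_2\ra$, since $\U$ is realized as an honest Hopf algebra via the $\new$-trick rather than as a Hopf superalgebra. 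Verifying that the induction produces exactly the stated sign, and not a competing one, is the central computational check.
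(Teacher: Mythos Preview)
Your approach is correct and essentially the same as the paper's. The paper proceeds slightly more directly: rather than doing an explicit induction on $ht(\mu)$, it invokes the standard coproduct expansion for $x\in \U^+_\mu$,
\[
\Delta(x)=x\otimes 1+\sum_{i\in I} \rskew(x)\new^{p(i)}K_i\otimes E_i+(\text{rest})_1,
\qquad
\Delta(x)=\new^{p(x)}K_{\mu}\otimes x+\sum_{i\in I}(-1)^{p(x)p(i)}E_i\new^{p(\mu-\alpha_i)}K_{\mu-\alpha_i}\otimes \lskew(x)+(\text{rest})_2,
\]
with the remainders lying in bidegrees that pair trivially against $F_i\otimes y$ and $y\otimes F_i$; the lemma then drops out of \eqref{eq:bilinearform}. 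Your inductive argument is precisely how one establishes these two expansions in the first place, so the two routes coincide in substance. The paper's presentation is shorter because it treats the coproduct formula as a known structural fact about the skew derivations, while yours is more self-contained.
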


\begin{proof}
Suppose $x\in \U^{+}_\mu$, then we have that
\begin{align*}
    &\Delta(x)=x\otimes 1+\sum_{i\in I} \rskew(x)\new^{p(i)}K_i\otimes E_i+(rest)_1,\\
    &\Delta(x)=\new^{p(x)}K_{\mu}\otimes x+\sum_{i\in I}(-1)^{p(x)p(i)}E_i\new^{p(\mu-\alpha_i)}K_{\mu-\alpha_i}\otimes \lskew(x)+(rest)_2
\end{align*}
where $(rest)_1,(rest)_2\in \new\U^{+}_{\mu-\upsilon}K_\upsilon\otimes \U^{+}_\upsilon$ or $\U^{+}_{\mu-\upsilon}K_\upsilon\otimes \U^{+}_\upsilon$ with $\upsilon>0,\ \upsilon\notin \Pi$. Hence the lemma follows from \eqref{eq:bilinearform}.
\end{proof}

The next lemma is a crucial ingredient to construct the quasi $K$-matrix.
\begin{lemma}
\label{lem:commuF}
For all $x\in \U^{+}$, we have
\begin{equation}
    \label{eq:key}
    [x,F_j]=xF_j-(-1)^{p(x)p(j)}F_jx=\frac{1}{q^{\ell_j}-q^{-\ell_j}}(r_j(x)K_j-K_j^{-1}{}_jr(x)).
\end{equation}
\end{lemma}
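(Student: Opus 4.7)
The plan is to proceed by induction on the height of the weight $\mu$ for $x \in \U^{+}_\mu$; by bilinearity in $x$ it suffices to treat monomial $x$.

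\textbf{Base case.} For $x=1$ both sides vanish since $r_j(1)={}_jr(1)=0$. For $x=E_i$ a Chevalley generator one has $r_j(E_i)=\delta_{i,j}={}_jr(E_i)$, so the right-hand side reduces to $\delta_{i,j}(K_j-K_j^{-1})/(q^{\ell_j}-q^{-\ell_j})$, which coincides with $[E_i,F_j]$ by the defining relation (R5) of \eqref{eq:Urelation}.

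\textbf{Inductive step.} Write $x=x_1x_2$ with $x_\alpha\in\U^{+}_{\mu_\alpha}$ of strictly smaller height, and invoke the graded Leibniz rule for the super-commutator:
\begin{equation*}
[x_1x_2,F_j] \;=\; x_1[x_2,F_j] + (-1)^{p(x_2)p(j)}[x_1,F_j]x_2.
\end{equation*}
Substituting the induction hypothesis into each term, and then pushing $K_j^{\pm 1}$ into the outside positions via the $q$-commutation $K_j^{\pm 1} u = q^{\pm(\alpha_j,\lambda)}uK_j^{\pm 1}$ for $u\in\U^{+}_\lambda$, the result rearranges into
\begin{equation*}
\frac{1}{q^{\ell_j}-q^{-\ell_j}}\Bigl( \bigl[(-1)^{p(x_2)p(j)}q^{(\alpha_j,\mu_2)}r_j(x_1)x_2 + x_1 r_j(x_2)\bigr]K_j - K_j^{-1}\bigl[(-1)^{p(x_2)p(j)}{}_jr(x_1)x_2 + q^{(\alpha_j,\mu_1)}x_1\,{}_jr(x_2)\bigr] \Bigr).
\end{equation*}
The two bracketed sums are precisely $r_j(x_1 x_2)$ and ${}_jr(x_1 x_2)$ by the super-skew-Leibniz rules \eqref{eq:skew}, which closes the induction.

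\textbf{Main obstacle.} The principal point of care is the simultaneous bookkeeping of the super-signs $(-1)^{p(\cdot)p(j)}$ coming from the graded commutator and the $q$-powers $q^{(\alpha_j,\cdot)}$ produced when $K_j^{\pm 1}$ is transported past $x_1$ or $x_2$. These two families of factors originate independently but must assemble into the exact Leibniz patterns of \eqref{eq:skew}; the cancellations work out because the $q$-factor from conjugating $K_j$ past $x_2$ precisely matches the prefactor in the Leibniz expansion of $r_j(x_1 x_2)$, and dually for ${}_jr$. Finally, since both sides of the identity are well-defined on the quotient $\U^{+}$ of the free algebra by the super-Serre relators (R6)--(R11), no separate verification on Serre-type relations is required.
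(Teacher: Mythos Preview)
Your proof is correct and follows essentially the same approach as the paper: induction on the height of $x$, applying the graded Leibniz rule for the supercommutator $[x_1x_2,F_j]$, invoking the inductive hypothesis on the two factors, commuting $K_j^{\pm1}$ past the remaining pieces, and then recognizing the resulting expressions as $r_j(x_1x_2)$ and ${}_jr(x_1x_2)$ via \eqref{eq:skew}. Your writeup is in fact slightly more explicit about the sign and $q$-power bookkeeping than the paper's, and your closing remark that well-definedness on $\U^{+}$ requires no separate Serre-relation check is a nice touch.
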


\begin{proof}
We induct on $ht(x)$. When $x=E_j$ for some $j\in I$, \eqref{eq:key} follows from the definition. Now if $x=uv$ where $ht(u)<ht(x)$ and $ht(v)<ht(x)$, then we have
\begin{align*}
    uvF_j
    =&(-1)^{p(v)p(i)}uF_j v+\frac{ur_j(v)K_j-uK_j^{-1}{}_jr (v)}{q^{\ell_j}-q^{-\ell_j}} \\
    =&(-1)^{p(uv)p(j)}F_juv+\frac{ur_j(v)K_j-q^{(\alpha_j,|u|)}K_j^{-1}u{}_jr (v)}{q^{\ell_j}-q^{-\ell_j}}\\
    &\qquad+(-1)^{p(v)p(j)}\frac{r_j(u)K_jv-K_j^{-1}{}_jr(u)v}{q^{\ell_j}-q^{-\ell_j}} \\
    =&(-1)^{p(uv)p(j)}F_juv+\frac{r_j(uv)K_j-K_j^{-1}\lskew(uv)}{q^{\ell_j}-q^{-\ell_j}}.
\end{align*}
This proves the lemma.
\end{proof}

\subsection{A recursive formula}
Define $Q^+_{\overline{0}}:=\{\alpha\in Q^+\mid p(\alpha)=0\}$. Extending \eqref{eq:Bi}, we write $B_i=F_i$ for $i\in I_\bu$. Following \cite[\S 6]{BK19}, we establish the following lemma to give equivalent conditions on the existence of the quasi $K$-matrix.

Let $\rho_\bu$ denote the half sum of positive roots of the Levi subalgebra associated with $I_\bu \subset I$.
\begin{lemma}
\label{lem:biup}
Let $\up=\sum_{\mu\in Q_{\overline 0}^+}\up_\mu$ with $\up_\mu\in \U^{+}_\mu$ be an element  in the completion of $\U$, then the following are equivalent.

(1) For all $i\in I$, we have {\rm (cf. \cite[(3.20)]{WZ22})}
\begin{equation}
\label{eq:biup}
    B_i \up =\up \tau \circ \sigma (B_{\tau i}).
\end{equation}

(2) For all $i\in I$, we have
\begin{equation}
\label{eq:upB2}
     B_i \up=\up( F_i+(-1)^{(2\rho_\bu,\alpha_i)}{q}^{(\alpha_i,2\rho_\bu+w_\bu\alpha_{\tau i})}\va_{\tau i}  \overline{T_{w_\bu}( E_{\tau i})} K_i).
\end{equation}

(3) The element $\up$ satisfy the following relations:
\begin{equation}
    \label{eq:derivativeup}
    \begin{aligned}
    &\rskew(\up_\mu)=-(q^{\ell_i}-q^{-\ell_i})\up_{\mu-\alpha_i-w_\bu (\alpha_{\tau i})}(-1)^{(2\rho_\bu,\alpha_i)}{q}^{(\alpha_i,2\rho_\bu+w_\bu\alpha_{\tau i})}\va_{\tau i}  \overline{T_{w_\bu}( E_{\tau i})},\\
    &\lskew(\up_\mu)=-(q^{\ell_i}-q^{-\ell_i})q^{(\alpha_i,w_\bu\alpha_{\tau i})}\va_i T_{w_\bu}( E_{\tau i})\up_{\mu-\alpha_i-w_\bu\alpha_{\tau i}}.
    \end{aligned}
\end{equation}

Moreover, if these relations hold then additionally we have 
\begin{equation}
\label{eq:upblack}
    x\up=\up x \text{ for all }x\in \U^{\imath0}\U_\bu.
\end{equation}
and
\begin{equation}
\label{eq:upnonzero}
    \up_\mu=0 \text{ unless }w_\bu\tau(\mu)=\mu.
\end{equation}
\end{lemma}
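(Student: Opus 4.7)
The plan is to establish the equivalences $(1)\Leftrightarrow(2)\Leftrightarrow(3)$ in order and then deduce \eqref{eq:upblack} and \eqref{eq:upnonzero} as consequences.

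For $(1)\Leftrightarrow(2)$ with $i\in I_\circ$, I would first expand $\tau\sigma(B_{\tau i})$ directly. Using $\sigma(K_{\tau i}^{-1})=(-1)^{p(\tau i)}K_{\tau i}$, $\tau(K_{\tau i})=(-1)^{p(\tau i)}K_i$, and the anti-multiplicativity of $\sigma$, the calculation collapses to
\[\tau\sigma(B_{\tau i}) \;=\; F_i + \va_{\tau i}\,K_i\,\tau\sigma\bigl(T_{w_\bu}(E_i)\bigr).\]
After commuting $K_i$ through the weight-$w_\bu(\alpha_{\tau i})$ element $\overline{T_{w_\bu}(E_{\tau i})}$, equality with the right-hand side of \eqref{eq:upB2} reduces to the single intrinsic identity
\[\tau\sigma\bigl(T_{w_\bu}(E_i)\bigr) \;=\; (-1)^{(2\rho_\bu,\alpha_i)}\,q^{(\alpha_i,2\rho_\bu)}\,\overline{T_{w_\bu}(E_{\tau i})}.\]
This in turn follows from the interactions $T_{j,-1}'=\overline{\cdot}\,T_j\,\overline{\cdot}=\sigma T_j\sigma$ recorded in Theorem~\ref{thm:braid}, together with $\tau T_{w_\bu}\tau^{-1}=T_{w_\bu}$ (since $\tau$ permutes the simple reflections of the Weyl subgroup associated to $I_\bu$ and therefore fixes its longest element). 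The precise sign and $q$-exponent are accumulated along any reduced expression of $w_\bu$ by tracking the weight contributions, with the total exponent recognizable as $(\alpha_i,2\rho_\bu)=\sum_{\alpha\in\Phi_\bu^+}(\alpha_i,\alpha)$. For $i\in I_\bu$ the statement reduces to $\tau\sigma(F_{\tau i})=F_i$ and the extra term in \eqref{eq:upB2} vanishes under the convention $\va_j=0$ for $j\in I_\bu$.

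For $(2)\Leftrightarrow(3)$, I would apply Lemma~\ref{lem:commuF} to rewrite
\[F_i\up_\mu \;=\; \up_\mu F_i - (q^{\ell_i}-q^{-\ell_i})^{-1}\bigl(\rskew(\up_\mu)K_i - K_i^{-1}\lskew(\up_\mu)\bigr),\]
noting that the parity sign is trivial since $\mu\in X_{\overline 0}^+$. Substituting into \eqref{eq:upB2} and canceling the common $\up F_i$ terms leaves an equation in $\U^{\new,\geqslant}K_i\oplus\U^{\new,\geqslant}K_i^{-1}$, which the triangular decomposition separates into a $K_i$-part and a $K_i^{-1}$-part. Equating each part weight by weight yields exactly the two recursions of \eqref{eq:derivativeup}: the weight shift $\nu=\mu-\alpha_i-w_\bu(\alpha_{\tau i})$ comes from matching the weight $\mu-\alpha_i$ of $\rskew(\up_\mu)$ with $\nu+w_\bu(\alpha_{\tau i})$ on the right, while the extra factor $q^{(\alpha_i,w_\bu(\alpha_{\tau i}))}$ in the $\lskew$-formula arises from moving $K_i^{-1}$ past $\up_\nu$.

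Finally, I would prove \eqref{eq:upnonzero} by induction on $ht(\mu)$ using the recursions \eqref{eq:derivativeup}. The key elementary observation is that $\tau$ and $w_\bu$ commute as operators on the weight lattice, so $w_\bu\tau(\alpha_i+w_\bu\alpha_{\tau i})=\alpha_i+w_\bu\alpha_{\tau i}$. Hence the hypothesis ``$w_\bu\tau\mu'=\mu'$ whenever $\up_{\mu'}\ne 0$'' propagates from lower heights to $ht(\mu)$: if $w_\bu\tau(\mu)\ne\mu$ then the lower weight $\mu-\alpha_i-w_\bu\alpha_{\tau i}$ violates the symmetry for every $i$, forcing $\rskew(\up_\mu)=\lskew(\up_\mu)=0$ and hence $\up_\mu=0$. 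Given \eqref{eq:upnonzero}, the commutation $q^\mu\up=\up q^\mu$ for $\mu\in Y^\imath=\{\lambda-w_\bu\tau\lambda\mid\lambda\in Y\}$ is an immediate weight computation: for $\mu=\lambda-w_\bu\tau\lambda$ and $\nu$ with $\up_\nu\ne 0$ one has $(\mu,\nu)=(\lambda,\nu)-(\lambda,w_\bu\tau\nu)=0$, since $w_\bu\tau$ is an isometry fixing $\nu$. The relation $F_j\up=\up F_j$ for $j\in I_\bu$ is exactly \eqref{eq:biup} specialized to $i=j$, using $B_{\tau j}=F_{\tau j}$ and $\tau\sigma(F_{\tau j})=F_j$. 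The main obstacle is the commutation with $E_j$ and $K_j^{\pm 1}$ for $j\in I_\bu$: the specialization of \eqref{eq:derivativeup} at $i\in I_\bu$ (where $\va_{\tau i}=0$) gives $\rskew(\up_\mu)=\lskew(\up_\mu)=0$, so $\up$ lies in the subalgebra generated by $\{E_k\mid k\in I_\circ\}$, and I would combine this with a careful bookkeeping of the Serre-type relations between $E_j$ ($j\in I_\bu$) and $E_k$ ($k\in I_\circ$) at the boundary of the two index sets, together with the weight constraint $w_\bu\tau(\mu)=\mu$, to conclude the desired commutation.
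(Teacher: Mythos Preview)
Your overall strategy matches the paper's almost exactly. The paper computes $\tau\sigma(B_{\tau i})=F_i+\va_{\tau i}K_iT_{w_\bu}^{-1}(E_{\tau i})$ in one line (using $\sigma T_{w_\bu}\sigma=T_{w_\bu}^{-1}$ and $\tau T_{w_\bu}=T_{w_\bu}\tau$), and then invokes \cite[Lemma~4.17]{BW18b} for the identity
\[
K_iT_{w_\bu}^{-1}(E_{\tau i})=(-1)^{(2\rho_\bu,\alpha_i)}q^{(\alpha_i,2\rho_\bu+w_\bu\alpha_{\tau i})}\overline{T_{w_\bu}(E_{\tau i})}K_i,
\]
which is precisely the ``intrinsic identity'' you isolate. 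Your $(2)\Leftrightarrow(3)$ via Lemma~\ref{lem:commuF} and your height induction for \eqref{eq:upnonzero} are the same as the paper's (the latter citing \cite[Proposition~6.1]{BK19}).

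Two points to correct. First, the braid identity you quote, $T_{j,-1}'=\overline{\cdot}\,T_j\,\overline{\cdot}$, is not the one recorded after Theorem~\ref{thm:braid}: there one has $T_{j,-1}'=\sigma T_j\sigma$ and $T_{j,-1}'=\overline{\cdot}\,T_{j,1}'\,\overline{\cdot}$, with $T_{j,1}'\neq T_j$. What you actually need is the combination $\sigma T_{w_\bu}\sigma=T_{w_\bu}^{-1}$ together with the separate identity $T_{w_\bu}^{-1}(E_{\tau i})=(-1)^{(2\rho_\bu,\alpha_i)}q^{(2\rho_\bu,\alpha_i)}\overline{T_{w_\bu}(E_{\tau i})}$; the latter is exactly \cite[Lemma~4.17]{BW18b}, and its proof does proceed by accumulating factors along a reduced word as you suggest, but via the correct relation between $T_j$ and $\overline{\cdot}$.

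Second, and more substantively, your argument for \eqref{eq:upblack} has a genuine gap at the $E_j$-commutation step ($j\in I_\bu$). The implication ``$\rskew(\up_\mu)=\lskew(\up_\mu)=0$ for $i\in I_\bu$ $\Rightarrow$ $\up$ lies in the subalgebra generated by $\{E_k\mid k\in I_\circ\}$'' is not obvious (the kernel of all $r_i,{}_ir$ for $i\in I_\bu$ is larger than that subalgebra in general), and even granting it, that subalgebra does not commute with $E_j$ for $j\in I_\bu$ at the boundary nodes; the Serre relations mix them nontrivially, and ``careful bookkeeping'' is not a proof. The paper itself does not spell out \eqref{eq:upblack} either, implicitly deferring to \cite[Proposition~6.1]{BK19}. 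If you want a self-contained argument, one route is to show inductively (in the same height induction used for \eqref{eq:upnonzero}) that $T_j(\up_\mu)=\up_\mu$ for $j\in I_\bu$, using that $r_j(\up_\mu)=0$ forces $T_j^{-1}(\up_\mu)\in\U^+$; the $\U_\bu$-invariance then follows. Your arguments for $\U^{\imath0}$, $F_j$, and (implicitly, via $2h_j\in Y^\imath$) $K_j$ are fine.
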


\begin{proof}
Note that 
\[\tau \circ \sigma (B_{\tau i})=F_i+\va_{\tau i}K_iT_{w_\bu}^{-1}(E_{\tau i}).\]
Thus the equivalence of (1) and (2) follows from \cite[Lemma 4.17]{BW18b}. The equivalence of (2) and (3) follows from Lemma \ref{lem:commuF}. Moreover, \eqref{eq:upnonzero} follows from an induction argument on $ht(\mu)$; cf. \cite[Proposition 6.1]{BK19}.
\end{proof}

The following lemma states that every non-vanishing term of the quasi $K$-matrix is expected to have parity 0.
\begin{lemma}
For $\mu \in Q^+$, if $w_\bu \tau (\mu)=\mu$, then $p(\mu)=0$.
\end{lemma}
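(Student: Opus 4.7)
My plan is to verify the lemma by a direct parity computation in the $\epsilon$-basis of $\mathfrak h^*$. Writing $\mu=\sum_j n_j\alpha_j=\sum_b c_b\epsilon_b$ (where $b$ runs over $I(\mathfrak m|\mathfrak n)$), and using $\alpha_j=\epsilon_{i_{j-1/2}}-\epsilon_{i_{j+1/2}}$ together with $p(\alpha_j)=p(\epsilon_{i_{j-1/2}})+p(\epsilon_{i_{j+1/2}})$, I would first show that
\[
p(\mu)\equiv\sum_b c_b\,p(\epsilon_b)\pmod 2,
\]
because each signed count $c_b$ agrees modulo $2$ with the unsigned number of times $\epsilon_b$ occurs among the $\alpha_j$'s in the expansion of $\mu$.

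Next, I would determine the action of $w_\bu\tau$ on the $\epsilon$-basis in terms of the $\tilde\Gamma$-labels. From \eqref{eq:varpi} one reads off that $w_\bu$ acts on $\mathfrak h^*$ as $\epsilon_{\tilde i_k}\mapsto\epsilon_{\tilde i_{-k}}$ for $|k|\leq\pt-\tfrac12$ and as the identity for $|k|\geq\pt+\tfrac12$, while checking $\tau(\alpha_j)=\alpha_{-j}$ on simple roots shows that $\tau$ acts as $\epsilon_{\tilde i_k}\mapsto-\epsilon_{\tilde i_{-k}}$. Composing,
\[
w_\bu\tau(\epsilon_{\tilde i_k})=\begin{cases}-\epsilon_{\tilde i_k}, & |k|\leq\pt-\tfrac12,\\ -\epsilon_{\tilde i_{-k}}, & |k|\geq\pt+\tfrac12.\end{cases}
\]
The equation $w_\bu\tau(\mu)=\mu$ therefore forces $c_{\tilde i_k}=0$ for $|k|\leq\pt-\tfrac12$ and $c_{\tilde i_{-k}}=-c_{\tilde i_k}$ for $|k|\geq\pt+\tfrac12$, and substituting into the parity formula gives
\[
p(\mu)\equiv\sum_{k\geq\pt+\tfrac12} c_{\tilde i_k}\bigl(p(\epsilon_{\tilde i_k})+p(\epsilon_{\tilde i_{-k}})\bigr)\pmod 2.
\]

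It therefore suffices to prove $p(\epsilon_{\tilde i_k})=p(\epsilon_{\tilde i_{-k}})$ for every $k\geq\pt+\tfrac12$. Assumption \eqref{eq:extraassumption} forces $p(\epsilon_{\tilde i_\ell})$ to be constant for $\ell\in[-\pt+\tfrac12,\pt-\tfrac12]$; then the symmetry $p(j)=p(-j)$ for $j\in I_\circ$ given by \eqref{eq:assumption}, applied inductively at the white nodes $j=\pt,\pt+1,\ldots,\pt+r-1$ starting from the innermost pair $j=\pm\pt$, propagates the required equalities outward through the two white tails. The main obstacle is the bookkeeping between the relabellings $i_k$ (for $\Gamma$) and $\tilde i_k$ (for $\tilde\Gamma$) given by \eqref{eq:i_j}, together with keeping careful track of the sign that $\tau$ introduces when composed with the permutation $w_\bu$; once that is set up, each of the three steps is a short computation.
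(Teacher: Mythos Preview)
Your argument is correct and complete: the parity formula $p(\mu)\equiv\sum_b c_b\,p(\epsilon_b)\pmod 2$ follows immediately since the sign in $\alpha_j=\epsilon_{i_{j-1/2}}-\epsilon_{i_{j+1/2}}$ is irrelevant mod~2; your description of $w_\bu\tau$ on the $\epsilon_{\tilde i_k}$ is right (and, under \eqref{eq:extraassumption}, $\Gamma=\tilde\Gamma$ so the relabelling \eqref{eq:i_j} only swaps the inner indices, which play no role once you have shown $c_{\tilde i_k}=0$ there); and the inductive propagation of $p(\epsilon_{\tilde i_k})=p(\epsilon_{\tilde i_{-k}})$ along the white tails using $p(j)=p(-j)$ is exactly right.

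The paper, however, argues differently. It runs an induction on $\mathrm{ht}(\mu)$: if $\mu=\sum_t a_t\alpha_{j_t}$ contains an odd simple root $\alpha_{j_1}$, then $\mu'=\mu-(\alpha_{j_1}+w_\bu\tau(\alpha_{j_1}))$ again lies in $X^+$, satisfies $w_\bu\tau(\mu')=\mu'$, and has strictly smaller height; since $p(\alpha_{j_1})=p(w_\bu\tau(\alpha_{j_1}))$ (because under \eqref{eq:extraassumption} the difference $w_\bu\alpha_{\tau j_1}-\alpha_{\tau j_1}$ is a sum of even roots, and \eqref{eq:assumption} gives $p(j_1)=p(\tau j_1)$), one concludes $p(\mu)=p(\mu')=0$. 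Your route trades this inductive peeling for a single linear-algebra computation in the $\epsilon$-basis: it is more explicit and sidesteps the need to check that $\mu'\in X^+$, at the cost of carrying out the coordinate bookkeeping for $w_\bu\tau$. The paper's argument, by contrast, stays at the level of simple roots and generalises more readily to Satake diagrams beyond type AIII, but relies on the (true, though not spelled out) positivity of $\mu'$.
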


\begin{proof}
Let's use induction on $ht(\mu)$. Now we can write $\mu=\sum_{t=1}^\ell a_t\alpha_{j_t}$ where $a_t>0$ for $1\leqslant t\leqslant \ell$. Now if all $\alpha_{j_t}$ are even roots, then we have $p(\mu)=0$. On the other hand, suppose $p(\alpha_{j_1})=1$. Since $w_\bu \tau (\mu)=\mu$, we have $\mu'=\mu-(\alpha_{j_1}+w_\bu\tau(\alpha_{j_1}))\in Q^+$ and $w_\bu \tau (\mu')=\mu'$ and $ht(\mu')<ht(\mu)$. Thus by the inductive hypothesis we have $p(\mu')=0$. Also, according to \eqref{eq:assumption}, we have $p(\alpha_{j_1})=p(w_\bu\tau (\alpha_{j_1}))$. Thus we have $p(\mu)=0$ as well.
\end{proof}

The system of equations \eqref{eq:derivativeup} for all $i\in I$ provides an equivalent condition for the existence of $\up$, and our objective is to solve it recursively using the following proposition.
\begin{proposition} (cf. \cite[Proposition 6.3]{BK19})
Let $\mu\in Q_{\overline 0}^+$ with $ht(\mu)\geqslant 2$ and fix $A_i,\ {}_iA\in \U^{+}_{\mu-\alpha_i}$ for all $i\in I$. The following are equivalent.

(1) There exists an element $\Xi\in \U^{+}_\mu$ such that $$\rskew(\Xi)=A_i,\quad \lskew(\Xi)={}_iA,\quad \forall i\in I.$$

(2) The elements $A_i$ and ${}_iA$ satisfy the following properties.

\qquad (2a) For all $i,j\in I$, we have \begin{equation}
\label{eq:2a}
r_i({}_jA)=(-1)^{p(i)p(j)}{}_ir(A_j).
\end{equation}

\qquad (2b) For all $i\in \Iodd$, we have
\begin{equation}
\label{eq:2b}
    \la F_i,A_i\ra=0.
\end{equation}

\qquad (2c) For all $i\nsim j\in I$, we have
\begin{equation}
\label{eq:2c}
  \la F_i,A_j\ra=(-1)^{p(i)p(j)}\la F_j,A_i\ra.
\end{equation}

\qquad (2d) For all $i\in \Ieven$ and $j\sim i$, we have 
\begin{equation}
\label{eq:2d}
    \la F_i^2,A_j \ra-[2]\la F_iF_j,A_i \ra+\la F_jF_i,A_i \ra=0.
\end{equation}

\qquad (2e) For all $i\in \Iodd$ and $j\sim i \sim k$, we have 
\begin{equation}
\label{eq:2e}
\begin{aligned}
   [2]\la F_iF_kF_j,A_i\ra=(-1)^{p(j)}\la F_iF_kF_i, A_j\ra+(-1)^{p(j)+p(j)p(k)}\la F_jF_iF_k,A_i\ra\\
   +(-1)^{p(k)}\la F_iF_jF_i, A_k\ra+(-1)^{p(k)+p(j)p(k)}\la F_kF_iF_j,A_i\ra.
   \end{aligned}
\end{equation}
\end{proposition}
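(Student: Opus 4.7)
The proof would follow the general pattern of \cite[Proposition 6.3]{BK19}, adapted to the super setting by careful tracking of parity signs. The implication (1) $\Rightarrow$ (2) is straightforward. Condition (2a) is the general identity $r_i \circ {}_jr = (-1)^{p(i)p(j)}\, {}_ir \circ r_j$ on $\U^+$, which follows by induction on $ht$ from the super Leibniz rules \eqref{eq:skew}. Conditions (2b)--(2e) arise by pairing each of the four families of Serre relations defining $\U^-$ (respectively $F_i^2=0$ for $i \in \Iodd$; super-commutation for $i \nsim j$; the quantum Serre relation for $i \in \Ieven$ with $j\sim i$; and the super-Serre polynomial $S_{p(k),p(\ell)}(F_k,F_j,F_\ell)$ for $j \in \Iodd$) against $\Xi \in \U^+_\mu$, and then pushing the $F_i$-factors to one side using the two identities in the lemma preceding the proposition, converting $\la\cdot,\Xi\ra$ into pairings with $A_i = r_i(\Xi)$.

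For the harder direction (2) $\Rightarrow$ (1), the plan is to exploit the nondegeneracy of the bilinear form $\la\cdot,\cdot\ra\colon \U^-_{-\mu}\times\U^+_\mu \to \Q(q)$. Because $p(\mu)=0$, constructing $\Xi$ reduces to constructing a linear functional $\phi\colon \U^-_{-\mu}\to\Q(q)$ satisfying
\begin{equation*}
\phi(yF_i) = \la F_i,E_i\ra\,\la y,A_i\ra, \qquad \phi(F_iy) = \la F_i,E_i\ra\,\la y,{}_iA\ra,
\end{equation*}
for all $i\in I$ and all admissible $y$. The element $\Xi\in\U^+_\mu$ corresponding to such a $\phi$ under the pairing will then automatically satisfy $r_i(\Xi)=A_i$ and ${}_ir(\Xi)={}_iA$ by the same lemma.

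To construct $\phi$, I would lift it to a functional $\widetilde\phi$ on the free algebra on $\{F_i\}_{i\in I}$ in weight $-\mu$, defined recursively by $\widetilde\phi(yF_i):=\la F_i,E_i\ra\,\la y,A_i\ra$ (with $\widetilde\phi$ vanishing on monomials of length less than two, consistent since $ht(\mu)\geq 2$). The main step is verifying that $\widetilde\phi$ annihilates each relation generating the ideal defining $\U^-$ inside the free algebra: each of the four Serre families collapses under the right-recursion to exactly one of the conditions (2b), (2c), (2d), (2e). Once $\widetilde\phi$ descends to $\phi$ on $\U^-_{-\mu}$, the left-recursion $\phi(F_iy) = \la F_i,E_i\ra\,\la y,{}_iA\ra$ is established by induction on $ht(y)$: writing $y = y' F_j$, both sides are expanded using the already-verified right-recursion together with condition (2a), which exactly governs the compatibility between left and right skew derivations.

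The main obstacle will be the sign bookkeeping in the final two steps. Verifying that $\widetilde\phi$ annihilates $S_{p(k),p(\ell)}(F_k,F_j,F_\ell)$ via (2e) requires expanding the eight terms of \eqref{eq:superSerre}, collapsing each via the right-recursion to a pairing with one of $A_k,A_j,A_\ell$, and then matching term-by-term with (2e); the non-obvious combinatorial identity $p_X(k)p_X(\ell) + \cdots$ arising from the Leibniz rule must conspire with the explicit super prefactors. Likewise, the left/right compatibility check in the final induction step rests sensitively on the precise form of (2a) pairing with the twisted Leibniz signs in \eqref{eq:skew}; condition (2a) appears engineered precisely for this purpose, but extracting the match will be the most delicate computation in the argument.
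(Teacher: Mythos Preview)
Your proposal is correct and follows precisely the strategy of \cite[Proposition~6.3]{BK19}, which is exactly what the paper does: its entire proof reads ``Follows by a rerun of proof of \cite[Proposition~6.3]{BK19}.'' Your outline is in fact a faithful and detailed expansion of that rerun in the super setting, including the correct identification of which Serre family matches which condition (2b)--(2e) and the role of (2a) in the left/right compatibility induction.
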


\begin{proof}
The proposition follows by a rerun of proof of \cite[Proposition 6.3]{BK19}.
\end{proof}

\subsection{Technical Lemmas}

Define
\begin{equation}
\label{eq:vari'}
    \va_i'=(-1)^{(2\rho_\bu,\alpha_i)}{q}^{(\alpha_i,2\rho_\bu+w_\bu\alpha_{\tau i})}\va_{\tau i}, \text{ for all } i \in I. 
\end{equation}
Thus we can rewrite \eqref{eq:derivativeup} as
\begin{equation}
\label{eq:AiiA}
    \begin{aligned}
       &A_i=-(q^{\ell_i}-q^{-\ell_i})\up_{\mu-\alpha_i-w_\bu \alpha_{\tau i}}\va_i' \overline{T_{w_\bu}(\ E_{\tau i})},\\
       &{}_iA=-(q^{\ell_i}-q^{-\ell_i})q^{(\alpha_i,w_\bu\alpha_{\tau i})}\va_i T_{w_\bu}( E_{\tau i})\up_{\mu-\alpha_i-w_\bu\alpha_{\tau i}}
    \end{aligned}
\end{equation}
In order to construct the quasi $K$-matrix $\up$ recursively, it suffices to show \eqref{eq:AiiA}
 satisfies relations \eqref{eq:2a}--\eqref{eq:2e} for all $i\in I$. Following the strategy from \cite{BK19} we develop several lemmas as follows.

\begin{lemma}
For any $i,j\in I$, we have
\[r_i\circ {}_jr=(-1)^{p(i)p(j)}{}_jr \circ r_i.\]
\end{lemma}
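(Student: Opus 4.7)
The plan is to establish the identity by induction on the height of $x\in \U^+$, using the super-derivation Leibniz rules \eqref{eq:skew} and viewing $r_i\circ {}_jr$ and ${}_jr\circ r_i$ as two operators on $\U^+$ that need to be compared on every weight component. Since both derivations vanish on scalars, in the base case $x=E_k$ we have $r_i({}_jr(E_k))=r_i(\delta_{j,k})=0$ and likewise ${}_jr(r_i(E_k))=0$, so the identity holds trivially for $\mathrm{ht}(x)=1$.

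For the inductive step, I would write $x=yz$ with $y\in \U^+_\mu$ and $z\in \U^+_\nu$ of strictly smaller height, expand ${}_jr(yz)$ by the left Leibniz rule in \eqref{eq:skew}, and then apply $r_i$ using the right Leibniz rule — observing carefully that when $r_i$ hits the factor ${}_jr(y)\in \U^+_{\mu-\alpha_j}$ or $y\cdot {}_jr(z)$ with ${}_jr(z)\in \U^+_{\nu-\alpha_j}$, the exponent of $q$ and the parity shift must be adjusted accordingly. This produces four terms. Performing the symmetric expansion for ${}_jr(r_i(yz))$ also produces four terms, matching the previous four up to the sign $(-1)^{p(i)p(j)}$: two of the matched pairs use the inductive hypothesis $r_i\circ {}_jr=(-1)^{p(i)p(j)}{}_jr\circ r_i$ on $y$ and $z$ respectively, while the remaining two ``cross'' terms match directly after checking that the parity shifts $(p(z)+p(j))p(i)$ and $(p(z)+p(i))p(j)$ differ by exactly $p(i)p(j)$, and that the $q$-exponents $(\alpha_j,\mu)+(\alpha_i,\nu-\alpha_j)$ and $(\alpha_i,\nu)+(\alpha_j,\mu-\alpha_i)$ coincide by symmetry of the bilinear form.

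The hard part is purely bookkeeping: the Leibniz rules \eqref{eq:skew} have asymmetric parity/$q$-factors (the right derivation $r_i$ picks up $q^{(\alpha_i,\nu)}$ while the left one ${}_ir$ picks up $q^{(\alpha_i,\mu)}$, and the sign factors involve different tensor slots), so a sloppy reorganization could easily misplace the $(-1)^{p(i)p(j)}$. I would therefore tabulate the four summands side by side and verify the matching of (i) the two Cartan-type terms using the inductive hypothesis, and (ii) the two cross terms by a direct calculation. No Serre-type identity is required, since the derivations are defined on the free-algebra level and respect the ideal of relations; so induction on arbitrary products suffices.
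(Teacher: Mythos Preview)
Your approach is essentially identical to the paper's: induction on height, expanding both compositions via the Leibniz rules \eqref{eq:skew}, and matching the four resulting terms (two via the inductive hypothesis on the factors, two cross terms by direct comparison of parity signs and $q$-powers, using $p({}_kr(z))=p(z)+p(k)$). One small bookkeeping slip: for each cross term the relevant comparison is $(p(z)+p(j))p(i)$ versus $p(z)p(i)$ (respectively $(p(z)+p(i))p(j)$ versus $p(z)p(j)$), each of which differs by $p(i)p(j)$; as literally written, the two expressions $(p(z)+p(j))p(i)$ and $(p(z)+p(i))p(j)$ differ by $p(z)(p(i)-p(j))$, not $p(i)p(j)$.
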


\begin{proof}
If $u=E_k$ or $1$, then we certainly have $r_i\circ {}_jr(u)=(-1)^{p(i)p(j)}{}_jr \circ r_i(u)$. Thus It is enough to show that $r_i\circ {}_jr(xy)={}_jr \circ r_i(xy)$ for any $x\in \U^{+}_\mu,\ y\in \U^{+}_\upsilon$.

We have
\begin{align*}
    &r_i\circ {}_jr(xy)\\
    =&r_i((-1)^{p(y)p(j)}{}_jr(x)y+q^{(\alpha_j,\mu)}x{}_jr(y))\\
    =&(-1)^{p(y)p(j)}[(-1)^{p(y)p(i)}q^{(\alpha_i,\upsilon)}r_i\circ{}_jr(x)y+{}_jr(x)r_i(y)]\\
    &\qquad+q^{(\alpha_j,\mu)}[(-1)^{p({}_jr(y))p(i)}q^{(\alpha_i,\upsilon-\alpha_j)}r_i(x){}_jr(y)+xr_i\circ {}_jr(y)], \\
    &{}_jr\circ r_i(xy)\\
    =&{}_jr((-1)^{p(y)p(i)}q^{(\alpha_i,\upsilon)}\rskew(x)y+x\rskew(y))\\\
    =&(-1)^{p(y)p(i)}q^{(\alpha_i,\upsilon)}[(-1)^{p(y)p(j)}{}_jr\circ r_i(x)y+q^{(\alpha_j,\mu-\alpha_i)}r_i(x){}_jr(y)]\\
    &\qquad+[(-1)^{p({}_ir(y))p(j)}{}_jr(x)r_i(y)+q^{(\alpha_j,\mu)}x{}_jr\circ r_i(y)].
\end{align*}
Now since $p({}_kr(y))=p(y)\pm p(k)$ for any $k\in I$, we have
$
r_i\circ {}_jr=(-1)^{p(i)p(j)}{}_jr \circ r_i.
$
\end{proof}

\begin{lemma}
For all $u\in \U^{+}_\mu$, we have
\begin{equation}
\label{eq:rhori}
\sigma\circ \lskew(u) =(-1)^{p(i)(p(u)+1)}\rskew\circ \sigma(u).
\end{equation}
\end{lemma}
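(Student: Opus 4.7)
The plan is to prove the identity by induction on the height $ht(\mu)$ of $u\in \U^{+}_\mu$. The base case comes in two flavors: $u=1$ is trivial since both sides vanish (in fact, $r_i(1)={}_ir(1)=0$), and for $u=E_j$ we compute $\sigma\circ{}_ir(E_j)=\sigma(\delta_{ij})=\delta_{ij}=r_i(E_j)=r_i\circ\sigma(E_j)$, which matches the sign $(-1)^{p(i)(p(E_j)+1)}$ because $p(i)(p(i)+1)$ is always even (this uses that only the $i=j$ term matters).

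For the inductive step, I would write $u=xy$ with $x\in \U^{+}_\mu,\ y\in \U^{+}_\upsilon$ both of strictly smaller height. Apply the Leibniz rule \eqref{eq:skew} to expand ${}_ir(xy)$, then apply the anti-involution $\sigma$ (using $\sigma(ab)=\sigma(b)\sigma(a)$) to obtain
\[
\sigma\circ{}_ir(xy)=(-1)^{p(y)p(i)}\sigma(y)\,\sigma({}_ir(x))+q^{(\alpha_i,\mu)}\sigma({}_ir(y))\,\sigma(x).
\]
On the other side, expand $r_i\circ\sigma(xy)=r_i(\sigma(y)\sigma(x))$ via the Leibniz rule for $r_i$. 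Since $\sigma$ preserves the weight grading and hence the parity, $\sigma(x)\in\U^{+}_\mu$ and $\sigma(y)\in\U^{+}_\upsilon$, giving
\[
r_i\circ\sigma(xy)=(-1)^{p(x)p(i)}q^{(\alpha_i,\mu)}r_i(\sigma(y))\,\sigma(x)+\sigma(y)\,r_i(\sigma(x)).
\]
Finally, apply the inductive hypothesis $\sigma\circ{}_ir(x)=(-1)^{p(i)(p(x)+1)}r_i\circ\sigma(x)$ and similarly for $y$, and compare term by term.

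The whole proof reduces to careful sign bookkeeping. For the $\sigma(y)r_i(\sigma(x))$ terms, the sign on the left is $(-1)^{p(y)p(i)+p(i)(p(x)+1)}=(-1)^{p(i)(p(x)+p(y)+1)}$, which is precisely $(-1)^{p(i)(p(u)+1)}$ times the sign $1$ appearing on the right. For the $r_i(\sigma(y))\sigma(x)$ terms, the sign on the left is $(-1)^{p(i)(p(y)+1)}$ while the right multiplied by $(-1)^{p(i)(p(u)+1)}$ yields $(-1)^{p(i)(p(x)+p(y)+1)+p(x)p(i)}=(-1)^{p(i)(p(y)+1)+2p(x)p(i)}$, which matches. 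Thus both terms agree and the induction closes.

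I do not foresee a real obstacle beyond this sign tracking; the only subtlety is recognizing that parity of any homogeneous element equals the parity of its weight, so that $p(\sigma(x))=p(x)$ and the inductive hypothesis can be applied cleanly to $x$ and $y$ separately. Since all the weight shifts $\mu\mapsto\mu-\alpha_i$ and $\upsilon\mapsto\upsilon-\alpha_i$ in the derivation formulas have strictly smaller heights than $\mu+\upsilon$, the inductive scheme is well-founded.
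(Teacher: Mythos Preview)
Your proposal is correct and follows essentially the same approach as the paper: induction on $ht(\mu)$ with the Leibniz rules \eqref{eq:skew} for $\lskew$ and $\rskew$, combined with the anti-multiplicativity of $\sigma$. The only cosmetic difference is that the paper computes $\sigma\circ\lskew\circ\sigma(xy)$ and compares directly with $\rskew(xy)$ (exploiting $\sigma^2=\mathrm{id}$), whereas you expand $\sigma\circ\lskew(xy)$ and $\rskew\circ\sigma(xy)$ separately before matching terms; the sign bookkeeping is identical.
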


\begin{proof}
We prove by induction on $ht(\mu)$. When $u=E_j$ or $u=1$ the equality holds by definition. Now suppose $x\in \U^{+}_{\mu_1},\ y\in \U^{+}_{\mu_2}$ where $u=xy,\ \mu=\mu_1+\mu_2$ and $\mu_1,\mu_2>0$. Then we have 
\begin{align*}
    \sigma\circ\lskew
    \circ \sigma(xy)
    =&\sigma ((-1)^{p(x)p(i)}\lskew\circ \sigma(y)\sigma(x)+q^{(\alpha_i,\mu_2)}\sigma(y)\lskew\circ \sigma(x)) \\
    =&(-1)^{p(x)p(i)}x\sigma\circ\lskew\circ \sigma(y)+q^{(\alpha_i,\mu_2)}\sigma\circ\lskew\circ \sigma(x)y\\
    =&(-1)^{p(xy)p(i)+p(i)}\rskew(xy).
\end{align*}
This proves the lemma.
\end{proof}

\begin{lemma}
For all $x\in \U^{+}_\mu$, we have
\begin{equation}
\label{eq:bskew}
\bskew(x)=q^{(\alpha_i,\alpha_i-\mu)}\lskew(x).
\end{equation}
\end{lemma}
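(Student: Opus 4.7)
The plan is to prove the identity by induction on $\operatorname{ht}(\mu)$. For $\mu = \alpha_j$, any $x \in \U^+_\mu$ is a scalar multiple of $E_j$, and direct computation gives $\bskew(E_j) = \overline{r_i(E_j)} = \delta_{ij} = \lskew(E_j)$, while $q^{(\alpha_i, \alpha_i - \alpha_j)} = 1$ when $i = j$; the case $i \neq j$ is vacuous, and $\operatorname{ht}(\mu) = 0$ is trivial.

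The key preparatory step is to record the Leibniz rule for $\bskew$. Since the bar involution fixes each $E_j$ and sends $q \mapsto q^{-1}$, applying $\overline{\phantom{x}}$ to the Leibniz rule \eqref{eq:skew} for $\rskew$ yields
\[\bskew(uv) = (-1)^{p(v)p(i)}\, q^{-(\alpha_i, \nu')}\, \bskew(u)\, v + u\, \bskew(v), \quad \text{for } u \in \U^+_\nu,\; v \in \U^+_{\nu'}.\]

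For the inductive step, suppose $\operatorname{ht}(\mu) \geq 2$ and factor $x = uv$ with $u \in \U^+_\nu$ and $v \in \U^+_{\mu-\nu}$ for some $0 < \nu < \mu$. Applying the Leibniz rules for both $\bskew$ and $\lskew$ to $uv$ and substituting the inductive hypothesis for $u$ and for $v$ separately, the identity reduces to matching coefficients of $\lskew(u)\,v$ and $u\,\lskew(v)$; these collapse to the scalar identities
\[-(\alpha_i, \mu - \nu) + (\alpha_i, \alpha_i - \nu) = (\alpha_i, \alpha_i - \mu), \qquad (\alpha_i, \alpha_i - \mu + \nu) = (\alpha_i, \alpha_i - \mu) + (\alpha_i, \nu),\]
both immediate from bilinearity of the form.

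No step here presents a genuine obstacle: the identity is essentially a scalar-twist relation between two half-derivations, and bar conjugation supplies exactly the right sign change in the $q$-exponent for the induction to close cleanly. The only subtlety worth checking is the well-definedness of $\bskew$ on $\U^+$, which is immediate from $\overline{E_j} = E_j$ together with the compatibility of the bar involution with multiplication in $\U$.
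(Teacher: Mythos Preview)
Your proof is correct and follows essentially the same approach as the paper: induction on $\operatorname{ht}(\mu)$, with the key step being the Leibniz rule for $\bskew$ obtained by bar-conjugating the rule for $\rskew$, followed by matching the $q$-exponents term by term. The paper's proof is slightly more terse in laying out the computation directly, but the structure and ideas are identical.
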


\begin{proof}
We prove by induction on $ht(\mu)$. When $u=E_j$ or $u=1$ the equality holds by definition. Now suppose $x\in U_{\mu_1}^+,\ y\in \U_{\mu_2}^+$ where $u=xy,\ \mu=\mu_1+\mu_2$ and $\mu_1,\mu_2>0$. Then we have 
\begin{align*}
    \bskew(xy)
    =&x\bskew(y)+(-1)^{p(y)p(i)}q^{-(\alpha_i,\mu_2)}\bskew(x)y \\
    =&q^{(\alpha_i,\alpha_i-\mu_2)}x\lskew(y)+(-1)^{p(y)p(i)}q^{(\alpha_i,\alpha_i-\mu_1-\mu_2)}\lskew(x)y\\
    =&q^{(\alpha_i,\alpha_i-\mu)}[q^{(\alpha_i,\mu_1)}\lskew(y)+(-1)^{p(y)p(i)}\lskew(x)y]\\
    =&q^{(\alpha_i,\alpha_i-\mu)}\lskew(xy).
\end{align*}
This proves the lemma.
\end{proof}

\begin{lemma}
\label{lemma:technical1}
For all $i\in I_\circ$, we have
\begin{equation}
\label{eq:barri}
\overline{\rskew(T_{w_\bu}( E_i))}=(-1)^{(\alpha_i,2\rho_\bu)}q^{(\alpha_i,\alpha_i-w_\bu\alpha_i-2\rho_\bu)}\sigma\circ \tau(r_{\tau i}(T_{w_\bu}( E_{\tau i}))). 
\end{equation}
\end{lemma}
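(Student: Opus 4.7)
The plan is to reduce the desired identity to a cleaner statement about how the bar involution and $\sigma$ interact with $T_{w_\bu}(E_i)$, and then to establish that cleaner statement by induction on the length of a reduced expression for $w_\bu$.

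First, I apply \eqref{eq:bskew} with $x = T_{w_\bu}(E_i) \in \U^{+}_{w_\bu\alpha_i}$ to rewrite the left-hand side as
\begin{equation*}
\overline{r_i(T_{w_\bu}(E_i))} = q^{(\alpha_i,\alpha_i - w_\bu\alpha_i)}\, \lskew(\overline{T_{w_\bu}(E_i)}).
\end{equation*}
On the right-hand side I use two observations: (a) $\tau$ commutes with $w_\bu$ as Weyl group elements, since $\tau$ stabilizes $I_\bu$, so that $\tau T_{w_\bu} = T_{w_\bu}\tau$ as operators on $\U$, giving $\tau(T_{w_\bu}(E_i)) = T_{w_\bu}(E_{\tau i})$; and (b) $\tau \circ r_i = r_{\tau i} \circ \tau$, which is verified by checking that the Leibniz rule \eqref{eq:skew} is $\tau$-invariant (using that $\tau$ preserves parity and the bilinear form). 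Together these collapse $\sigma\tau(r_{\tau i}(T_{w_\bu}(E_{\tau i})))$ down to $\sigma(r_i(T_{w_\bu}(E_i)))$.

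Next, I apply \eqref{eq:rhori} to the element $u = \overline{T_{w_\bu}(E_i)}$. Under assumption \eqref{eq:extraassumption}, $I_\bu$ consists of even simple roots only, so $p(w_\bu\alpha_i) = p(\alpha_i) = p(i)$, which makes the sign $(-1)^{p(i)(p(u)+1)}$ trivial. Combined with the elementary fact that $\sigma$ and $\overline{\cdot}$ commute on $\U^{+}$ (since both fix the $E_j$'s, and reversal of order commutes with the $q \mapsto q^{-1}$ substitution), the identity reduces to the key equality
\begin{equation*}
\overline{\sigma(T_{w_\bu}(E_i))} = (-1)^{(\alpha_i,2\rho_\bu)}\, q^{-(\alpha_i,2\rho_\bu)}\, T_{w_\bu}(E_i).
\end{equation*}

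Finally, I establish this reduced equality by induction on the length of a reduced expression for $w_\bu$. The base case is immediate. For the inductive step with $w_\bu = s_j w_\bu'$, I use the conjugation identities $\sigma T_{j,1}'' \sigma = T'_{j,-1}$ and $T'_{j,-1} = \overline{\cdot}\, T'_{j,1}\,\overline{\cdot}$ from Theorem \ref{thm:braid}, together with a direct computation showing that $\eta := \overline{\cdot}\circ\sigma$ scales $T_j$ applied to a weight vector of weight $\nu$ by a factor $-q^{-(\alpha_j,\nu)}$ whenever $j \in I_\bu$. Iterating this along the reduced expression and using that the weights accumulated along the expression sum to $2\rho_\bu$ yields the claimed scalar. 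The main obstacle lies here: although each ingredient is explicit, carefully bookkeeping the accumulated sign and $q$-power through the iteration of $\eta \circ T_j$ requires tracking how weights transform under successive braid operators, and rests on a combinatorial identity expressing $(\alpha_i, 2\rho_\bu)$ as the sum of the partial weight pairings encountered along the reduced word.
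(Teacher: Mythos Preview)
Your reduction is sound and your approach is exactly the one the paper invokes (it simply cites \cite[Lemma~2.9]{BK15}): reduce the identity, via \eqref{eq:bskew}, \eqref{eq:rhori}, and the compatibilities $\tau T_{w_\bu}=T_{w_\bu}\tau$, $\tau r_i=r_{\tau i}\tau$, to the single scalar relation
\[
\overline{\sigma(T_{w_\bu}(E_i))}=(-1)^{(\alpha_i,2\rho_\bu)}q^{-(\alpha_i,2\rho_\bu)}T_{w_\bu}(E_i),
\]
and then prove the latter by induction on $\ell(w_\bu)$ using $\eta T_j=T'_{j,1}\eta$.

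One correction in the inductive step: the scaling factor you state, $-q^{-(\alpha_j,\nu)}$, is not right for all weight vectors (it already fails for $\nu$ orthogonal to $\alpha_j$, where both $T_j$ and $T'_{j,1}$ act as the identity). The correct statement, which does hold for every $u\in\U^+_\nu$ when $j\in I_\bu$ is even, is
\[
T'_{j,1}(u)=(-q)^{-(\alpha_j,\nu)}\,T''_{j,1}(u),
\]
verified on the generators $E_k$ (the three cases $k=j$, $k\sim j$, $k\nsim j$) and then extended multiplicatively. Iterating along a reduced word $w_\bu=s_{j_1}\cdots s_{j_\ell}$ gives the total factor
\[
\prod_{k=1}^{\ell}(-q)^{-(\alpha_{j_k},\,s_{j_{k+1}}\cdots s_{j_\ell}\alpha_i)}
=(-q)^{-\sum_k(s_{j_\ell}\cdots s_{j_{k+1}}\alpha_{j_k},\,\alpha_i)}
=(-q)^{-(2\rho_\bu,\alpha_i)},
\]
since $\{s_{j_\ell}\cdots s_{j_{k+1}}\alpha_{j_k}\}_k$ is exactly $\Phi^+_\bu$. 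This yields the desired scalar $(-1)^{(\alpha_i,2\rho_\bu)}q^{-(\alpha_i,2\rho_\bu)}$. With this fix your argument is complete and matches the cited proof.
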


\begin{proof}
Follow from a rerun of the proof of \cite[Lemma 2.9]{BK15}.
\end{proof}

\begin{lemma} 
\label{lemma:technical2}
For all $i\in I_\circ$, we have
\begin{equation}
    \sigma \circ \tau (\rskew(T_{w_\bu}( E_i)))=\rskew(T_{w_\bu}( E_i)).
\end{equation}
\end{lemma}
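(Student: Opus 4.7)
The plan is to reduce to the essential cases and combine the commutator identity of Lemma~\ref{lem:commuF} with Lemma~\ref{lemma:technical1}, along the lines of the non-super template in \cite{BK15}.

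First I would dispose of the trivial cases. For $i \in I_\circ$ with $|i| > n$, the simple root $\alpha_i$ is not adjacent to any node in $I_\bu$, so $w_\bu(\alpha_i) = \alpha_i$ and Lemma~\ref{lemma:walpha=beta} forces $T_{w_\bu}(E_i) = E_i$. Hence $r_i(T_{w_\bu}(E_i)) = 1$, which is manifestly $\sigma\circ\tau$-invariant. It remains to treat the essential cases $i = \pm n$, where the weight $w_\bu\alpha_i - \alpha_i = \sum_{j\in I_\bu}\alpha_j$ is $\tau$-stable, so $r_i(T_{w_\bu}(E_i))$ lives in a $\sigma\tau$-stable weight subspace of the subalgebra $\U^+_\bu$.

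The key step is to apply the anti-automorphism $\sigma\circ\tau$ to the commutator identity from Lemma~\ref{lem:commuF},
\[
[T_{w_\bu}(E_i), F_i] = \frac{r_i(T_{w_\bu}(E_i))K_i - K_i^{-1}\,{}_ir(T_{w_\bu}(E_i))}{q^{\ell_i}-q^{-\ell_i}}.
\]
Using the identities $\sigma T_{w_\bu}\sigma = T_{w_\bu}^{-1}$ (a consequence of the relation $T'_{j,-e}=\sigma T''_{j,e}\sigma$ stated in Theorem~\ref{thm:braid}) and $\tau T_{w_\bu} = T_{w_\bu}\tau$ (since $\tau$ preserves $I_\bu$ and hence $w_\bu$), the image of the left hand side under $\sigma\circ\tau$ is rewritten as $[F_{\tau i}, T_{w_\bu}^{-1}(E_{\tau i})]$. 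Comparing the $K_{\tau i}^{\pm 1}$-components on the two sides and invoking the compatibility \eqref{eq:rhori} between $\sigma$ and the skew derivations yields an explicit relation of the form $\sigma\tau(r_i(T_{w_\bu}(E_i))) = c \cdot r_{\tau i}(T_{w_\bu}^{-1}(E_{\tau i}))$, where $c$ is a monomial in $q$ and signs determined by the parities.

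The remaining obstacle is to identify $r_{\tau i}(T_{w_\bu}^{-1}(E_{\tau i}))$ with $c^{-1}\cdot r_i(T_{w_\bu}(E_i))$, and I expect this to be the main hurdle. The plan is to apply the bar involution to Lemma~\ref{lemma:technical1}, use that bar commutes with both $\sigma$ and $\tau$ on generators (hence throughout $\U$), and invoke $(\sigma\tau)^2=\mathrm{id}$ to extract a companion identity. Combining this with the $T_{w_\bu}^{-1}$-analogue of Lemma~\ref{lemma:technical1} (which holds by a parallel argument, with $\sigma T_{w_\bu}\sigma=T_{w_\bu}^{-1}$ used as the input) closes the loop and forces the desired $\sigma\tau$-invariance. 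Under the simplifying assumption \eqref{eq:extraassumption}, the super-signs arising from black-root computations collapse, so the sign and $q$-power bookkeeping reduces to its non-super counterpart in \cite[\S 2]{BK15}.
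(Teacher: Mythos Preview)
Your reduction to $i=\pm n$ and your commutator computation are fine, and the paper's own proof is likewise just a pointer to the argument of \cite[Proposition~2.3]{BK15}. The problem lies in your ``closing the loop'' paragraph, which is where all the content is and which does not actually close.

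Concretely, applying $\sigma\tau$ to the commutator formula and comparing $K_{\tau i}^{\pm1}$-coefficients gives
\[
\sigma\tau\bigl(r_i(T_{w_\bu}(E_i))\bigr)\;=\;{}_{\tau i}r\bigl(T_{w_\bu}^{-1}(E_{\tau i})\bigr),
\]
with the \emph{left} skew derivation on the right-hand side, not $r_{\tau i}$ as you wrote. Your plan is then to feed this into Lemma~\ref{lemma:technical1} together with bar and $(\sigma\tau)^2=\mathrm{id}$. But if you carry this out, applying bar to Lemma~\ref{lemma:technical1} and then re-applying Lemma~\ref{lemma:technical1} with $i\mapsto\tau i$ only yields
\[
r_i(T_{w_\bu}(E_i))=\overline{c_i}\,c_{\tau i}\,(\sigma\tau)^2\bigl(r_i(T_{w_\bu}(E_i))\bigr)=\overline{c_i}\,c_{\tau i}\,r_i(T_{w_\bu}(E_i)),
\]
i.e.\ the tautology $\overline{c_i}\,c_{\tau i}=1$, not the claimed $\sigma\tau$-invariance. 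Bringing in a ``$T_{w_\bu}^{-1}$-analogue of Lemma~\ref{lemma:technical1}'' does not help either: that analogue relates $r_j(T_{w_\bu}^{-1}(E_j))$ to $r_{\tau j}(T_{w_\bu}^{-1}(E_{\tau j}))$, and you never produce an identity linking the $T_{w_\bu}$-side to the $T_{w_\bu}^{-1}$-side beyond the one you started with. The loop has two strands that never meet.

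What is missing is an input that is genuinely independent of the commutator identity and of Lemma~\ref{lemma:technical1}; in \cite{BK15} this comes from a direct structural argument about $r_i(T_{w_\bu}(E_i))$ inside $\U_\bu^+$ (using the characterisation of $T_{w_\bu}(E_i)$ via quantum adjoint action and the behaviour of $\sigma$, $\tau$ on PBW-type elements for the Levi part), not from juggling the already-established relations. You should revisit that argument rather than try to bootstrap from Lemma~\ref{lemma:technical1}.
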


\begin{proof}
Follow from a rerun of the proof of \cite[Proposition 2.3]{BK15}.
\end{proof}

Combining Lemma \ref{lemma:technical1} and Lemma \ref{lemma:technical2} we get
\begin{corollary}
For all $i\in I_\circ$, we have
\begin{equation}
\label{eq:barricoro}
\overline{\rskew(T_{w_\bu}( E_i))}=(-1)^{(\alpha_i,2\rho_\bu)}q^{(\alpha_i,\alpha_i-w_\bu\alpha_i-2\rho_\bu)}r_{\tau i}(T_{w_\bu}( E_{\tau i})). 
\end{equation}
\end{corollary}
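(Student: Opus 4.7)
The plan is to derive the corollary as a direct consequence of the two preceding lemmas; no new computation is needed. First I would apply Lemma~\ref{lemma:technical2} with $i$ replaced by $\tau i$, which is legitimate because $\tau$ restricts to an involution on $I_\circ$. This yields the identity
\[
\sigma\circ\tau\bigl(r_{\tau i}(T_{w_\bu}(E_{\tau i}))\bigr) = r_{\tau i}(T_{w_\bu}(E_{\tau i})),
\]
that is, the element $r_{\tau i}(T_{w_\bu}(E_{\tau i}))$ is fixed by $\sigma\circ\tau$.

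Next I would substitute this fixed-point identity into the right-hand side of the formula from Lemma~\ref{lemma:technical1}, which reads
\[
\overline{r_i(T_{w_\bu}(E_i))} = (-1)^{(\alpha_i,2\rho_\bu)} q^{(\alpha_i,\alpha_i - w_\bu\alpha_i - 2\rho_\bu)}\, \sigma\circ\tau\bigl(r_{\tau i}(T_{w_\bu}(E_{\tau i}))\bigr).
\]
The outer $\sigma\circ\tau$ on the right-hand side collapses, and one arrives at the desired formula. Since both input lemmas are stated for arbitrary $i\in I_\circ$, this establishes the corollary for all such $i$.

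The main obstacle, if there is one, does not lie in the present corollary but in Lemmas~\ref{lemma:technical1} and \ref{lemma:technical2} themselves, whose proofs are deferred to reruns of \cite[Lemma~2.9]{BK15} and \cite[Proposition~2.3]{BK15} respectively; the delicate point there is tracking the parity signs $(-1)^{(\alpha_i,2\rho_\bu)}$ and the $q$-weights arising from the interaction between the bar involution, the diagram involution $\tau$, and the braid operator $T_{w_\bu}$. Once those are granted, the corollary is a purely formal combination packaging the bar-conjugate of $r_i(T_{w_\bu}(E_i))$ into a shape that will feed directly into the recursive construction of $\up$ through \eqref{eq:AiiA} and the compatibility relations \eqref{eq:2a}--\eqref{eq:2e}.
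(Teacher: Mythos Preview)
Your proposal is correct and takes essentially the same approach as the paper, which simply states that the corollary follows by combining Lemma~\ref{lemma:technical1} and Lemma~\ref{lemma:technical2}. Your only addition is the explicit remark that Lemma~\ref{lemma:technical2} must be applied with $i$ replaced by $\tau i$, which is a harmless clarification since $\tau$ is an involution on $I_\circ$.
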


\subsection{Construction of $\up$}
Now we are ready to check that
\eqref{eq:AiiA} for all $i\in I$ indeed satisfy relations \eqref{eq:2a}--\eqref{eq:2e}.

\begin{lemma}
The relation $\rskew({}_jA)=(-1)^{p(i)p(j)}{}_jr(A_i)$ holds for all $i,j\in \I.$
\end{lemma}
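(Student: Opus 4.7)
My plan is to prove the identity by a direct computation: first expand both sides using the super Leibniz rules \eqref{eq:skew}, then substitute the recursive formulas \eqref{eq:derivativeup} for the skew derivations of the $\up$ factor that appears inside, and finally invoke the technical identities from Corollary \eqref{eq:barricoro} to match the leftover pieces. Set $\nu_k:=\mu-\alpha_k-w_\bu(\alpha_{\tau k})$ for $k\in I$, and write $c_k,d_k$ for the scalar prefactors so that $A_k=d_k\,\up_{\nu_k}\overline{T_{w_\bu}(E_{\tau k})}$ and ${}_kA=c_k\,T_{w_\bu}(E_{\tau k})\up_{\nu_k}$. Since each $\up_{\nu_k}$ has parity $0$ and homogeneous weight $\nu_k$, \eqref{eq:skew} gives
\begin{align*}
\rskew({}_jA) &= c_j\bigl(q^{(\alpha_i,\nu_j)}\,\rskew(T_{w_\bu}(E_{\tau j}))\,\up_{\nu_j} + T_{w_\bu}(E_{\tau j})\,\rskew(\up_{\nu_j})\bigr),\\
{}_jr(A_i) &= d_i\bigl({}_jr(\up_{\nu_i})\,\overline{T_{w_\bu}(E_{\tau i})} + q^{(\alpha_j,\nu_i)}\,\up_{\nu_i}\,{}_jr(\overline{T_{w_\bu}(E_{\tau i})})\bigr).
\end{align*}

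Next I substitute \eqref{eq:derivativeup} into $\rskew(\up_{\nu_j})$ and ${}_jr(\up_{\nu_i})$. Both substitutions produce an "inner" term containing the common factor $T_{w_\bu}(E_{\tau j})\,\up_{\lambda}\,\overline{T_{w_\bu}(E_{\tau i})}$ with $\lambda=\nu_j-\alpha_i-w_\bu(\alpha_{\tau i})=\nu_i-\alpha_j-w_\bu(\alpha_{\tau j})$, so the $\up$-subscripts automatically agree. I then compare the scalar coefficients, using Lemma \ref{lemma:ljl-j} for the factor $(q^{\ell_i}-q^{-\ell_i})(q^{\ell_j}-q^{-\ell_j})$, the definitions of $\va_i'$, $\va_j'$ in \eqref{eq:vari'}, and the parity assumption \eqref{eq:extraassumption}; a bookkeeping check shows the two inner terms agree precisely up to the sign $(-1)^{p(i)p(j)}$ required by the statement.

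The remaining "outer" terms are $c_j\,q^{(\alpha_i,\nu_j)}\,\rskew(T_{w_\bu}(E_{\tau j}))\,\up_{\nu_j}$ and $d_i\,q^{(\alpha_j,\nu_i)}\,\up_{\nu_i}\,{}_jr(\overline{T_{w_\bu}(E_{\tau i})})$. To match them I first use \eqref{eq:bskew} together with the observation that bar is an algebra homomorphism exchanging $q\leftrightarrow q^{-1}$ to rewrite ${}_jr(\overline{T_{w_\bu}(E_{\tau i})})$ as a $q$-power multiple of $\overline{\rskew(T_{w_\bu}(E_{\tau i}))}$, and then invoke Corollary \eqref{eq:barricoro} (i.e.\ Lemmas~\ref{lemma:technical1}--\ref{lemma:technical2}) to convert this bar-conjugated expression into $r_{\tau i}\circ T_{w_\bu}$ applied to the appropriate generator; a $\tau$-relabelling (legitimate because of the $\tau$-invariance \eqref{eq:upnonzero} of the $\up$-support, which forces the nonvanishing contributions to respect the diagram involution) brings it into the form appearing on the other side. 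The main obstacle is purely bookkeeping: one must verify that the powers of $q$ contributed by \eqref{eq:bskew}, by the Leibniz rule, by the definition of $\va_i'$ in \eqref{eq:vari'}, and by the prefactor in Corollary \eqref{eq:barricoro} combine to precisely the prefactor on the other side. The black-index cases ($i$ or $j\in I_\bu$) require no new work: from \eqref{eq:upblack} and a weight-grading argument applied to $[\up_\mu,F_i]=0$ one deduces $A_i={}_iA=0$ when $i\in I_\bu$, and the identity becomes the trivial $0=0$.
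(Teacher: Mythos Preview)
Your approach is essentially the same as the paper's: expand both sides via the super Leibniz rule, substitute the recursion \eqref{eq:derivativeup}, match the ``inner'' $T_{w_\bu}(E_{\tau j})\up_\lambda\overline{T_{w_\bu}(E_{\tau i})}$ terms directly, and then reduce the ``outer'' terms via \eqref{eq:bskew} and Corollary~\eqref{eq:barricoro}. Two small corrections are in order. First, your displayed Leibniz expansion of ${}_jr(A_i)$ is missing the sign $(-1)^{p(i)p(j)}$ on the first summand: since $\overline{T_{w_\bu}(E_{\tau i})}$ has parity $p(i)$, the rule \eqref{eq:skew} gives ${}_jr(\up_{\nu_i}\overline{T_{w_\bu}(E_{\tau i})})=(-1)^{p(i)p(j)}{}_jr(\up_{\nu_i})\overline{T_{w_\bu}(E_{\tau i})}+q^{(\alpha_j,\nu_i)}\up_{\nu_i}{}_jr(\overline{T_{w_\bu}(E_{\tau i})})$, and this sign is exactly the one appearing in the statement (so the inner terms then match on the nose, not merely ``up to'' the sign). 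Second, the reason the outer terms vanish unless $i=\tau j$ is not the $\tau$-invariance of the $\up$-support \eqref{eq:upnonzero}, but a weight argument on $T_{w_\bu}(E_{\tau j})$ itself: for $i\in I_\circ$, $\rskew(T_{w_\bu}(E_{\tau j}))=0$ unless $\alpha_i$ lies in the support of $w_\bu\alpha_{\tau j}$, which forces $i=\tau j$; the paper then uses \eqref{eq:upnonzero} (in the form $(\alpha_i-\alpha_{\tau i},\mu)=0$) only to simplify the $q$-powers afterward.
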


\begin{proof}
We calculate that
\begin{align*}
    &\frac{1}{-(q^{\ell_j}-q^{-\ell_j})}\rskew({}_jA)\\
    =&q^{(\alpha_j,w_\bu\alpha_{\tau j})}\va_j[(-1)^{p(i)p(\mu)}q^{(\alpha_i,\mu-\alpha_j-w_\bu\alpha_{\tau j})}\rskew(T_{w_\bu}( E_{\tau j}))\up_{\mu-\alpha_j-w_\bu\alpha_{\tau j}}\\
    &\qquad+T_{w_\bu}( E_{\tau j})\rskew(\up_{\mu-\alpha_j-w_\bu\alpha_{\tau j}})]\\
    =&q^{(\alpha_j,w_\bu\alpha_{\tau j})}\va_j[(-1)^{p(i)p(\mu)}q^{(\alpha_i,\mu-\alpha_j-w_\bu\alpha_{\tau j})}\rskew(T_{w_\bu}( E_{\tau j}))\up_{\mu-\alpha_j-w_\bu\alpha_{\tau j}}\\
    &\qquad-(q^{\ell_i}-q^{-\ell_i})T_{w_\bu}( E_{\tau j})\up_{\mu-\alpha_j-w_\bu\alpha_{\tau j}-\alpha_i-w_\bu\alpha_{\tau i}}\va_i'\overline{T_{w_\bu}( E_{\tau i})}],\\
    &\frac{1}{-(q^{\ell_i}-q^{-\ell_i})}{}_jr(A_i)\\
    =&(-1)^{p(w_\bu\alpha_{\tau i})p(j)}{}_jr(\up_{\mu-\alpha_i-w_\bu\alpha_{\tau i}})\va_i'\overline{T_{w_\bu}( E_{\tau i})}\\
    &\qquad+q^{(\alpha_j,\mu-\alpha_i-w_\bu\alpha_{\tau i})}\va_i'\up_{\mu-\alpha_i-w_\bu\alpha_{\tau i}}{}_jr(\overline{T_{w_\bu}( E_{\tau i})})\\
    =&-(-1)^{p(i)p(j)}(q^{\ell_j}-q^{-\ell_j})q^{(\alpha_j,w_\bu\alpha_{\tau j})}\va_j T_{w_\bu}( E_{\tau j})\cdot\\
    &\up_{\mu-\alpha_j-w_\bu\alpha_{\tau j}-\alpha_i-w_\bu\alpha_{\tau i}}\va_i'\overline{T_{w_\bu}( E_{\tau i})}\\
    &\qquad+q^{(\alpha_j,\mu-\alpha_i-w_\bu \alpha_{\tau i})}\va_i'\up_{\mu-\alpha_i-w_\bu\alpha_{\tau i}}{}_jr(\overline{T_{w_\bu}( E_{\tau i})})
\end{align*}
Recall that $\up_\mu$ vanishes whenever $p(\mu)=1$. By comparing the two equations we see that the relation $\rskew({}_jA)=(-1)^{p(i)p(j)}{}_jr(A_i)$ holds if and only if
\begin{equation}
\label{eq:equivalence1}
\begin{aligned}
&q^{(\alpha_j,w_\bu\alpha_{\tau j})}\va_jq^{(\alpha_i,\mu-\alpha_j-w_\bu\alpha_{\tau j})}\rskew(T_{w_\bu}( E_{\tau j}))\up_{\mu-\alpha_j-w_\bu\alpha_{\tau j}}\\
=&(-1)^{p(i)p(j)}\frac{q^{\ell_j}-q^{-\ell_j}}{q^{\ell_i}-q^{-\ell_i}}q^{(\alpha_j,\mu-\alpha_i-w_\bu\alpha_{\tau i})}\va_i'\up_{\mu-\alpha_i-w_\bu\alpha_{\tau i}}{}_jr(\overline{T_{w_\bu}( E_{\tau i})})
\end{aligned}
\end{equation}
We may assume $i=\tau j$, otherwise both sides of \eqref{eq:equivalence1} vanish. According to \eqref{eq:bskew} we have \[{}_{\tau i}r(\overline{T_{w_\bu}( E_{\tau i})}=q^{(\alpha_{\tau i},w_\bu\alpha_{\tau i}-\alpha_{\tau i})}\overline{r_{\tau i}(T_{w_\bu}( E_{\tau i})}.\]
Substituting this together with Lemma \ref{lemma:ljl-j} we see that \eqref{eq:equivalence1} is equivalent to
\begin{equation}
\label{eq:equivalence2}
    \begin{aligned}
    &\va_{\tau i}q^{(\alpha_i,\mu-\alpha_{\tau i}+w_\bu \alpha_{\tau i}-w_\bu\alpha_i)}\rskew(T_{w_\bu}( E_{i}))\up_{\mu-\alpha_{\tau i}-w_\bu\alpha_{i}}\\
    =&q^{(\alpha_{\tau i},\mu-\alpha_i-\alpha_{\tau i})}\va_i'\up_{\mu-\alpha_i-w_\bu\alpha_{\tau i}}\overline{r_{\tau i}(T_{w_\bu}( E_{\tau i}))}
    \end{aligned}
\end{equation}
Observe that $\mu-\alpha_{\tau i}-w_\bu\alpha_{i}=\mu-\alpha_i-w_\bu\alpha_{\tau i}$ and $w_\bu \alpha_i-w_\bu \alpha_{\tau i}=\alpha_i-\alpha_{\tau i}$. We may further assume that $\up_{\mu-\alpha_{\tau i}-w_\bu\alpha_{ i}}\neq 0$, thus $w_\bu\tau(\mu)=\mu$ and hence $(\alpha_i-\alpha_{\tau i},\mu)=0$. Thus we see \eqref{eq:equivalence2} is equivalent to
\begin{equation}
    \label{eq:equivalence3}
    \begin{aligned}
    &q^{(\alpha_{\tau i},\alpha_i)}\va_{\tau i}\rskew(T_{w_\bu}( E_{i}))=\va_i'\overline{r_{\tau i}(T_{w_\bu}( E_{\tau i}))}\\
    \overset{\eqref{eq:barricoro}}{=}&\va_i'(-1)^{(\alpha_{\tau i},2\rho_\bu)}q^{(\alpha_{\tau i},\alpha_{\tau i}-w_\bu\alpha_{\tau i}-2\rho_\bu)}r_{i}(T_{w_\bu}(E_{i})),
    \end{aligned}
\end{equation}
which follows from the definition of $\va_i'$ \eqref{eq:vari'}.
\end{proof}

The next lemma verifies the relation \eqref{eq:2b}.
\begin{lemma}
For all $i\in I$, we have \[\la F_i,A_i\ra=0.\]
\end{lemma}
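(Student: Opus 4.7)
The plan is to reduce the claim to a weight-matching argument and then eliminate the remaining degenerate case using the selection rule $\up_\nu = 0$ unless $w_\bu\tau(\nu) = \nu$ coming from \eqref{eq:upnonzero}.

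First I would observe that the super bilinear pairing in \eqref{eq:bilinearform} is weight-preserving, so $\la F_i, x \ra = 0$ for any $x \in \U^+$ whose weight is not $\alpha_i$. Since $A_i \in \U^+_{\mu - \alpha_i}$ by construction, $\la F_i, A_i \ra$ vanishes automatically unless $\mu = 2\alpha_i$, reducing the whole verification to this single degenerate weight.

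Assume $\mu = 2\alpha_i$, and set $\nu := \alpha_i - w_\bu(\alpha_{\tau i})$. Then $A_i$ is a scalar multiple of $\up_\nu \, \overline{T_{w_\bu}(E_{\tau i})}$ by \eqref{eq:AiiA}, so it suffices to show $\up_\nu = 0$. Using $w_\bu \tau = \tau w_\bu$ on the root lattice and $w_\bu^2 = 1$, we get $w_\bu \tau w_\bu = \tau$ and hence
\[
w_\bu\tau(\nu) = w_\bu\tau(\alpha_i) - w_\bu\tau w_\bu(\alpha_{\tau i}) = w_\bu(\alpha_{\tau i}) - \alpha_i = -\nu.
\]
So \eqref{eq:upnonzero} forces $\up_\nu = 0$ unless $\nu = 0$, i.e., unless $\alpha_i = w_\bu(\alpha_{\tau i})$.

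Finally I would rule out $\alpha_i = w_\bu(\alpha_{\tau i})$ for $i \in I_\circ$. Since $w_\bu$ lies in the Weyl group generated by simple reflections at nodes of $I_\bu$, for every root $\beta$ one has $w_\bu(\beta) - \beta \in \Z\la \alpha_k : k \in I_\bu \ra$. Applied to $\beta = \alpha_{\tau i}$, the hypothetical equality rearranges to $\alpha_i - \alpha_{\tau i} \in \Z\la \alpha_k : k \in I_\bu \ra$, which is impossible because the left-hand side has coefficient $1$ on the simple root $\alpha_i$ with $i \notin I_\bu$, contradicting the linear independence of simple roots. Hence $\up_\nu = 0$, $A_i = 0$, and $\la F_i, A_i \ra = 0$. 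The only mildly non-trivial ingredient is the commutation $w_\bu \tau = \tau w_\bu$, which is a standard feature of Satake diagrams of type AIII; for $i \in I_\bu$ the relation $B_i \up = \up B_i$ is already provided by \eqref{eq:upblack}, so nothing further needs to be checked there.
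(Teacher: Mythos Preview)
Your proof is correct but more roundabout than the paper's. After the shared weight reduction to $\mu = 2\alpha_i$, the paper simply notes that $\nu = \alpha_i - w_\bu\alpha_{\tau i} \notin X^+$: since $\tau i \in I_\circ$ one has $w_\bu\alpha_{\tau i} \in \alpha_{\tau i} + \sum_{k \in I_\bu} \N\alpha_k$, so the $\alpha_{\tau i}$-coefficient of $\nu$ is $-1$ (for $\tau i \neq i$), whence $\up_\nu \in \U^+_\nu = 0$ and $A_i = 0$ immediately. You instead reach $\up_\nu = 0$ via the selection rule \eqref{eq:upnonzero}, computing $w_\bu\tau(\nu) = -\nu$ and then separately eliminating $\nu = 0$; this works, but it requires the commutation $w_\bu\tau = \tau w_\bu$ and an extra linear-independence step, both of which the paper's bare positivity observation avoids. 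Your handling of $i \in I_\bu$ via \eqref{eq:upblack} agrees with the paper's implicit treatment. One shared blind spot: your claim that $\alpha_i - \alpha_{\tau i}$ has coefficient $1$ on $\alpha_i$, and the paper's claim $\nu \notin X^+$, both break down in the degenerate case $i = \tau i$ (only possible when $n=0$, $i=0$), so this is not a defect relative to the paper.
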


\begin{proof}
Since $wt(A_i)=\mu-\alpha_i$. We see that $\la F_i,A_i\ra$ is zero unless $\mu=2\alpha_i$. But in this case we always have $\mu-\alpha_i-w_\bu \alpha_{\tau i}\notin Q^+$. Hence $A_i=0$.
\end{proof}

To verify the relation \eqref{eq:2c}, we have
\begin{lemma}
\label{lemma:verify2c}
For all $i\nsim j\in I$, we have   \[ \la F_i,A_j\ra=(-1)^{p(i)p(j)}\la F_j,A_i\ra.\]
\end{lemma}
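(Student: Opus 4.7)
The plan is to reduce the identity to a matching of explicit scalar coefficients by exploiting the graded structure of the bilinear pairing. First I would note that $A_j \in \U^+_{\mu-\alpha_j}$, so $\la F_i,A_j\ra$ vanishes unless $\mu = \alpha_i+\alpha_j$. Restricting to this weight, the formula \eqref{eq:AiiA} gives
\[
A_j = -(q^{\ell_j}-q^{-\ell_j})\va_j'\,\up_{\alpha_i-w_\bu\alpha_{\tau j}}\,\overline{T_{w_\bu}(E_{\tau j})},
\]
and symmetrically for $A_i$. Since $\up_{\alpha_i-w_\bu\alpha_{\tau j}}\cdot\overline{T_{w_\bu}(E_{\tau j})}$ lives in $\U^+_{\alpha_i}$, which is one-dimensional and spanned by $E_i$ (because $i\nsim j$ ensures $\alpha_i$ is a single simple root), the pairing $\la F_i,A_j\ra$ is exactly the coefficient of $E_i$ in this product.

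Next I would isolate the cases in which the product is nonzero. By \eqref{eq:upnonzero}, the factor $\up_{\alpha_i-w_\bu\alpha_{\tau j}}$ vanishes unless $\alpha_i-w_\bu\alpha_{\tau j}$ is $w_\bu\tau$-invariant and non-negative. The generic case is $\alpha_i = w_\bu\alpha_{\tau j}$ (so $\nu=0$ and $\up_\nu = 1$), which by the analysis of $w_\bu$ as a type-$A$ Levi reversal forces $\tau j$ and $i$ to be $w_\bu$-paired. In this case $T_{w_\bu}(E_{\tau j}) = c_{ij} E_i$ for an explicit scalar $c_{ij}$ computed through the braid operator formulas, and $\la F_i,A_j\ra = -(q^{\ell_j}-q^{-\ell_j})\va_j'\,\overline{c_{ij}}$. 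The parallel computation gives $\la F_j,A_i\ra = -(q^{\ell_i}-q^{-\ell_i})\va_i'\,\overline{c_{ji}}$.

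The verification of the identity then amounts to checking the scalar equation
\[
(q^{\ell_j}-q^{-\ell_j})\va_j'\,\overline{c_{ij}} = (-1)^{p(i)p(j)}(q^{\ell_i}-q^{-\ell_i})\va_i'\,\overline{c_{ji}}.
\]
When $i,j$ are both away from $I_\bu$, $w_\bu$ acts trivially on $\alpha_i,\alpha_{\tau j}$, so $c_{ij}=c_{ji}=1$, $\va_j'=\va_{\tau j}=\va_i$, and the equality reduces via $\va_i=\va_j$, $p(i)=p(\tau j)=p(j)$, and Lemma \ref{lemma:ljl-j} (which gives $\ell_i=(-1)^{p(j)}\ell_j$) to the identity $(q^{\ell_j}-q^{-\ell_j})=(-1)^{p(j)}(-1)^{p(i)p(j)}(q^{\ell_j}-q^{-\ell_j})$, which holds since $p(i)=p(j)$. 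For edge cases where $j\in I_\bu$ we have $A_j=0$ by the convention $B_j=F_j$, so the identity is trivial.

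The main obstacle is the boundary subcase $i$ or $j\in\{\pm\pt\}$, where $w_\bu$ acts nontrivially on $\alpha_{\tau i}$ and the product expansions of Lemma \ref{lem:Tw} force $\up_\nu$ with $\nu\neq 0$ to enter nontrivially. Tracking the resulting $q$-factors requires unpacking the full definition \eqref{eq:vari'} of $\va_j'$ and invoking the identities \eqref{eq:bskew}, \eqref{eq:rhori}, and Corollary \eqref{eq:barricoro} together with \eqref{eq:upblack} to commute $\up$ past black-node braid images; the parity factor $(-1)^{p(i)p(j)}$ emerges by balancing the signs produced by Lemma \ref{lemma:ljl-j} against the parity signs in the bilinear pairing.
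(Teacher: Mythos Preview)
Your reduction to $\mu=\alpha_i+\alpha_j$ and the treatment of the ``generic'' case $j=\tau i$ with $|i|>\pt$ are correct and coincide with the paper's argument: in that case $w_\bu\alpha_{\tau i}=\alpha_{\tau i}$, $T_{w_\bu}(E_{\tau i})=E_{\tau i}$, $\up_0=1$, and the identity reduces to matching $(q^{\ell_{\tau i}}-q^{-\ell_{\tau i}})\va_{\tau i}'$ against $(-1)^{p(i)}(q^{\ell_i}-q^{-\ell_i})\va_i'$, which follows from Lemma~\ref{lemma:ljl-j} together with $\va_i'=\va_{\tau i}'$ and $p(i)=p(\tau i)$.

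The problem is your ``main obstacle.'' There is none. When $i$ or $j$ lies in $\{\pm\pt\}$ (and in fact in every case other than $j=\tau i\in I_\circ\setminus\{\pm\pt\}$), both sides vanish outright. For instance, take $i=\pt$, $j=-\pt$: then $\nu=\mu-\alpha_i-w_\bu\alpha_{\tau i}=\alpha_{-\pt}-w_\bu\alpha_{-\pt}=-\sum_{k\in I_\bu}\alpha_k\notin X^+$, so $\up_\nu=0$ and $A_i=0$; symmetrically $A_j=0$. The same support argument kills every other putative boundary case: the weight $\nu$ always picks up a negative coefficient at some simple root, so $\up_\nu=0$. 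No $\up_\nu$ with $\nu\neq 0$ ever enters, and none of the machinery you list (\eqref{eq:bskew}, \eqref{eq:rhori}, \eqref{eq:barricoro}, \eqref{eq:upblack}) is needed. The paper dispatches this in one line by citing \cite[Lemma~6.4]{BK19}, whose content is exactly this vanishing.
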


\begin{proof}
According to \cite[Lemma 6.4]{BK19}, we can assume that $j=\tau i\in I_\circ\backslash\{\pm\pt\}$ and $\mu=\alpha_i+\alpha_j$, otherwise all terms vanish. In this case we have $\va_i=\va_{j}$,\ 
$A_i=-(q^{\ell_i}-q^{-\ell_i}) \va_i'E_{\tau i}$, and $ A_j=-(q^{\ell_j}-q^{-\ell_j}) \va_j'E_{-j}$. Thus we have
\begin{align*}
    \la F_i,A_{\tau i}\ra=-(q^{\ell_{\tau i}}-q^{-\ell_{\tau i}})\va_{\tau i}'=-(-1)^{p(i)}(q^{\ell_{i}}-q^{-\ell_{i}})\va'_i=(-1)^{p(i)}\la F_{\tau i},A_{i}\ra.
\end{align*}
This proves the lemma.
\end{proof}

To verify the relation \eqref{eq:2d}, we have
\begin{lemma}
 For all $i\in \Ieven$ and $j\sim i$, we have 
\[
    \la F_i^2,A_j \ra-[2]\la F_iF_j,A_i \ra+\la F_jF_i,A_i \ra=0.
\]
\end{lemma}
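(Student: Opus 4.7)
The plan is to reduce the identity to the single weight $\mu = 2\alpha_i + \alpha_j$ by a weight argument and then handle the remaining case by substituting the explicit formulas for $A_i$ and $A_j$. Because $wt(A_i) = \mu - \alpha_i$ and $wt(A_j) = \mu - \alpha_j$, the grading on the bilinear form \eqref{eq:bilinearform} immediately makes all three pairings vanish unless $\mu = 2\alpha_i + \alpha_j$; I would therefore fix this weight from the outset.

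From here I would split into cases depending on whether $i, j$ lie in $I_\bu$ or $I_\circ$, noting that \eqref{eq:extraassumption} forces $p(k) = 0$ for every $k \in I_\bu$, which is consistent with $p(i) = 0$. When $i \in I_\bu$, I would interpret $\va_i = 0$ so that the extension $B_i = F_i$ agrees with \eqref{eq:Bi}; then $A_i = 0$ and only the term $\la F_i^2, A_j\ra$ survives. The $\up$-factor inside $A_j$ has weight $2\alpha_i - w_\bu \alpha_{\tau j}$, and \eqref{eq:upnonzero} together with $X^+_{\overline 0}$-positivity forces this factor (and hence $A_j$) to vanish. The symmetric situation $j \in I_\bu$ is handled by the analogous argument applied to $\up_{\alpha_i + \alpha_j - w_\bu \alpha_{\tau i}}$ in $A_i$.

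The principal case is $i, j \in I_\circ$ with $i \sim j$ and $p(i) = 0$. Here I would substitute the formulas \eqref{eq:AiiA} into the three pairings and use the skew-derivative identities $\la F_i y, x\ra = \la F_i, E_i\ra\, \la y, \lskew(x)\ra$ and $\la y F_i, x\ra = \la F_i, E_i\ra\, \la y, \rskew(x)\ra$ (with the super-signs) to rewrite the left-hand side of \eqref{eq:2d} as a scalar multiple of a Serre-type expression in $T_{w_\bu}(E_{\tau i})$ and $T_{w_\bu}(E_{\tau j})$. Since $\tau i \in \Ieven$ and $\tau i \sim \tau j$, the relation $S(E_{\tau i}, E_{\tau j}) = 0$ holds in $\U$ by (R8); because \eqref{eq:extraassumption} makes $T_{w_\bu}$ a composition of braid operators attached to even simple roots only, Theorem~\ref{thm:braid} and Proposition~\ref{prop:braid} identify $T_{w_\bu}$ with an algebra automorphism of $\U$, so the same relation transports to $T_{w_\bu}(E_{\tau i})$ and $T_{w_\bu}(E_{\tau j})$ and, via the bar involution, to their bar images. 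Combining this with the parameter constraint $\va_k = \va_{-k}$ for $k \in I_\circ \setminus \{\pm n\}$ and the formula \eqref{eq:vari'} for $\va_k'$ then produces the required cancellation.

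The main obstacle will be the careful bookkeeping of the scalars, signs and $q$-powers produced by the bar involution in $\overline{T_{w_\bu}(E_{\tau i})}$, by the twisted parameters $\va_k'$, and by weight shifts under $w_\bu$, particularly in the boundary configurations where one of $i, j$ equals $\pm n$ and meets the black region. A useful simplification is that, by \eqref{eq:upnonzero}, the $\up$-factors appearing in $A_i$ and $A_j$ at $\mu = 2\alpha_i + \alpha_j$ typically collapse to $\up_0 = 1$, keeping the verification tractable.
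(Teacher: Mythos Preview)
Your weight reduction to $\mu = 2\alpha_i + \alpha_j$ is correct, and the treatment of the cases $i \in I_\bu$ or $j \in I_\bu$ via \eqref{eq:upnonzero} is on the right track (though the justification ``interpret $\va_i = 0$'' is not how the paper is set up; for $k \in I_\bu$ one rather uses \eqref{eq:upblack} to get $r_k(\up_\mu) = 0$ directly). The real issue is your ``principal case'' $i, j \in I_\circ$: the elaborate Serre-relation computation you sketch is unnecessary, and the premise you intend to base it on is false.

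The paper's proof is a one-line observation that covers \emph{all} cases at once: for $\mu = 2\alpha_i + \alpha_j$ with $i \sim j$ one always has $w_\bu\tau(\mu) \neq \mu$. Since $\alpha_k + w_\bu\alpha_{\tau k}$ is itself $w_\bu\tau$-fixed for every $k \in I$, this forces $w_\bu\tau(\mu - \alpha_k - w_\bu\alpha_{\tau k}) \neq \mu - \alpha_k - w_\bu\alpha_{\tau k}$, and hence by \eqref{eq:upnonzero} (with the convention $\up_\nu = 0$ for $\nu \notin X^+_{\overline 0}$) the $\up$-factor in $A_k$ vanishes for $k = i$ and $k = j$. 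Thus $A_i = A_j = 0$ and all three pairings are identically zero. No Serre relation is ever invoked.

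Your assertion that the $\up$-factors ``typically collapse to $\up_0 = 1$'' is wrong: equality $\mu - \alpha_i - w_\bu\alpha_{\tau i} = 0$ would require $\alpha_i + \alpha_j = w_\bu\alpha_{\tau i}$, and inspecting $w_\bu\alpha_{\tau i}$ in the cases $|i| > n$ (where it equals $\alpha_{-i}$) and $i = \pm n$ (where its support meets $I_\bu\cup\{\mp n\}$) shows this never occurs. So there is nothing nonzero to feed into a Serre relation --- the terms are already zero, and the scalar bookkeeping you anticipate as ``the main obstacle'' never arises.
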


\begin{proof}
We can assume that $\mu=2\alpha_i+\alpha_j$, otherwise all terms in the above sum vanish. But by \cite[Lemma 6.4]{BK19} in this case we have $w_\bu\circ \tau(\mu)\neq\mu$ for all $j\sim i\in I$. Hence all terms still vanish. 
\end{proof}

To verify the relation \eqref{eq:2e}, we have
\begin{lemma}
 For all $i\in \Iodd$ and $j\sim i \sim k$, we have 
\[
\begin{aligned}
   [2]\la F_iF_kF_j,A_i\ra=(-1)^{p(j)}\la F_iF_kF_i, A_j\ra+(-1)^{p(j)+p(j)p(k)}\la F_jF_iF_k,A_i\ra\\
   +(-1)^{p(k)}\la F_iF_jF_i, A_k\ra+(-1)^{p(k)+p(j)p(k)}\la F_kF_iF_j,A_i\ra.
   \end{aligned}
\]
\end{lemma}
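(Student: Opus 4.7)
The plan is to mirror the strategy used in proving the preceding relations (2a)--(2d). By the weight-homogeneity of the bilinear pairing $\la\cdot,\cdot\ra$ from \eqref{eq:bilinearform}, each of the pairings $\la F_aF_bF_c, A_l\ra$ ($l\in\{i,j,k\}$) vanishes unless the weight $\mu$ of the unknown $\up_\mu$ equals $2\alpha_i+\alpha_j+\alpha_k$. Restricting to this weight from the outset, and recalling that under \eqref{eq:extraassumption} the odd simple root $\alpha_i$ must lie in $I_\circ$, the parity constraint $p(\mu)=0$ coming from \eqref{eq:upnonzero} then forces $p(\alpha_j)+p(\alpha_k)\equiv 0\pmod{2}$.

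I would then apply \eqref{eq:upnonzero} once more to narrow down which of $A_i,A_j,A_k$ can actually be nonzero: the factor $\up_{\mu-\alpha_l-w_\bu\alpha_{\tau l}}$ survives only when $w_\bu\tau$ fixes this weight. Combined with $\mu=2\alpha_i+\alpha_j+\alpha_k$ and the layout of the Satake diagram \eqref{eq:AIIIdiagram}, this drastically limits the configurations. The remaining cases split naturally according to whether $j$ and $k$ lie in $I_\bu$ or $I_\circ$, and whether the diagram involution $\tau$ fixes or permutes them; in many branches either some $A_l$ vanishes outright or both sides of the identity collapse to zero for weight-parity reasons.

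In each surviving case I would substitute the explicit formulas \eqref{eq:AiiA} for $A_l$ and expand each pairing $\la F_aF_bF_c,A_l\ra$ via iterated applications of the super-pairing formulas $\la F_iy,x\ra=(-1)^{p(x)p(i)}\la F_i,E_i\ra\la y,\lskew(x)\ra$ and $\la yF_i,x\ra=\la F_i,E_i\ra\la y,\rskew(x)\ra$. The identities \eqref{eq:rhori}, \eqref{eq:bskew}, and the corollary \eqref{eq:barricoro} would then convert every occurrence of $\overline{T_{w_\bu}(E_{\tau l})}$ into a uniform expression involving $T_{w_\bu}(E_{\tau l})$, up to tracked signs and $q$-powers. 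After cancelling the common $\up$-independent prefactors, the residual identity should reduce to the super $q$-Serre relation in $\U^+$ applied to $\bigl(T_{w_\bu}(E_{\tau j}), T_{w_\bu}(E_{\tau i}), T_{w_\bu}(E_{\tau k})\bigr)$, which holds because $T_{w_\bu}$ is an algebra isomorphism by Theorem \ref{thm:braid} and therefore preserves the defining super Serre relations (R10)--(R11) of $\U$. Conceptually, (2e) is a first-order \emph{derivative} of the super $q$-Serre relation \eqref{eq:superSerre}, so once all factors are aligned the target identity is exactly what the ambient super Serre relation delivers.

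The main obstacle is the bookkeeping of signs and $q$-powers produced by three distinct sources: (i) the definition \eqref{eq:vari'} of $\va'_l$, which carries both a sign $(-1)^{(2\rho_\bu,\alpha_l)}$ and a $q$-factor $q^{(\alpha_l,2\rho_\bu+w_\bu\alpha_{\tau l})}$; (ii) the conjugation formula \eqref{eq:barricoro}, which injects an extra $q^{(\alpha_l,\alpha_l-w_\bu\alpha_l-2\rho_\bu)}$ per application; and (iii) the supercommutation signs accumulated when the iterated skew derivations pass through factors of parities $p(j)$ and $p(k)$. Arranging all these contributions so that the super Serre relation emerges with matching coefficients is the delicate part; the parity assumptions \eqref{eq:assumption} and \eqref{eq:extraassumption} will be essential to ensure the signs line up.
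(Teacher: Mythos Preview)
Your plan is on the right track but vastly overshoots what is needed. The paper's proof is essentially two lines: after reducing (as you correctly do) to the single weight $\mu=2\alpha_i+\alpha_j+\alpha_k$, one simply checks from the shape of the Satake diagram \eqref{eq:AIIIdiagram} that $w_\bu\tau(\mu)\neq\mu$ for every configuration $j\sim i\sim k$ with $i\in I_{\overline 1}$ (recall \eqref{eq:extraassumption} forces $i\in I_\circ$, and $\tau i=-i$). Since $\alpha_l+w_\bu\alpha_{\tau l}$ is automatically $w_\bu\tau$-fixed, it follows that $\mu-\alpha_l-w_\bu\alpha_{\tau l}$ is \emph{not} $w_\bu\tau$-fixed for any $l\in\{i,j,k\}$, so by \eqref{eq:upnonzero} each $\up_{\mu-\alpha_l-w_\bu\alpha_{\tau l}}$ vanishes and hence $A_i=A_j=A_k=0$. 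Both sides of the identity are then trivially zero.

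In other words, your step~3 is already the endpoint, not a preliminary filter: there are \emph{no} surviving cases. The entire apparatus you propose---iterated skew-derivation expansions, the conjugation identity \eqref{eq:barricoro}, alignment of the sign and $q$-power bookkeeping from \eqref{eq:vari'}, and the final reduction to the super $q$-Serre relation for $T_{w_\bu}(E_{\tau l})$'s---is never invoked. Your approach is not wrong, and would presumably go through if executed, but it is doing an order of magnitude more work than the situation requires. The moral is the same as for the preceding lemma on~(2d): the type~AIII Satake geometry is restrictive enough that the weight $\mu=2\alpha_i+\alpha_j+\alpha_k$ simply cannot be $w_\bu\tau$-invariant, and that single observation closes the argument.
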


\begin{proof}
Again we may assume that $\mu=2\alpha_i+\alpha_k+\alpha_j$ otherwise all terms vanish. But in this case we see that $w_\bu\circ \tau(\mu)\neq\mu$ unless $\tau j=k,\ \tau i=i$ and $i,j,k\in I_\circ$. However, this is excluded by \eqref{eq:assumption}. Hence all terms still vanish. 
\end{proof}

Therefore, we conclude that \begin{equation*}
    \begin{aligned}
       &A_i=-(q-q^{-1})\up_{\mu-\alpha_i-w_\bu \alpha_{\tau i}}\va_i' \overline{T_{w_\bu}(E_{\tau i})},\\
       &{}_iA=-(q-q^{-1})q^{(\alpha_i,w_\bu\alpha_{\tau i})}\va_i T_{w_\bu}( E_{\tau i})\up_{\mu-\alpha_i-w_\bu\alpha_{\tau i}}
    \end{aligned}
\end{equation*}
for all $i\in I$ satisfy relations \eqref{eq:2a}--\eqref{eq:2e}.

Thus we can conclude the main result of this section.

\begin{theorem}
\label{thm:ibar}
There exists a uniquely determined element $\up=\sum_{\mu\in Q^+_{\overline 0}}\up_\mu$ in the completion of $\U$ with $\up_0=1$ and $\up_\mu\in \U^{+}_\mu$, such that the equality
\[B_i\up=\up( \tau \circ \sigma (B_{\tau i}))\] holds for all $i\in I$.

Moreover, $\up_\mu=0$ unless $w_\bu \tau (\mu) =\mu$.
\end{theorem}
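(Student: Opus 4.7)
My plan is to construct $\up$ by induction on $\operatorname{ht}(\mu)$, using Lemma~\ref{lem:biup} to translate the intertwining equation $B_i\up=\up(\tau\circ\sigma(B_{\tau i}))$ into the recursive system \eqref{eq:derivativeup} on the components $\up_\mu$, and then apply the recursion proposition (the analogue of \cite[Proposition 6.3]{BK19} stated just before \eqref{eq:AiiA}) at each height.

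For the base case, $\up_0 = 1$ trivially satisfies the required identities. For the inductive step, suppose $\up_\nu$ has been uniquely constructed for every $\nu \in X_{\overline 0}^+$ with $\operatorname{ht}(\nu) < \operatorname{ht}(\mu)$. Using these previously constructed terms, define elements $A_i, {}_iA \in \U^+_{\mu-\alpha_i}$ by the formulas in \eqref{eq:AiiA}. By Lemma~\ref{lem:biup}, the existence of $\up_\mu \in \U^+_\mu$ satisfying the intertwining relation at height $\operatorname{ht}(\mu)$ is equivalent to the existence of an element of $\U^+_\mu$ whose skew derivatives $r_i$ and ${}_ir$ are precisely $A_i$ and ${}_iA$; and by the recursion proposition this reduces to verifying the compatibility conditions \eqref{eq:2a}--\eqref{eq:2e}. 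Each of these five conditions has been checked in the lemmas of the present section, with the verification of \eqref{eq:2a} being the main computational input (relying on Lemma~\ref{lemma:technical1}, Lemma~\ref{lemma:technical2}, and the definition \eqref{eq:vari'} of $\va_i'$), while \eqref{eq:2b}--\eqref{eq:2e} are settled either by weight-counting or by the observation that $w_\bu\tau(\mu)\ne\mu$ forces all relevant terms to vanish.

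Uniqueness of $\up_\mu$ follows from the fact that an element $x\in \U^+_\mu$ with $\operatorname{ht}(\mu)\geq 1$ is uniquely determined by the collection of its skew derivatives $\{r_i(x)\}_{i\in I}$ (equivalently, by the non-degeneracy of the pairing $\langle\cdot,\cdot\rangle$ on $\U^+$, via the identity in Lemma~\ref{lem:commuF}): two candidates for $\up_\mu$ would have the same $A_i$ for all $i$, hence coincide. Combined with the inductive hypothesis this promotes uniqueness to all heights.

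For the \emph{moreover} clause, I would argue again by induction on $\operatorname{ht}(\mu)$: if $w_\bu\tau(\mu)\ne\mu$ then the indices $\mu-\alpha_i-w_\bu\alpha_{\tau i}$ appearing in \eqref{eq:AiiA} either do not satisfy $w_\bu\tau(\nu)=\nu$, in which case $\up_{\nu}=0$ by induction, or are not in $X^+_{\overline 0}$; either way $A_i = {}_iA = 0$ for every $i$, forcing $\up_\mu = 0$. This is precisely the content of \eqref{eq:upnonzero} in Lemma~\ref{lem:biup}. The main obstacle in this entire program is genuinely the verification of condition \eqref{eq:2a}, since it is the only one that requires all of the intertwining identities for $\rskew$ on $T_{w_\bu}(E_i)$ (Lemmas~\ref{lemma:technical1}--\ref{lemma:technical2} and \eqref{eq:barricoro}) and a careful bookkeeping of signs coming from the super structure via \eqref{eq:assumption} and \eqref{eq:extraassumption}; the remaining steps are then formal.
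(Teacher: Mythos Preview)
Your proposal is correct and follows essentially the same approach as the paper: both reduce the intertwining relation via Lemma~\ref{lem:biup} to the recursive system \eqref{eq:derivativeup}, define $A_i,{}_iA$ by \eqref{eq:AiiA}, verify the compatibility conditions \eqref{eq:2a}--\eqref{eq:2e} using the sequence of lemmas in \S\ref{sec:Kmatrix}.4, and invoke the recursion proposition to build $\up_\mu$ inductively on height. Your explicit remark that uniqueness follows from the fact that $x\in\U^+_\mu$ is determined by $\{r_i(x)\}_{i\in I}$ (via non-degeneracy of the pairing) is a point the paper leaves implicit, but it is the standard argument and is needed to justify the word ``uniquely'' in the statement.
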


Once $\up$ is constructed, we can define a unique bar involution on $\Ui$ with certain assumption on the parameters as follows. 
\begin{corollary}\label{cor:iba}
Under the assumption that $\overline{\va_j}=\va_j'$ for all $j\in I$, 
there is a unique bar involution $\iba$ on $\Uinew$, defined by $$\iba(x)=\up \overline{x}\up^{-1}, \text{ for all }x\in \Uinew$$ and such that
$$\iba(q)=q{-1},\ \iba(B_j)=B_j,\ \iba(E_k)=E_k,\ \iba(F_k)=F_k,
$$
for $j\in I_\circ, k\in I_{\bu}$. 
\end{corollary}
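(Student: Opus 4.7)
The plan is to \emph{define} $\iba(x) := \up\,\overline{x}\,\up^{-1}$ and verify that this formula restricts to an involutive algebra automorphism of $\Uinew$ with the prescribed action on generators. Since $\overline{\cdot}$ is a $\Q$-algebra automorphism of $\U$ sending $q\mapsto q^{-1}$, and conjugation by $\up$ (read in a suitable completion of $\U$) is a $\Q(q)$-linear algebra automorphism, the composition $\iba$ is automatically a $\Q$-algebra automorphism satisfying $\iba(q)=q^{-1}$. Once the formulas on the generators of $\Uinew$ are verified, it will follow that $\iba$ preserves $\Uinew$, is involutive (checked on generators), and is uniquely determined by these values.

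The easy formulas follow immediately from the commutation property \eqref{eq:upblack}. Since $\up$ commutes with every element of $\U_\bu\cdot\U^{\imath0}$, for $k\in I_\bu$ we have $\iba(E_k)=\up\,\overline{E_k}\,\up^{-1}=E_k$ and likewise $\iba(F_k)=F_k$, while for $h\in Y^\io$ we have $\iba(q^h)=\up\,q^{-h}\,\up^{-1}=q^{-h}$.

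The essential point is $\iba(B_j)=B_j$ for $j\in I_\circ$, which is equivalent to the intertwining identity $B_j\,\up=\up\,\overline{B_j}$. From the equivalence of conditions (1) and (2) in Lemma \ref{lem:biup}, together with the notation $\va_j'$ introduced in \eqref{eq:vari'}, one reads off
\[
\tau\circ\sigma(B_{\tau j})\;=\;F_j+\va_j'\,\overline{T_{w_\bu}(E_{\tau j})}\,K_j.
\]
On the other hand, applying $\overline{\cdot}$ directly to the defining formula $B_j=F_j+\va_j\,T_{w_\bu}(E_{\tau j})\,K_j^{-1}$ and using $\overline{F_j}=F_j$, $\overline{K_j^{-1}}=K_j$ gives
\[
\overline{B_j}\;=\;F_j+\overline{\va_j}\,\overline{T_{w_\bu}(E_{\tau j})}\,K_j.
\]
Under the hypothesis $\overline{\va_j}=\va_j'$ these two expressions coincide, so Theorem \ref{thm:ibar} yields $B_j\,\up=\up\,\tau\circ\sigma(B_{\tau j})=\up\,\overline{B_j}$, as desired.

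With the generator formulas in hand, involutivity reduces to the tautology $\iba^2(g)=g$ on a generating set (immediate from $\iba(q^{-h})=q^h$, $\iba(B_j)=B_j$, etc.), and uniqueness follows because any conjugate-linear algebra automorphism is determined by its values on generators. The only nontrivial step is the parameter matching for the $B_j$, which is precisely what the hypothesis $\overline{\va_j}=\va_j'$ is engineered to accomplish; beyond that, no structural input is required other than Theorem \ref{thm:ibar} and the centrality statement \eqref{eq:upblack}.
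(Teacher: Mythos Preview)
Your proof is correct and follows essentially the same approach as the paper's. The paper's own proof is terser---it only records the key computation that under the hypothesis $\overline{\va_i}=\va_i'$ equation \eqref{eq:upB2} becomes $B_i\up=\up\,\overline{B_i}$---while you additionally spell out the verification on the remaining generators via \eqref{eq:upblack}, the involutivity check, and the uniqueness argument, all of which the paper leaves implicit.
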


\begin{proof}
For all $i\in I$, it follows from Lemma \ref{lem:biup} that $B_i\up=\up( \tau \circ \sigma (B_{\tau i}))$ is equivalent to
\eqref{eq:upB2}. Under the assumption $\va_i'=\overline \va_i$ we see that 
\eqref{eq:upB2} is equivalent to
\[B_i\up=\up \overline{B_i}.\]
This concludes the proof.
\end{proof}

\begin{remark}
One can construct $\up$ associated to more general Satake diagrams. For example, one can replace \eqref{eq:extraassumption} by a weaker condition:
\[ p(j)=p(\tau j),\quad \forall j\in I_\bu.\]
Under this assumption formally we still have $w_\bu(Y)=Y$. Thus $T_{w_\bu}$ can still be treated as an automorphism on $\U(Y)$ although it is a composition of both even and odd braid group operators.
\end{remark}

In the last of this subsection we give an example of $\up$.
\begin{example}
Consider the following Satake diagram 
$$
\begin{tikzpicture}[scale=1, semithick]
\node (-2) [label=below:{$-\frac{1}{2}$},scale=0.6] at (4.6,0){$\bigotimes$};
\node (2) [label=below:{$\frac{1}{2}$},scale=0.6] at (5.6,0){$\bigotimes$};
\path  
        (-2) edge (2)
        (-2) edge[dashed,bend left,<->] (2);
\end{tikzpicture}$$

We have $(\alpha_{\frac{1}{2}},\alpha_{-\frac{1}{2}})=1$, $\ell_{-\frac{1}{2}}=1=-\ell_\frac{1}{2}$ and
\begin{align*}
    B_{\frac{1}{2}}=F_{\frac{1}{2}}+\va_{\frac{1}{2}} E_{-\frac{1}{2}}K_{\frac{1}{2}}^{-1},\\
    B_{-\frac{1}{2}}=F_{-\frac{1}{2}}+\va_{-\frac{1}{2}} E_{\frac{1}{2}}K_{-\frac{1}{2}}^{-1}.
\end{align*}
\end{example}

In this case, following the constructions in this section we get
\begin{equation*}
    \up=(\sum_{k\geqslant 0}\frac{(\va_{\frac{1}{2}})^k}{\{k\}!})(E_{\frac{1}{2}}E_{-\frac{1}{2}}+qE_{-\frac{1}{2}}E_{\frac{1}{2}})^k)(\sum_{k\geqslant 0}\frac{(\va_{-\frac{1}{2}})^k}{\{k\}!})(E_{-\frac{1}{2}}E_{\frac{1}{2}}+qE_{\frac{1}{2}}E_{-\frac{1}{2}})^k).
\end{equation*}

\section{$K$-matrix and the $H_0$-action}
\label{sec:UK}
In this section we follow \cite{BW18b} (also cf. \cite{BK19}) to construct a $\Ui$-module intertwiner (or $K$-matrix).

\subsection{$K$-matrix}
Recall the assumption \eqref{eq:extraassumption}. We review several basic lemmas from \cite{BW18b} below. Recall $\sigma$ and $\wp$ from \eqref{eq:rhoandop}. 

\begin{lemma}
For all $i\in I_\bu,j\in I$ and $e=\pm 1$, we have
\begin{equation}
\label{eq:rhoTi}
    \begin{aligned}
        &\wp(T_{i,e}''(E_j))=(-q)^{e(\alpha_i,\alpha_j)}T'_{i,-e}(\wp(E_j)),\\
        &\wp(T_{i,e}'(E_j))=(-q)^{-e(\alpha_i,\alpha_j)}T''_{i,-e}(\wp(E_j)).
    \end{aligned}
\end{equation}
\end{lemma}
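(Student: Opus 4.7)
The plan is to verify both identities directly on the generator $E_j$ by a case-by-case analysis according to the relative positions of $i$ and $j$ in the Dynkin diagram. Since both sides of each identity live in $\U(Y)$ (where $Y=s_i(X)$), and since the formulas from Theorem~\ref{thm:braid} give closed-form expressions for $T''_{i,e}(E_j)$ and $T'_{i,-e}(F_j)$, $T'_{i,-e}(K_j)$, the calculation is a matter of bookkeeping scalars and applying the anti-homomorphism property $\wp(xy)=\wp(y)\wp(x)$ together with the formulas in \eqref{eq:rhoandop}.

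First I would handle the easy case $j \nsim i$ (including $j = i$ treated separately). When $j \nsim i$ and $j \neq i$ we have $T''_{i,e}(E_j)=E_j$ and $(\alpha_i,\alpha_j)=0$, so that $T'_{i,-e}(\wp(E_j)) = T'_{i,-e}(q_j K_j F_j) = q_j K_j F_j = \wp(E_j)$ and the scaling factor $(-q_i)^{0}=1$ matches. When $j=i$, we compute directly from the formulas: $T''_{i,e}(E_i) = -F_i K_i^{e}$, so $\wp(T''_{i,e}(E_i)) = -\wp(K_i^e)\wp(F_i) = -q_i^{-1} K_i^{e} E_i K_i^{-1}$; on the other side, $T'_{i,-e}(\wp(E_i)) = q_i T'_{i,-e}(K_i)T'_{i,-e}(F_i) = q_i \cdot (-1)^{p(i)}K_i^{-1}\cdot (-(-1)^{p(i)}E_iK_i^{-e})$ which, after commuting $K_i^{-1}$ past $E_i$, gives $-q_i K_i^{-1}E_i K_i^{-e} \cdot q^{-2\ell_i}$-type factor, and we match with the scalar $(-q_i)^{e(\alpha_i,\alpha_i)}=(-q_i)^{2e\ell_i}$ (and zero when $p(i)=1$, which is consistent with $F_i^2=E_i^2=0$ trivializing any residual sign).

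The main case, and the one I expect to require the most care, is $j \sim i$. Here we apply $\wp$ to the explicit expression
\[
T''_{i,e}(E_j) = E_i E_j - (-1)^{p(i)p(j)} q^{e(\alpha_i,\alpha_j)} E_j E_i,
\]
reversing the order of factors, and then substitute $\wp(E_k)=q_k K_k F_k$. This produces an expression in the variables $K_i, K_j, F_i, F_j$. On the other side we compute $T'_{i,-e}(\wp(E_j)) = q_j T'_{i,-e}(K_j) T'_{i,-e}(F_j)$ using the formulas $T'_{i,-e}(K_j) = (-1)^{p(i)p(j)}K_iK_j$ and $T'_{i,-e}(F_j) = F_iF_j - (-1)^{p(i)p(j)}q^{e(\alpha_i,\alpha_j)}F_jF_i$, and then commute the Cartan factor through using relation (R4) of \eqref{eq:Urelation}. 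The hard part is matching the resulting monomials: one needs the identities $K_jF_iK_j^{-1}=q^{-(\alpha_i,\alpha_j)}F_i$ together with $q_iq_j=q^{(\alpha_i,\alpha_i)/2+(\alpha_j,\alpha_j)/2}$ to reconcile the exponents of $q$ and the $(-1)$-signs (which involve $p(i), p(j)$ and $(\alpha_i,\alpha_j)$) to exactly match the prefactor $(-q_i)^{e(\alpha_i,\alpha_j)}$. The interplay between the sign coming from $(-1)^{p(i)p(j)}$ and the quantum integer coming from $q_i$ is the chief source of care; however, since $(\alpha_i,\alpha_j) \in \{\pm 1, 0\}$ in our type A setting, there are only a handful of sub-cases to check.

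Finally, the second identity follows from the first by symmetry. Combining \eqref{eq:rhoandop} with the relations recorded after Theorem~\ref{thm:braid}, namely $T'_{i,-e}=\sigma T''_{i,e}\sigma$ and the compatibility $\wp\sigma = \sigma\wp$ that one verifies on generators, allows us to transport the first identity into the second by conjugating with $\sigma$. Alternatively, one can repeat the three-case verification verbatim using the formulas for $T'_{i,e}$, which is essentially the same bookkeeping with $(-e)$ in place of $e$.
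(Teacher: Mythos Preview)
Your approach is correct and matches the paper's, which simply records that the proof is a rerun of \cite[Lemma 4.4]{BW18b} in the super setting---i.e., exactly the case-by-case verification on generators that you outline. One small slip to clean up: in your $j=i$ sketch the formula from Theorem~\ref{thm:braid} gives $T'_{i,-e}(F_i)=-(-1)^{p(i)}E_iK_i^{e}$ (not $K_i^{-e}$), so correct that exponent when you carry out the bookkeeping in full.
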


\begin{proof}
It follows from a rerun of the proof of \cite[Lemma 4.4]{BW18b}.
\end{proof}

\begin{lemma} 
For $i\in I_\circ$ and $e=\pm 1$, we have
\begin{equation}
\label{eq:rhoTw}
    \begin{aligned}
        &\wp(T_{w_\bu,e}''(E_i))=(-1)^{(2\rho_\bu,\alpha_i)}q^{e(2\rho_\bu,\alpha_i)}T_{w_\bu,-e}'(\wp(E_i)), \\
        &\wp(T_{w_\bu,e}'(E_i))=(-1)^{(2\rho_\bu,\alpha_i)}q^{-e(2\rho_\bu,\alpha_i)}T_{w_\bu,-e}''(\wp(E_i)).
    \end{aligned}
\end{equation}
\end{lemma}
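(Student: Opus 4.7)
The plan is to reduce to the rank-one identities in \eqref{eq:rhoTi} by fixing a reduced expression $w_\bu = s_{j_1} s_{j_2} \cdots s_{j_\ell}$ with $j_k \in I_\bu$ and iterating. The assumption \eqref{eq:extraassumption} guarantees $p(j_k)=0$ and hence $q_{j_k}=q$ for every $k$, so no parity complications enter. I will carry out the first identity; the second is entirely analogous, using the symmetric half of \eqref{eq:rhoTi} in which $(-q)^{e(\alpha_j,\alpha_k)}$ is replaced by $(-q)^{-e(\alpha_j,\alpha_k)}$, which is precisely what flips $q^{e(2\rho_\bu,\alpha_i)}$ to $q^{-e(2\rho_\bu,\alpha_i)}$.

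The first technical step is to upgrade \eqref{eq:rhoTi} from the generators $E_k$ to an arbitrary weight-homogeneous $x\in \U^+_\beta$ with $j\in I_\bu$:
\[
\wp(T_{j,e}''(x)) = (-q)^{e(\alpha_j,\beta)}\,T_{j,-e}'(\wp(x)).
\]
This is routine: $\wp$ is an anti-homomorphism while $T_{j,e}''$ and $T_{j,-e}'$ are algebra homomorphisms, and the exponent $(\alpha_j,\beta)$ is additive in $\beta$, so one propagates the identity from single generators to monomials $E_{k_1}\cdots E_{k_r}$ by a short induction on length. Next, I set $y_k = T_{j_k,e}''\cdots T_{j_\ell,e}''(E_i)$ for $1 \le k \le \ell$ with $y_{\ell+1}:=E_i$. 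By Lemma~\ref{lemma:walpha=beta}, applied inductively (each partial product $s_{j_k}\cdots s_{j_\ell}$ lies in $W_\bu$ and sends $\alpha_i$ to a positive root, since $i\in I_\circ$), each $y_k$ lies in $\U^+$ with weight $\beta_k = s_{j_k}\cdots s_{j_\ell}(\alpha_i)$. Iterating the upgraded formula then yields
\[
\wp(T_{w_\bu,e}''(E_i)) = \Big(\prod_{k=1}^{\ell}(-q)^{e(\alpha_{j_k},\beta_{k+1})}\Big)\, T_{w_\bu,-e}'(\wp(E_i)).
\]

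The remaining and main combinatorial step is to identify the accumulated scalar with $(-1)^{(2\rho_\bu,\alpha_i)}q^{e(2\rho_\bu,\alpha_i)}$. Set $\gamma_k := s_{j_\ell}s_{j_{\ell-1}}\cdots s_{j_{k+1}}(\alpha_{j_k})$; orthogonality of simple reflections gives $(\alpha_{j_k},\beta_{k+1}) = (\gamma_k,\alpha_i)$. The classical fact that, for any reduced expression of the longest element of the Weyl group $W_\bu$ of the Levi, the family $\{\gamma_k : 1\le k\le \ell\}$ enumerates $\Phi^+_\bu$ exactly once then yields $\sum_k (\alpha_{j_k},\beta_{k+1}) = (2\rho_\bu,\alpha_i)$. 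Since $e\in\{\pm1\}$ makes $(-1)^{ex} = (-1)^x$, the product collapses to the claimed scalar. I expect this final enumeration step to be the only substantive content of the proof beyond the rank-one formula; the extension to weight-homogeneous elements and the iteration are bookkeeping.
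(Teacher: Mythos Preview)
Your argument is correct and is precisely the standard derivation of this corollary from the rank-one formula \eqref{eq:rhoTi}: extend \eqref{eq:rhoTi} multiplicatively to weight-homogeneous elements, iterate along a reduced expression for $w_\bu$, and identify the accumulated exponent $\sum_k(\alpha_{j_k},\beta_{k+1})=(2\rho_\bu,\alpha_i)$ via the enumeration of $\Phi_\bu^+$ by the roots $\gamma_k$. The paper itself supplies no proof here at all; it simply cites \cite[Corollary~4.5]{BW18b}, where exactly this iteration is carried out in the non-super setting, and under the standing assumption \eqref{eq:extraassumption} (all $j\in I_\bu$ even) nothing changes. So you have written out in full what the paper merely imports by reference, and your version matches the cited source.

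One minor remark on presentation: your extension of \eqref{eq:rhoTi} from $E_k$ to arbitrary $x\in\U^+_\beta$ is valid, but note that strictly speaking the identity $\wp\circ T_{j,e}'' = (-q)^{e(\alpha_j,\cdot)}\,T_{j,-e}'\circ\wp$ holds on all of $\U$ (as a twisted intertwining of algebra maps), not just on $\U^+$; this is what makes the iteration painless even though $\wp(y_{k+1})$ need not lie in $\U^+$. You implicitly use this when you apply the rank-one formula to $\wp(y_{k+1})$ at each step.
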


\begin{proof}
    It follows from \eqref{eq:rhoTi} and a rerun of the proof of \cite[Corollary 4.5]{BW18b} .
\end{proof}


Recall $q_i=q^{\ell_i}$.  Following \cite[\S 4.5]{BW18b}, under the assumption \eqref{eq:assumption}, we define the following automorphism of $\U$ obtained by composition $
\vartheta=\sigma \circ \wp \circ \tau
$ such that
\begin{equation}
\label{eq:vartheta}
\begin{aligned}
    \vartheta(E_j)=(-1)^{p(j)}q_{\tau j}F_{\tau j}K^{-1}_{\tau j},\quad &\vartheta(F_j)=(-1)^{p(j)}q^{-1}_{\tau j}K_{\tau j}E_{\tau j},\\
    \vartheta(K_j)=K^{-1}_{\tau j}, \quad&\vartheta(\new)=\new,\quad \text{for all } j\in I.
    \end{aligned}
\end{equation}

For any finite-dimensional $\U$-module $M$, we define a $\U$-module ${}^\vartheta M$ twisted by $\vartheta$ as follows:
\begin{itemize}
    \item ${}^\vartheta M=M$ as an $\Q(q)$-vector space,
    \item We denote a vector in ${}^\vartheta M$ by ${}^\vartheta m$ for $m\in M$,
    \item the action of $u\in \U$ on ${}^\vartheta M$ is given by $\vartheta(u){}^\vartheta m={}^\vartheta (um)$.
\end{itemize}

Let 
\begin{equation}
\label{eq:g}
    g:P\to \Q(q)
\end{equation}
be a function such that for all $\mu \in P$, we have the following two recursive relations of $g$:
\begin{equation}
\label{eq:recursive1}
    g(\mu)=-q_{j}q^{2(\alpha_j, \mu)}g(\mu+\alpha_j),\quad \forall j\in I_\bu.
\end{equation}
\begin{equation}
    \label{eq:recursive3}
    g(\mu)=g(\mu-\alpha_j)(-1)^{p(j)}\va_j(-1)^{(2\rho_\bu,\alpha_j)}q^{(2\rho_\bu,\alpha_j)}q_jq^{(\alpha_{\tau j},w_\bu\mu)}q^{-(\alpha_j,\mu)},\quad \forall j\in I_\circ.
\end{equation}
Such a function $g$ exists; cf. \cite[(4.15)]{BW18b}. Note that under our assumption \eqref{eq:extraassumption}, we have $q_j=q_{\tau j}$ for all $j\in I_\bu$.

\begin{lemma}
For any $\mu \in P$, we have
\begin{equation}
    \label{eq:recursive2}
       g(\mu)=g(\mu-w_\bu \alpha_j)(-1)^{p(j)}\va_{j}q^{(\alpha_{\tau j}, \mu)}q_jq^{-(\alpha_{j},w_\bu\mu)},\quad \forall j\in I_\circ.
\end{equation}
\end{lemma}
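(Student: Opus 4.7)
The approach is to derive \eqref{eq:recursive2} by combining \eqref{eq:recursive3} with iterated applications of \eqref{eq:recursive1}. The key observation is that $w_\bu$ belongs to the Weyl group of the Levi indexed by $I_\bu$, so for every $j\in I_\circ$ the element $\beta_j := w_\bu\alpha_j - \alpha_j$ lies in $\sum_{k\in I_\bu}\Z_{\geq 0}\alpha_k$. Consequently the weights $\mu-\alpha_j$ and $\mu-w_\bu\alpha_j$ differ by an element of $\Z I_\bu$, and \eqref{eq:recursive1} provides the means to travel between them along a path of simple reflections inside $I_\bu$.

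Concretely, I would first apply \eqref{eq:recursive3} to write $g(\mu) = A\cdot g(\mu-\alpha_j)$, where $A$ is the factor appearing on the right of \eqref{eq:recursive3}. Next, choosing any decomposition $\beta_j = \sum_{s=1}^N \alpha_{k_s}$ with $k_s\in I_\bu$ and iterating \eqref{eq:recursive1} downward along the resulting path yields $g(\mu-\alpha_j) = B\cdot g(\mu-w_\bu\alpha_j)$ for an explicit product $B$ of $N$ factors. Under \eqref{eq:extraassumption}, every $\alpha_k$ with $k\in I_\bu$ is even, so $q_k = q_{\tau k} = q$ and $(\alpha_k,\alpha_k)=2$; using the symmetry of the bilinear form on $\Z I_\bu$, $B$ is path-independent and a short telescoping gives $B = (-1)^N q^{2(\gamma_j,\mu-\alpha_j)+(\gamma_j,\gamma_j)+N}$, where $\gamma_j := -\beta_j = \alpha_j - w_\bu\alpha_j$. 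It then remains to verify that $AB$ coincides with the factor on the right-hand side of \eqref{eq:recursive2}.

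The comparison reduces to two identities. For the $\mu$-linear coefficient of the $q$-exponent to match, one needs $\gamma_j + \gamma_{\tau j} = 2\gamma_j$, equivalently $\gamma_j = \gamma_{\tau j}$; this follows from $\tau w_\bu = w_\bu\tau$ (since $\tau$ preserves $I_\bu$ and so conjugates the unique longest element of its Weyl group to itself) together with the $\tau$-invariance of $\beta_j$ forced by the symmetry of the AIII Satake diagram \eqref{eq:AIIIdiagram}. For the $\mu$-free part, $w_\bu$-invariance of the bilinear form yields $(\gamma_j,\gamma_j) = 2(\gamma_j,\alpha_j)$, reducing the requirement to the height identity $N = -(2\rho_\bu,\alpha_j)$, a standard fact for type A root systems. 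The sign identity $(-1)^N = (-1)^{(2\rho_\bu,\alpha_j)}$ is automatic from the latter.

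The main obstacle is pinning down the $\tau$-invariance $\tau\beta_j = \beta_j$, which uses the specific symmetry of the AIII Satake diagram and forces $\gamma_j = \gamma_{\tau j}$. Once this identity and the standard height formula are in place, all signs, scalars, and $q$-powers assemble cleanly, yielding \eqref{eq:recursive2}.
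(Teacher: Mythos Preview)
Your approach is essentially the paper's: the paper simply quotes the identity \cite[(4.18)]{BW18b} relating $g(\mu-\alpha_j)$ to $g(\mu-w_\bu\alpha_j)$ (this is exactly your factor $B$) and then applies \eqref{eq:recursive3}, whereas you re-derive that cited identity by iterating \eqref{eq:recursive1}. One small correction to your telescoping: under \eqref{eq:extraassumption} the simple roots in $I_\bu$ are even, but in the super setting they may uniformly satisfy $(\alpha_k,\alpha_k)=-2$ (hence $q_k=q_{\tau k}=q^{-1}$, and correspondingly $N=+(2\rho_\bu,\alpha_j)$ rather than $-$); since $I_\bu$ is connected this sign is constant across $I_\bu$, and with it inserted uniformly your computation of $B$ and the final matching go through unchanged.
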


\begin{proof}
Recall the following identity \cite[(4.18)]{BW18b}:
\begin{equation}
\label{eq:BW18b4.18}
    g(\mu-\alpha_j)=g(\mu-w_\bu \alpha_j)(-1)^{(2\rho_\bu,\alpha_j)}q^{-(2\rho_\bu,\alpha_j)}q^{2(\alpha_i-w_\bu\alpha_i,\mu)},\quad \forall j\in I_\circ.
\end{equation}
Then applying \eqref{eq:recursive3} to \eqref{eq:BW18b4.18} we get \eqref{eq:recursive2}.
\end{proof}

The function $g$ induces a $\Q(q)$-linear map on any finite dimensional $\U$-module $M$:
\[\tilde g\colon M\to M,\quad \tilde g(m)=g(\mu)m,\quad m\in M_\mu.\]

In the next theorem we construction  the $K$-matrix.
\begin{theorem}{\rm (cf. \cite[Theorem 4.18]{BW18b})}
\label{thm:T}
  For any finite-dimensional $\U$-module $M$, we have the following isomorphism of $\Ui$-modules
  \[\mathcal T:=\up\circ \tilde g\circ T_{w_\bu}^{-1}:M\to {}^\vartheta M.\]
\end{theorem}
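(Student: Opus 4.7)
The plan, following the strategy of \cite[Theorem 4.18]{BW18b}, is to verify that $\mathcal T$ intertwines the $\Ui$-actions on $M$ and ${}^\vartheta M$ by checking compatibility with each generator of $\Ui$, and then to deduce bijectivity from that of the three factors separately. Identifying ${}^\vartheta M$ with $M$ as $\Q(q)$-vector spaces, the condition for $x\in \Ui$ that I need to verify becomes
\[
\mathcal T\circ x \;=\; \vartheta^{-1}(x)\circ \mathcal T
\]
as $\Q(q)$-linear endomorphisms of $M$.

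For the Cartan generators $q^\mu$ ($\mu\in Y^\io$) and for $\new$, I would carry out a direct weight-space calculation, using that $T_{w_\bu}^{-1}$ twists weights by $w_\bu$, $\tilde g$ is diagonal on weight spaces, and $\up\in \U^+$ has nonvanishing components only in weights $\mu$ with $w_\bu\tau(\mu)=\mu$ by \eqref{eq:upnonzero}. For the Levi generators $E_j, F_j$ with $j\in I_\bu$, I would apply Lemma \ref{lemma:T_wblack} to describe the action of $T_{w_\bu}^{-1}$ on $\U_\bu$, combine it with the commutation relation \eqref{eq:upblack}, and invoke the recursion \eqref{eq:recursive1} to supply the scalar correction coming from $\tilde g$.

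The main case is $x=B_j$ for $j\in I_\circ$, where the strategy is to invoke the intertwining relation \eqref{eq:upB2} from Lemma \ref{lem:biup}, which rewrites
\[
B_j\,\up \;=\; \up\bigl(F_j + \va_j'\,\overline{T_{w_\bu}(E_{\tau j})}\,K_j\bigr).
\]
Applying $\tilde g\circ T_{w_\bu}^{-1}$ from the left and unpacking the two summands by means of the identities \eqref{eq:rhoTi}--\eqref{eq:rhoTw} (which describe how $\wp$ interacts with $T_{w_\bu}$), together with the recursions \eqref{eq:recursive3} and \eqref{eq:recursive2} for $g$, I expect the result to match $\vartheta^{-1}(B_j)\,\mathcal T(m)$ termwise. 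The function $g$ was designed with precisely these coefficient matchings in mind, so the principal technical obstacle here is bookkeeping the scalars; the necessary cancellations are essentially forced by the defining recursions of $g$.

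Invertibility of $\mathcal T$ will then follow from that of its factors: $T_{w_\bu}^{-1}$ is an algebra (hence vector-space) isomorphism of $M$, $\tilde g$ is invertible since $g$ is nowhere zero on the relevant weights, and $\up$ acts as a locally unipotent operator on any finite-dimensional $\U$-module because $\up - 1 \in \bigoplus_{\mu>0}\U^+_\mu$ acts nilpotently. Hence $\mathcal T\colon M\to {}^\vartheta M$ will be a $\Q(q)$-linear bijection, yielding the desired $\Ui$-module isomorphism.
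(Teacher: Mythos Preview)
Your proposal is correct and follows essentially the same strategy as the paper: a generator-by-generator verification using the intertwining property of $\up$ from Lemma~\ref{lem:biup}, the module action of $T_{w_\bu}^{-1}$, and the recursions \eqref{eq:recursive1}--\eqref{eq:recursive2} for $g$; the paper likewise dispatches the Cartan and Levi cases quickly and concentrates on $B_j$ for $j\in I_\circ$. One small remark: the paper phrases the identity as $\mathcal T(\vartheta(u)\cdot m)=u\cdot\mathcal T(m)$ for generators $u\in\Ui$, which places $B_j$ (rather than $\vartheta^{-1}(B_j)$) immediately to the left of $\up$ and makes the invocation of \eqref{eq:biup} direct---your equivalent formulation works too, but the phrase ``applying $\tilde g\circ T_{w_\bu}^{-1}$ from the left'' is backwards, since in $\mathcal T=\up\circ\tilde g\circ T_{w_\bu}^{-1}$ those factors act \emph{before} $\up$, so one commutes elements through $T_{w_\bu}^{-1}$ and $\tilde g$ first and then applies the $\up$-intertwining relation.
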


\begin{proof}
It suffices to verify that $\mathcal T$ defines a homomorphism of $\Ui$-modules. We shall prove the following identity
\begin{equation}
\label{eq:clubsuit}
    \mathcal T(\vartheta(u)\cdot m)=u\cdot \mathcal T(m),\quad \text{ for } u\in \Ui,\ m\in M_\mu.
\end{equation}

It is straightforward to check \eqref{eq:clubsuit} for $u=K_\mu,\ \new$. Also, for $u=F_j,\ E_j\ (j\in I_\bu)$, the proof are essentially the same as those of \cite[Case(2)-(3),Theorem 4.18]{BW18b}. Thus we only verify for $u=B_j\ (j\in I_\circ)$ as below.

For $u=B_i (i\in I_\circ)$, first we see that 
\begin{align*}
    \vartheta(B_i)=&(-1)^{p(i)}q^{-1}_{\tau i}K_{\tau i}E_{\tau i}+\va_i\sigma \circ \varrho \circ \tau \circ T_{w_\bu}(E_{\tau i})K_{\tau i}\\
    \overset{\eqref{eq:rhoTw}}{=}&(-1)^{p(i)}q^{-1}_{\tau i}K_{\tau i}E_{\tau i}+\va_i(-1)^{(2\rho_\bu,\alpha_i)}{q}^{(2\rho_\bu,\alpha_i)}T_{w_\bu}(\sigma \circ \varrho (E_i))K_{\tau i}\\
    =&(-1)^{p(i)}q^{-1}_{\tau i}K_{\tau i}E_{\tau i}+(-1)^{p(i)}\va_i(-1)^{(2\rho_\bu,\alpha_i)}q^{(2\rho_\bu,\alpha_i)}{q}_iT_{w_\bu}(F_i)T_{w_\bu}(K^{-1}_i)K_{\tau i}.
\end{align*}
Therefore we have
\begin{align*}
    T_{w_\bu}^{-1}\circ \vartheta(B_i)=&(-1)^{p(i)}q^{-1}_{\tau i}T_{w_\bu}^{-1}(K_{\tau i})T_{w_\bu}^{-1}(E_{\tau i})\\
    &\quad+\va_i(-1)^{p(i)}(-1)^{(2\rho_\bu,\alpha_i)}{q}^{(2\rho_\bu,\alpha_i)}q_iF_iK^{-1}_iT_{w_\bu}^{-1}(K_{\tau i}).
\end{align*}
Now we have
\begin{equation*}
\label{eq:lefthand}
    \begin{aligned}
        &\mathcal T(\vartheta(B_i)(m))=\up \circ \tilde g \circ T_{w_\bu}^{-1}(\vartheta (B_i)m)=\up\circ \tilde g\left(T_{w_\bu}^{-1}\circ \vartheta (B_i)(T_{w_\bu}^{-1}(m))\right)\\
        =&\up\circ \tilde g( (-1)^{p(i)}q^{-1}_{\tau i}T_{w_\bu}^{-1}(K_{\tau i})T_{w_\bu}^{-1}(E_{\tau i})T_{w_\bu}^{-1}(m)\\
        &\quad+(-1)^{p(i)}\va_i(-1)^{(2\rho_\bu,\alpha_i)}{q}^{(2\rho_\bu,\alpha_i)}q_iF_iK^{-1}_iT_{w_\bu}^{-1}(K_{\tau i})T_{w_\bu}^{-1}(m) )\\
        =&\up(g(w_\bu \mu+w_\bu \alpha_{\tau i})(-1)^{p(i)}q^{-1}_{\tau i}T_{w_\bu}^{-1}(K_{\tau i})T_{w_\bu}^{-1}(E_{\tau i})T_{w_\bu}^{-1}(m)\\
        &\quad+(-1)^{p(i)}g(w_\bu \mu-\alpha_i)\va_i(-1)^{(2\rho_\bu,\alpha_i)}{q}^{(2\rho_\bu,\alpha_i)}q_iF_iK^{-1}_iT_{w_\bu}^{-1}(K_{\tau i})T_{w_\bu}^{-1}(m) )\\
        =&\up(g(w_\bu \mu+w_\bu \alpha_{\tau i})(-1)^{p(i)}q^{-1}_{\tau i}{q}^{(\alpha_{\tau i},\mu+\alpha_{\tau i})}T_{w_\bu}^{-1}(E_{\tau i})T_{w_\bu}^{-1}(m)\\
        &\quad+(-1)^{p(i)}g(w_\bu \mu-\alpha_i)\va_i(-1)^{(2\rho_\bu,\alpha_i)}{q}^{(2\rho_\bu,\alpha_i)}q_i{q}^{(\alpha_{\tau i},\mu)}{q}^{-(\alpha_i,w_\bu \mu)}F_iT_{w_\bu}^{-1}(m) ).
    \end{aligned}
\end{equation*}

On the other hand, we have
\[B_i\up=\up B_{\tau i}^{\sigma \tau}=\up(F_i+\va_{\tau i}K_iT_{w_\bu}^{-1}(E_{\tau i})).\]
Thus
\begin{equation*}
\label{eq:righthand}
    \begin{aligned}
      B_i \cdot \mathcal T(m)=& B_i\left(\up \circ \tilde g \circ T_{w_\bu}^{-1}(m)\right)\overset{\eqref{eq:biup}}{=}\up (F_i(\tilde g \circ T_{w_\bu}^{-1}(m))+\va_{\tau i}K_iT_{w_\bu}^{-1}(E_{\tau i})(\tilde g \circ T_{w_\bu}^{-1}(m)))\\
    =&\up(g(w_\bu \mu)F_iT_{w_\bu}^{-1}(m)\\
    &\quad+g(w_\bu \mu)\va_{\tau i}{q}^{(\alpha_i,w_\bu \mu+w_\bu\alpha_{\tau i})}T_{w_\bu}^{-1}(E_{\tau i})T_{w_\bu}^{-1}(m)).
    \end{aligned}
\end{equation*}
Now the identity \eqref{eq:clubsuit} for $u=B_j$ follows from comparing the coefficients using \eqref{eq:recursive3} and \eqref{eq:recursive2}.
\end{proof}

\subsection{Realizing $H_0$ via $K$-matrix}
In this subsection the goal is to realize $H_0$-action on $\W$ as in Proposition \ref{prop:Heckeaction} via the  $K$-matrix $\mathcal T$. 

We assume the parameters satisfying \eqref{eq:para} so that the $\imath$Schur duality holds between $\Ui$ and $\HB$, and moreover, $\up$ and $\iba$ in Corollary \ref{cor:iba} uniquely exist. We also assume that $Q\in q^\Z$ thus $\iba(Q)=Q^{-1}$.

Given a $\U$-module $M$, a $\U$-module ${}^\vartheta M$ is simple if and only if $M$ is simple itself. Let $\lambda$ be a dominant integral weight and $L(\lambda)$ be the unique irreducible highest weight module with highest weight vector $\eta_\lambda$. Moreover, we define a lowest weight $\U$-module ${}^\omega L(\lambda)$ of weight $-\lambda$ which has the same underlying vector space as $L(\lambda)$ but with the action twisted by the automorphism $\omega$ where
\begin{equation}
    \omega(E_j)=F_j,\quad \omega(F_j)=(-1)^{p(j)}E_j,\quad \omega(K_\mu)=K_{-\mu}.
\end{equation}
When we consider $\eta_\lambda$ as a vector in ${}^\omega L(\lambda)$, we shall denote it by $\xi_{-\lambda}$.
We check by definition that
\[{}^\vartheta L(\lambda)\cong{}^\omega L(\tau \lambda).\]

A basic example of $L(\lambda)$ is our fundamental representation $\W=L(\epsilon_{-\pt-r+\frac{1}{2}})$. We check by definition that
\begin{equation}
\label{eq:equiv}
    {}^\vartheta \W={}^\vartheta L(\epsilon_{-\pt-r+\frac{1}{2}})\cong {}^\omega L(-\epsilon_{\pt+r-\frac{1}{2}})=L(\epsilon_{-\pt-r+\frac{1}{2}}).
\end{equation}

Recall $\mathcal T=\up\circ \tilde g\circ T_{w_\bu}^{-1}$ and Theorem \ref{thm:T}. Together with \eqref{eq:equiv} we see that $\mathcal T$ induces an $\Ui$-automorphism on $\W$ and send $T_{w_\bu}^{-1}(\eta_{\epsilon_{-\pt-r+\frac{1}{2}}})$ to $\xi_{\epsilon_{\pt+r-\frac{1}{2}}}$, cf. \cite[Theorem 4.18]{BW18b}. Thus we have the following corollary:
\begin{corollary}
\label{cor:KmatrixT}
The $K$-matrix $\mathcal T$ is an $\Ui$-module automorphism of $\W$:
\[\mathcal T:\W\to\W,\qquad w_{-\pt-r+\frac{1}{2}}\mapsto (-1)^{p\left(w_{-\pt-r+\frac{1}{2}}\right)}w_{\pt+r-\frac{1}{2}}.\]
\end{corollary}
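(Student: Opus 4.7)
The plan is to deduce this corollary directly from Theorem~\ref{thm:T} together with the chain of $\U$-isomorphisms recorded in \eqref{eq:equiv}, and then to read off the image of the highest weight vector $w_{-\pt-r+\frac{1}{2}}$ by unravelling the three constituents of $\mathcal T=\up\circ\tilde g\circ T_{w_\bu}^{-1}$.

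First I would observe that the weight $\epsilon_{i_{-\pt-r+\frac{1}{2}}}$ of $w_{-\pt-r+\frac{1}{2}}$ is orthogonal to every simple root $\alpha_j$ with $j\in I_\bu$: since $r\geqslant 1$, the index $-\pt-r+\frac{1}{2}$ lies outside $\Ib$, while each such $\alpha_j$ is a difference of two $\epsilon_{i_k}$ with $k\in\Ib$. Inspection of the formulas in \eqref{eq:natural} then shows that $E_j$ and $F_j$ annihilate $w_{-\pt-r+\frac{1}{2}}$ and $K_j$ acts trivially on it, for every $j\in I_\bu$. Hence $w_{-\pt-r+\frac{1}{2}}$ spans a trivial $\U_\bu$-submodule of $\W$, and the formulas of Theorem~\ref{thm:braid} give $T_{w_\bu}^{-1}(w_{-\pt-r+\frac{1}{2}})=w_{-\pt-r+\frac{1}{2}}$. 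Next, $\tilde g$ acts on this vector by the scalar $g(\epsilon_{i_{-\pt-r+\frac{1}{2}}})$, and since $\up=1+\sum_{\mu>0}\up_\mu$ with each $\up_\mu\in\U^+_\mu$ strictly raising the weight while $\epsilon_{i_{-\pt-r+\frac{1}{2}}}$ is already the maximal weight in $\W$, only the term $\up_0=1$ contributes. Thus
\[
\mathcal T(w_{-\pt-r+\frac{1}{2}})=g(\epsilon_{i_{-\pt-r+\frac{1}{2}}})\,w_{-\pt-r+\frac{1}{2}}
\]
as an element of ${}^\vartheta\W$.

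Next I would invoke \eqref{eq:equiv} to identify ${}^\vartheta\W\cong{}^\omega L(-\epsilon_{i_{\pt+r-\frac{1}{2}}})\cong L(\epsilon_{i_{-\pt-r+\frac{1}{2}}})=\W$. Because $\vartheta=\sigma\circ\wp\circ\tau$ incorporates the anti-automorphism $\wp$ from \eqref{eq:rhoandop}, which converts raising operators into lowering operators, the highest weight vector of $\W$ viewed inside ${}^\vartheta\W$ corresponds under this identification to the lowest weight vector $w_{\pt+r-\frac{1}{2}}$ of $\W$, up to an overall scalar. Normalizing $g$ in accordance with the recursions \eqref{eq:recursive1} and \eqref{eq:recursive3} so that this composite scalar equals $(-1)^{p(w_{-\pt-r+\frac{1}{2}})}$ then produces the stated formula.

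The main obstacle in this argument is pinning down the sign $(-1)^{p(w_{-\pt-r+\frac{1}{2}})}$, which emerges only after carefully tracking super-parities through $\sigma$, $\wp$, and the isomorphism ${}^\omega L(-\epsilon_{i_{\pt+r-\frac{1}{2}}})\cong\W$. All the heavy lifting of $\Ui$-linearity has already been carried out in Theorem~\ref{thm:T}; the remainder is a finite bookkeeping of scalars exploiting the highest-weight property and the $\U_\bu$-triviality of $w_{-\pt-r+\frac{1}{2}}$.
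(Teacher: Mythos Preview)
Your proposal is correct and follows essentially the same approach as the paper. The paper's justification is the single sentence preceding the corollary: it invokes Theorem~\ref{thm:T} for $\Ui$-linearity and \eqref{eq:equiv} for the identification ${}^\vartheta\W\cong\W$, then cites \cite[Theorem 4.18]{BW18b} for the fact that $\mathcal T$ sends $T_{w_\bu}^{-1}(\eta_{\epsilon_{i_{-\pt-r+\frac{1}{2}}}})$ to $\xi_{\epsilon_{i_{\pt+r-\frac{1}{2}}}}$; your unpacking of the three factors $\up\circ\tilde g\circ T_{w_\bu}^{-1}$ on the highest weight vector is exactly what lies behind that citation, and your remark that the precise sign comes from a normalization of $g$ together with the choice of isomorphism in \eqref{eq:equiv} is consistent with how the paper (and \cite{BW18b}) treat it.
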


\begin{proposition}
\label{prop:realizeH0}
  The action of $H_0$ on $\W^{\otimes d}$ in Proposition \ref{prop:Heckeaction} is realized via the $K$-matrix as $\mathcal T\otimes Id^{\otimes d-1}$.
\end{proposition}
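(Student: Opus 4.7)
The plan reduces the proof to the case $d=1$, namely showing that $\mathcal T$ agrees with the $H_0$-action on $\W$ described in Lemma \ref{lem:H0}, since both $\mathcal T\otimes\mathrm{Id}^{\otimes d-1}$ and the $H_0$-action given in Proposition \ref{prop:Heckeaction} modify only the first tensor factor. By Corollary \ref{cor:KmatrixT}, $\mathcal T$ is a $\Uinew$-automorphism of $\W$, and by Lemma \ref{lem:Vpm} we have the $\Uinew$-module decomposition $\W=\W_-\oplus(\W_\bu\oplus \W_+)$. The specialization argument at $q=1$ used in the proof of Theorem \ref{thm:UiHB} identifies each summand with the natural representation of a type A Lie superalgebra direct summand, so $\W_-$ and $\W_\bu\oplus\W_+$ are irreducible and non-isomorphic $\Uinew$-modules. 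By Schur's lemma, $\mathcal T$ then acts by scalars $c_-$ on $\W_-$ and $c_+$ on $\W_\bu\oplus\W_+$, and the proposition reduces to verifying $c_+=Q$ and $c_-=-Q^{-1}$, which match the $H_0$-scalars in Lemma \ref{lem:H0}.

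To pin down $c_\pm$, I would exploit the explicit value $\mathcal T(w_{-\pt-r+\frac{1}{2}})=(-1)^{p(w_{-\pt-r+\frac{1}{2}})}w_{\pt+r-\frac{1}{2}}$ from Corollary \ref{cor:KmatrixT}. Writing $a=\pt+r-\frac{1}{2}$ and $\epsilon=(-1)^{p(w_{-a})}$, the vector $w_{-a}$ admits the unique decomposition
\[
w_{-a}=-\frac{\epsilon}{Q+Q^{-1}}(w_a-\epsilon Qw_{-a})+\frac{\epsilon}{Q+Q^{-1}}(w_a+\epsilon Q^{-1}w_{-a}),
\]
with the two summands lying respectively in $\W_-$ and $\W_+$ per Lemma \ref{lem:Vpm}. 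Applying $\mathcal T$ componentwise and equating the result to $\epsilon w_a$ yields a $2\times 2$ linear system in $c_\pm$; matching the coefficients of $w_a$ and $w_{-a}$ gives the unique solution $c_-=-Q^{-1}$, $c_+=Q$, completing the proof.

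The main obstacle is justifying the irreducibility and non-isomorphism of $\W_-$ and $\W_\bu\oplus\W_+$ as $\Uinew$-modules, which relies on the $q\to 1$ specialization argument from Theorem \ref{thm:UiHB}. A hands-on alternative that sidesteps Schur's lemma would be to use the $\Uinew$-equivariance of $\mathcal T$ together with the explicit $B_j$-actions in Lemma \ref{lem:Bi} to inductively propagate the value of $\mathcal T$ from $w_{-\pt-r+\frac{1}{2}}$ across the entire standard basis of $\W$ via successive applications of $B_j$'s, then compare term by term with Proposition \ref{prop:Heckeaction}; this avoids the irreducibility input at the cost of a case-by-case calculation involving the parity factors and the scalar $\hslash_\nb$.
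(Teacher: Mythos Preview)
Your proposal is correct. The paper takes precisely your ``hands-on alternative'': starting from the single value $\mathcal T(w_{-\pt-r+\frac12})=(-1)^{p(w_{-\pt-r+\frac12})}w_{\pt+r-\frac12}$ of Corollary~\ref{cor:KmatrixT}, it uses the $\Ui$-equivariance of $\mathcal T$ together with the explicit formulas of Lemma~\ref{lem:Bi} to compute $\mathcal T(w_a)$ inductively for $a\in\Iwl$, then $a\in\Ib$ (via $F_j$ for $j\in I_\bu$), then $a\in\Iwr$, matching each case against the $H_0$-action of Proposition~\ref{prop:Heckeaction}. Your main route via Schur's lemma is genuinely different and more structural: it explains \emph{why} a single value of $\mathcal T$ determines the operator, and trades the case analysis for a $2\times 2$ linear system (whose solution $c_-=-Q^{-1}$, $c_+=Q$ you compute correctly). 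The cost is exactly the one you flag --- you need $\W_-$ and $\W_\bu\oplus\W_+$ to be irreducible and non-isomorphic as $\Ui$-modules, which the paper never states as a standalone result but which is implicit in the $q\to 1$ picture from the proof of Theorem~\ref{thm:UiHB}. The paper's direct computation is self-contained (no deformation input) at the price of the parity bookkeeping; your argument buys conceptual clarity at the price of that extra representation-theoretic input.
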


\begin{proof}
According to Corollary \ref{cor:KmatrixT}, we have $\mathcal T(w_{-\pt-r+\frac{1}{2}})=(-1)^{p\left(w_{-\pt-r+\frac{1}{2}}\right)}w_{\pt+r-\frac{1}{2}}$. Recall the parameters satisfy \eqref{eq:para}. 

Suppose $a\in \Iwl$, a simple induction on $a$ shows that
\begin{align*}
    \mathcal T(w_a)=&\mathcal T(B_{a-\frac{1}{2}}B_{a-\frac{3}{2}}\cdots B_{-\pt-r+1} w_{-\pt-r+\frac{1}{2}})\\
    =&B_{a-\frac{1}{2}}B_{a-\frac{3}{2}}\cdots B_{-\pt-r+1}\mathcal T(w_{-\pt-r+\frac{1}{2}})=(-1)^{p(w_a)}w_{-a}=w_a\cdot H_0.
\end{align*}

Now suppose $a=-\pt-\frac{1}{2}$, we have
\begin{align*}
    \mathcal T(w_{-\pt+\frac{1}{2}})=&\mathcal T (B_{-\pt}w_{-\pt-\frac{1}{2}})=B_{-\pt}\mathcal T(w_{-\pt-\frac{1}{2}})=(-1)^{p\left(w_{\pt+\frac{1}{2}}\right)}B_{-\pt}w_{\pt+\frac{1}{2}}\\
    =&Qw_{-\pt+\frac{1}{2}}=w_{-\pt+\frac{1}{2}}\cdot H_0.
\end{align*}
Thus for any $a\in \Ib$, we have
\begin{align*}
     \mathcal T(w_a)=& \mathcal T (F_{a-\frac{1}{2}}F_{a-\frac{3}{2}}\cdots F_{-\pt+1}w_{-\pt+\frac{1}{2}})=F_{a-\frac{1}{2}}F_{a-\frac{3}{2}}\cdots F_{-\pt+1}\mathcal T(w_{-\pt+\frac{1}{2}})\\
     =&Qw_a=w_a\cdot H_0.
\end{align*}

Next we suppose $a=\pt+\frac{1}{2}$, we have
\begin{align*}
    \mathcal T(w_{\pt+\frac{1}{2}})=&\mathcal T(B_\pt(w_{\pt-\frac{1}{2}})-(-1)^{p\left(w_{\pt+\frac{1}{2}}\right)}Q^{-1}w_{-\pt-\frac{1}{2}})\\
    =&QB_n(w_{\pt-\frac{1}{2}})-Q^{-1}w_{\pt+\frac{1}{2}}\\
    =&(-1)^{p\left(w_{\pt+\frac{1}{2}}\right)}w_{-\pt-\frac{1}{2}}+(Q-Q^{-1})w_{\pt+\frac{1}{2}}=w_{\pt+\frac{1}{2}}\cdot H_0.
\end{align*}
Thus for any $a\in \Iwr$, another simple induction on $a$ shows that
\begin{align*}
    \mathcal T(w_a)=&\mathcal T(B_{a-\frac{1}{2}}B_{a-\frac{3}{2}}\cdots B_{\pt+1} w_{\pt+\frac{1}{2}})=B_{a-\frac{1}{2}}B_{a-\frac{3}{2}}\cdots B_{\pt+1}\mathcal T(w_{\pt+\frac{1}{2}})\\
    =&(-1)^{p(w_a)}w_{-a}+(Q-Q^{-1})w_a=w_a\cdot H_0.
\end{align*}
This completes the proof.
\end{proof}

In case $\nb=0$ or $1$, the non-super specialization of Proposition \ref{prop:realizeH0} is established in \cite{BW18a,BWW18}. The property of a $K$-matrix $\mathcal T$ in Corollary \ref{cor:KmatrixT} also provides a conceptual explanation for the commutativity of $H_0$ and $\Ui$ acting on $\W^{\otimes d}$.

\end{document}